\documentclass[11pt,reqno]{amsart}
\usepackage{graphicx,amsmath,amsthm,amsfonts,amssymb,mathrsfs}
\usepackage[usenames,dvipsnames]{xcolor}
\usepackage{tikz}
\usepackage{hyperref}
\hypersetup{%
	colorlinks=true, linkcolor=blue,
	citecolor=ForestGreen
}
\usepackage[paper=letterpaper,margin=.9in]{geometry}
\usepackage[inline]{enumitem} 
\usepackage{caption,subcaption}


%

\theoremstyle{plain}
\newtheorem{thm}{Theorem}[section]

\newenvironment{customthm}[1]
{\innercustomthm}
{\endinnercustomthm}
\newtheorem{cor}[thm]{Corollary}
\newtheorem{prop}[thm]{Proposition}
\newtheorem{lem}[thm]{Lemma}
\theoremstyle{definition}
\newtheorem{defn}[thm]{Definition}
\newtheorem{rmk}[thm]{Remark}
\numberwithin{equation}{section}
\numberwithin{table}{section}
\allowdisplaybreaks

\makeatletter
\newcommand{\myitem}[1]{%
\item[#1]\protected@edef\@currentlabel{#1}%
}
\makeatother

\usepackage{acronym}
\acrodef{SHE}{Stochastic Heat Equation}
\acrodef{SBE}{Stochastic Burgers Equation}
\acrodef{SPDE}{Stochastic Partial Differential Equation}
\acrodef{KPZ}{Kardar--Parisi--Zhang}
\acrodef{6V}{Six Vertex}
\acrodef{ASEP}{Asymmetric Simple Exclusion Process}


\newcommand{\DefinText}[1]{\textbf{#1}}	

\newcommand{\gen}{\mathsf{L}}		
\newcommand{\sgn}{\text{sgn}}		
\newcommand{\ind}{\mathbf{1}}		
\newcommand{\Ex}{\mathbb{E}}		
\renewcommand{\Pr}{\mathbb{P}}		
\newcommand{\set}[1]{{\{#1\}}}		
\newcommand{\metric}{\text{d}_{C^{-1}(\R^2)}}
\newcommand{\TrP}{\Pr_{\overrightarrow{\text{S6V}}}}	
\newcommand{\TrPr}{\Pr_{\overleftarrow{\text{S6V}}}}	
\newcommand{\TrPrc}{\mathbf{U}}							
\newcommand{\TrPrcfr}{\tilde{\mathfrak{F}}}				
\newcommand{\TrPrct}{\tilde{\mathfrak{D}}}				

\newcommand{\Gibbs}{\mathcal{SG}}			
\newcommand{\Zsv}{Z}						
\newcommand{\Zasep}{Z_\textup{ASEP}}			
\newcommand{\Nasep}{N_\textup{ASEP}}			
\newcommand{\filt}{\mathscr{F}}				
\newcommand{\mg}{M}							
\newcommand{\limZ}{\mathcal{Z}}				
\newcommand{\limH}{\mathcal{H}}				
\newcommand{\limU}{\mathcal{U}}				
\newcommand{\limM}{\mathcal{M}}				
\newcommand{\limN}{\mathcal{N}}				

\newcommand{\Bdd}{\mathcal{B}_\e}					
\newcommand{\Decay}{\mathcal{G}_\e}					
\newcommand{\decay}{\gamma_\e}						
\newcommand{\Zg}{Z_{\nabla}}						
\newcommand{\tilZ}{\tilde{Z}}				
\newcommand{\Xbd}{\mathcal{X}_\text{bdd}}			
\newcommand{\Yg}{\mathcal{Y}_{\nabla}}				
\newcommand{\Ygg}{\mathcal{Y}_{\nabla,\nabla}}		
\newcommand{\Xg}{\mathcal{X}_{\nabla}}				
\newcommand{\Xgg}{\mathcal{X}_{\tilZ}}				
\newcommand{\Xgmomt}{B_{\mathcal{X}_{\nabla}}}		
\newcommand{\Xggmomt}{B_{\mathcal{X}_{\tilZ}}}		
\newcommand{\etacnt}{\eta_\text{c}}					

\newcommand{\move}{K}								
\newcommand{\moveCent}{\overline{K}}				
\newcommand{\noise}{\xi}							
\newcommand{\den}{\rho}								
\newcommand{\RW}{R}									
\newcommand{\Rw}{S'}								
\newcommand{\rw}{S}									
\newcommand{\hk}{\mathsf{p}}						
\newcommand{\hke}{\mathsf{p}_\e}					

\newcommand{\Circ}{\mathcal{C}}					
\newcommand{\zoneC}{\Gamma(t,\e)}				
\newcommand{\zoneCLim}{\Gamma_{*}}				
\newcommand{\Magic}{\mathcal{M}}				
\newcommand{\MagicC}{\mathcal{M}'}				
\newcommand{\magicC}{\mathcal{M}'}				
\newcommand{\Magicc}{\mathcal{M}''}				
\newcommand{\magicc}{\mathcal{M}''}				
\newcommand{\zplv}{\mathcal{N}}					
\newcommand{\Circc}{\tilde{\mathcal{C}}}		
%

\newcommand{\pole}{\mathfrak{p}_\varepsilon}	
\newcommand{\polee}{\mathfrak{q}_\varepsilon}	
\newcommand{\poleLim}{\mathfrak{p}_*}			
\newcommand{\rad}{\mathfrak{u}}					
\newcommand{\SG}{\mathbf{V}}					
\newcommand{\SGe}{\mathbf{V}_\e}				
\newcommand{\SGFr}{\mathbf{V}^\text{fr}_\e}		
\newcommand{\SGIn}{\mathbf{V}^\text{in}_\e}		
\newcommand{\SGfr}{\mathfrak{F}}				
\newcommand{\SGt}{\mathfrak{D}}					
\newcommand{\SGfre}{\mathfrak{F}_\e}			
\newcommand{\SGte}{\mathfrak{D}_\e}				
\newcommand{\SGtee}{\mathfrak{D}_{\e}}			
\newcommand{\SGteLim}{\mathfrak{D}_*}			
\newcommand{\SGtt}{\mathfrak{H}_\e}				
\newcommand{\SGttLim}{\mathfrak{H}_*}			
\newcommand{\SGbk}{\mathbf{V}_\text{blk}}		
\newcommand{\SGres}{\mathbf{V}_\text{res}}		
\newcommand{\zp}{\mathfrak{J}} 					
%
\newcommand{\SGasep}{\mathbf{V}_{\textup{ASEP}}}				
\newcommand{\SGeasep}{\mathbf{V}_{\e,\textup{ASEP}}}			
\newcommand{\SGasepfr}{\mathfrak{F}^\textup{ASEP}_\e}			
\newcommand{\SGasepE}{\mathfrak{E}^\textup{ASEP}_\e}			

\newcommand{\mue}{\mu_\e}
\newcommand{\taue}{\tau_\e}
\newcommand{\lambdae}{\lambda_\e}

\newcommand{\Zmg}{Z_\text{mg}}
\newcommand{\Adr}{A_\text{dr}}
\newcommand{\Amg}{A_\text{mg}}
\newcommand{\Agdr}{A_{\nabla,\text{dr}}}
\newcommand{\Agmg}{A_{\nabla,\text{mg}}}
\newcommand{\hatAdr}{\hat{A}_\text{dr}}
\newcommand{\hatAmg}{\hat{A}_\text{mg}}
\newcommand{\Atdr}{A_{\hk-I}}
\newcommand{\hatAgdr}{\hat{A}_{\nabla,\text{dr}}}
\newcommand{\hatAgmg}{\hat{A}_{\nabla,\text{mg}}}

\newcommand{\img}{\mathbf{i}}

\newcommand{\Z}{\mathbb{Z}}
\newcommand{\C}{\mathbb{C}}
\newcommand{\R}{\mathbb{R}}
\newcommand{\e}{\varepsilon}
\newcommand{\ic}{\text{ic}}
\newcommand{\mut}{\lfloor\mu t\rfloor}
\newcommand{\muet}{\lfloor\mue t\rfloor}
\newcommand{\mues}{\lfloor\mue s\rfloor}

\renewcommand{\bar}[1]{\overline{#1}}

\renewcommand{\hat}[1]{\widehat{#1}}
\renewcommand{\tilde}[1]{\widetilde{#1}}

\usepackage{graphicx}
\newcommand*{\Cdot}{{\raisebox{-0.5ex}{\scalebox{1.8}{$\cdot$}}}} 
\graphicspath{{./figs/}}


\setcounter{tocdepth}{1}


\newcommand{\uu}{\mathbf{U}}

\newcommand{\Yfin}[1]{\mathbb{Y}^{#1}}
\newcommand{\Xlf}[1]{\mathbb{X}_{\geq #1}}
\newcommand{\Xinf}{\mathbb{X}}

\setcounter{tocdepth}{2}

\begin{document}
\title[SPDE Limit of the Six Vertex Model]
{Stochastic PDE Limit of the Six Vertex Model}
\author[I.\ Corwin]{Ivan Corwin}
\address{I.\ Corwin,
	Departments of Mathematics, Columbia University,
	\newline\hphantom{\hspace{15pt}I.\ Corwin}
	2990 Broadway, New York, NY 10027}
\email{corwin@math.columbia.edu}
\author[P.\ Ghosal]{Promit Ghosal}
\address{P.\ Ghosal,
	Departments of Statistics, Columbia University,
	\newline\hphantom{\hspace{15pt}P.\ Ghosal}
	1255 Amsterdam Avenue, New York, NY 10027}
\email{pg2475@columbia.edu}
\author[H.\ Shen]{Hao Shen}
\address{H.\ Shen,
	Departments of Mathematics, University of Wisconsin - Madison,
	\newline\hphantom{\hspace{15pt}H.\ Shen}
480 Lincoln Drive, Madison, WI  53706}
\email{pkushenhao@gmail.com}
\author[L-C.\ Tsai]{Li-Cheng Tsai}
\address{L-C.\ Tsai,
	Departments of Mathematics, Columbia University,
	\newline\hphantom{\hspace{15pt}L-C.\ Tsai}
	2990 Broadway, New York, NY 10027}
\email{lctsai.math@gmail.com}

\keywords{%
six vertex model,
Kardar--Parisi--Zhang equation,
interacting particle system,
Markov duality%
}

\begin{abstract}
\noindent
We study the \emph{stochastic} six vertex model and prove that under weak asymmetry scaling (i.e., when the parameter $\Delta\to 1^+$ so as to zoom into the ferroelectric/disordered phase critical point) its height function fluctuations converge to the solution to the \ac*{KPZ} equation. We also prove that the one-dimensional family of stochastic Gibbs states for the \emph{symmetric} six vertex model converge under the same scaling to the stationary solution to the stochastic Burgers equation.

Our proofs rely upon the \emph{Markov (self) duality} of our model. The starting point is an exact microscopic Hopf--Cole transform for the stochastic six vertex model which follows from the model's known one-particle Markov self-duality. Given this transform, the crucial step is to establish \emph{self-averaging} for specific quadratic function of the transformed height function. We use the model's two-particle self-duality to produce explicit expressions (as Bethe ansatz contour integrals) for conditional expectations from which we extract time-decorrelation and hence self-averaging in time. The crux of our Markov duality method is that the entire convergence result reduces to precise estimates on the one-particle and two-particle transition probabilities. Previous to our work, Markov dualities had only been used to prove convergence of particle systems to linear Gaussian SPDEs (e.g. the stochastic heat equation with additive noise).
%
\end{abstract}
\maketitle
\tableofcontents

\section{Introduction}\label{introduction}

The \ac{6V} model and the \ac{KPZ} equation are widely studied models in equilibrium and non-equilibrium statistical mechanics. In this paper we demonstrate how a certain scaling limit of the former model converges to the later equation. This limit comes from scaling into the critical point dividing the ferroelectric and disordered phases of the model. Our results apply for both the \emph{stochastic} and \emph{symmetric} \ac{6V} models (Theorems \ref{thm:S6V} and \ref{thm:6V} respectively). The technical core of this paper is the \emph{Markov duality method}: One-particle duality allows us to perform a microscopic Hopf--Cole transform of the model's height function process into a discrete stochastic heat equation, and prove tightness of that resulting equation; and two-particle duality controls the quadratic variation of the martingale part and proves precise self-averaging in time.

The structure of this introduction is as follows:
Section \ref{sec.s6v} introduces the stochastic \ac{6V} model and records our first main result, it convergence to the \ac{KPZ} equation (Theorem \ref{thm:S6V}).
Section \ref{sec.6v} introduces the symmetric \ac{6V} model and records our second main result, the convergence of the one-parameter family of stochastic Gibbs states  to the stationary solution to the \emph{stochastic Burgers equation} (Theorem \ref{thm:6V}). This section also describes the model with
external fields and how the stochastic Gibbs states arise in the (conjectural) phase diagram for the model's Gibbs states.
Section \ref{sec:ASEPKPZ} recalls how the \ac{KPZ} equation arises as a scaling limit for ASEP (a well studied continuous time limit of the stochastic \ac{6V} model). The purpose of this is to highlight (in the simplest case possible) the key technical challenge in proving such results---self average of the quadratic variation.
Section \ref{sect:introDual} briefly introduces our Markov duality method in the context of ASEP and provides some historical context for it. This approach is developed fully for the stochastic \ac{6V} model in the main body of the paper.
Section \ref{sec:liter} provides a brief review of related literature studying the symmetric and stochastic \ac{6V} models, \ac{KPZ} equation, and Markov dualities.


\subsection{KPZ equation as a limit of the stochastic six vertex model}\label{sec.s6v}
The stochastic \ac{6V} model is a discrete time interacting particle system introduced in 1992 by Gwa and Spohn \cite{GS92}. The model depends on two parameters $b_1,b_2\in (0,1)$ which are used to define (positive) weights on six type of vertices---see the top row of Figure \ref{tbl:S6VWeights}. Treating the solid lines entering a vertex from below or the left as \emph{inputs} and those exits above or to the right as \emph{outputs}, these vertex are conservative (i.e., the number of input lines equals the number of output lines)  and stochastic (i.e., for fixed inputs, the sum of weights over outputs is always 1, and the individual weights are non-negative). Given a down-right path in $\Z^2$ and a specification of boundary condition inputs along the path, the stochastic \ac{6V} model is a measure on the vertices to the up and right of the path, or equivalently a measure on the collection of solid lines which leave the boundary inputs and continue in the up and right directions. The measure is defined recursively: starting with vertices with inputs given, the outputs are randomly chosen amongst all possible outputs with probabilities given by the associated vertex weights. The left-side of Figure \ref{fig:sv} illustrates when the boundary condition inputs are specified on the coordinate axes for the first quadrant. See Section \ref{sec.s6vs} for a more precise definition of the model (including a bi-infinite version) and Section \ref{sec.s6vlit} for a brief review of related literature.

\begin{figure}[ht]
        \centering
        \begin{tabular}{|c|c|c|c|c|c|c|}
           \hline
Non-crossing paths
&
\includegraphics[width=35pt]{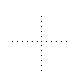}
&
\includegraphics[width=35pt]{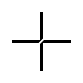}
&
\includegraphics[width=35pt]{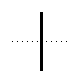}
&
\includegraphics[width=35pt]{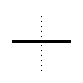}
&
\includegraphics[width=35pt]{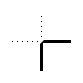}
&
\includegraphics[width=35pt]{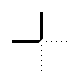}
\\
           \hline
Stochastic weights &        $1$ & $1$ & $b_1$ & $b_2$ & $1-b_1$ & $1-b_2$ \\
            \hline
Symmetric weights&
        $a$ & $a$ & $b$ & $b$ & $c$ & $c$ \\
            \hline
Asymmetric weights&
        $e^{-H-V} a$ & $e^{H+V}a$ & $e^{-H+V}b$ & $e^{H-V}b$ & $c$ & $c$ \\
            \hline
        \end{tabular}
        \caption{Six vertices with their stochastic, symmetric and asymmetric weights.}
        \label{tbl:S6VWeights}
\end{figure}

If the boundary condition inputs are specified entirely on the horizontal axis, it is natural to think of vertical solid lines as particles evolving in time (as measured by the $y$-coordinate) via the following Markovian update. Start with left-most particle\footnote{If there is no left-most particle, the dynamics can be still be defined with some care---see Section~\ref{sec.s6vs}.}. With probability $b_1$ it stays put, and with $1-b_1$ it moves one to the right. The particle continues to move right with probability $b_2$ per step until it either stops, or it hits the next particle. When no collision happens, repeat these rules for the next particle to the right. If a collision occurs, the moving particle stops at that site and the next particle starts moving to the right with probability $1$, and continues to move with probability $b_2$ (as usual). See Section \ref{sec.s6vlit} for a discussion of some limit of the stochastic \ac{6V} model.

\begin{figure}[ht]
  \includegraphics[width=5in]{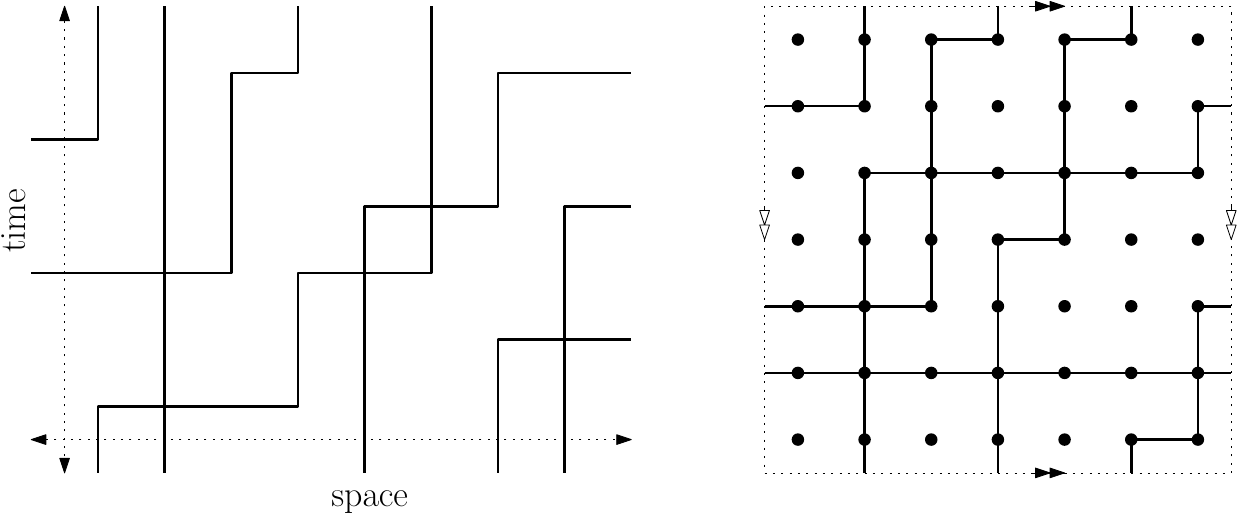}
  \caption{Left: Particle trajectories for the stochastic \ac{6V} model with boundary condition inputs along the coordinate axes. Right: Periodic boundary conditions.}
  \label{fig:sv}
\end{figure}

Define the height function $ N(t,x) $ for the stochastic \ac{6V} model to be equal to the net number of particles which have moved across the time-space line between $(0,0)$ and $(t,x)$ (i.e., summing $1$ for each left-to-right move and $-1$ for each right-to-left move---see Figure \ref{fig.s6vpart}). For a precise definition as well as a construction of $ N(t,x) $ for bi-infinite configurations, see Section~\ref{sec.s6vs}.
Given such $ N(t,x) $, we first linearly interpolate in $ x\in\Z $
and then linearly interpolate in $ t \in \Z_{\geq 0} $ to make $ N(t,x) \in C([0,\infty),C(\R)) $.
Hereafter, we endow the space $ C(\R) $ and $ C([0,\infty),C(\R)) $ the topology of uniform convergence over compact subsets,
and write $ \Rightarrow $ for the weak convergence of probability laws.

\begin{figure}[ht]
\centering
\includegraphics[width=.45\textwidth]{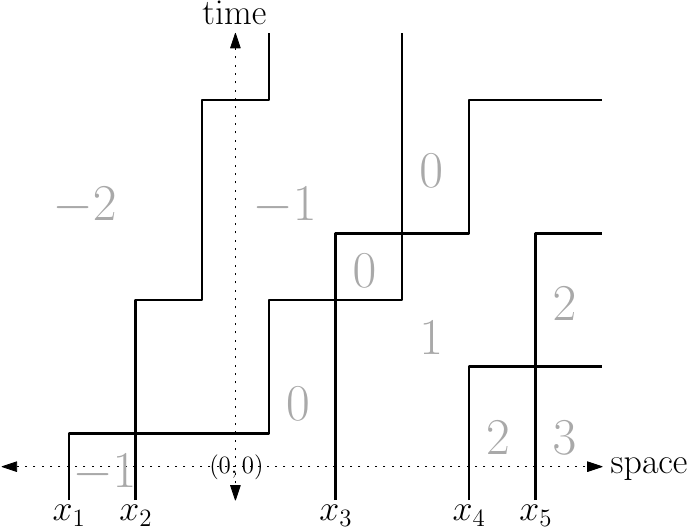}
\caption{The stochastic \ac{6V} particle trajectories and associated height functions. Here we assume a left-most particle and label the first five particles $x_1,\ldots, x_5$. The lines represent their temporal trajectories. The dark grey numbers represent the height function $N(t,y)$ for different regions. The height function changes value when crossing particle trajectories (increasing as one crosses from left to right).}\label{fig.s6vpart}
\end{figure}

Our main result for the stochastic \ac{6V} model states that,
under weak asymmetry scaling where $ b_1\in (0,1) $ is fixed
and $ \tau=b_2/b_1=e^{-\sqrt\e} \to 1 $,
an analog of \eqref{eq.bgres} holds for $ N(t,x) $.
To setup notations, we \emph{fix} any density $ \den\in(0,1) $ hereafter, and let
\begin{align}
	\label{eq:lambda}
	\lambda
	&=
	\frac{1-b_2\tau^{-\den}}{b_1-(b_1+b_2-1)\tau^{-\den}}
	=
	\frac{1-b_1\tau^{1-\den}}{b_1-(b_1+b_1\tau-1)\tau^{-\den}},
\\
	\label{eq:mu}
	\mu
	&
	= \frac{\tau^{-\den}(1-b_1)(1-b_2)}{(b_1-(b_1+b_2-1)\tau^{-\den})(1-b_2\tau^{-\den})}
	= \frac{\tau^{-\den}(1-b_1)(1-b_1\tau)}{(b_1-(b_1+b_1\tau-1)\tau^{-\den})(1-b_1\tau^{1-\den})}.
\end{align}
The reason of choosing these values of the parameters $\lambda,\mu$ will be clear in Section~\ref{sect:mHC}.
Specifically, under the weak asymmetry scaling where $ b_1\in(0,1) $ fixed and $ \tau =\taue = b_2/b_1 := e^{-\sqrt\e} $,
we have $ \lambda=\lambdae $ and $ \mu=\mue $, which,
up to first order in $ \sqrt\e $, read
\begin{align}
	\label{eq:lambdae}
	\lambdae
	&=
	1 - \den \sqrt\e + \mathcal{O}(\e),
\\
	\label{eq:mue}
	\mue
	&=
	1 + \tfrac{b_1-2b_1\den}{b_1-1} \sqrt{\e} + \mathcal{O}(\e).
\end{align}
We adopt standard notation $ \mathcal{O}(a) $ to denote a generic quantity
such that $ \sup_{0<a<1}|\mathcal{O}(a)|a^{-1}<\infty $.
Recall the \ac{KPZ} equation (see Section \ref{sect:SHE} for its definition; Sections \ref{sec:ASEPKPZ} and \ref{sec.KPZlit} a literature review).

\begin{align}
	\label{eq:KPZ}
	\partial_t \limH(t,x) = \frac{\nu_*}{2} \partial_x^2 \limH(t,x)
	- \frac{\kappa_*}{2}\big( \partial_x \limH(t,x)\big)^2 + \sqrt{D_*}\noise(t,x),
\end{align}
with coefficients
\begin{align}
	\label{eq:ceffints}
	\nu_* := \frac{2b_1}{1-b_1},
	\quad
	\kappa_* := \frac{2b_1}{1-b_1},
	\quad
	D_* := \frac{2b_1\den(1-\den)}{1-b_1}.
\end{align}
\begin{thm}
\label{thm:S6V}
Consider the stochastic \ac{6V} model, with parameter $ b_1> b_2 \in (0,1) $.
\begin{enumerate}[leftmargin=5ex, label=(\alph*)] %
\item \textbf{(Near stationary initial conditions)} Fix a density $\rho\in (0,1)$.
With $ \e\downarrow 0 $ denoting a scaling parameter,
we start the stochastic \ac{6V} model from a sequence of initial conditions $ \{N_\e(0,x)\}_{\e>0} $,
and let $ N_\e(t,x) $ denote the resulting height function.
Assume that $ \{N_\e(0,x)\}_{\e>0} $ is near stationary with density $\rho$ (Definition~\ref{def:nearStat}),
and that for some $ C(\R) $-valued process $ \limH^\ic(x) $,
\begin{align}
	\sqrt\e \big( N_\e(0,\e^{-1}x) -  \rho \e^{-1}x \big)
	\Longrightarrow
	\limH^\ic(x),
	\quad
	\text{ in } C(\R).
\end{align}
Then, under the weak asymmetry scaling where $ b_1\in(0,1) $ is fixed, $ \tau =\taue = b_2/b_1 := e^{-\sqrt\e} $, and $\lambda$ and $\mu$ depend on $\e$ as in \eqref{eq:lambdae} and \eqref{eq:mue}, we have
\begin{align}
	\label{eq:S6VtoKPZ}
	\sqrt\e \Big( N_\e\big(\e^{-2}t,\e^{-1}x+\mue \e^{-2} t\big)
	 - \den(\e^{-1}x+\mue \e^{-2} t) \Big) - \e^{-2}t\log\lambdae
	\Longrightarrow
	&\limH(t,x),
\\
	\notag
	&\text{in } C([0,\infty), C(\R)),
\end{align}
where $ \limH(t,x) $ is the Hopf--Cole solution (defined in Section~\ref{sect:SHE}) of
the \ac{KPZ} equation~\eqref{eq:KPZ} with initial condition~$ \limH^\ic(x) $.
\item \textbf{(Step initial condition)}
Start the stochastic \ac{6V} model from the step initial condition $ N(0,x) = (x)_+:=\max(0,x) $,
and let $ N_\e(t,x) $ denote the resulting height function. Fix $\rho\in (0,1)$.
Under the weak asymmetry scaling where $ b_1\in(0,1) $ is fixed, $ \tau =\taue = b_2/b_1 := e^{-\sqrt\e} $, and $\lambda$ and $\mu$ depend on $\e$ as in \eqref{eq:lambdae} and \eqref{eq:mue}, we have
\begin{align*}
	\sqrt\e \Big(
		N_\e\big(\e^{-2}t,\e^{-1}x+\mue \e^{-2} t\big)
		- \den(\e^{-1}x+\mue \e^{-2} t) \Big)
		- \e^{-2}t\log\lambdae
		&- \log \tfrac{\den(1-\den)}{\sqrt\e}
	\Longrightarrow
	\limH(t,x),
\\
	&\text{in } C((0,\infty), C(\R)),
\end{align*}
where $ \limH(t,x) $ is the Hopf--Cole solution of
the \ac{KPZ} equation~\eqref{eq:KPZ} with narrow wedge initial condition (see Section~\ref{sect:SHE}).
\end{enumerate}
\end{thm}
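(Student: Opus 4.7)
The plan is to carry out a three-step Markov duality approach.

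\textbf{Step 1: Microscopic Hopf--Cole transform via one-particle duality.} The first step is to exponentiate the height function by defining
\begin{align*}
	Z_\e(t,x) \;:=\; \lambdae^{-t}\,\taue^{N_\e(t,x) - \den x},
\end{align*}
(or a close variant), where $\taue = e^{-\sqrt\e}$. The parameters $\lambdae$ and $\mue$ in \eqref{eq:lambdae}--\eqref{eq:mue} are chosen precisely so that the one-particle Markov self-duality of the stochastic six vertex model yields the exact identity
\begin{align*}
	\Ex\!\left[ Z_\e(t+1,x) \,\big|\, \filt_t \right] \;=\; \sum_{y\in\Z} \hke(y)\, Z_\e(t,x-y),
\end{align*}
with $\hke$ the transition kernel of a random walk on $\Z$ whose drift is of order $-\mue$ and whose variance is $\nu_*\e^2$ to leading order. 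Thus $Z_\e$ obeys a discrete stochastic heat equation driven by a martingale $M_\e$, and the centering $\mue\e^{-2}t$ in the statement of the theorem exactly compensates for the drift of $\hke$.

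\textbf{Step 2: KPZ scaling and tightness.} Under the KPZ rescaling $(t,x)\mapsto(\e^{-2}t,\,\e^{-1}x+\mue\e^{-2}t)$, the iterated kernel $\hke^{*\lfloor\e^{-2}t\rfloor}$ converges to the continuum heat kernel with diffusivity $\nu_*/2$. Tightness of the rescaled $Z_\e$ in $C([0,\infty),C(\R))$ reduces to uniform $p$-th moment bounds on $Z_\e(t,x)$ and on its space and time increments. The two-particle self-duality furnishes an explicit Bethe-ansatz contour integral for $\Ex[Z_\e(t,x)Z_\e(t,y)]$, supplying the second moment bounds; higher moments follow by a Gaussian-type concentration bootstrap or direct use of $n$-point duality.

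\textbf{Step 3: Identification of the limit (main obstacle).} By a martingale problem characterization, identifying any subsequential limit $\limZ$ of the rescaled $Z_\e$ with the multiplicative continuum SHE (whose Hopf--Cole logarithm is the KPZ solution $\limH$) requires two things: (i) convergence of the discrete Laplacian driving $Z_\e$ to $\tfrac{\nu_*}{2}\partial_x^2\limZ$, which is immediate from Step~2; and (ii) the crucial \emph{self-averaging} of the quadratic variation, namely that for every $\varphi\in C_c^\infty(\R)$ the Ito correction produced by $\langle M_\e\rangle$ converges in probability to $D_*\int_0^t\!\int\varphi(x)^2\limZ(s,x)^2\,dx\,ds$. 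This is the hard part. A direct computation identifies $\langle M_\e\rangle_t$ as a sum of local quadratic functionals of $Z_\e$ that coincide with $D_*\e^2 Z_\e^2$ only in a time-averaged sense. To upgrade this to convergence in probability, I would use the two-particle duality contour integrals for the conditional pair correlators $\Ex[Z_\e(s,x)Z_\e(s,y)\mid\filt_r]$ to establish the time-decorrelation estimate
\begin{align*}
	\Ex\!\left[\Big( \langle M_\e\rangle_{r+\delta} - \langle M_\e\rangle_r - D_*\!\!\int_r^{r+\delta}\!\!\int \varphi^2 Z_\e^2\, dx\, ds \Big)^2\right] \;\longrightarrow\; 0
\end{align*}
on mesoscopic windows $\delta=\e^{a}$ with $0<a<2$; summing over such windows yields the required self-averaging. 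Uniqueness of SHE solutions with given initial data then identifies $\limZ$, and the Hopf--Cole map identifies $\limH$, proving part~(a).

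For part~(b), the same machinery applies once the initial data is handled: the step profile, after the Hopf--Cole transform and the normalization $\log(\den(1-\den)/\sqrt\e)$, converges to the delta-function (narrow-wedge) initial data for the SHE. The instantaneous smoothing of the SHE together with the tightness and identification of part~(a), applied at any fixed positive starting time, then give convergence on $C((0,\infty),C(\R))$.
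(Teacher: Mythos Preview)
Your three-step outline matches the paper's architecture: microscopic Hopf--Cole transform from one-particle duality, tightness from moment bounds, and identification of the limit via a martingale problem whose quadratic-variation step is handled through two-particle duality. Two points where your write-up diverges from the paper are worth flagging.

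In Step~2, the paper does \emph{not} use duality for the moment bounds. It obtains all $p$-th moment and H\"older-type increment bounds (Proposition~5.1) by the standard Bertini--Giacomin iteration of the discrete integral equation for $Z_\e$, using heat-kernel estimates for $\hke^t$ and Burkholder's inequality. Your proposed route (second moment from two-point duality, then a ``Gaussian-type concentration bootstrap'' or $n$-point duality for higher moments) is not needed and would be considerably more delicate to make uniform in $\e$; the direct iteration is both simpler and what the paper actually does.

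In Step~3, your framing via mesoscopic time windows is not the paper's mechanism. The paper expands $\e^{-1}\Theta_1\Theta_2$ explicitly, isolating the terms $\mathcal{X}_\nabla$ and $\mathcal{X}_{\tilZ}$ built from discrete gradients of $Z_\e$ (Corollary~7.4), and then bounds the second moment of their full time sum directly by writing it as a double sum over $s_1\le s_2$ and estimating the inner conditional expectation. The crucial estimate you do not name is that each discrete gradient, once pushed onto the two-particle semigroup $\SGe$ via duality and summation by parts, gains a factor $(t+1)^{-1/2}$ (Proposition~6.1). Establishing this uniformly over the $\nabla$-Weyl chamber via steepest-descent on the Bethe-ansatz double contour integral is the technical heart of the paper (all of Section~6), and it is precisely this gradient gain, not any mesoscopic averaging trick, that yields the $\e^{1/2}$ in the self-averaging bound (Corollary~7.7).
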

\begin{rmk}
It is worth remarking on the freedom to choose arbitrary $\rho\in (0,1)$ in the theorem. For the near stationary initial conditions, $\rho$ controls the density of particles (or vertical lines) as well as the \emph{characteristic} velocity around which we focus. For step initial data, $\rho$ determines a velocity within the rarefaction fan (and gives the density around that velocity). Previous KPZ equation limit results for ASEP where limited to $\rho=1/2$ since the arguments become more complicated in a moving frame or with a disproportionate number of particles to holes.
\end{rmk}

\begin{rmk}
\cite{CT15} proves KPZ equation convergence for a portion of the class of higher spin stochastic vertex models \cite{Corwin2016}. Those models fall into two types -- those with spin $I,J \in \Z_{\geq 1}$ in which the number of particles or arrows per edge is bounded by $I$ or $J$ (depending on the edge's orientation) and those with non-integer spin in which there may be an infinite number of particles or arrows per edge. \cite{CT15} analyzed this second class, specifically under scaling in which the expected number of particles per site diverges with $\e$. This simplifies analysis quite dramatically since \cite{CT15} is able to Taylor expand the quadratic martingale in the density yielding an analysis which completely avoids the key complexity which we encounter here. The stochastic \ac{6V} model comes from taking $I=J=1$ and hence the number of particles per site is either 0 or 1. Though we do not address the general $I,J\in \Z_{\geq 1}$ class herein, we expect our Markov duality method is applicable there.
\end{rmk}

\begin{rmk}
Plugging the expansions \eqref{eq:lambdae} and \eqref{eq:mue} for $\lambdae$ and $\mue$
into \eqref{eq:S6VtoKPZ},
one can see that the two terms of vertical shifting of height function,
namely $-\sqrt\e \den ( \mue \e^{-2} t )$ and $- \e^{-2}t\log\lambdae$,
are both  of order $\mathcal{O}(\e^{-\frac32})$;
but their order $\mathcal{O}(\e^{-\frac32})$ parts
 cancel out.
Therefore \eqref{eq:S6VtoKPZ} states that the rescaled and tilted height function
subtracting $\mathcal{O}(\e^{-1}) t$ converges to the solution to KPZ equation.
\end{rmk}

\noindent \emph{Proof sketch.}
Proposition \ref{prop:mSHE} provides an exact \emph{microscopic Hopf--Cole transform} through which the stochastic \ac{6V} model height process is relates to a microscopic \ac{SHE}. This transformation is readily seen as a consequence of the (one-particle) Markov self-duality given in Corollary \ref{cor:CP16}.
Theorem \ref{thm:S6V:} proves convergence of this microscopic \ac{SHE} to the continuum \ac{SHE}. When translated back into the stochastic \ac{6V} model height function, this implies Theorem \ref{thm:S6V}.

The proof of Theorem \ref{thm:S6V:} boils down to showing tightness and identifying the limiting linear and quadratic martingale problem. The first two items follow in a standard manner from moment bounds provided by Proposition \ref{prop:momt}. Controlling the quadratic variation is the hard part. Proposition \ref{prop:qv} does this by proving a form of self-averaging for the quadratic variation (which itself is a quadratic in the solution to the microscopic \ac{SHE}). The proof of the self-average relies upon the two-particle duality through Proposition \ref{prop:Zduality}. That duality reduces the calculation of conditional expectations to computations involving the transition probability for a two-particle version of the stochastic \ac{6V} model. Such transition formulas can be written explicitly using Bethe ansatz---see Proposition \ref{prop:TransitionProbability} or the  formula in \eqref{eq:SG}.  Proposition \ref{prop:SG} contains very precise estimates on the two-point transition probabilities which are proved via involved steepest descent analysis on the double-contour integral formulas encoding these probabilities.

In Sections \ref{sec:ASEPKPZ} and \ref{sect:introDual} (and Appendix \ref{sect:ASEP}) we explain how these ideas work in the simpler the context of ASEP. For ASEP, there are other methods which can be used to prove self-averaging. Presently, our  Markov duality method is the only approach which works for the \ac{6V} model.

\subsection{Stochastic Burgers equation as a limit of symmetric six vertex model}\label{sec.6v}

The symmetric \ac{6V} model is a foundational model in 2D equilibrium statistical mechanics.
It is defined with respect to a pre-imposed a choice of boundary condition on a compact domain in $\Z^2$, e.g. periodic boundary condition on a rectangular domain as in Figure \ref{fig:sv}. Then, one chooses an assignment of vertices inside the domain which fit together (i.e. output lines match input lines from vertices to the right or above) with probability proportional to the product of vertex weights. These weights are specified by $a,b,c>0$ (in fact, by scaling only two of these matter) as in Figure \ref{tbl:S6VWeights} and the model is called \emph{symmetric} since reflecting the vertices over the diagonal does not change their weight. To go from such a product of weights to a probability requires dividing by a normalizing constant (also called a partition function) which is the sum over all configurations of the product of weights. The need to normalize was not present in the case of  stochastic weights as it equals 1 there.

\subsubsection{Conjectural phase diagram for symmetric six vertex model Gibbs states} \label{sec:Conjectural phase diagram}

How does the symmetric \ac{6V} model behave as the mesh size goes to zero? Is there a limit shape? How does the height function fluctuate around it? How much do boundary conditions or external fields effect these limits? These questions are intertwined with understanding the {\it extremal, translation invariant, ergodic infinite volume Gibbs states} (or simply Gibbs states for short) and their associated {\it free energies}. These can be thought of as distributions on configurations of vertices on $\Z^2$ which satisfy the symmetric \ac{6V} Gibbs property---the marginal distribution restricted to any compact subdomain, given the state of the boundary vertices, is given by the above symmetric \ac{6V} model probability prescription (i.e., product over weights of vertices normalized to be a probability distribution).

While much has been conjectured about the symmetric \ac{6V} Gibbs states (e.g. their phase diagram, free energy, uniqueness, and fluctuations) very little has been proved---see Section \ref{sec.6vlit} for some further discussion.
The description we provide here (i.e. in this Section~\ref{sec:Conjectural phase diagram})
can be found, for instance, in \cite{Nolden92, Bukman1995, Res2010} and is essentially conjectural. We include it here to motivate the importance of studying the ``\emph{stochastic Gibbs states}''  in Section \ref{sec:stochasticgibbsstates}.
The discussion in this Section~\ref{sec:Conjectural phase diagram} will  not be used in any proofs.

The Gibbs states for the symmetric \ac{6V} model (with a given choice of $a,b,c$) are believed to arise as infinite volume limits of the periodic boundary condition asymmetric \ac{6V} model in which there are horizontal and vertical external fields of strength $H,V\in \R$ (see Figure \ref{tbl:S6VWeights}). These fields reward  the occurrence of horizontal or vertical lines by factors of $e^{H/2}$ and $e^{V/2}$ and penalize the absence of lines by $e^{-H/2}$ and $e^{-V/2}$. Consider any rectangle enclosed in the interior of the fundamental domain of the periodic model. Then, regardless of the choices of external fields, conditioned on the vertices on the boundary of the rectangle, the configuration inside is given by the symmetric, zero-field \ac{6V} model weights. This is because all possible vertex configurations inside the rectangle have the same number of vertical and horizontal lines. This is analogous to the fact that for a simple random walk with drift, the marginal distribution of the walk given a fixed starting and ending level is drift-independent.

\begin{figure}[ht]
\hspace{5mm}
  \begin{subfigure}{0.4\textwidth}
  \includegraphics[width=2.5in]{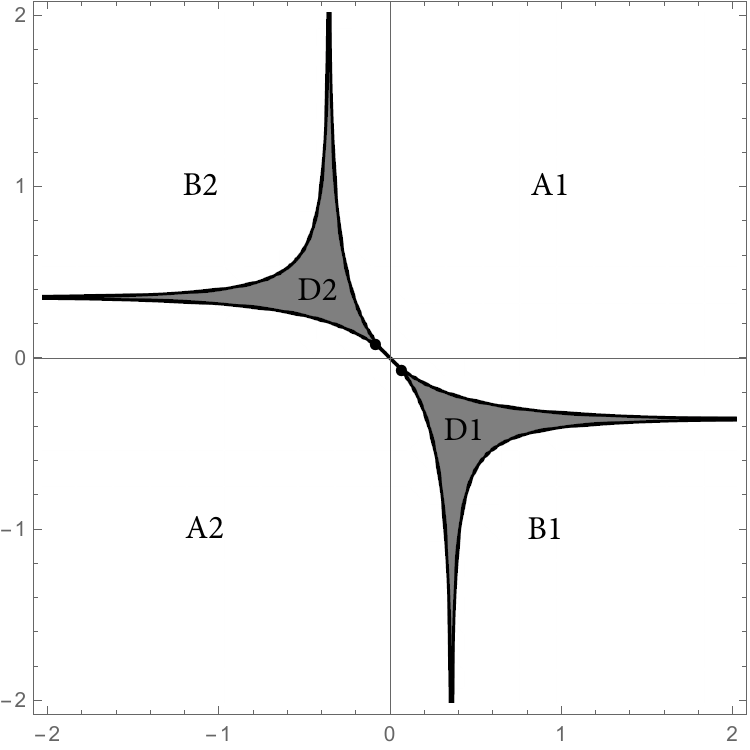}
  \put(-87,-7){$H$}
  \put(-187,90){$V$}
  \caption{}
  \label{fig1}
  \end{subfigure}
  \begin{subfigure}{0.4\textwidth}
  \includegraphics[width=2.5in]{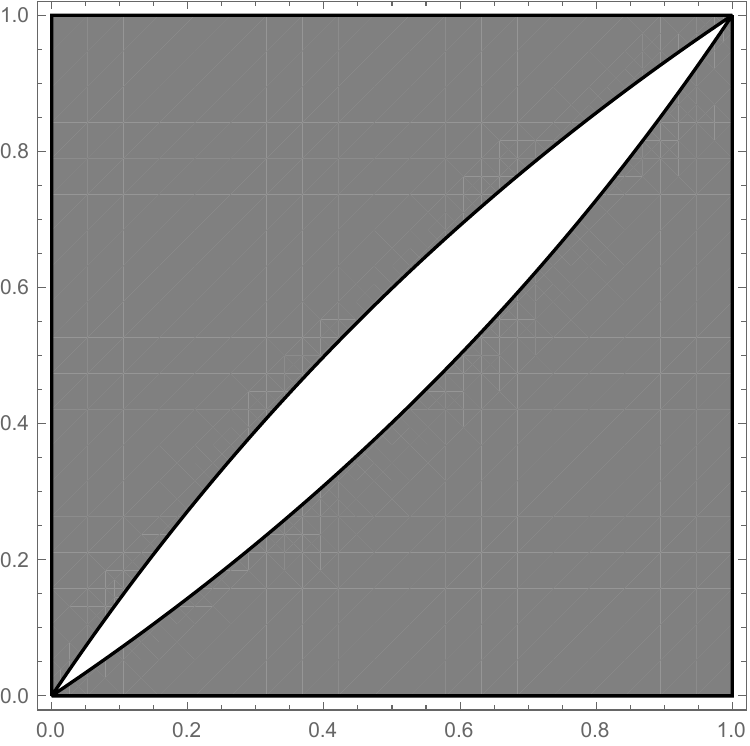}
  \put(-85,-7){$h$}
  \put(-185,90){$v$}
  \caption{}
  \label{fig2}
  \end{subfigure}
  \caption{\ac{6V} model with parameters  $(a,b,c)\approx (.201,.1,.1)$ (or $u,\eta=.1$) and $\Delta \approx 1.005$. (a): Phase diagram mapping $(H,V)$ onto Gibbs states. (b): Average density of horizontal and vertical   lines $(h,v)$ accessible as $(H,V)$ varies. The A1 phase maps to $(h,v)=(1,1)$, A2 to $(0,0)$, B2 to $(0,1)$, B1 to $(1,0)$. The disordered phase D2 maps to the grey area above the diagonal in the $(h,v)$ plot, and D1 to the reflected area. The disordered phase extends asymptotically vertically and horizontally so as to separate the A and B phases. The two {\it conical points} are where D2/D1, A1 and A2 touch. Each conical point maps to the entire boundary of the white lens around the $(h,v)$ diagonal. Inside the lens there are no (extremal) Gibbs states with those specified densities.}
  \label{fig:phases}
\end{figure}

Gibbs states are believed to be uniquely indexed by their average density $(h,v)\in[0,1]^{2}$ of horizontal and vertical lines (respectively). It is not necessary that every $(h,v)$ will have a corresponding Gibbs state which realizes those densities. \cite{Res2010} describes the conjectural mapping (derived based on Bethe ansatz calculations) between $(H,V)$ and $(h,v)$. The nature of this mapping depends on the parameter
\begin{align}
	\label{eq:Delta}
	\Delta = \frac{a^2+b^2-c^2}{2ab}.
\end{align}
We will focus on the case when $\Delta>1$ and $a>b+c$ (the other possible case when $\Delta>1$ is $b>a+c$ and that can be recovered by a simple transformation of vertices) in which the conjectural phase diagram is given in Figure \ref{fig:phases}\footnote{When $|\Delta|<1$ the conical points in the phase diagram disappear and the two disordered phases merge. When $\Delta<-1$ a new antiferroelectric phase emerges for $H,V$ near zero. The associated Gibbs state is composed of diagonal bands of zig-zags made up only the $c$-type vertices.} -- see the caption beneath the figure regarding how different phases in $(H,V)$ picture are mapped into regions in $(h,v)$ picture. There are four frozen phases A1,A2,B1,B2 which arise when $H$ and $V$ are sufficiently positive or negative. Between them are {\it disordered} phases D1,D2 which map onto values of $(h,v)$ in the grey region. \cite{Nien84} (see more recently \cite{KMSW17}) conjectured that the fluctuations in the disordered phase are log-correlated and related to the Gaussian free field (or central charge 1 CFT). Such a result has only been proved at the free-fermion ($\Delta=0$) point \cite{Ken00,Ken01,Ken09}.

In Figure \ref{fig:phases}(A) the disordered regions D1 and D2 terminate near the origin at {\it conical points} connected by a line between the A1 and A2 phases. In Figure \ref{fig:phases}(B) these conical points are mapped to the entire boundary between the grey disordered phase and the white excluded phase (i.e. the lens around the diagonal which do not have corresponding extremal  Gibbs states). Different  Gibbs states  arise at a conical point, depending on the angle in the $(H,V)$-plane along which one approaches the conical point; these Gibbs states have  different line densities $(h,v)$ as parametrized by the boundary of the lens in Figure \ref{fig:phases}(B).
 \cite{Bukman1995} argued that the one-parameter family of Gibbs states  arising at the conical points coincides with the one-parameter family of so-called ``{\it stochastic} Gibbs states'', which we now discuss in Section~\ref{sec:stochasticgibbsstates}.

\subsubsection{Stochastic Gibbs states and their scaling limits}\label{sec:stochasticgibbsstates}

Whereas even the existence of disordered Gibbs states is only conjectural for $\Delta\neq 0$, the one-parameter family of ``stochastic Gibbs states'' (which enjoy the symmetric $(a,b,c)$ Gibbs property) is  readily constructed owing to their connection with the stochastic \ac{6V} model. Fix $(a,b,c)$ and consider the stochastic \ac{6V} model with parameters\footnote{This relation can be reversed to give $\Delta = \frac{b_1+b_2}{2\sqrt{b_1 b_2}}$.}
\begin{equation}\label{eq.bs}
b_1 = \tfrac{b}{a}\big(\Delta + \sqrt{\Delta^2-1}\big), \quad \textrm{and}\quad  b_2 = \tfrac{b}{a}\big(\Delta - \sqrt{\Delta^2-1}\big).
\end{equation}
Choose $(h,v)\in [0,1]^2$ such that
\begin{equation}\label{vhbeq}
\frac{v}{1-v}(1-b_1) = \frac{h}{1-h}(1-b_2).
\end{equation}

There is a one-parameter family of solutions $(h,v)$ to this relation.
Consider boundary condition inputs for the stochastic \ac{6V} model on the first quadrant where with probability $h$ there are horizontal lines coming in from the $y$-axis, and with probability $v$ there are vertical lines coming in from the $x$-axis (all these occur independently). \cite{A16aa}  proves that this boundary condition is {\it stationary} so that if one shifts the coordinates of the origin into the third quadrant, the marginal distribution restricted to the first quadrant remain unchanged. Shifting the origin back to $(-\infty,-\infty)$ defines a Gibbs state referred to as  {\bf stochastic Gibbs state} with line densities $(h,v) $,
which we denote by $ \Gibbs(b_1,b_2;h,v) $
(see Lemma \ref{lem:stat} below for precise statement of this construction.)
Figure \ref{fig:char} illustrates such a stochastic Gibbs state.

\begin{figure}[ht]
  \includegraphics[width=4in]{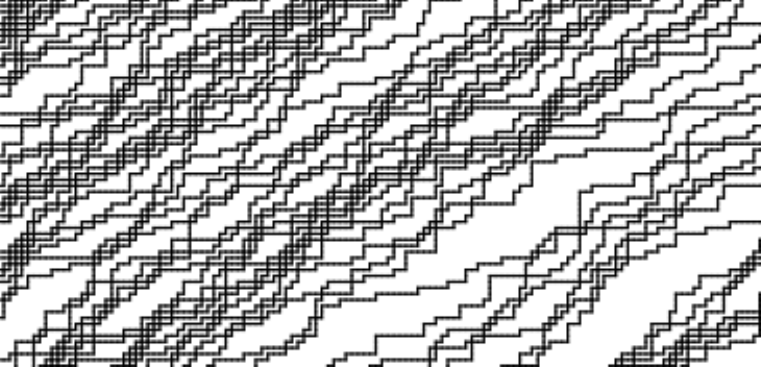}
  \caption{A sample of a stochastic Gibbs state on a finite box.}
  \label{fig:char}
\end{figure}

The densities $(h,v)$ in this one-parameter family of stochastic Gibbs states
$ \Gibbs(b_1,b_2;h,v) $ coincide with the densities which are conjectured to arise from the conical point (i.e. the boundary of the white lens in Figure \ref{fig:phases})\footnote{In fact, \eqref{vhbeq} only gives  upper 
boundary of the lens. The other boundary comes from applying the diagonal symmetry of the symmetric model.}.
Let us briefly make this matching to the formula for that lens boundary given in \cite{ResSri16b}.
When $\Delta>1$ and $a>b+c$,
Baxter introduced a convenient  (projective) parameterization of $ (a,b,c) $:
\begin{align}
	\label{eq:projPara}
	a= \sinh(u+\eta),\qquad b= \sinh(u),\qquad c=\sinh(\eta),
\end{align}
with $u,\eta>0$. Note in particular $ \Delta = \cosh(\eta) $.
In terms of this parameterization,
the conjectural (see, for example, \cite{ResSri16b}) one-parameter family of  Gibbs states arising from the conical points have horizontal and vertical line densities given
by the relation\footnote{In \cite{ResSri16b}, $t=2h-1$ and $s=2v-1$. There was a transcription error in  \cite[Eq.~(34)]{ResSri16b} (which related a result from \cite{Bukman1995}). What was written there as $\tanh(u+\eta)$ should be $\tanh(u)$ (as stated here) \cite{PrivateNicolaiAmol}.}
\begin{align}
	\label{eq:slope}
	h= \frac{v\big(1\pm \tanh(u)\big)}{1\pm \tanh (u)(2v-1)}.
\end{align}
and the conical points arises from choosing $(H,V) =  (\pm \eta/2,\mp \eta/2)$.
\cite{A16aa} proves that $ \Gibbs(b_1,b_2;h,v) $ does
prescribe a translation-invariant Gibbs state for the symmetric six-vertex model with weights $(a,b,c)$, see Proposition~\ref{GibbsMeasureProposition} below.
%

Our main theorem on symmetric \ac{6V} model describes the large scale behaviors of the stochastic Gibbs state when $ \Delta \downarrow 1 $---that is, when we zoom into the \emph{ferroelectric-disorder interface}.

A natural quantity describing large scale behavior of Gibbs states
is the empirical distributions of vertical or horizontal lines.
We will focus on vertical lines, and the analogous result on horizontal lines is obtained through exchanging $ x $- and $ y $-axes.
Given a tiling on $ \Z^2 $ by the six vertices from Figure~\ref{tbl:S6VWeights},
for each point $ (x,y)\in\Z^2 $, we let $ u(x,y) $ denote the indicator function
for having an \emph{incoming} (i.e., from below) vertical line.
More explicitly,
\begin{align}
	\label{eq:vl:ind}
	u(x,y)
	=
	\ind\set{
		(x,y) \text{ is tiled with }
		\raisebox{-4pt}{\includegraphics[width=20pt]{vertex1111}},
		\raisebox{-4pt}{\includegraphics[width=20pt]{vertex1010}},
		\text{ or }
		\raisebox{-4pt}{\includegraphics[width=20pt]{vertex1001}}
	}.
\end{align}
For a fixed $ v\in(0,1) $ average density of vertical lines,
we define the scaled empirical distribution $  U_\e $, acting on $ f\in C^\infty_c(\R^2) $ ($ C^\infty $ with compact support) as
\begin{align}
	\label{eq:empirical}
	\langle U_\e, f \rangle
	:=
	\e^{\frac52} \sum_{x,y\in\Z} \big(u(x,y)-v\big) f(\e^{-1}x-\mue \e^{-2}y,\e^{-2}y).
\end{align}
Here $ \mue $ is the proper centering of the reference frame in order to observe \ac{KPZ}-type fluctuations.
In terms of Baxter's projective parametrization~\eqref{eq:projPara},
$ \mue $ is obtained by matching $ b_1,b_2 $ into $ u,\eta $  via~\eqref{eq.bs}\eqref{eq:projPara} in~\eqref{eq:mu},
and setting $u=u_{\e}=\zeta\sqrt\e$ for some fixed $\zeta\in(0,\infty)$, $\eta=\eta_{\e}=\frac12\sqrt\e$ and $\rho=v$, $ \tau=\taue=e^{-\sqrt\e} $.

Informally speaking, the $ \e\downarrow 0 $ limit of the empirical distribution $ U_\e $
is described by the stationary solution of the \ac{SBE}:
\begin{align}
	\label{eq:SBE}
	\partial_t \limU = \frac{\nu_*}{2} \partial_{x}^2\limU
	-
	\frac{\kappa_*}{2} \partial_x\big( \limU^2\big) + \sqrt{D_*}\partial_x\noise.
\end{align}

To formulate our result precisely,
first note that the solution $ \limU $ of the \ac{SBE}~\eqref{eq:SBE} is a distribution (i.e., generalized function) valued process.
In the following we will work with the space $ C^{-1}(\R^2) $ of distributions.
For $ f\in C^\infty_c(\R^2) $, write $ f_\delta(x,y):= f(\delta^{-1} x, y) $ for the corresponding scaled function.
This scaling probes only the regularity in $ x $.
For linear functionals $ U,U' $ on $ C^\infty_c(\R^2) $, define
\begin{align}
	\notag
	\Vert U \Vert_{C^{-1}(\R^2),[-\ell,\ell]^2}
	&:=
	\sup\big\{ |\langle U, f_\delta\rangle| \delta: \delta \in (0,1), f\in C^\infty_c(\R^2),
\\
	\label{eq:C-1:}
	&
	\hphantom{:=\sup\big\{ |\langle U, f_\delta\rangle| \delta^{2}:}
	\text{supp}(f)\subset[-\ell,\ell]^2, \ \Vert f \Vert_\infty + \Vert \partial_x f \Vert_\infty \leq 1 \big\},
\\
	\label{eq:C-1}
	\metric(U,U') &:= \sum_{\ell=1}^\infty \big( 2^{-\ell} \wedge \Vert U-U' \Vert_{C^{-1}(\R^2),[-\ell,\ell]^2} \big).
\end{align}
The space $ C^{-1}(\R^2) $ consists of linear functional $ U:C^\infty_c(\R^2)\to \R $ satisfying $ \metric(U,0)<\infty $,
endowed with the metric $ \metric(\Cdot,\Cdot) $.

To define the stationary solution of the \ac{SBE}~\eqref{eq:SBE},
consider the Hopf--Cole solution $ \limH_\text{stat}(t,x) \in C([0,\infty),C(\R)) $ of the \ac{KPZ} equation~\eqref{eq:KPZ},
with initial condition
\begin{align}
	\label{eq:Brownian}
	\limH_\text{stat}(0,x) = \sqrt{\den(1-\den)}B(x),
	\quad
	\den = v,
\end{align}
where $ B(x) $ denote a two-sided standard Brownian motions.
It is known~\cite{Bertini1997,FQ15} that
the Brownian motion~\eqref{eq:Brownian} is quasi-stationary for the \ac{KPZ} equation~\eqref{eq:KPZ}.
That is, $ \limH_\text{stat}(t_0,\Cdot)-\limH_\text{stat}(t_0,0) \stackrel{\text{law}}{=} \sqrt{\den(1-\den)}B(\Cdot) $, for any $ t_0\in[0,\infty) $.
This and the uniqueness of Hopf--Cole solutions implies that
\begin{align}
	\label{eq:stat:SBE}
	\limH_\text{stat}(t+t_0,x) - \limH_\text{stat}(t_0,0)	
	\stackrel{\text{law}}{=}
	\limH_\text{stat}(t,x),
	\quad
	\text{as }
	C([0,\infty),C(\R))
	\text{-valued processes}
\end{align}
for any $ t_0>0 $.
Utilizing~\eqref{eq:stat:SBE}, we show in Section~\ref{sect:pfthm6V} that
the centered height process $ (\limH_\text{stat}(t,x) - \limH_\text{stat}(t,0)) $ can in fact be extended to all values of $ t>-\infty $.
\begin{prop}
\label{prop:constrK}
There exits a $ C(\R,C(\R)) $-valued process $ \mathcal{K}(t,x) $ such that, for any fixed $ t_0\in\R $,
\begin{align}
	\label{eq:KeqH}
	\mathcal{K}(t-t_0,x) \stackrel{\text{law}}{=} \limH_\text{stat}(t,x) - \limH_\text{stat}(t,0),
	\quad
	\text{as }
	C([0,\infty),C(\R))
	\text{-valued processes in }(t,x).
\end{align}
\end{prop}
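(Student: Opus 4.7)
The plan is to construct $\mathcal{K}$ by extending the centered stationary KPZ height process $Y(t,x) := \limH_\text{stat}(t,x) - \limH_\text{stat}(t,0)$ from $t \in [0,\infty)$ to all of $t \in \R$ via a Kolmogorov projective-limit argument. The essential input is time-shift stationarity of $Y$, which is a direct consequence of the quasi-stationarity \eqref{eq:stat:SBE}.

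The first step is to verify that for every $t_0 > 0$,
\begin{align*}
\{Y(t+t_0, x)\}_{(t,x)\in[0,\infty)\times\R} \stackrel{\text{law}}{=} \{Y(t,x)\}_{(t,x)\in[0,\infty)\times\R}.
\end{align*}
Writing
\begin{align*}
Y(t+t_0, x) = \bigl[\limH_\text{stat}(t+t_0,x) - \limH_\text{stat}(t_0,0)\bigr] - \bigl[\limH_\text{stat}(t+t_0,0) - \limH_\text{stat}(t_0,0)\bigr],
\end{align*}
both bracketed terms are obtained by evaluating the same shifted and centered process $Z(t,x) := \limH_\text{stat}(t+t_0,x)-\limH_\text{stat}(t_0,0)$ at $x$ and at $0$. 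Since \eqref{eq:stat:SBE} asserts $Z \stackrel{\text{law}}{=} \limH_\text{stat}$ on $C([0,\infty), C(\R))$ \emph{jointly} in $(t,x)$, applying the measurable centering functional $f \mapsto f(\cdot,\cdot) - f(\cdot,0)$ to both sides yields $Z(t,x)-Z(t,0) \stackrel{\text{law}}{=} \limH_\text{stat}(t,x) - \limH_\text{stat}(t,0) = Y(t,x)$, which is the claim.

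Next, for each $n \in \N$ take $\mathcal{K}^{(n)}$ to be a $C([-n,n], C(\R))$-valued random element distributed as $\{Y(t+n, x)\}_{t\in[-n,n]}$; stationarity forces the restriction of $\mathcal{K}^{(n+1)}$ to $[-n,n]$ to agree in law with $\mathcal{K}^{(n)}$. The Daniell-Kolmogorov extension theorem on the Polish space $C(\R, C(\R))$, viewed as the projective limit of the $C([-n,n], C(\R))$ under the natural restriction maps, then produces a $C(\R, C(\R))$-valued process $\mathcal{K}$ whose restriction to $[-n,n]$ has the law of $\mathcal{K}^{(n)}$ for every $n$.

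Finally, to verify \eqref{eq:KeqH}, fix $t_0 \in \R$ and an arbitrary truncation $T>0$, and choose $n$ so that $[-t_0, T-t_0] \subset [-n,n]$. By construction $\{\mathcal{K}(t-t_0, x)\}_{t \in [0,T]}$ has the law of $\{Y(t-t_0+n, x)\}_{t \in [0,T]}$, which by the first step (applied with shift $n-t_0 \geq 0$) equals the law of $\{Y(t,x)\}_{t \in [0,T]}$; sending $T\to\infty$ upgrades this to the desired equality on $C([0,\infty), C(\R))$. The only non-routine content is the first step, where one has to use \eqref{eq:stat:SBE} in the joint sense so that the centering functional can be pushed through; the rest is a textbook projective-limit construction and requires no further PDE or analytic input.
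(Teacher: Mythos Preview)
Your argument is correct and reaches the same conclusion as the paper, but the technical route differs in a useful way.

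The paper first applies the classical Kolmogorov extension theorem at the level of finite time-marginals, producing a process $\hat{\mathcal{K}}$ with values in the product space $\prod_{\R} C(\R)$; it then has to manufacture a continuous modification by dyadic interpolation and a Borel--Cantelli argument. Your approach sidesteps this second stage entirely: by working from the outset with the countable projective system of Polish spaces $C([-n,n],C(\R))$ (with restriction maps), continuity in $t$ is built into the state space, so the projective-limit version of Kolmogorov extension delivers a $C(\R,C(\R))$-valued process directly. This is cleaner, and it exploits the fact that the original process $Y$ is already jointly continuous---information the paper's finite-dimensional extension discards and then has to recover. The only caveat is that you are invoking the projective-limit (Bochner--Kolmogorov) form of the extension theorem rather than the product form; this is standard for countable inverse systems of Polish spaces, but it is worth naming the version explicitly since some readers will only know the product-space statement.
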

\noindent
Note that in the above proposition $ \mathcal{K}(t,x) $ is a process with $t\in \R$.
Given this,
the solution of $ \limU $ of the \ac{SBE} is defined as
\begin{align}
	\label{eq:SBE:}
	\limU: C^\infty_c(\R^2) \to \R,
	\quad
	\langle \limU, f \rangle
	:=
	- \int_{\R^2} \partial_x f(x,y) \mathcal{K}(y,x) dxdy.
\end{align}
Given that $ \limH_\text{stat}\in C(\R_+\times\R) $, it is straightforward to check $ \limU\in C^{-1}(\R^2) $.

The following is our main result on symmetric \ac{6V} model:
\begin{thm}
\label{thm:6V}
Consider the symmetric \ac{6V} model with vertex weights $(a,b,c)$ given by Baxter's projective parameters $(u,\eta)$ as in~\eqref{eq:projPara}.
Let  $ \eta=\eta_\e = \frac12\sqrt\e $ such that $ \Delta = \cosh(\eta_\e) \downarrow 1$.
There exist parameters $u=u_\e = \frac12\zeta\sqrt\e +o(\sqrt\e)$
for some constant $\zeta\in(0,\infty)$ such that,
for any densities $ (h,v)\in(0,1)^2 $ with $v$  fixed and $ h=h_\e $  given by~\eqref{eq:slope} (with the $\pm$ symbol fixed to be $-$), we have
\begin{align*}
	 U_\e \Longrightarrow \limU
	\quad
	\text{ in } C^{-1}(\R^2)
	\qquad \mbox{as }\e\to 0 .
\end{align*}
Here $U_\e$ is the
 the empirical distribution as in \eqref{eq:empirical} of the stochastic Gibbs state $ \Gibbs(b_1,b_2;h_\e,v) $,
with $ b_1,b_2 $  given in terms of 
$ u_\e,\eta_\e $
through~\eqref{eq.bs}\eqref{eq:projPara};
and $ \limU $ is   the solution to \ac{SBE} given as in~\eqref{eq:SBE:},
with coefficients
\begin{align} \label{e:SBEcoeff}
	\nu_* = 2\zeta,
	\qquad
	\kappa_* = 2\zeta,
	\qquad
	D_* := 2\zeta v(1-v).
\end{align}
\end{thm}

\begin{rmk}
In order to see the \ac{SBE} limit here it is necessary to look along the characteristic (in the sense of Burger's equation) direction. In \eqref{eq:empirical}, this is reflected in the slope $\mue$ which is  given by the derivative of $h(v)$ evaluated at $v$ (as can be checked using \eqref{eq:slope}).
\end{rmk}

\noindent \emph{Proof sketch.}
This result is proved in Section \ref{sect:pfthm6V}. Since the stochastic Gibbs states come from a suitably chosen stochastic \ac{6V} model, we can apply Theorem \ref{thm:S6V} to prove convergence. The convergence is for positive times, but using the stationarity, we can extend it easily to all time.

\subsection{KPZ equation as a limit of ASEP}\label{sec:ASEPKPZ}

\acp{SPDE} describe the evolution of systems in the presence of random noise. The construction and approximation theory for non-linear SPDEs has attracted significant attention and enjoyed major breakthroughs in recent years (see, for instance, \cite{Bertini1997,Hairer13,Hairer14, GP2015a,GJ14a,GP15b}). Such equations are believed to describe the fluctuations of microscopic systems around their hydrodynamic limits.

The \ac{KPZ} equation is a model for random growth processes, interacting particle systems, and directed polymers \cite{Corwin12,QS15}. Writing $\limH(t,x)$ for the height at time $t\geq 0$ above $x\in\R$, the equation reads:
\begin{equation}
	\label{eq.kpz}
	\partial_t \limH(t,x) = \tfrac{\nu}{2} \partial_x^2 \limH(t,x) - \tfrac{\kappa}{2}\big( \partial_x \limH(t,x)\big)^2 + \sqrt{D}\noise(t,x),
\end{equation}
where $ \noise(t,x) $ denotes the Gaussian space-time white noise,
and $ \kappa\neq 0 \in\R $ and $ \nu,D >0 $ are constants measuring the strength of each term in~\eqref{eq.kpz}.

Making sense of \eqref{eq.kpz} is confounded by the non-linearity---solutions are rough enough that this does not make classical sense. The simplest, though indirect, approach is through the Hopf--Cole transform---one simply \emph{defines}
$\limH(t,x) =-\frac{\nu}{\kappa} \log \limZ(t,x)$ where $\limZ$ solves the \ac{SHE} (with multiplicative noise)\footnote{The positivity and well-posedness of \eqref{eq.SHEs} follows classical methods, see \cite{Corwin12,Q11} for further details.}:
\begin{equation}\label{eq.SHEs}
	\partial_t \limZ(t,x) = \tfrac{\nu}{2} \partial_x^2 \limZ(t,x) + \tfrac{\kappa\sqrt{D}}{\nu}\noise(t,x) \limZ(t,x).
\end{equation}
There are two other definitions which have been introduced recently and yield equivalent solutions: energy solutions \cite{GJ14a,GP2015a} and the regularity structures \cite{Hairer14}/paracontrolled distributions \cite{GP15b} (these last notions are for periodic $x\in [0,1]$). See also renormalization group techniques in \cite{Antti}.

How does the \ac{KPZ} equation arise from microscopic systems? Fixing $(b,z)\in \R^2$ and letting (for the moment) $\limH_{\e}(t,x) :=\e^{b} \limH(\e^{-z}t,\e^{-1}x)$ one sees that $\limH_{\e}$ satisfies a version of \eqref{eq.kpz} with scaled coefficients (see, for instance, \cite{Q11}). There are no choices for $(b,z)$ besides $(0,0)$ which leave the equation invariant. One may, however, simultaneously scale coefficients in \eqref{eq.kpz} to compensate for the effects of the $(b,z)$-scaling. This is a proxy for understanding how discrete models may converge to \eqref{eq.kpz} when one performs $(b,z)$-scaling while also scaling model parameters to effectively tune coefficients. This is called \DefinText{weak scaling}, and significant efforts have sought to show \DefinText{weak \ac{KPZ} universality}, meaning that general classes of processes converge to \eqref{eq.kpz} under such weak scaling.

Even though the focus of this work is on the \ac{6V} model, we focus for the moment on ASEP since it is a simpler process and allows us to cleanly identify the key challenge in proving the \ac{KPZ} equation limit for the stochastic \ac{6V} model.
The \ac{ASEP} is a continuous-time particle system in which particles inhabit sites indexed by $\Z$ and jump left and right according to continuous time exponential clocks with rates $\ell\geq 0$ and $r\geq 0$
(fix $\ell\geq r$ and $\ell+r=1$) subject to exclusion (jumps to occupied sites are suppressed). The \ac{ASEP} height function $ \Nasep(t,x) $ is defined just as for the stochastic \ac{6V} model and has $1$/$0$ slopes entering occupied/vacant sites (see Figure \ref{fig.asep}). \ac{ASEP} arises as a continuous time limit of the stochastic \ac{6V} model when $b_1=\e \ell$, $b_2=\e r$, time is scale to be $\e^{-1}t$ and particles are viewed in a moving frame with velocity $\e^{-1}$ (see \cite{BCG2016,A16b}).

\begin{figure}[ht]
\centering
\includegraphics[width=.4\textwidth]{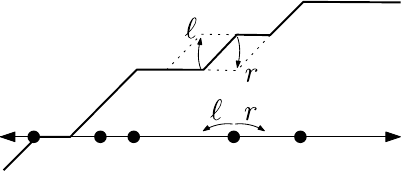}
\caption{ASEP particle configuration with the associated height function above it. Left jumps correspond to adding a rhombus and right jumps do the opposite.}\label{fig.asep}
\end{figure}

The \ac{ASEP} was the first discrete space system proved to converge to the \ac{KPZ} equation: \cite{Bertini1997} proved that for \emph{nearly stationary} initial condition with density $ \den=\frac12 $ (Definition~\ref{def:nearStat}),
under weak asymmetry scaling where $\ell-r = \sqrt\e$,
\begin{equation}\label{eq.bgres}
	\sqrt\e \Big( \Nasep(\e^{-2}t, \e^{-1} x) - \tfrac12 \e^{-1} x - \tfrac14 \e^{-\frac32} t  \Big)
	\stackrel{\e\to 0}{\Longrightarrow}
	\limH(t,x),
\end{equation}
as a space-time process. The starting point for this result was an observation in \cite{Gart88} that ASEP admits a \DefinText{microscopic Hopf--Cole transform}:
\begin{equation}\label{eq.mch}
\textrm{Setting } \tau = r/\ell \textrm{, and } Q(t,x) = \tau^{\Nasep(t,x)},\qquad dQ(t,x) = L^1_{r,\ell} Q(t,x) +Q(t,x) dM(t,x).
\end{equation}
Here $L^1_{r,\ell}$ is the generator of a simple continuous time random walk
with left and right jump rates given by $r$ and $\ell$ (note the exchange in left and right rates),
and $dM(t,x)$ is a martingale with explicit quadratic variation (see Appendix~\ref{sect:ASEP}).

The convergence in \eqref{eq.bgres} is shown not at the level of the height function, but rather its exponential,
by showing that the above microscopic \ac{SHE} \eqref{eq.mch} converges under the scalings in \eqref{eq.bgres} to its continuum version \eqref{eq.SHEs}.
Given tightness of the exponential process (which follows from detailed estimates on the random walk transition probability),
the convergence to~\eqref{eq.SHEs} is achieved via martingale problems (see Section \ref{sect:pfthmS6V}).
That is, the \ac{SHE} is uniquely characterized by a linear and quadratic martingale problem which, respectively, identify the drift and the noise.

Convergence of the linear problem follows easily by approximating $L^{1}_{r,\ell}$ with the Laplacian. The convergence of the quadratic problem is rather involved and ultimately boils down to showing that
\begin{equation}\label{eq.nab}
	\nabla Q(t,x+1) \nabla Q(t,x)
	\text{ self-averages in } t.
\end{equation}
Such expressions arise from the quadratic variation of the $dM(t,x)$. Here $(\nabla f)(x) = f(x+1)- f(x)$.
In \eqref{eq.nab}, ``self-averaging'' refers to a phenomena
where the moments of the integral of the expression over a long time interval of length $\mathcal O(\e^{-2})$
will vanish as $\e \to 0$, see \eqref{e:zero after time-averaging}.
For ASEP, this phenomena is explained more in Appendix~\ref{sect:ASEP}, in particular, see \eqref{e:heuristicASEP}.
In the case of stochastic six vertex model, the precise statement of ``self-averaging''
is given in Proposition~\ref{prop:qv}. See Remark~\ref{rem:self-av-mean}.

The statement~\eqref{eq.nab} is natural from the perspective of hydrodynamic limit theory.
Indeed, \cite{Q11} demonstrated how the replacement lemma (i.e., local equilibrium) can be used to prove~\eqref{eq.nab}.
The proof in \cite{Bertini1997} proceeded through a different, iterative scheme.
Roughly speaking, it seeks to close a sequences of inequalities starting from~\eqref{eq.mch}.
Crucial to the closing of inequalities (and hence to this scheme as a whole) is a non-trivial summation identity for the random walk transition probability.

\subsection{Markov duality method}
\label{sect:introDual}

The \emph{Markov duality method} that we employ in this article
provides a new way to obtain optimal control over the conditional expectation of the expression in \eqref{eq.nab} (and related terms).
More importantly, the method also applies to the general class of discrete time stochastic vertex models introduced in \cite{Corwin2016}---in particular, to the stochastic \ac{6V} model.
Presently, none of the other methods used for \ac{KPZ} equation convergence results seem to be applicable to the stochastic \ac{6V} model. The quadratic variation for the stochastic \ac{6V} model takes a more complicated form (as in~\eqref{eq:Theta1}--\eqref{eq:Theta2}) than that of \ac{ASEP}.
This being the case, the approach of \cite{Bertini1997} for closing inequalities does not appear to generalize.

Hydrodynamic theory methods like energy solutions \cite{GJ14a,GP2015a} or the approach to self-averaging given in \cite{Q11} relies heavily upon continuous time Markov process methods. In fact, hydrodynamic theory for discrete-time processes is not particularly well-developed as many of the basic tools that work in continuous time fail to generalize. The model considered here is updated sequentially in discrete time (see Section~\ref{sec.s6vs}), so, from the perspective of \emph{Markov chains}, the update of each particle depends on configurations of \emph{infinitely} many other particles.
This intricate feature further impedes generalizing methods of continuous time Markov process and hydrodynamic limit theory.
%

Other methods like regularity structures \cite{Hairer14}, paracontrolled distributions \cite{GP15b} and renormalization group methods \cite{Antti} have not yet been sufficiently developed to deal with processes that are driven by a process-dependent noise (see, however, the recent work of \cite{Matetski} for progress on this in the context of regularity structures).
More precisely, this refers to the fact that the martingale in~\eqref{eq.mch} have a $ Q $-dependent quadratic variation.
Additionally, those methods are presently restricted to periodic boundary conditions. The Markov duality method works for discrete time processes with general initial condition on the full line. Its shortcoming is that it requires the existence of (at least $k=1,2$) Markov dualities like below. See Section \ref{sec.KPZlit} for further discussion on literature related to \ac{KPZ} equation convergence results.

\medskip

The microscopic Hopf--Cole transform \cite{Gart88} is the $k=1$ case  of ASEP Markov duality \cite{BCS14}:
\begin{equation}\label{eq.ASEPMD}
\textrm{For }k\geq 1\textrm{ and }\vec{x} = (x_1<\cdots <x_k)\in \Z^k, \quad \frac{d}{dt} \Ex\Big[\prod_{i=1}^{k} Q(t,x_i)\Big] = L^{k}_{r,\ell} \Ex\Big[\prod_{i=1}^{k} Q(t,x_i)\Big].
\end{equation}
Here $\Ex$ is the expectation of the ASEP height process, and $L^{k}_{r,\ell}$ acts on $\vec{x}$ as the space-reversed generator of $k$-particle ASEP with locations $\vec{x}$. For $k=1$, removing expectations yields \eqref{eq.mch}. Replacing $Q(t,x)$ by its discrete derivative $\tilde{Q}(t,x) := Q(t,x)-Q(t,x-1) $ yields a similar duality  due to \cite{MSchu97}.

The Markov duality method uses the $Q$ and $\tilde{Q}$ duality for $k=2$ to prove convergence of the discrete quadratic martingale problem to that of the SHE. For example, the key term in \eqref{eq.nab} can be rewritten as $\tilde{Q}(t,x+1)\tilde{Q}(t,x)$ and duality shows that for $x_1<x_2$ and $t>s$,
$$
\Ex\big[\tilde{Q}(t,x_1)\tilde{Q}(t,x_2)\big\vert \filt(s)\big] = \sum_{y_1< y_2} p_{t-s}(\vec{x}\to \vec{y}) \tilde{Q}(s,y_1)\tilde{Q}(s,y_2)
$$
where $p_{t-s}(\vec{x}\to \vec{y})$ is the two-particle space-reversed ASEP transition probability from $\vec{x}=(x_1,x_2)$ to $\vec{y}=(y_1,y_2)$ in time $t-s$. Once in this form, the discrete differentiation can be transferred to the transition probabilities and the proof of self-averaging reduces to fine estimates on such derivatives of the two-particle heat kernel. In essence, duality turns a hydrodynamic problem (involving the local equilibration in the collective behavior of many particles) into a diffusive problem (involving the fluctuations of a handful of particles).

The Bethe ansatz (for ASEP, see \cite{TracyWidom08,TracyWidom08e} or Appendix \ref{sect:ASEP}) provides a means to extract very precise estimates for finite particle system transition probabilities. We also remark that whereas previous results on the KPZ equation limit for ASEP have assumed density near $1/2$, the duality method works just as well for any density and for any moving frame in the rarefaction fan.

The major downside of our Markov duality method is that such dualities like \eqref{eq.ASEPMD} do not hold for generic systems and their occurrence is often due to algebraic structures which are not very flexible to perturbations (see Section \ref{sec.duallit} for further discussion). However, it was shown in \cite{Corwin2016,JK17} that the stochastic \ac{6V} model (in fact its higher spin generalizations too) enjoy the same sort of duality as in \eqref{eq.ASEPMD} (see Section \ref{sect:duality}). We see the main technical accomplishment of this paper to be the use of this duality method to control the quadratic martingale.

Let us attempt to put the Markov duality method into historical context. The first instance where Markov duality was used to prove an SPDE limit was in the work of \cite{DPS} which focused on the {\it very} weakly asymmetric simple exclusion process (with weaker asymmetry than in \cite{Bertini1997}). Since the asymmetry in that work was sufficiently weak, the limiting SPDE was a linear (Gaussian) SPDE -- the additive SHE. The approach of \cite{DPS} relied on estimates for occupation variable correlation functions. For the symmetric (SSEP) model, these functions satisfy closed equations due to a Markov self-duality for SSEP. In the presence of asymmetry, \cite{DPS} derived an infinite hierarchy of relations for correlation functions which, for very weak asymmetry, they could control in a perturbative manner using the SSEP duality (see \cite{DeMasiPresutti, Ravishankar} for further discussion of this approach).

For stronger asymmetry (as considered in \cite{Bertini1997} and herein), the \cite{DPS} perturbation method breaks down. Instead, we use the ASEP self-dualities (which are non-local and generalize the SSEP correlation functions in certain cases) which yield a closed hierarchy. Moreover, we only need to use the one and two particle duality, as opposed to the full hierarchy (i.e., arbitrarily many dual particles).

\subsection{Further literature}\label{sec:liter}

\subsubsection{Symmetric six vertex model}\label{sec.6vlit}

Introduced in 1935 by Pauling \cite{LPauling35} as a model for 2D ice and then in its general form in 1941 by Slater \cite{Slater41} to model potassium dihydrogren phosphate, the symmetric \ac{6V} model has found many applications across physics and mathematics as well as prompted the discovery of new algebraic structures such as quantum groups and new symmetric functions. The \ac{6V} model was exactly solved in Lieb's breakthrough work \cite{Lieb67} which was the first time the ideas of Bethe ansatz were applied to a statistical mechanics model. This work immediately (e.g. \cite{SuthL67,YangYang66}) opened up the field to many important developments including coordinate/algebraic Bethe ansatz, quantum groups, domain-wall boundary conditions, connections to symmetric functions---see the reviews/books \cite{Baxter89, Nolden92, Fadeev, Korepin, jimbo, Res2010,Bleher,gaudin,KKK15, BP16b}). The results of this paper probe the behavior of the \ac{6V} model as $\Delta\searrow 1$. There are many other interesting phase transitions in the \ac{6V} model---for instance when $a=b$ (i.e., the Fierz, or F model---studied first in \cite{Rys}), as $c\to 2a$ (or equivalently $\Delta\to -1$) there is a remarkable infinite order phase transition in the free energy (see \cite{LiebWu} for further information).

\subsubsection{Stochastic six vertex model}\label{sec.s6vlit}

Study of this special case of the asymmetric \ac{6V} model was initiated in 1992 by Gwa and Spohn \cite{GS92}. However, the relation between the conical points and the stochastic \ac{6V} model was conjectured in 1995 by Bukman and Shore \cite{Bukman1995}, though there was earlier discussion about the existence of these conical points in  \cite{JS84}.

The study of the stochastic \ac{6V} model was recently reinitiated in \cite{BCG2016} wherein they proved the prediction from \cite{GS92} that the stochastic \ac{6V} model was in the \ac{KPZ} universality class. This was demonstrated at the level of convergence of the one-point distribution (to the GUE Tracy-Widom distribution) for a special boundary condition on the first quadrant with no lines coming from the $y$-axis and no anti-lines coming from the $x$-axis (i.e., step initial condition). This result did not involve any special weak scaling, hence convergence to the GUE Tracy-Widom distribution and not the one-point distribution for the \ac{KPZ} equation. \cite{AB16,A16aa} then extended the one-point convergence to other initial condition, including the stationary case (i.e., the stochastic Gibbs state).

In that case, \cite{A16aa} computed an exact one-point formula and proved convergence to the stationary \ac{KPZ} distribution (the Baik-Rains distribution) in the characteristic direction. In principle one could take the weakly asymmetric scaling limit of that formula and match it with the formula for the stationary \ac{KPZ} equation proved in \cite{BCFV} (though that would only prove a one-point convergence result, as opposed to the process level result herein). In a similar spirit, \cite{BO16} showed that under weakly asymmetric scaling, one point distribution of the stochastic \ac{6V} model converges to that of the \ac{KPZ} equation (see also \cite{BG16}). The scaling considered in \cite{BO16} is different than here---essentially they also tune $b_1,b_2\to 1$ (herein they converge to a value strictly less than 1). It is quite likely that our approach could apply under the scaling used in \cite{BO16}, though we do not pursue that here.

\cite{BBCW} recently studied a half-space version of the stochastic \ac{6V} model and demonstrated that its one-point asymptotics match the prediction from other models in the \ac{KPZ} universality class. It may be possible to adapt methods from \cite{corwin2016open} (see also \cite{Shalin}) to connect the half-space stochastic \ac{6V} model to the \ac{KPZ} equation under weakly asymmetric scaling, though we do not pursue that here.

The stochastic \ac{6V} model admits a higher spin analog wherein more than one line can move along each edge in $\Z^2$ (i.e., multiple particles can occupy the same site, or move together). These models have recently been studied in \cite{Corwin2016,BP16a} and admit some similar asymptotics as the stochastic \ac{6V} model. The Markov duality method should also apply to these models (as they all enjoy the same duality as the stochastic \ac{6V} model).

There are other limits of the stochastic \ac{6V} model besides the \ac{KPZ} equation and ASEP, e.g. Hall-Littlewood PushTASEP \cite{BufPet, BCG2016, BBW16, G17} and Brownian motions with oblique reflection \cite{SS15}.
Another limit considered in parallel to the present paper is in the work of \cite{BorGorforth}. They consider a different type of limit in which $b_1$ and $b_2$ both tend to $1$. \cite{BorGorforth} proves a law of large numbers and some Gaussian fluctuation results under this scaling. Moreover, they conjecture  (and prove in a certain low density regime) convergence to the stochastic telegraph equation---a linear hyperbolic SPDE driven by additive space-time white noise. That conjecture has now been proved in \cite{ShenTsai}. It would be natural to try to fill-out the scaling limits which sit between our results and those of \cite{BorGorforth,ShenTsai}.


\subsubsection{Kardar-Parisi-Zhang equation}\label{sec.KPZlit}

The \ac{KPZ} equation \eqref{eq.kpz} was introduced in 1986 by Kardar, Parisi and Zhang \cite{KPZ86}. In 1995 Bertini and Cancrini \cite{Bertini1995} provided the first justification for the Hopf--Cole solution to the KPZ equation. Bertini and Giacomin \cite{Bertini1997} soon after proved the first discrete convergence result (for ASEP) to the \ac{KPZ} equation.
This converge result has recently been  extended in works of \cite{Amir11, Q11}. \cite{Dembo2016} extended the convergence result to certain non-nearest-neighbor exclusion processes which do not satisfy an exact microscopic Hopf--Cole transform.

The first convergence result to the \ac{KPZ} equation for a discrete time particle system was recently proved in \cite{CT15}. The systems considered therein were infinite spin versions of the higher spin vertex models studied in \cite{Corwin2016}. The scaling there was different than the weakly asymmetric scaling here. In particular, the number of particles per site diverges under their scaling. This simplified the study of the quadratic martingale problem considerably. In particular, due to the divergence of the number of particles per site, the key bound which plays a central role in this work and in that of \cite{Bertini1997} becomes straightforward and does not require any sort of trick to control. Other recent \ac{KPZ} equation convergence works, following the style of \cite{Bertini1997}, have included the ASEP-$(q,j)$ \cite{CSL16}, Hall-Littlewood PushTASEP \cite{G17}, and open ASEP \cite{corwin2016open,Shalin,Labbe16b}.

The energy solution method for \ac{KPZ} equation convergence was initiated in the work of the Jara and Gon\c calves \cite{GJ10} (cf. \cite{Assing}). Initially this approach only provided tightness and it was not known whether energy solutions were unique. Uniqueness (and hence the identification with the Hopf--Cole solution) was proved in \cite{GP2015a}. This approach has been applied to prove that a wide variety of particle systems converge to the \ac{KPZ} equation, see \cite{GJ14a, GJS15,TGS16,GJ13,GJ16,GonPerMar}. Those results require stationary initial condition and the method of proof relies heavily upon having well-developed hydrodynamic theory estimates available. Quite recently, \cite{Yang} has extended this method to include more general initial data such as flat.

Regularity structures and paracontrolled distributions provide another route to prove convergence results to the \ac{KPZ} equation. These notions of solutions were introduced by Hairer \cite{Hairer13,Hairer14} and Gubinelli and Perkowski \cite{GP15b} (cf. \cite{GIP}), and have since been used to prove convergence for some space-time regularized versions of the equation \cite{HaiShen, HaiQua, DieGubPer}. \cite{HaiMat,CanMat,erhard2017discretisation} has recently developed a discrete space-time version of regularity structures, which may prove useful in demonstrating convergence of various discrete processes to the \ac{KPZ} equation.
It is worth noting that presently due to technical challenges involved with going to the full line,
these works on the KPZ equation using regularity structures or paracontrolled distributions are restricted to periodic spatial coordinates. Finally, there is also a renormalization group method which has been applied to the KPZ equation in \cite{Antti}, though it also is also restricted to a periodic setting.

\subsubsection{Markov duality}\label{sec.duallit}

Markov dualities are extremely useful notions within probability. An early example of a self-duality was for the simple symmetric exclusion processes (SSEP) \cite{Liggett85} where it played a key role in proving that only extremal, translation invariant, ergodic invariant distributions of SSEP on $\Z^d$ are the Bernoulli product distributions. Whereas that duality applied, in fact to SSEP on any graphs, asymmetric particle system dualities seem to be much more rigid and dependant upon algebraic structures only present for one spatial dimension. The first such example was found in \cite{MSchu97} where the $\tilde{Q}$ version of the duality in \eqref{eq.ASEPMD} was first discovered based on the affine quantum group $U_q[\mathfrak{sl}_2]$ symmetry of ASEP (see also \cite{SS94}). The self duality of ASEP  has played an important role in demonstrating that ASEP belongs to the \ac{KPZ} universality class (see, for instance, \cite{BCS14, IC14Springer} and the reference therein).

Recently, a generalized version of ASEP (called ASEP-$(q,j)$) which enjoys a generalization of the ASEP self-duality was introduced in \cite{CGRS16} based on higher spin representations of $U_q[\mathfrak{sl}_2]$. Self duality has been also proved \cite{BS15, JK16} in certain multi-species versions of ASEP using higher rank quantum group symmetries in the spirit of \cite{CGRS16} (see also \cite{Wheeleretal} which relates duality to the Knizhnik-Zamolodchikov equation).

The stochastic \ac{6V} model (as well as higher spin vertex models) duality was discovered and proved in \cite{Corwin2016} (see \cite{JK17} for an algebraic proof of some of the dualities from \cite{Corwin2016} based on properties of the $R$ matrix and quantum group co-product, and see \cite{Yier} for a discussion of an fix to a mistake present in \cite{Corwin2016}). It is this duality for the stochastic \ac{6V} model that plays a pivotal role in this paper and is discussed in more detail in Section \ref{sect:duality}.

%
%
%
%

\addtocontents{toc}{\protect\setcounter{tocdepth}{0}}
\subsection*{Outline}
In Section~\ref{S6VMIntro} we give a brief discussion the stochastic and symmetric \ac{6V} models.
This includes the definition of the stochastic model with \emph{bi-}infinite configurations,
the construction of stochastic Gibbs states,
and how they fit into the stochastic and symmetric models.
Then, to setup the premise of our analysis,
in Section~\ref{sect:duality} we recall the self-duality of the stochastic model,
and in Section~\ref{sect:HC}, we introduce the microscopic Hopf--Cole transform.
Specifically, once the transform is introduced,
Theorem~\ref{thm:S6V} on the convergence of the stochastic model to \ac{KPZ} naturally translates
into the corresponding, equivalent statement in terms of convergence toward the \ac{SHE}, Theorem~\ref{thm:S6V:}.
In Section~\ref{sect:pfthm}, we settle the main results Theorems~\ref{thm:S6V:} and \ref{thm:6V}
while assuming Proposition~\ref{prop:qv}.
The latter is a statement on self-averaging of the relevant quadratic variation.
Proving Proposition~\ref{prop:qv} makes up the core of our analysis.
In Section~\ref{sect:SG}, we perform steepest-decent-like analysis on the given
contour integral formula for the semigroup.
The analysis produces estimates on the semigroup and its gradients, jointly over all relevant points in spacetime.
Then, in Section~\ref{sect:qv},
we incooperate these estimates into the stochastic model via duality and prove Proposition~\ref{prop:qv}.

To make connection with \ac{ASEP}, in Appendix~\ref{sect:ASEP},
we briefly recall its Hopf--Cole transform and the structure of the relevant martingale.
Given this setup, we explain how, for \ac{ASEP}, our duality approach could serve as an alternative
to the approach of \cite{Bertini1997} for controlling the quadratic variation.

\subsection*{Acknowledgements}
The authors wish to thank J. Dub\'edat, P. di Francesco, and T. Spencer for discussions about the disordered phase of the \ac{6V} model, A. Aggarwal for discussions about his work on this subject,  Yier Lin for careful reading of part of the manuscript and Krishnamurthi Ravishankar for helpful historical references about the role of dualities in SPDE limits. I. Corwin was partially supported by the NSF through DMS:1811143 and DMS:1664650, and the Packard Foundation through a Packard Fellowship for Science and Engineering. H. Shen is partially supported by the NSF through DMS:1712684.  L-C. Tsai is partially supported by the NSF through DMS:1712575 and the Simons Foundation through a Junior Fellowship. Both I. Corwin and P. Ghosal collaborated on this project during the 2017 Park City Mathematics Institute, partially funded by the NSF through DMS:1441467.
\addtocontents{toc}{\protect\setcounter{tocdepth}{2}}

\section{Stochastic and symmetric six vertex models}\label{S6VMIntro}
We now provide more detailed definitions of the stochastic and symmetric \ac{6V} models.

\subsection{Stochastic six vertex model as a particle system and its height function}\label{sec.s6vs}

\subsubsection{Defining the left-finite process}
In \cite[Section 2]{BCG2016}, the stochastic \ac{6V} model is defined on the first quadrant $\Z_{>0}^2$ by first specifying the configuration of lines coming from the bottom and left boundary and then inductively filling in the quadrant. Specifically, once it is determined whether  lines are entering a given vertex from below and from the left, the stochastic weights in  Figure \ref{tbl:S6VWeights} specify the probability according to which one chooses (independently over vertices) the outgoing line configuration. Proceeding recursively in this manner defines the stochastic \ac{6V} model distribution on the entire quadrant (for the given boundary condition).

If we restrict ourselves to boundary conditions where there are no lines coming from the left boundary, then the lines from the bottom can be seen as the trajectories of a discrete time sequential update exclusion-type particle system. Under this interpretation, time is measured by the $y$-axis, and the particles are identified with vertical lines and their moves are identified with the horizontal lines. We define below this particle system and allow particles to start anywhere on $\Z$ as long as there is always a left-most particle. After doing that, we explain how to extend our definition to two-sided infinite particle configurations (as will be necessary to state our main results).

\begin{defn}
\label{def:S6V:lf}
For $w\in \Z$ define the space of left-finite ordered particle configurations with left-most label $w$ to be
\begin{equation}\label{eq:RightFiniteParticleConfiguartion}
\Xlf{w}
:= \Big\{\vec{x}=(-\infty=x_{w-1}<x_w<x_{w+1}<\ldots):
	x_i\in \Z\cup\{\pm\infty\}, \text{ for } i\in\Z_{\geq w} \Big\}.
\end{equation}
Here $x_i$ represents the location of the particle labeled $i$. Notice that we have placed a virtual particle $x_{w-1}$ at $-\infty$. We allow $\Xlf{w}$ to contain configurations with infinitely many particles as well as finitely many particles. In the later case, there will be some $w'$ such that $x_{i}=+\infty$ for all $i>w'$.

Having defined our state space $\Xlf{w}$ we proceed to describe the discrete time Markov chain $\big(\vec{x}(t)\big)_{t\in\Z_{\geq 0}}$ where $\vec{x}(t)\in \Xlf{w}$ for each $t$. Fix $b_1,b_2\in(0,1)$ and let
$$\tau=b_2/b_1\in(0,1)$$
denote their ratio. We will assume that $b_2<b_1$ so that $\tau\in(0,1)$ throughout. The algebraic results do not generally depend on this, but when we perform asymptotics we will use this asymmetry. Given $\vec{x}(t)$, we choose $\vec{x}(t+1)$ according to the following sequential (left to right) procedure. For each $i \geq w$ (starting with $i=w$ and progressing sequentially to $i=w+1$, $i=w+2$, etc),
choose $x_i(t+1)$ so that (recall that $x_j(t+1)$ for $j<i$ have already been updated)\\
$(a)$ if $x_{i-1}(t+1)<x_i(t)$, then
\begin{align*}
\Pr\big(x_i(t+1)=x_i(t)+j\big)&=\left\{\begin{array}{l@{,}l}
b_1 & \text{ if }j=0;\\
(1-b_1)(1-b_2)b^{j-1}_2
	& \text{ if }1\leq j \leq x_{i+1}(t)-x_{i}(t)-1;\\
(1-b_1) b^{j-1}_2 & \text{ if }j= x_{i+1}(t)-x_{i}(t) ;\\
0 &\text{ otherwise};
\end{array}\right.
\end{align*}
$(b)$ if $x_{i-1}(t+1)=x_{i}(t)$, then
\begin{align*}
\Pr\big(x_{i}(t+1)=x_{i}(t)+j\big)
&=
\left\{\begin{array}{l@{,}l}
(1-b_2)b^{j-1}_2 & \text{ if }1\leq j <x_{i+1}(t)-x_{i}(t);\\
 b^{j-1}_2 & \text{ if }j=x_{i+1}(t)-x_{i}(t) ;\\
 0 & \text{ otherwise}.
\end{array}\right.
\end{align*}
Since we have assumed the convention $x_{w-1}(t)=-\infty$, the particle $x_w$ is always updated by rule $(a)$.

In words, sequentially (starting with particle $x_w$) each particle $x_i$ wakes up and moves one to the right with probability $1-b_1$. Once awake, the particle continues moving right with probability $b_2$ for each step. If $x_i$ eventually moves into the location occupied already by $x_{i+1}$, then $x_i$ stops moving and stays put, while $x_{i+1}$ is forced to wake up and move one to the right (after which it continues with the probability $b_2$ rule as above). Once the particle $x_i$ stops, that is its new position $x_i(t+1)$.

To each state $\vec{x}(t)\in \Xlf{w}$ we may associate occupation variables and a height function as follows: Define the $\{0,1\}$-valued \DefinText{occupation variables}
\begin{align}
	\eta(t,y) :=\ind_{\set{x_n(t)=y \text{ for some }n\in \Z_{\geq w}}}
\end{align}
where the indicator function is $1$ if the site $y$ is occupied by a particle at time $t$, and $0$ otherwise.
Likewise, define the \DefinText{height function}
\begin{align*}
N(t,y) := N_y\big(\vec{x}(t)\big)-N_0\big(\vec{x}(0)\big).
\end{align*}
(We have centered $N$ so that $N(0,0)=0$.) In the above definition, we have used the following notation. For $y\in \Z$, $N_y:\Xlf{w}\to \Z_{\geq w-1}$ and (for later use) $\eta_y:\Xlf{w}\to \{0,1\}$) are defined by\footnote{Note that $\eta(t,y)=\eta_{y}(\vec{x}(t))$.
We distinguish the notation $\eta(t,y)$ as a process and the notation $\eta_{y}$ as a function on particle configurations $\vec{x}$ merely for convenience.}
\begin{align} \label{eq:def-Nyx}
N_y(\vec{x}) := \max\big\{n:x_n\leq y\big\} \qquad \text{and} \quad\eta_y(\vec{x}) := N_y(\vec{x})-N_{y-1}(\vec{x}).
\end{align}
In particular, one has $N_{x_n}(\vec{x})=n$, and $N_y(\vec{x}) = w-1$ if $y$ is to the left of all particles in $\vec{x}$.
It follows that  
$ N(t,y)-N(t,y-1) = \eta(t,y)$, so that the space-time  level-lines of 
$N(t,y)$ correspond with the trajectories of $\vec{x}(t)$. See Figure \ref{fig.s6vpart} for an illustration.
\end{defn}

Under the dynamics described above in Definition~\ref{def:S6V:lf}, the height function $N(\Cdot,t)$ evolves in $ t $ as a Markov chain.
We may describe its transitions explicitly.
\begin{defn}\label{def:Bs}
Let $X\sim \text{Ber}(\den)$ mean that $X$ is a Bernoulli random variable taking values in $\{0,1\}$ with $\Pr(X=1)=\den$. Let $\big\{B(t,y;\eta), B'(t,y;\eta): t\in\Z_{\geq 0}, y\in\Z, \eta\in\{0,1\}\big\}$
denote a countable collection of independent Bernoulli variables,
with $B(t,y;\eta)\sim \text{Ber}\big(1-b_1^\eta\big)$ and $B'(t,y;\eta)\sim \text{Ber}\big(b_2^{1-\eta}\big)$.
\end{defn}

Using the Bernoulli random variables from the above definition we see that
\begin{align}\label{eq:UpdateINS6VOccupationProcess}
	N(t+1,y)
	&\stackrel{\text{law}}{=}
	\left\{\begin{array}{l@{,}l}
   		N(t,y) -  B'(t,y;\eta(t,y)) & \text{  if } N(t+1,y-1)=N(t,y-1)-1,
   \\
    	N(t,y) -  B(t,y;\eta(t,y)) & \text{  if } N(t+1,y-1)=N(t,y-1).
    \end{array}
    \right.
\end{align}

\subsubsection{Defining the bi-infinite process}
Since the stochastic \ac{6V} model is sequentially updated,  it is not a priori clear how to define it when there are infinitely many particles to the left and right of the origin. \cite{CT15} showed that it is possible to restate the stochastic \ac{6V} model in terms of a parallel update rule which readily admits a bi-infinite extension. We restate this result below as well as include a convergence result showing how to approximate the bi-infinite process with left-finite ones.
\begin{defn}
Denote the space of bi-infinite order particle configurations by
$$
\Xinf= \big\{
	\cdots<x_{-1}<x_0<x_1<\cdots : x_i\in \Z\cup \{-\infty,+\infty\}\big\}.
$$
Notice that we have included left and right finite configurations in $\Xinf$
by having imaginary particles at $-\infty$ or $\infty$.
\end{defn}

\begin{lem}\label{lem:binfinite}
Consider a bi-infinite configuration $\vec{x}\in \Xinf$ and let $\vec{x}_{\geq w}= \big(x_{i}:i\geq w\big)\in \Xlf{w}$ for any $w\in \Z$. Let $N(0,y) = N_y\big(\vec{x}\big)-N_0\big(\vec{x}\big)$ and $N^{w}(t,y) = N_y\big(\vec{x}_{\geq w}(t)\big)-N_0\big(\vec{x}_{\geq w}(0)\big)$ where $\vec{x}_{\geq w}(t)$ is the stochastic \ac{6V} Markov chain at time $t$ with initial condition $\vec{x}_{\geq w}$. Likewise, let $\eta(0,y) = N(0,y)-N(0,y-1)$ and $\eta^w(t,y) = N^w(t,y)-N^w(t,y-1)$. Let $B(t,y,\eta)$ and $B'(t,y,\eta)$ be as in Definition \ref{def:Bs}. Then for any $t\in \Z_{\geq 0}$ and $w,y\in \Z$, we have that
$$
N^w(t,y)- N^w(t+1,y) =
\sum_{y'=x_w}^y \prod_{z=y'+1}^y \Big(B'\big(t,z; \eta^w(t,z)\big)-B\big(t,z;\eta^w(t,z)\big)\Big) B\big(t,y';\eta^w(t,y')\big).
$$
Furthermore for any $y\in \Z$, as $w\to -\infty$, $N^w(t,y)\to N(t,y)$ in $L^p$ for all $p\geq 1$ and in probability. The limit $N(t,y)$ is specified inductively in $t$ (with $t=0$ as the base case) by the (convergent) relation 
\begin{align}
	\label{eq:moveBer}
	N(t,y)- N(t+1,y)
	= \sum_{y'\leq y} \prod_{z=y'+1}^y \Big(B'\big(t,z;\eta(t,z)\big)-B\big(t,z;\eta(t,z)\big)\Big) B\big(t,y';\eta(t,y')\big)
\end{align}
and hence satisfies \eqref{eq:UpdateINS6VOccupationProcess}. From $N(t,y)$ we define $\eta(t,y) = N(t,y)-N(t,y-1)$, and we may uniquely define $\vec{x}(t)$ so that the particles of $\vec{x}(t)$ track the level lines of $N(t,y)$.
\end{lem}

%

\begin{proof}
The result is a special case of the statement and proof of \cite[Lemma 2.3 and Remark 2.5]{CT15}. In \cite{CT15} the authors consider a more general higher-spin version of the stochastic \ac{6V} model \cite{Corwin2016} with arbitrary horizontal spin $J$ as well as parameters $\alpha,q,\nu$. Our stochastic \ac{6V} model corresponds with taking $J=1$ (spin-$\tfrac{1}{2}$), $ \nu=1/q=\tau $,
and matching $ b_1=\frac{1+ q\alpha}{1+\alpha} $ and $ b_2=\frac{\alpha+q^{-1}}{1+\alpha} $.
\end{proof}

Unless specified otherwise, the stochastic \ac{6V} model now means the bi-infinite  version of Lemma \ref{lem:binfinite}.

\subsubsection{Stationary initial condition}

A key aspect of studying an interacting particle system is to identify its stationary distributions, in particular those which are translation invariant and ergodic. These distributions are the first step towards identifying the hydrodynamic equations and non-universal constants which arise in the \ac{KPZ} scaling theory (see, for instance, \cite{Spohn12} and references therein). For ASEP these are characterized by one parameter $\den\in [0,1]$ and given by product distribution $\text{Ber}(\den)$ on occupation variables. The same distributions turn out to be stationary of the stochastic \ac{6V} model. In fact, as shown in \cite{A16aa}, the stationary stochastic \ac{6V} model enjoys a sort of stationarity along down-right paths very much akin to that of certain exactly solvable directed polymer and last passage percolation models (see, for instance, \cite{Sep12}).

\begin{defn}
\label{def:station}
Consider the stochastic \ac{6V} model with parameters $b_1, b_2$.
Choose $(h,v)\in [0,1]^2$ such that \eqref{vhbeq} holds, namely
$ \frac{v}{1-v}(1-b_1) = \frac{h}{1-h}(1-b_2) $.
The stationary stochastic \ac{6V} model on the first quadrant is defined relative to $ (h,v) $
by specifying that on the $y$-axis ($x$-axis) horizontal (vertical)  lines enter from the boundary independently with probabilities $h$ ($v$).
\end{defn}


\begin{lem}\label{lem:stat}
Consider the stationary stochastic \ac{6V} model on the first quadrant from Definition~\ref{def:station}.
Then, along any fixed down-right lattice path in the first quadrant (i.e., a collection of vertices in $\Z_{\geq 0}^2$
so that each vertex follows the previous one by adding $(1,0)$ or $(0,-1)$ to its coordinates)
the sequence of incoming line occupancy variables
(i.e., whether a horizontal or vertical line enter vertices along the path)
are independent and incoming horizontal lines are present with probability $ h $
while incoming vertical lines are present with probability $ v $.
Consequently, we can define the stationary stochastic \ac{6V} model on all of $\Z^2$
by taking the distributional limit as $n\to \infty$
of the model on the first quadrant with the origin shifted to $(-n,-n)$.
We refer to this distribution (of vertex configurations on $ \Z^2 $)
as the \DefinText{stochastic Gibbs states} with densities $ (h,v) $,
and denote it by $ \Gibbs(b_1,b_2;h,v) $.
\end{lem}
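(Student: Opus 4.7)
The plan is to prove the path-independence statement by an inductive ``corner flip'' procedure starting from the axes, and then to deduce the infinite-volume construction of $\Gibbs(b_1,b_2;h,v)$ from consistency of finite-window marginals as the origin is shifted. The base case is the two axes themselves, where the claim is built into Definition~\ref{def:station}. For the inductive step I would fix any $2\times 2$ plaquette on $\{x,x+1\}\times\{y,y+1\}$ and show that flipping a concave corner of the path at $(x,y)$ (path going $(x,y+1)\to(x,y)\to(x+1,y)$) to the corresponding corner at $(x+1,y+1)$ (path going $(x,y+1)\to(x+1,y+1)\to(x+1,y)$) preserves the joint distribution claimed by the lemma. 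Since any down-right path $\pi$ agrees with the axis path outside a bounded region (up to testing finitely many edges at a time), finitely many such plaquette flips reduce the axis case to $\pi$.

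The algebraic core of the inductive step is a single-vertex identity: if the West and South occupation variables at a stochastic \ac{6V} vertex are independent Bernoulli with parameters $h$ and $v$ respectively, then the East and North occupation variables are independent Bernoulli with the same parameters $h$ and $v$. A direct check against the six stochastic weights in Figure~\ref{tbl:S6VWeights} shows that the conservation law forces $\Pr(E{=}0,N{=}0)=(1-h)(1-v)$ and $\Pr(E{=}1,N{=}1)=hv$ automatically, while independence of the outputs with the correct marginals reduces to
\begin{equation*}
h(1-v)(1-b_1)=(1-h)v(1-b_2),
\end{equation*}
which is exactly the balance condition $\tfrac{h}{1-h}(1-b_1)=\tfrac{v}{1-v}(1-b_2)$ imposed in Definition~\ref{def:station}. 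To promote this to a plaquette flip I would apply the identity sequentially to each of the four vertices of the $2\times 2$ square, using the locality of the \ac{6V} update and the inductive joint-independence of the four boundary inputs of the plaquette supplied by the old path; this shows that both the old internal pair $\{N\text{ of }(x,y),\,E\text{ of }(x,y)\}$ and the new internal pair $\{E\text{ of }(x,y+1),\,N\text{ of }(x+1,y)\}$ carry the same iid Bernoulli$(h)\times$Bernoulli$(v)$ law jointly with the unchanged edges of the path.

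The main obstacle I anticipate is that a purely vertex-by-vertex argument gives only the correct marginal law on each internal pair, whereas one needs the two pairs to have the same joint law with the \emph{rest} of the path (to which both pairs are correlated through the shared SW-boundary variables of the plaquette). This is exactly the type of plaquette identity that Yang--Baxter-style swaps furnish in the integrable setting, and in the present stochastic context it can be verified by decomposing the plaquette Markov kernel in the two orderings (first SW vertex, then the other three; versus first the two middle vertices, then the NE vertex) and matching the resulting joint laws using only the single-vertex identity above together with the input independence.

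Finally, for the bi-infinite construction, fix any finite window $W\subset\Z^2$. For all $n$ large enough, $W$ lies in the first quadrant of the model translated to the origin $(-n,-n)$, and one can enclose $W$ from below-left by a fixed down-right path $\pi_W$ (whose position depends only on $W$, not on $n$). By the already-established path-independence property applied in the shifted quadrant, the occupation variables along $\pi_W$ are iid Bernoulli with parameters $(h,v)$, and the conditional distribution of the \ac{6V} state inside $W$ given these path variables is determined solely by the vertex weights and the Bernoulli parameters, independently of $n$. The sequence of finite-dimensional distributions on $W$ is therefore eventually constant, and Kolmogorov's extension theorem produces a unique translation-invariant measure on $\Z^2$ whose law on any finite window agrees with this common limit; this is the stochastic Gibbs state $\Gibbs(b_1,b_2;h,v)$.
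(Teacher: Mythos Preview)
The paper does not prove this lemma; it simply cites \cite[Lemma~A.2]{A16aa}. Your corner-flip induction via the single-vertex Burke identity, followed by Kolmogorov extension for the bi-infinite measure, is the standard argument and is essentially what one finds in the cited reference, so the overall strategy is correct.

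Your anticipated obstacle, however, is not a real obstacle, and the $2\times2$-plaquette and Yang--Baxter discussion is an unnecessary detour. A single corner flip at one vertex $(x,y)$ already delivers the full joint statement. The point you are missing is this: by the inductive hypothesis the crossed edges of the old path---namely $W(x,y)$, $S(x,y)$, and all the remaining ones---are jointly independent Bernoulli, and the outputs $(N(x,y),E(x,y))$ are a deterministic function of $(W(x,y),S(x,y))$ together with the \emph{fresh} vertex randomness at $(x,y)$, which is independent of everything on the old path. Since the remaining crossed edges are independent of $(W,S)$ and of this fresh randomness, they are automatically independent of $(N,E)$; combined with your single-vertex check that $(E,N)\sim\text{Ber}(h)\otimes\text{Ber}(v)$, this closes the induction in one step per vertex. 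The pair $(W,S)$ is being \emph{removed} from the path, so its correlation with $(N,E)$ is irrelevant---no plaquette-level identity is required.
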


\begin{proof}
This is the content of \cite[Lemma~A.2]{A16aa}.
\end{proof}

The distribution $\Gibbs(b_1,b_2;h,v)$ does not treat the $x$-axis and $y$-axis directions differently. In terms of the particle process interpretation for the stochastic \ac{6V} model, this stationary distribution corresponds to starting with particles independently at each site of $\Z$ with probability $v$. The parameter $h=h(v)$ then corresponds to the probability that a particle crosses a given vertical column at a given time, and the stationarity says that these events are all independent. The function $h(v)$ is called the \emph{flux}.

\subsection{Stochastic Gibbs states for the symmetric six vertex model}

As discussed in the introduction, the stochastic Gibbs states constructed in Lemma \ref{lem:stat} are Gibbs states for a symmetric \ac{6V} model in the ferroelectric phase with parameters matched accordingly.

\begin{prop}\label{GibbsMeasureProposition}
Consider positive $ (a,b,c) $ such that $ a>b+c $ and such that $\Delta>1$ (recall $ \Delta $ from~\eqref{eq:Delta}). Let $ b_1,b_2 $ be given as in~\eqref{eq.bs},
and $ (h,v)\in[0,1]^2 $ satisfy \eqref{vhbeq}, namely,  $ \frac{v}{1-v}(1-b_1) = \frac{h}{1-h}(1-b_2) $.
Then, the stationary stochastic \ac{6V} distribution
$ \Gibbs(b_1,b_2;h,v) $ from Lemma \ref{lem:stat} is a extremal, translation invariant, ergodic infinite volume Gibbs state for the symmetric
\ac{6V} model on $\Z^2$ with weights $ (a,b,c) $, and $(h,v)$ gives the density of horizontal and vertical lines under this Gibbs state.
\end{prop}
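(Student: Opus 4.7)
The plan is to verify the listed properties in turn -- translation invariance, line densities, the symmetric \ac{6V} Gibbs specification, and ergodicity/extremality -- using the construction of $\Gibbs(b_1,b_2;h,v)$ from Lemma~\ref{lem:stat}. Translation invariance is built into the construction, and the densities follow because Lemma~\ref{lem:stat} identifies the marginals along any horizontal (resp.\ vertical) line as i.i.d.\ $\text{Ber}(v)$ (resp.\ $\text{Ber}(h)$).

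The main content is the Gibbs property. Since rectangles form a cofinal family of finite subsets of $\Z^2$, it suffices to check the specification on rectangles $\Lambda$. From Definition~\ref{def:S6V:lf} and the construction in Lemma~\ref{lem:stat}, the conditional distribution of the configuration in $\Lambda$ given its exterior (equivalently, given the boundary edge occupations on $\partial\Lambda$) is proportional to $\prod_{w\in\Lambda}W_\mathrm{stoch}(w)$. Writing $b_1,b_2$ in terms of $(a,b,c)$ through \eqref{eq.bs} and Baxter's parametrization \eqref{eq:projPara}, a direct per-vertex computation gives
\[
\log\frac{W_\mathrm{stoch}(w)}{W_\mathrm{sym}(w)} = -\log a + \tfrac{\eta}{2}\bigl[(i_1+j_1)-(i_2+j_2)\bigr] + u\,(j_1-i_1),
\]
where $(i_1,i_2,j_1,j_2)$ are the left/bottom/right/top edge occupancies at $w$. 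Summing this identity over $w\in\Lambda$ and invoking row- and column-wise line conservation -- the telescoping $V^{(k)}=V^{(k-1)}+H^L_k-H^R_k$ for inter-row vertical flux, with $V^{(0)}$ and $V^{(N)}$ lying in $\partial\Lambda$, and its horizontal counterpart -- shows that the interior horizontal and vertical line counts $I_H,I_V$ are themselves determined by $\partial\Lambda$. Combined with the fact that $\sum_{w\in\Lambda}(j_1-i_1)$ equals the net horizontal flux across $\partial\Lambda$, this yields that $\prod_{w\in\Lambda}W_\mathrm{stoch}(w)/\prod_{w\in\Lambda}W_\mathrm{sym}(w)$ is boundary-measurable. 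Hence the conditional law is proportional to $\prod_{w\in\Lambda}W_\mathrm{sym}(w)$, which is the symmetric \ac{6V} Gibbs specification with weights $(a,b,c)$.

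For ergodicity -- which, by the standard equivalence, yields extremality in the convex set of translation-invariant Gibbs states -- the idea is to represent $\Gibbs(b_1,b_2;h,v)$ as the image of an i.i.d.\ product measure under a translation-equivariant measurable map. Lemma~\ref{lem:binfinite} provides essentially this representation: the bi-infinite configuration arises as the $L^p$-limit of left-finite dynamics driven by the independent Bernoulli update variables $B,B'$ of Definition~\ref{def:Bs}, with an i.i.d.\ $\text{Ber}(h)\otimes\text{Ber}(v)$ seed along a down-right path receding to $(-\infty,-\infty)$. Ergodicity of the product source measure under $\Z^2$-shifts is standard and transfers through the shift-equivariant map. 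The principal technical obstacle I expect is controlling the $L^p$-approximation of Lemma~\ref{lem:binfinite} jointly and uniformly in the shift $\tau_n$, so that the mixing estimate $\Ex[f\cdot (g\circ\tau_n)]\to\Ex[f]\Ex[g]$ for local observables $f,g$ passes through the limit; a cleaner route, if available, is to invoke ergodicity of the stationary stochastic \ac{6V} states directly from \cite{A16aa}.
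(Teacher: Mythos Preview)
The paper's own proof is simply a citation to \cite{Bukman1995}, \cite{ResSri16b}, and \cite[Proposition~A.3]{A16aa}; no argument is given. Your proposal actually sketches the proof, and the sketch is correct in structure---it is essentially the argument one finds in \cite[Appendix~A]{A16aa}. There is one computational slip: under \eqref{eq:projPara} the vertical-through vertex (stochastic weight $b_1$, symmetric weight $b$) has $b_1/b=e^{\eta}/a$, so your displayed per-vertex identity should have both the $\eta$- and $u$-terms negated, i.e.\ $\tfrac{\eta}{2}[(i_2+j_2)-(i_1+j_1)]+u(i_1-j_1)$. This is harmless for the argument, since either sign yields a linear form in edge occupancies whose sum over $\Lambda$ equals $-|\Lambda|\log a$ plus multiples of the interior line counts $I_H^{\mathrm{int}},I_V^{\mathrm{int}}$ plus boundary terms, and your telescoping identity $V^{(k)}=V^{(k-1)}+H^L_k-H^R_k$ does correctly show that $I_H^{\mathrm{int}},I_V^{\mathrm{int}}$ are boundary-determined.

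Your caution on ergodicity is also warranted: the full $\Z^2$ Gibbs state is obtained only as a \emph{limit} of half-plane factors of i.i.d., and pushing ergodicity through that limit needs care beyond what Lemma~\ref{lem:binfinite} directly provides. Your suggested shortcut---invoke the result from \cite{A16aa}---is precisely what the paper does.
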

\begin{proof}
A version of this result seems to have been first observed in \cite{Bukman1995}. More recently, it appeared in \cite{ResSri16b}; \cite[Proposition~A.3]{A16aa} provides a proof.
\end{proof}

\section{Self duality for stochastic six vertex model}\label{sect:duality}


The Markov duality method we introduce in this paper for showing convergence of the stochastic \ac{6V} model to the \ac{KPZ} equation relies upon the model's self-duality (in particular the one and two-particle duality), which we present in this section. This result was first proved for the stochastic \ac{6V} model with left-finite initial condition in \cite{Corwin2016}. We recall that result first, and then extend it by approximation to the bi-infinite stochastic \ac{6V} model defined in Lemma \ref{lem:binfinite}.

Let us first recall the general definition of Markov duality.

\begin{defn}\label{def:dual}
Given two Markov chains (in discrete time) or processes (in continuous time) $x(t)\in X$ and $y(t)\in Y$,
we say $x(t)$ and $y(t)$ are dual with respect to a duality function $H:X\times Y\to \R$ if for all $x\in X$, $y\in Y$ and $t\geq 0$
\begin{align*}
	\Ex^x\Big[H\big(x(t),y\big)\Big] = \Ex^y\Big[H\big(x,y(t)\big)\Big].
\end{align*}
Here, $\Ex^x$ denotes the expectation when the process $x(t)$ has been started with the initial condition $x(0)=x$, and $\Ex^y$ likewise for the $y$ variables.
\end{defn}
Our stochastic \ac{6V} self duality theorem is actually a duality between the stochastic \ac{6V} model and its $k$-particle space reversal ($k\geq 1$ is arbitrary), which we define now.
\begin{defn}\label{def:kpart}
Let $\Yfin{k}= \{(y_1<\cdots <y_k)\in \Z^k\}$ denote the state space of ordered $k$-particle configurations (sometimes called a discrete Weyl chamber).
The reversed stochastic \ac{6V} (or $\overleftarrow{\text{S6V}}$) model with $k$-particles is the Markov chain
$ \vec{y}(t)=(y_1(t)<\ldots<y_k(t))\in \Yfin{k} $ defined such that $ -\vec{y}(t) := (-y_k(t)<\cdots<-y_1(t))\in \Yfin{k} $
evolves according to the stochastic \ac{6V} dynamics given in Definition~\ref{def:S6V:lf}.
For $\vec{x},\vec{y}\in \Yfin{k}$, let
$\TrPr(\vec{x}\to \vec{y};t)$ denote the transition probability
that the reversed stochastic \ac{6V} started from $\vec{y}(0)=\vec{x}$ has $\vec{y}(t) = \vec{y}$.
Likewise, we let $\TrP(\vec{x}\to \vec{y};t)$ denote the transition probability
that the (usual) stochastic \ac{6V} started from $\vec{y}(0)=\vec{x}$ has $\vec{y}(t) = \vec{y}$.
\end{defn}

\begin{prop}\label{prop:CP16}
Fix $k\in \Z_{\geq 1}$, $w\in \Z$ and parameters $b_1,b_2\in (0,1)$ with $b_2<b_1$ (and recall that $\tau = b_2/b_1$). Let $\vec{x}(t)\in \Xlf{w}$ denote the stochastic \ac{6V} model with left-finite configurations (recall Definition \ref{def:S6V:lf}, as well as the notation $N_y(\vec{x})$ and $\eta_y(\vec{x})$ defined therein) and let $\vec{y}(t)\in \Yfin{k}$ denote the (reversed) $\overleftarrow{\text{S6V}}$ model with $k$-particles (Definition \ref{def:kpart}). Then $\vec{x}(t)$ and $\vec{y}(t)$ are dual with respect to the following two duality functions (recall Definition \ref{def:dual})
\begin{align*}
	H(\vec{x},\vec{y}) := \prod_{i=1}^{k} \tau^{N_{y_i}(\vec{x})},
	\qquad \textrm{and}\qquad
	\tilde{H}(\vec{x},\vec{y}) := \prod_{i=1}^{k} \eta_{y_i+1}(\vec{x})\, \tau^{N_{y_i}(\vec{x})}.
\end{align*}
\end{prop}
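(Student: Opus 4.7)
The plan is to reduce the $t$-step duality to a single-step identity and then verify that identity, first directly for $k = 1$ and then for $k \geq 2$ via the sequential-update structure of the forward chain. By the semigroup property of both Markov chains and the tower rule, it suffices to prove
\begin{align*}
\Ex^{\vec x}[H(\vec x(1), \vec y)] = \Ex^{\vec y}[H(\vec x, \vec y(1))]
\end{align*}
for every $\vec x \in \Xlf{w}$ and $\vec y \in \Yfin{k}$ (and analogously for $\tilde H$); iterating $t$ times then yields the general statement.

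For the base case $k = 1$, a single reversed $\overleftarrow{\textup{S6V}}$ particle carries no exclusion and evolves as a (truncated) geometric random walk whose kernel is determined explicitly by $b_1, b_2$, so the right-hand side is an explicit weighted sum of $\tau^{N_{y'}(\vec x)}$ over columns $y'$. The left-hand side can be expanded using the Bernoulli representation in \eqref{eq:UpdateINS6VOccupationProcess}: only finitely many independent Bernoullis govern whether the forward chain crosses column $y$ during one step, and summing over their outcomes reproduces exactly the same geometric kernel paired against $\tau^{N_{\cdot}(\vec x)}$. For $k \geq 2$, the duality function $H(\vec x, \vec y) = \prod_i \tau^{N_{y_i}(\vec x)}$ factorizes across the slabs $[y_i, y_{i+1})$, and the left-to-right sequential update of the forward chain allows one to compute $\Ex^{\vec x}[H(\vec x(1), \vec y)]$ as a cascade of single-slab contributions matching the $k = 1$ identity. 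The non-trivial piece is the coupling between adjacent slabs, which is precisely what the exclusion interaction of the reversed $k$-particle chain reproduces. This matching is ultimately an avatar of the Yang--Baxter equation for the stochastic 6V $R$-matrix: the functions $\tau^{N_y}$ are joint eigenvectors of a commuting family of transfer-matrix-like operators, and the intertwining of the two chains by $H$ reflects the stochastic co-product of $U_q[\widehat{\mathfrak{sl}}_2]$.

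The $\tilde H$-duality then follows from that of $H$ by the identity
\begin{align*}
\eta_{y+1}(\vec x)\, \tau^{N_y(\vec x)} = \tfrac{1}{1-\tau}\big(\tau^{N_y(\vec x)} - \tau^{N_{y+1}(\vec x)}\big),
\end{align*}
which expresses $\tilde H$ as a signed linear combination of $H$-values; the corresponding discrete-difference operator on the $\vec y$-side commutes with the reversed transition operator, and the exclusion condition $y_i < y_{i+1}$ built into $\Yfin{k}$ cancels the boundary terms that arise when adjacent indices are consecutive. The main obstacle is the algebraic core of the $k \geq 2$ step: the sequential updates of the forward chain become delicately coupled when the $y_i$'s are close together, and verifying that these couplings reproduce exactly the exclusion dynamics of the reversed chain is what ties the proof to the Yang--Baxter / quantum-group structure underlying the 6V model, as exploited in the original proof in \cite{Corwin2016}. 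A brute-force combinatorial check would be intractable; the algebraic framework furnishes the intertwining essentially for free once the $k = 1$ identity is in hand.
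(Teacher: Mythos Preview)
The paper's proof is simply a citation to \cite[Theorem~2.23]{Corwin2016}, together with the observation that the $\tilde H$ here is a constant multiple of a simultaneous shift of the duality function $\tilde H'$ there, so duality for $\tilde H'$ transfers immediately to $\tilde H$ by translation invariance of the reversed chain. Your proposal is more ambitious in outlining a direct route --- reduce to one step, check $k=1$ by hand, then invoke the sequential-update / Yang--Baxter structure for $k\geq 2$ --- but you ultimately also defer the hard $k\geq 2$ step for $H$ to the algebraic framework of \cite{Corwin2016}, so at the level of what is actually being proved here versus cited, the two coincide.

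There is, however, a genuine gap in your derivation of the $\tilde H$-duality from the $H$-duality. The pointwise identity $\eta_{y+1}\tau^{N_y} = \tfrac{1}{1-\tau}(\tau^{N_y}-\tau^{N_{y+1}})$ is correct, and expanding the product over $i$ writes $\tilde H(\vec x,\vec y)$ as a signed sum of $H(\vec x,\vec y^S)$ over subsets $S\subseteq\{1,\ldots,k\}$, where $\vec y^S$ shifts $y_i\mapsto y_i+1$ for $i\in S$. But when $y_{i+1}=y_i+1$, some of these $\vec y^S$ leave the Weyl chamber $\Yfin{k}$ (two coordinates coincide), and the $H$-duality is only asserted for $\vec y\in\Yfin{k}$. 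Your assertion that ``the corresponding discrete-difference operator on the $\vec y$-side commutes with the reversed transition operator'' is precisely the point that needs proof and is not true as stated: the partial shifts $y_i\mapsto y_i+1$ do \emph{not} commute with the interacting $k$-particle kernel (only the simultaneous shift of all coordinates does, by translation invariance), so one cannot simply push the difference operator through the $\vec y$-semigroup. The claim that ``the exclusion condition \ldots\ cancels the boundary terms'' is the entire content of what must be shown, not a justification. The paper sidesteps this completely by observing that $\tilde H(\vec x,(y_1,\ldots,y_k)) = \tau^k \tilde H'(\vec x,(y_1+1,\ldots,y_k+1))$, a \emph{simultaneous} translate, for which duality of $\tilde H'$ (proved independently in \cite{Corwin2016}) transfers by ordinary translation invariance with no boundary issue at all.
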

\begin{proof}
This is a special case of the dualities proved for the higher spin stochastic vertex models in \cite[Theorem~2.23]{Corwin2016}
(see also Section~5.5 therein). In fact, \cite{Yier} found a mistake in the proof of the $ \hat{\mathsf{G}_n}(\vec{g},\vec{n}) $ duality (our $\tilde{H}$ duality herein) and provided a correct proof for that case. 
Note that the corresponding duality function $ \tilde{H} $ (called $ \hat{\mathsf{G}_n}(\vec{g},\vec{n}) $ therein)
takes a slight different form here.
Under current notation, the duality function in \cite[Theorem 2.23]{Corwin2016} corresponds to
$
	\tilde{H}'(\vec{x},\vec{y}) := \prod_{i=1}^{k} \eta_{y_i}(\vec{x})\, \tau^{N_{y_i}(\vec{x})}.
$
One readily sees that
\begin{align*}
	\tilde{H}\big(\vec{x},(y_1,\ldots,y_k)\big)
	=
	\tau^{k}
	\tilde{H}'\big(\vec{x},(y_1+1,\ldots,y_k+1)\big),
\end{align*}
so the duality of the latter readily implies that of the former.
\end{proof}

For our applications, we need to extend this duality to the bi-infinite stochastic \ac{6V} model.
This is accomplished by appealing to the approximation result given in Lemma \ref{lem:binfinite}.
Let $ (\filt(t))_{t\in\Z_{\geq 0}} $ denote the canonical filtration of the stochastic \ac{6V} model.

\begin{cor}
\label{cor:CP16}
Fix $k\in \Z_{\geq 1}$, $b_1,b_2\in (0,1)$ with $b_2<b_1$, and let $\tau = b_2/b_1$.
The result of Proposition~\ref{prop:CP16} also hold for the bi-infinite stochastic \ac{6V} model $\vec{x}(t)\in \Xinf$.
In particular, letting $N(t,y)$ denote the height function associated in Lemma \ref{lem:binfinite} to $\vec{x}(t)$, and recall the reversed stochastic \ac{6V} model transition probability $\TrPr$ from Definition \ref{def:kpart}, this implies that
\begin{align*}
	\Ex\Big[\prod_{i=1}^{k} \tau^{N(t+s,y_i)} \Big\vert \filt(s) \Big]\,
	&= \,\,\sum_{\vec{y}'\in \Yfin{k}} \TrPr\big(\vec{y}\to\vec{y}\,';t\big) \prod_{i=1}^{k} \tau^{N(s,y'_i)}
\\
	&= \,\,\sum_{\vec{y}'\in \Yfin{k}} \TrP\big(\vec{y}\,'\to\vec{y};t\big) \prod_{i=1}^{k} \tau^{N(s,y'_i)},	
\\
	\Ex\Big[\prod_{i=1}^{k} \eta(t+s,y_i+1) \tau^{N(t+s,y_i)} \Big\vert \filt(s) \Big]\,
	&= \,\,\sum_{\vec{y}'\in \Yfin{k}} \TrPr\big(\vec{y}\to\vec{y}\,';t\big) \prod_{i=1}^{k} \eta(s,y'_i+1)\tau^{N(s,y'_i)}
\\
	&= \,\,\sum_{\vec{y}'\in \Yfin{k}} \TrP\big(\vec{y}\,'\to\vec{y};t\big) \prod_{i=1}^{k} \eta(s,y'_i+1)\tau^{N(s,y'_i)}.
\end{align*}
Above, the expectation is over the height function $N(t+s,\Cdot)$ conditioned on its values $N(s,\Cdot)$ at time $s$, and $\eta$ is coupled to $N$ so that $\eta(t,y) = N(t,y)-N(t,y-1)$.
\end{cor}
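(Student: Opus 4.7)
The plan is to deduce this as a consequence of Proposition~\ref{prop:CP16} via the approximation scheme provided by Lemma~\ref{lem:binfinite}. Specifically, for each $w \in \Z$, let $\vec{x}_{\geq w}(t) \in \Xlf{w}$ denote the left-finite stochastic \ac{6V} chain started from $\vec{x}_{\geq w}(0) = \vec{x}_{\geq w}$, with associated height function $N^w(t, y)$. Applying Proposition~\ref{prop:CP16} together with the Markov property of the left-finite chain yields, for each $w$,
\begin{align*}
\Ex\Big[\prod_{i=1}^k \tau^{N^w(t+s, y_i)} \,\Big|\, \filt(s)\Big] = \sum_{\vec{y}' \in \Yfin{k}} \TrPr(\vec{y} \to \vec{y}'; t) \prod_{i=1}^k \tau^{N^w(s, y'_i)},
\end{align*}
and the analogous statement for the duality function $\tilde{H}$. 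Here I use that $\vec{x}_{\geq w}(s)$ is $\filt(s)$-measurable by the coupling in Lemma~\ref{lem:binfinite} and that the Bernoulli variables $B,B'$ driving the chain after time $s$ are independent of $\filt(s)$.

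The next step is to pass $w \to -\infty$. By Lemma~\ref{lem:binfinite}, $N^w(r, y) \to N(r, y)$ in $L^p$ for every $p \geq 1$ and in probability, so $\tau^{N^w(r, y)} \to \tau^{N(r, y)}$ in probability. To interchange the limit with the conditional expectation on the left-hand side, I would establish uniform integrability of $\{\tau^{N^w(t+s, y_i)}\}_w$; this follows from the $L^p$ control in Lemma~\ref{lem:binfinite}, which upgrades convergence in probability of $\tau^{N^w}$ to $L^1$ convergence (since $\tau^{N^w} = \tau^{N}\cdot\tau^{N^w - N}$ and the correction has all moments bounded uniformly in $w$). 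For the infinite sum on the right-hand side, dominated convergence applies once one produces a summable dominating function. This is obtained by combining uniform-in-$w$ moment bounds on $\tau^{N^w(s, y'_i)}$ (which grows at worst exponentially in $|y'_i|$, since $N^w(s, y'_i) - N^w(s, 0)$ is controlled by a sum of Bernoullis plus initial density) with decay estimates on $\TrPr(\vec{y} \to \vec{y}'; t)$: in the reversed $k$-particle model a single step moves a particle leftward by at most a geometric random variable, so the kernel has tails in $|\vec{y} - \vec{y}'|$ that decay faster than any exponential for fixed $t$, more than enough to absorb the growth of the duality function.

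The final claim of the corollary is the identity
\begin{align*}
\sum_{\vec{y}'} \TrPr(\vec{y} \to \vec{y}'; t) \prod_i \tau^{N(s, y'_i)} = \sum_{\vec{y}'} \TrP(\vec{y}' \to \vec{y}; t) \prod_i \tau^{N(s, y'_i)}.
\end{align*}
This would follow from the pointwise symmetry $\TrPr(\vec{y} \to \vec{y}'; t) = \TrP(\vec{y}' \to \vec{y}; t)$ for the $k$-particle stochastic \ac{6V} transition kernel. At the single-particle level this is transparent, since the forward one-step jump probabilities $p(y,y') = b_1 \delta_{y'=y} + (1-b_1)(1-b_2)b_2^{y'-y-1}\ind_{y'>y}$ exactly match the reversed one-step probabilities under $(y,y')\mapsto(-y',-y)$, and iterating gives equality at any time $t$. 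At the $k$-particle level the same identity can be read off from the explicit Bethe-ansatz contour-integral formula for the transition kernel (which is referenced in the proof sketch following Theorem~\ref{thm:S6V} and appears as Proposition~\ref{prop:TransitionProbability}), the integrand being manifestly invariant under the simultaneous swap $(\vec{y},\vec{y}')\leftrightarrow(\vec{y}',\vec{y})$ combined with the appropriate reflection of the contours. The $\tilde{H}$ version of the corollary is obtained by the same approximation argument, with integrability simplified by the uniform boundedness of the $\eta$-factors. The main technical subtlety is the dominated-convergence control for the infinite $\vec{y}'$ sum, which hinges on balancing the growth of $\tau^{N(s,\cdot)}$ against the decay of the reversed semigroup.
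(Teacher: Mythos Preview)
Your overall strategy matches the paper's: approximate by the left-finite chain, apply Proposition~\ref{prop:CP16}, and pass to the limit on each side. However, there is a genuine error in your dominated-convergence step for the right-hand side, and you miss a simplification the paper exploits.

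First, the error: you assert that the reversed kernel $\TrPr(\vec{y}\to\vec{y}\,';t)$ has tails decaying \emph{faster than any exponential} in $|\vec{y}-\vec{y}\,'|$. This is false. A geometric jump has exponential tails, and convolving finitely many of them still gives exponential (times polynomial) decay, with rate essentially $b_2$. Meanwhile the duality function grows like $\tau^{-|y'_i|}$, since $|N(s,y'_i)-N(s,y_i)|\le |y'_i-y_i|$. The ratio is $b_2\tau^{-1}=b_1<1$, so summability does hold, but only because of this specific inequality---not because of any super-exponential margin. The paper makes exactly this computation: it bounds $\TrPr(\vec{y}\to\vec{y}\,';1)\le C\prod_i b_2^{\,y_i-y'_i}\ind_{\{y_i\ge y'_i\}}$ and invokes $\tau^{-1}b_2=b_1<1$.

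Second, the simplification: rather than carry a general $t$ through the limit, the paper reduces at the outset to $s=0$ and $t=1$ (the general case following by iteration and the Markov property). This buys two things. On the left, $N^w(1,y_i)$ differs from the deterministic $N^w(0,y_i)$ by at most one, so $\prod_i\tau^{N^w(1,y_i)}$ is uniformly bounded and bounded convergence applies immediately---no uniform-integrability argument is needed. On the right, the one-step kernel admits the clean geometric bound above.

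Finally, for the identity $\TrPr(\vec{y}\to\vec{y}\,';t)=\TrP(\vec{y}\,'\to\vec{y};t)$, the paper gives a short combinatorial proof rather than appealing to the contour formula: by definition $\TrPr(\vec{y}\to\vec{y}\,';t)=\TrP(-\vec{y}\to-\vec{y}\,';t)$, and the map $(-\vec{y},-\vec{y}\,')\mapsto(\vec{y}\,',\vec{y})$ is a simultaneous horizontal and vertical flip of the vertex configuration, under which the six vertex weights are either preserved or swapped in a way that leaves the measure invariant for fixed endpoints. Your Bethe-ansatz route could in principle work, but the claimed invariance of the integrand under $(\vec{y},\vec{y}\,')\leftrightarrow(\vec{y}\,',\vec{y})$ is not manifest from the formula in Proposition~\ref{prop:TransitionProbability} and would require a nontrivial change of variables to verify.
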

\begin{proof}
We will give the proof for the $ H $ duality as the $ \tilde{H} $ duality follows identically.
Without loss of generality we assume that $s=0$.
It suffices also to show that the duality holds for just $t=1$ since general $t$ follows inductively.

Recall the notation $\vec{x}_{\geq w}$ and $\vec{x}_{\geq w}(t)$ from Lemma \ref{lem:binfinite} for the bi-infinite stochastic \ac{6V} model cutoff to be left-finite with first particle $x_w$. Applying the duality in Proposition~\ref{prop:CP16} implies that
\begin{align*}
	\Ex\Big[\prod_{i=1}^{k} \tau^{N_{y_i}(\vec{x}_{\geq w}(1))}\Big]
	=
	\sum_{\vec{y}'\in \Yfin{k}}\TrPr\big(\vec{y}\to\vec{y},';1\big) \prod_{i=1}^{k} \tau^{N_{y'_i}(\vec{x}_{\geq w})},
\end{align*}
where the expectation is over $\vec{x}_{\geq w}(t)$ at $t=1$ with initial condition $\vec{x}_{\geq w}$ at $t=0$.
In order to prove the corollary, we must show that taking $w\to -\infty$, both sides of the above equation converge to their bi-infinite version.
The left-hand side converges as $w\to -\infty$ to $\Ex[\prod_{i=1}^{k} \tau^{N_{y_i}(\vec{x}(1))}]$.
This is because, by Lemma~\ref{lem:binfinite} $N_y(\vec{x}_{\geq w}(t))$ converges in probability to $N_y(\vec{x}(t))$
and in a single time step $N_{y}(\vec{x}_{\geq w}(t))$ may change by at most one, hence the argument of the expectation is a bounded function.
To show the right-hand side convergence, we bound (for some constant~$C<\infty$)
\begin{align*}
	\TrPr\big(\vec{y}\to \vec{y}\,';1\big) \leq C \prod_{i=1}^{k} b_2^{y_i-y'_i} \ind_\set{y_i\geq y'_i}
\end{align*}
and then use the fact that $\tau^{-1}b_2=b_1<1$ to apply dominated convergence.
The above  bound follows since for the reversed stochastic \ac{6V} model $\vec{y}(t)$,
the increments (up to a minus sign) $ -(y_i(0)-y_i(1)) $ can be stochastically upper bounded by $1+\text{geo}(b_2)$,
where $ \text{geo}(b_2) $ is  a geometric random variable with values in $\Z_{\geq 0}$ with parameter $b_2$.
This proves the first identity for the $ H $ duality.

For the second identity, by definition of space-reversed stochastic \ac{6V} model, we have
\begin{align*}
	\TrPr(\vec{y}\to\vec{y}\,';t) = \TrP(-\vec{y}\to-\vec{y}\,';t),
\end{align*}
where, for $ \vec{y}=(y_1<\ldots<y_k)\in\Yfin{k} $, $ -\vec{y}:=(-y_k<\ldots<-y_1)\in\Yfin{k} $ denotes the space-reversed configuration.
Further, the stochastic \ac{6V} model enjoys a space-time reversal symmetry:
\begin{align*}
	\TrP(-\vec{y}\to-\vec{y}\,';t) = \TrP(\vec{y}\,'\to\vec{y};t).
\end{align*}
To see this, notice that $ (-\vec{y},-\vec{y}\,') \mapsto (\vec{y}\,',\vec{y}) $ amounts to a vertical and horizontal flip in the vertex model configration.
Under such flips, the weights for
($ \includegraphics[width=10pt]{vertex1111} $,
$ \includegraphics[width=10pt]{vertex0000} $,
$ \includegraphics[width=10pt]{vertex0101} $,
$ \includegraphics[width=10pt]{vertex1010} $)
remain unchanged,
while the weights for
($ \includegraphics[width=10pt]{vertex1001} $,
$ \includegraphics[width=10pt]{vertex0110} $)
swap.
Given fixed initial and terminal conditions $ (\vec{y},\vec{y}\,') $,
it is readily checked that \ac{6V} measures are invariant under the prescribed swap.
From these consideration we conclude
\begin{align*}
	\TrPr(\vec{y}\to\vec{y}\,';t) = \TrP(-\vec{y}\to-\vec{y}\,';t) = \TrP(\vec{y}\,'\to\vec{y};t).
\end{align*}
This proves the second claimed identity.
\end{proof}

Owing to its Bethe ansatz solvability, the $k$-particle (reversed) stochastic \ac{6V} model admits explicit integral formulas for transition probabilities. We will make use of the $k=1,2$ cases of these formulas, but since the general $k$ result is not any more complicated, we record it below. Note that in the below formula (and subsequent calculations involving it) we will use $\vec{x}$ and $\vec{y}$ to denote $k$-particle configurations (as opposed to $\vec{y}$ and $\vec{y}'$ as in our discussion on duality).
\begin{prop}\label{prop:TransitionProbability}
Fix $k\in \Z_{\geq 1}$ and parameters $b_1,b_2\in (0,1)$ with $b_2<b_1$. Then for any $\vec{x},\vec{y}\in \Yfin{k}$ (where $\Yfin{k}$ is the discrete Weyl Chamber defined in Definition~\ref{def:kpart}) and $t\in \Z_{\geq 0}$,
\begin{align}
	\TrP\big(\vec{y}\to \vec{x};t\big)  = \TrPrc(\vec{y},\vec{x};t)
\end{align}
where $\uu(\vec{y},\vec{x};t)$ is defined for all $\vec{y},\vec{x}\in \Z^k $ by
\begin{equation}\label{eq:SGc}
	\TrPrc(\vec{y},\vec{x};t)
	=
	\oint_{\Circ_r}\cdots \oint_{\Circ_r} \sum_{\sigma\in \mathfrak{S}_k}(-1)^{\sigma}\!\prod_{1\leq i<j\leq k}\TrPrcfr(z_i,z_j,\sigma)
	\prod_{i=1}^k z^{x_{\sigma(i)}-y_i-1}_{i} \TrPrct(z_i)^t \frac{dz_i}{2\pi \img}.
\end{equation}
Here $\Circ_r$ is a circular contour (counter-clockwise oriented) centered at the origin with a large enough radius $r$ so as to include all poles of the integrand, $\mathfrak{S}_k$ is the set of all permutations on the set $\{1,\ldots, k\}$, $(-1)^{\sigma}\in \{-1,1\}$ is the sign of the permutation, and
\begin{align*}
	\TrPrcfr(z_i,z_j,\sigma) := \frac{1-(1+\tau^{-1})z_{\sigma(i)}+\tau^{-1}z_{\sigma(i)}z_{\sigma(j)}}{1-(1+\tau^{-1})z_i+\tau^{-1}z_iz_j},
	\qquad
	\TrPrct(z) := \left(\frac{b_1+(1-b_1-b_2)z^{-1}}{1-b_2z^{-1}}\right).
\end{align*}
\end{prop}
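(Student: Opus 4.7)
The proposition asserts a coordinate Bethe ansatz contour integral for the $k$-particle forward transition probability. My plan is to verify it by showing that the right-hand side of \eqref{eq:SGc} solves the correct one-step master equation with the correct delta initial condition, and then invoke uniqueness. Throughout, I will work with $\TrP$ directly (using the space-time reversal identity already noted in the proof of Corollary~\ref{cor:CP16} if one wishes to translate to $\TrPr$).

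First, I would set up the master equation. The one-step forward operator $\mathbf{T}^{*}$ for the $k$-particle stochastic \ac{6V} model on $\Yfin{k}$ is not a tensor product of single-particle operators, because of the sequential right-push rule in Definition~\ref{def:S6V:lf}. However, following the reduction carried out for higher-spin stochastic vertex models in \cite{Corwin2016,BP16a}, I would show that there exists a \emph{free} one-step operator $\mathbf{T}_0 = (\mathbf{T}_1)^{\otimes k}$, where $\mathbf{T}_1$ is the one-particle operator whose generating function is precisely $\TrPrct(z)$ (verified for $k=1$ by direct computation against the jump law in Definition~\ref{def:S6V:lf}(a)), together with a pair of first-order difference operators $B_i$ acting across each diagonal $x_{i+1}=x_i$, such that $\mathbf{T}^{*}\phi(\vec{x}) = \mathbf{T}_0 \tilde\phi(\vec{x})$ whenever $\tilde\phi:\Z^k\to\R$ is an extension of $\phi$ from $\Yfin{k}$ satisfying $B_i\tilde\phi = 0$ for each $i$. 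The specific form of $B_i$ is extracted by comparing the sequential update on adjacent versus isolated configurations and is what gives rise to the scattering factors $\TrPrcfr(z_i,z_j,\sigma)$.

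Second, I would verify that, with $\vec{y}$ and $t$ fixed, the right-hand side of \eqref{eq:SGc} is such a Bethe-ansatz extension. The integrand $\sum_\sigma(-1)^\sigma\prod_{i<j}\TrPrcfr(z_i,z_j,\sigma)\prod_i z_{\sigma(i)}^{x_{\sigma(i)}}$ (viewed as a function of $\vec{x}\in\Z^k$) is, by the standard symmetrization calculation, a joint eigenfunction of $\mathbf{T}_0$ with eigenvalue $\prod_i \TrPrct(z_i)$, and the precise rational shape of $\TrPrcfr$ is chosen exactly so that $B_i$ applied to this integrand vanishes termwise in $\vec{z}$. Consequently the contour integral advances by one time step by multiplying the integrand by $\prod_i \TrPrct(z_i)^{t}\mapsto \prod_i \TrPrct(z_i)^{t+1}$, matching the master equation from the first step.

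Third, I would verify the delta initial condition $\TrPrc(\vec{y},\vec{x};0)=\ind_{\{\vec{y}=\vec{x}\}}$ for $\vec{y},\vec{x}\in\Yfin{k}$. The plan is to shrink the contours $\Circ_r$ one at a time (equivalently, after $z_i\mapsto z_i^{-1}$, to collapse them to the origin), iteratively collecting residues at the poles of $\TrPrcfr(z_i,z_j,\sigma)$. The antisymmetrization over $\sigma\in\mathfrak{S}_k$ causes all off-diagonal residues to cancel in pairs (the same mechanism that underlies the spatial Plancherel identity for the six-vertex Bethe ansatz), leaving only $\sigma=\mathrm{id}$ contributions, which collapse to a telescoping product of $k$ one-dimensional integrals, each giving the single-particle Kronecker delta. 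The main obstacle is Step~3: the residue bookkeeping is combinatorially delicate, and a clean implementation is best carried out either as a specialization of the orthogonality/Plancherel identity already established in \cite[Section~2]{Corwin2016} for higher-spin stochastic vertex models (at $I=J=1$ with parameters matched as in the proof of Lemma~\ref{lem:binfinite}), or via the contour-nesting argument of \cite{BP16a}. Once the initial condition is verified, uniqueness of the solution to the discrete Kolmogorov equation forces $\TrPrc(\vec{y},\vec{x};t) = \TrP(\vec{y}\to \vec{x};t)$.
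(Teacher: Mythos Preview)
Your proposal outlines the standard coordinate Bethe ansatz verification (free evolution plus two-body boundary conditions, eigenfunction property of the integrand, and delta initial condition via residues), and the outline is correct in spirit. However, the paper's proof is simply a one-line citation: it invokes \cite[Theorem~3.6, Eq.~(26)]{BCG2016} with the parameter matching $c_1=1-b_1$, $c_2=1-b_2$, $a_1=a_2=1$. That theorem in \cite{BCG2016} is proved by exactly the three-step scheme you describe, so you are effectively re-deriving a result already established in the literature rather than citing it. The paper's approach buys brevity and avoids repeating a known argument; yours would make the present paper self-contained at the cost of several pages of routine but finicky verification (particularly your Step~3, whose residue combinatorics you rightly flag as the obstacle). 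If you do pursue the direct route, note that \cite{BCG2016} handles the $t=0$ initial condition by a direct contour-shrinking argument specific to the six-vertex scattering factor rather than by appealing to a general Plancherel identity, so that is the cleanest reference to follow.
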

\begin{proof}
This is a special case of \cite[Theorem~3.6, Eq.~(26)]{BCG2016} with $c_1=1-b_1$, $c_2=1-b_2$  and $a_1=a_2=1$.
\end{proof}

\section{Hopf--Cole transform: reformulation of Theorem~\ref{thm:S6V}}
\label{sect:HC}

One-particle $H$ duality (Proposition~\ref{prop:CP16}) implies that $\Ex[\tau^{N(t,x)}] $ solves the evolution equation for a one-particle stochastic \ac{6V} model.
As is true for general finite variance homogeneous random walks on $ \Z $, this evolution equation is a discrete heat equation and after proper centering and scaling, it will go to the continuous heat equation on $ \R $.
In this section we describe (see Proposition \ref{prop:mSHE}) the martingale part that is left when ones does not take expectations,
as well as the proper centering of the process $ \tau^{N(t,x)} $ that gives $ \Zsv(t,x) $,
the microscopic Hopf--Cole transform of $ N(t,x) $.

Given such a transform,
we reformulate the convergence to \ac{KPZ} equation (i.e., Theorem~\ref{thm:S6V})
as an \emph{equivalent} statement of convergence to \ac{SHE} (see Theorem~\ref{thm:S6V:}).

\subsection{Microscopic Hopf--Cole transform}
\label{sect:mHC}

Recall that $ \den\in(0,1) $ is a fixed parameter representing the average density.
Referring back to Theorem~\ref{thm:S6V},
we notice that the convergence results involve centering and tilting of the height function $ N(t,x) $.
Our first step here is hence to introduce the corresponding centering and tilting of $ \tau^{N(t,x)} $.
To setup notation, consider the stochastic \ac{6V} model with a \emph{single} particle starting from $ x=0 $.
This is simply a discrete-time random walk $ X(t) = \rw(1)+\ldots + \rw(t) $,
with i.i.d.\ increments $ \rw(1),\ldots,\rw(t) $ that have distribution $ \rw(i)\stackrel{\text{law}}{=}\rw $, where
\begin{align}
	\label{eq:rw}
	\Pr\left(\rw=n\right)
	=
	\left\{\begin{array}{c@{,}l}
		(1-b_1)(1-b_2)b^{n-1}_2 & \text{ when }n>0,
		\\
		b_1 & \text{ when } n=0,
		\\
		0 & \text{ otherwise.}
	\end{array}\right.
\end{align}
Now, with $ N(t,x) $ being tilted by $ -\den x $ in~\eqref{eq:S6VtoKPZ},
we consider the analogous tilt of $ \rw $:
\begin{align}
	\label{eq:Rw}
	\Pr\left(\Rw=n\right)
	:=
	\lambda \Ex[\tau^{-\den\rw}\ind_{\set{\rw=n}}]
	=\lambda \tau^{-\rho n } \Pr(\rw=n).
\end{align}
The parameter $ \lambda = (\Ex[\tau^{-\den\rw}])^{-1} $ is in place to ensure~\eqref{eq:Rw} defines a random variable,
and the variable $ \Rw $ has mean $ \mu= \Ex[\Rw]>0 $.
From~\eqref{eq:rw}, it is straightforward to check\footnote{The computation for $\lambda$ simply boils down to a geometric series.
The computation for $\mu$ boils down to a sum of the form
$\sum_{n\ge 0} (n+1) (b_2\tau^{-\rho})^n$; this multiplied by $(1-b_2 \tau^{-\rho})$ again gives a geometric series.}
that $ \lambda $ and $ \mu $ are given by~\eqref{eq:lambda}--\eqref{eq:mu}.
We further consider the corresponding centered variable $ \RW := \Rw-\mu $.
With $ \mu $ being the centering parameter (in Theorem~\ref{thm:S6V}) that sets the reference frame along the characteristic,
we let
\[
 \Xi(t) = \Z - t \mu
 \]
denote a shifted integer lattice to accommodate the centering by $ \mu $.
Given this notation, we define the \DefinText{(microscopic) Hopf--Cole (i.e., G\"artner) transform} of the stochastic \ac{6V} model as
\begin{align}
	\label{eq:ColeHopfTransform}
	\Zsv(t,x) := \lambda^{t}\tau^{N( t, x+\mu t )-\den( x+\mu t)},
	\quad
	x \in \Xi(t),
\end{align}
where $ \lambda $ and $ \mu $ are given in~\eqref{eq:lambda}--\eqref{eq:mu}.

It is straightforward to verify that the $ k=1 $ duality for $ \tau^{N_y(\vec x)} $
(Proposition~\ref{prop:CP16}) implies that
\begin{align}
	\label{eq:Zsv:mSHE:}
	\Ex\big[\Zsv(t+1,x-\mu) \big| \filt(t) \big]
	=
	\big( \hk\Zsv(t) \big)(x-\mu),
\end{align}
where $ \hk $ acts on functions $ f: \Xi(t)\to\R $ as
\begin{align*}
	(\hk f)(x) := \sum_{y\in \Xi(t)} \hk(x-y) f(y)
	= \sum_{y:\, x-y\in \Xi(1)} \!\!\!\! \hk(x-y) f(y) ,
	\quad
	x \in \Xi(t+1),
\end{align*}
with a kernel $ \hk(\Cdot) $ given by the probability mass function of $ \RW $, i.e.,
\begin{align}
	\label{eq:hk}
	\hk(x) := \Pr(\RW=x)
	=
	\left\{\begin{array}{c@{,}l}
		\lambda(1-b_1)(1-b_2)b^{x+\mu-1}_2 \tau^{-\den(x+\mu)}
		& \text{ when }x+\mu \in \Z_{>0},
		\\
		\lambda b_1 & \text{ when } x+\mu=0,
		\\
		0 & \text{ otherwise}
	\end{array}\right.
\end{align}
While the kernel $\hk(\Cdot)$ is independent of $t$,
strictly speaking the domain and range of the {\it operator} $ \hk $ depends on $ t $ because it maps functions on $ \Xi(t) $ to functions on $ \Xi(t+1) $.
We however drop this dependence in our notation $ \hk $.
We will consider also the $ t $-the power of $ \hk $ (viewed as an operator), i.e., $ \hk(t) := \hk^t $
(so in particular $ \hk=\hk(1) $), namely
\[
\begin{tikzpicture}
\node at (0,0) {$\cdots\stackrel{\hk}{\longrightarrow}\R^{\Xi(t)} \stackrel{\hk}{\longrightarrow}\R^{\Xi(t+1)} \stackrel{\hk}{\longrightarrow}\R^{\Xi(t+2)}
\stackrel{\hk}{\longrightarrow}\R^{\Xi(t+3)}\stackrel{\hk}{\longrightarrow}\cdots$};
\draw[->] (-3,-.3) --(-3,-.5) --(3,-.5) -- (3,-.3) ;
\node at (0,-.8) {$\hk^3$};
 \end{tikzpicture}
 \]
and $\hk^t$ has kernel
\begin{align}
	\label{eq:hkt}
	\hk(t,x)
	=
	\sum_{x_i\in \Xi(1), x_1+\ldots+x_t=x} \hk(x_1) \cdots \hk(x_t),
	\quad
	x\in \Xi(t).
\end{align}
Since $ \hk(x) $ is the probability mass function of $ \RW $,
the kernel $ \hk(t,x) $ is exactly the $ t $-step transition probability of a random walk with i.i.d.\ increment $ \RW $.
Given this interpretation and the aforementioned relation between $ \rw $ and $ \RW $, we have
\begin{align*}
	\hk(t,x) = \Pr\big[ \RW(1)+\ldots+\RW(t)=x \big]
	&= \lambda^t\Ex\big[ \ind_\set{\rw(1)+\ldots+\rw(t)=x+\mu t} e^{-\den(\rw(1)+\ldots+\rw(t))} \big]
\\
	&= \lambda^t e^{-\den(x+\mu t)}\TrP\big[ 0 \to x+\mu t ;t \big ].
\end{align*}
Combining this with Proposition~\ref{prop:TransitionProbability} for $ k=1 $ gives the following contour integral expression:
\begin{align}
	\label{eq:hk:contour}
	\hk(t,x)
	=
	\oint_{\Circ_r}	
		z^{x+(\mu t-\mut)} \frac{(\SGt(t,z))^t dz}{2\pi\img z},	
\end{align}
where $ \Circ_r $ denotes a counter-clockwise oriented, circular contour that is centered at origin, and
\begin{align}
	\label{eq:SGt}
	\SGt(t,z) &:= z^{\mut} \Big( \lambda \Big(\frac{b_1+(1-b_1-b_2)/(\tau^\den z)}{1-b_2/(\tau^\den z)}\Big) \Big)^{t}.
\end{align}

Equation~\eqref{eq:Zsv:mSHE:} states that
$ \Zsv(t+1,x-\mu)-(\hk\Zsv(t))(x) $ is an $ \filt $-martingale increment.
We now provide a precise description of this martingale increment.
Recall that the height function $ N(t,x) $ either decreases by one or remains constant within each update $ t\mapsto t+1 $.
This being the case,
\begin{align}
	\label{eq:move}
	\move(t,x) := N(t,x) - N(t+1,x)
\end{align}
defines a $ \{0,1\} $-valued (i.e., Bernoulli) random variable.
Consider further the centered variables
\begin{align}
	\label{eq:moveCent}
	\moveCent(t,x) := \move(t,x) - \Ex[ \move(t,x) | \filt(t) ].
\end{align}
\begin{prop}\label{prop:mSHE}
For any $t \in\Z_{\geq 0} $ and $ x\in \Xi(t) $, we have
\begin{align}
	\label{eq:mSHE}
	\Zsv(t+1,x-\mu) = \big( \hk\Zsv(t) \big)(x-\mu) + \mg(t,x),
\end{align}
where
\begin{align}
	\label{eq:mg}
	\mg(t,x) := \lambda(\tau^{-1}-1)\Zsv(t,x) \moveCent(t,x+\mu t)
\end{align}
is an $ \filt $-martingale increment,
i.e., $ \Ex[\mg(t,x)|\filt(t)]=0 $, $ t\in\Z_{\geq 0} $,
with
\begin{align}
	\label{eq:Quadvar}
	\Ex\left[\mg(t,x_1)\mg(t,x_2)|\filt(t)\right]
	&=
	(b_1\tau^{1-\den})^{|x_1-x_2|} \Theta_1(t,x_1\wedge x_2)\Theta_2(t,x_1\wedge x_2),
\\	
	\label{eq:Theta1}
	\Theta_1(t,x) &:= \lambda \tau^{-1} \Zsv(t,x)
		- \big(\hk\Zsv(t)\big)(x-\mu),
\\
	\label{eq:Theta2}
 	\Theta_2(t,x) &:= -\lambda \Zsv(t,x)+\big(\hk\Zsv(t)\big)(x-\mu).
\end{align}
\end{prop}
\begin{proof}
The result is a special case of the statement
and proof of \cite[Proposition~2.6]{CT15}.
In \cite{CT15} the authors consider a more general higher-spin version of the stochastic \ac{6V} model \cite{Corwin2016} with arbitrary non-negative integer valued horizontal spin $J$ as well as parameters $\alpha,q,\nu$. Our stochastic \ac{6V} corresponds with taking $ J=1 $ and $ \nu=1/q=\tau $ therein,
and matching $ b_1 \mapsto \frac{1+ q\alpha}{1+\alpha} $ and $ \tau^{-\den} \mapsto \den $.
\end{proof}

More generally, for $ k \geq 2 $, $ \Zsv(t,x) $ inherits a duality from $ \tau^{N(t,x)} $,
analogous to Corollary~\ref{cor:CP16} and Proposition~\ref{prop:TransitionProbability}.
The analogous semigroup integral formulas are obtained by a centering and tilting of $ \TrPrc $ (as in Proposition~\ref{prop:TransitionProbability}).
We state the duality and integral formula result for $ \Zsv $ only for $ k=2 $ (as we will only need that case).
For $ y_1<y_2\in\Xi(s) $ and for $ x_1<x_2\in\Xi(s+t) $, we define
\begin{align}
	\notag
	\SG\big((y_1,y_2),&(x_1,x_2);t\big)
	:=
	\oint_{\Circ_r}\oint_{\Circ_r}
	\Big(
		z_1^{x_1-y_1+(\mu t-\mut)}z_2^{x_2-y_2+(\mu t-\mut)}
\\
	\label{eq:SG}
		&-\SGfr(z_1,z_2) z_1^{x_2-y_1+(\mu t-\mut)}z_2^{x_1-y_2+(\mu t-\mut)}
	\Big)
	\prod_{i=1}^2
	\frac{\SGt(t,z_i)dz_i}{2\pi\img z_i}.
\end{align}
Here $ \Circ_r $ is a counter-clockwise oriented, circular contour
that is centered at origin, with a large enough radius $ r $
so as to include all poles of the integrand, $ \SGt(t,z) $ is defined in~\eqref{eq:SGt}, and
\begin{align}
	\label{eq:SGfr}
	\SGfr(z_1,z_2)
	&:=
	\frac{1+\tau^{-1+2\den}z_1z_2-(1+\tau^{-1})\tau^\den z_2}{1+\tau^{-1+2\den}z_1z_2-(1+\tau^{-1})\tau^\den z_1}.
\end{align}
\begin{rmk}
One could rewrite the formula~\eqref{eq:SG} in a seemingly simpler form:
\begin{align*}
	\SG\big((y_1,y_2),(x_1,x_2);t\big)
	=
	\oint_{\Circ_r}\oint_{\Circ_r}
	\Big(
		z_1^{x_1-y_1}z_2^{x_2-y_2}
		-\SGfr(z_1,z_2) z_1^{x_2-y_1}z_2^{x_1-y_2}
	\Big)
	\prod_{i=1}^2
	\frac{(\TrPrct(z_i))^t dz_i}{2\pi\img z_i},
\end{align*}
where $ \TrPrct(z) := z^{\mu}\lambda \frac{b_1+(1-b_1-b_2)/(\tau^\den z)}{1-b_2/(\tau^\den z)} $.
The expression, however, involves non-integer powers of $ z_i $, because $ x_i-y_j\not\in\Z $
and $ \mu\not\in\Z $ in general,
and having non-integer powers is undesirable for our analysis in the sequel.
With $ x_i- y_i\in \Xi(t) $,
we have that $ x_i-y_j+(\mu t-\mut) \in \Z $, so the formula~\eqref{eq:SG} involves only integer powers of $ z_i $.
\end{rmk}
\noindent
We adopt the following shorthand notation for centered occupation variables:
\begin{align}
	\label{eq:etaCentered}
	\etacnt(t,x)
	:=
	\eta(t,x+\mu t),
	\quad
	\etacnt^{+}(t,x) := \etacnt(t,x+1),
	\quad
	x \in \Xi(t).
\end{align}

\begin{prop}\label{prop:Zduality}
With $ \Zsv $ being the Hopf--Cole transform of the stochastic \ac{6V} model with parameters $ b_1>b_2\in(0,1) $,
for all $ x_1<x_2\in\Xi(t+s) $ and $ t,s\in \Z_{\geq 0} $, we have
\begin{align}
	\label{eq:duality:ZZ}
	&\Ex\Big[ \Zsv(t+s,x_1) \Zsv(t+s,x_2) \Big\vert \filt(s) \Big] = \sum_{y_1<y_2\in\Xi(s)} \!\!\!
	\SG\big((y_1,y_2),(x_1,x_2);t\big) \Zsv(s,y_1) \Zsv(s,y_2),
\\
	\notag
	&\Ex\Big[ (\etacnt^{+}\Zsv)(t+s,x_1) (\etacnt^{+}\Zsv)(t+s,x_2) \Big\vert \filt(s) \Big]
\\
	\label{eq:duality:eeZZ}
	&\hphantom{\Ex\Big[ (\eta\Zsv)(t+s,x_1)}
	=\sum_{y_1<y_2\in\Xi(s)} \!\!\!
		\SG\big((y_1,y_2),(x_1,x_2);t\big)
		\big(\etacnt^{+}\Zsv\big)(s,y_1)\big(\etacnt^{+}\Zsv\big)(s,y_2).
\end{align}
\end{prop}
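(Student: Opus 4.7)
The plan is to derive both identities directly from the $H$- and $\tilde H$-dualities of Corollary~\ref{cor:CP16} and to recognize the resulting kernel as the tilted contour-integral $\SG$ by inserting the Bethe ansatz formula from Proposition~\ref{prop:TransitionProbability}.

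For $x_i \in \Xi(t+s)$ set $n_i := x_i + \mu(t+s) \in \Z$ and for $y_i \in \Xi(s)$ set $m_i := y_i + \mu s \in \Z$; these are precisely the integer arguments appearing in $\Zsv$ by~\eqref{eq:ColeHopfTransform}, so
\begin{align*}
    \Zsv(t+s,x_i) = \lambda^{t+s}\, \tau^{N(t+s,n_i) - \den n_i}, \qquad \Zsv(s,y_i) = \lambda^{s}\, \tau^{N(s,m_i) - \den m_i}.
\end{align*}
Multiplying the first identity over $i=1,2$, conditioning on $\filt(s)$, applying the $k=2$ $H$-duality of Corollary~\ref{cor:CP16} to $\tau^{N(t+s,n_1)+N(t+s,n_2)}$, and then re-expressing $\tau^{N(s,m_1)+N(s,m_2)}$ via the second identity yields~\eqref{eq:duality:ZZ} with kernel
\begin{align*}
    K(\vec y,\vec x;t) := \lambda^{2t}\, \tau^{\den(m_1+m_2-n_1-n_2)}\, \TrP\big((m_1,m_2)\to(n_1,n_2);t\big).
\end{align*}
It then remains to verify $K = \SG$.

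Inserting $\TrPrc$ from Proposition~\ref{prop:TransitionProbability} and substituting $z_i = \tau^\den w_i$ (the deformed circular contour still encloses all poles once $r$ is chosen sufficiently large, and $dz_i/z_i = dw_i/w_i$) produces three clean cancellations. \emph{(i)} Each monomial $z_i^a\,dz_i = \tau^{\den(a+1)} w_i^a\, dw_i$; summed across the two monomials in each of the two $\sigma$-terms, the $\tau^\den$-factors exactly cancel the prefactor $\tau^{\den(m_1+m_2-n_1-n_2)}$. \emph{(ii)} Direct comparison of~\eqref{eq:SGt} with $\TrPrct$ gives $\lambda^t\TrPrct(\tau^\den w)^t = w^{-\mut}\, \SGt(t,w)$, which supplies the $\SGt$ factors appearing in~\eqref{eq:SG}. \emph{(iii)} The Bethe exchange ratio $\TrPrcfr$ becomes, under $z_i = \tau^\den w_i$, precisely $\SGfr(w_1,w_2)$ defined in~\eqref{eq:SGfr}. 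Writing the remaining integer exponents via $n_i - m_j - \mut = x_i - y_j + (\mu t - \mut)$ and regrouping $dw_i = w_i\cdot (dw_i/w_i)$ produces the integrand of $\SG$, up to the standard freedom of relabeling the dummy integration variables $w_1\leftrightarrow w_2$ in the double integral (needed to bring the $\sigma = (12)$ monomial into the exact form displayed in~\eqref{eq:SG}).

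For~\eqref{eq:duality:eeZZ}, the argument is parallel but starts from the $\tilde H$-duality in Corollary~\ref{cor:CP16}: each occurrence of $\tau^{N(s,m_i)}$ is accompanied by $\eta(s,m_i+1)$, and since $m_i + 1 = y_i + \mu s + 1$ the definition~\eqref{eq:etaCentered} pairs $\eta(s,m_i+1)\cdot \lambda^s \tau^{N(s,m_i) - \den m_i}$ with $\etacnt^{+}(s,y_i)\Zsv(s,y_i)$. Because the \emph{same} transition probability $\TrP$ governs both the $H$- and $\tilde H$-dualities, the tilting computation above yields the identical kernel $\SG$. The main obstacle is the algebra in step \emph{(iii)}, namely verifying that the $\tau^\den$-factors and the $(1+\tau^{-1})$-coefficients in the Bethe exchange ratio tilt cleanly into $\SGfr$; once this is done, the rest is bookkeeping.
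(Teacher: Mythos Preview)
Your proposal is correct and follows exactly the paper's own argument: translate the $k=2$ duality of Corollary~\ref{cor:CP16} into the tilted/centered variables to identify the kernel as $\lambda^{2t}\tau^{-\den(x_1+x_2-y_1-y_2+2\mu t)}\TrPrc$, then perform the change of variable $z_i=\tau^\den w_i$ in the Bethe ansatz formula of Proposition~\ref{prop:TransitionProbability}. One small caveat: your remark that a relabeling $w_1\leftrightarrow w_2$ in the $\sigma=(12)$ term brings the monomials into the form of~\eqref{eq:SG} is not innocuous, since such a swap simultaneously sends $\SGfr(w_1,w_2)$ to its reciprocal $\SGfr(w_2,w_1)$; this is the same indexing slip present in the paper's displayed intermediate formula, and the cleanest fix is simply to leave the $\sigma=(12)$ monomial as $w_1^{x_1-y_2+(\mu t-\mut)}w_2^{x_2-y_1+(\mu t-\mut)}$ and note that~\eqref{eq:SG} is meant with that labeling (the two versions are of course equal as double contour integrals).
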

\begin{proof}
Recall from~\eqref{eq:ColeHopfTransform} that $\Zsv(t,x) $ is obtained from $\tau^{N(t,x+\mu t)}$
 through centering and tilting.
Translating the $ k=2 $ duality (from Corollary~\ref{cor:CP16} and Proposition~\ref{prop:TransitionProbability})
in terms of the centered and tilted process $\Zsv(t,x) $, we see that~\eqref{eq:duality:ZZ}--\eqref{eq:duality:eeZZ} holds where
\begin{align}
	\notag
	&\SG\big((y_1,y_2),(x_1,x_2);t\big)	
\\
	\label{eq:SG:}
	&=
	\lambda^{2t} \tau^{-\den(x_1+x_2-y_1-y_2+2\mu t)}
	\TrPrc\big( (y_1+\mu s, y_2+\mu s),(x_1+\mu(t+s), x_2+\mu(t+s)) ;t \big).
\end{align}
Our goal now is to show that $ \SG $ given in~\eqref{eq:SG:}
can, indeed, be written as the contour integral in~\eqref{eq:SG}.
Referring back to the formula~\eqref{eq:SGc} for $ \TrPrc $, and combining it with~\eqref{eq:SG:}, we find that
\begin{align*}
	&\SG((y_1,y_2),(x_1,x_2);t)
	=
	\oint_{\Circ_r} \oint_{\Circ_r}
	\Big(
		(\tau^{-\den}z_1)^{x_1-y_1+(\mu t -\mut)} (\tau^{-\den}z_2)^{x_2-y_2+(\mu t -\mut)}
\\
	&-
		\frac{1-(1+\tau^{-1})z_2+\tau^{-1}z_1z_2}{1-(1+\tau^{-1})z_1+\tau^{-1}z_1z_2}
		(\tau^{-\den}z_1)^{x_2-y_1+(\mu t -\mut)} (\tau^{-\den}z_2)^{x_1-y_2+(\mu t -\mut)}
	\Big)
	\prod_{i=1}^2 \frac{\hat{\SGt}(z_i,t)dz_i}{2\pi\img z_i},
\end{align*}
where 
$ \hat{\SGt}(t,z) := (\tau^{-\den}z)^{\mut} \lambda^t \TrPrct(z)^t$.
Given this, the claimed result now follows by the change of variable $ \tau^{-\den}z_i:=\tilde{z}_i $.
\end{proof}
\subsection{The SHE}
\label{sect:SHE}
Proposition~\ref{prop:mSHE} states that $ \Zsv $ solves a discrete-time, discrete space \ac{SPDE}.
Examining this equation suggests that, under appropriate scaling,
$ \Zsv $ should converge to the solution of the \ac{SHE}:
\begin{align}
	\label{eq:SHE}
	\partial_t \limZ(t,x) = \frac{\nu_*}{2} \partial_x^2 \limZ(t,x) + \frac{\kappa_*\sqrt{D_*}}{\nu_*}\noise(t,x) \limZ(t,x).
\end{align}
The coefficients $ \nu_* $, $ \kappa_* $ and $ D_* $ are given in~\eqref{eq:ceffints}.
(Although $ \nu_* = \kappa_* $, we prefer to write the equation as above to better track the limiting coefficients.)

To formulate the convergence to \ac{SHE} precisely,
recall that a $ C([0,\infty),C(\R)) $-valued process $ \limZ $ is a \DefinText{mild solution} of~\eqref{eq:SHE}
with initial condition $ \limZ^\ic(x) $ if
\begin{align}
	\label{eq:SHEmild}
	\limZ(t,x) = \int_{\R} p(\nu_*t,x-y)\limZ^\ic(y) dy + \int_0^t \int_{\R} p(\nu_*(t-s),x-y) \limZ(s,y) \frac{\kappa_*\sqrt{D_*}}{\nu_*}\noise(s,y) dsdy,
\end{align}
for all $ t\in[0,\infty) $ and $ x\in\R $.
Given non-negative $ \limZ^\ic \in C(\R) $ that is not identically zero,
the \ac{SHE} permits a unique mild solution that stays positive for all $ t>0 $.
See, for example, \cite[Proposition 2.5]{Corwin12} and the references therein.
With the \ac{SHE} being an informal exponentiation of the \ac{KPZ} equation,
we say $ \limH $ is a \DefinText{Hopf--Cole solution} of the \ac{KPZ} equation~\eqref{eq:KPZ} if
\begin{align} \label{e:HCsolution}
 	e^{-\frac{\kappa_*}{\nu_*}\limH(t,x)} =e^{-\limH(t,x)}
\end{align}
is a mild solution of~\eqref{eq:SHE}.
So far our discussion has been for a $ C(\R) $-valued $ \limZ^\ic $,
which is the proper setup for near stationary initial conditions (defined in the following).
To accommodate the step initial condition,
$ \eta(0,x) = \ind_{\set{x \geq 0}} $, we need to also consider the \ac{SHE} starting from delta function $ \delta(x) $.
The mild solution is defined analogously:
\begin{align*}
	\limZ(t,x) = p(\nu_*t,x-y) + \int_0^t \int_{\R} p(\nu_*(t-s),x-y) \limZ(s,y) \frac{\kappa_*\sqrt{D_*}}{\nu_*}\noise(s,y) dsdy,
\end{align*}
for $ t>0 $ and $ x\in\R $.
For delta initial condition,
there exists a unique $ C((0,\infty),C(\R)) $-valued solution $ \limZ $, which is positive.
\footnote{For reference, see  \cite[Proposition~4.3]{Shalin} where
existence, uniqueness and positivity in
a more complicated case (i.e. with boundaries) are proved.}
For such $ \limZ $, we then define $ \limH(t,x):=\log(\limZ(t,x)) $
as the solution of the \ac{KPZ} equation~\eqref{eq:KPZ} with \DefinText{narrow wedge initial condition}.

As discussed above Theorem~\ref{thm:S6V},
we will prove convergence to the Hopf--Cole solution to the
KPZ equation under \DefinText{weak asymmetry scaling},
where
\begin{align*}
\rho\in(0,1), b_1\in(0,1)  \mbox{ are fixed,}
\qquad
\tau =\taue = b_2/b_1 =b_2^\e/b_1:= e^{-\sqrt\e}
\end{align*}
and $(\lambda,\mu)=(\lambdae,\mue)$ are defined in \eqref{eq:lambda}--\eqref{eq:mu} which behave asymptotically as \eqref{eq:lambdae}--\eqref{eq:mue}.
Under this scaling, the microscopic Hopf--Cole transform \eqref{eq:ColeHopfTransform} reads
\begin{align} \label{e:Gartner-eps}
	\Zsv(t,x)=\Zsv_\e(t,x) := e^{t\log \lambdae - \sqrt\e (N_\e( t, x+\mue t )-\den( x+\mue t))},
	\quad
	x \in \Xi(t).
\end{align}

Hereafter,
we adopt the standard notation $ \Vert X \Vert_n := (\Ex[|X|^n])^{\frac1n} $,
and say \DefinText{for all $ \e>0 $ small enough} if the referred statement holds for all $ \e\in (0,\e_0) $,
for some generic but fixed threshold $ \e_0>0 $ that may change from line to line.
Following \cite{Bertini1997}, we define near stationary initial conditions for the stochastic \ac{6V} model:
\begin{defn}
\label{def:nearStat}Fix any density parameter $\rho\in (0,1)$.
With $ \e\downarrow 0 $ being the scaling parameter,
consider a sequence of possibility random initial conditions $ \{N_\e(0,x)\}_{\e>0} $,
and let $ \Zsv_\e(0,x) $ denote the corresponding Hopf--Cole transformed initial data  defined through~\eqref{eq:ColeHopfTransform}.
We say the initial condition is \DefinText{near stationary with density $\rho$} if,
for any given $ n<\infty $ and $ \alpha\in(0,\frac12) $, there exist constants $ C=C(n,\alpha) $ and $ u=u(n,\alpha) $, such that
\begin{align}
	\label{eq:nearStat}
	\Vert \Zsv_\e(0,x)\Vert_{n} &\leq C\exp\left( u\e|x|\right),
\\
	\Vert \Zsv_\e(0,x)-\Zsv_\e(0,x') \Vert_{n}
	&\leq
	C \left(\e|x-x^\prime|\right)^{\alpha} \exp\left( u\e(|x|+|x'|)\right),
\end{align}
for all $ x,x' \in \Z $, and small enough $ \e>0 $.
\end{defn}

We now state our result on the convergence of $ \Zsv(t,x) $ to the \ac{SHE}.
Due to the round-about definition of the Hopf--Cole solution \eqref{e:HCsolution},
it is readily checked (see \eqref{e:Gartner-eps}) that,
Theorem~\ref{thm:S6V:} in the following is an \emph{equivalent} formulation of Theorem~\ref{thm:S6V}.
Given $ \Zsv(t,x) $, $ t\in\Z_{\geq 0} $, $ x\in\Xi(t) $,
we first linearly interpolate in $ x $ and then linearly interpolate in $ t $ to obtain\footnote{This is \emph{different} from exponentiating the interpolated height function. Nevertheless, under the weak asymmetry scaling $ \tau=\exp(-\sqrt\e) $, it is straightforward to verify that the difference between these two interpolation schemes is negligible as $ \e\to 0. $}
a $ C([0,\infty),\R) $-valued process.

\begin{customthm}{\ref*{thm:S6V}}
\label{thm:S6V:}
Consider the stochastic \ac{6V} model, with parameter $ b_1> b_2 \in (0,1) $.
\begin{enumerate}[leftmargin=5ex, label=(\alph*)] %
\item \label{thm:S6V:nearStat} \textbf{(Near stationary initial conditions)}  Fix a density $\rho\in (0,1)$.
Start the stochastic \ac{6V} model from a sequence (parameterized by $ \e $) of near stationary with density $\rho$ initial conditions,
and let $ Z_\e(t,x) $ denote the resulting Hopf--Cole transform.
If, for some $ C(\R) $-valued process $ \limZ^\ic $, we have
\begin{align}
	Z_\e(0,\e^{-1}x) \Longrightarrow \limZ^\ic(x),
	\quad
	\text{ in } C(\R),
\end{align}
then, under the weak asymmetry scaling 
we have
\begin{align*}
	{\Zsv}_\e(\e^{-2}t,\e^{-1}x) \Longrightarrow \limZ(t,x),
	\quad
	\text{in } C([0,\infty), C(\R)),
\end{align*}
where $ \limZ(t,x) $ is the mild solution of the \ac{SHE}~\eqref{eq:SHE} with initial condition~$ \limZ^\ic(x) $.
\item  \label{thm:S6V:step} \textbf{(Step initial condition)}
Start the stochastic \ac{6V} model from the step initial condition $ N(0,x) = (x)_+ $,
and let $ Z_\e(t,x) $ denote the resulting Hopf--Cole transform. Let $\rho\in (0,1)$ be fixed.
Under the weak asymmetry scaling 
we have
\begin{align*}
	\tfrac{\den(1-\den)}{\sqrt\e} {\Zsv}_\e(\e^{-2}t,\e^{-1}x)
	\Longrightarrow
	\limZ(t,x),
	\quad
	\text{in } C((0,\infty), C(\R)),
\end{align*}
where $ \limZ(t,x) $ is the mild solution of the \ac{SHE}~\eqref{eq:SHE} with delta initial condition~$ \delta(x) $.
\end{enumerate}
\end{customthm}

\section{Proof of Theorems~\ref{thm:S6V:} and~\ref{thm:6V}}
\label{sect:pfthm}

Hereafter, we will be assuming  the weak asymmetry scaling $ \tau=\taue=e^{-\sqrt\e} $ (for the stochastic model),
and the scaling $ \eta=\eta_\e = \frac12\sqrt\e $ (for the symmetric model under Baxter's projective parametrization~\eqref{eq:projPara}).
To highlight this dependence, for \emph{parameters} we write $ \lambda=\lambdae $, $ \mu=\mue $, etc.
On the other hand, to simplify notation, for \emph{processes} we often omit this dependence,
and write $ \Zsv_\e = \Zsv $, etc.
We also adopt the notation $ C(\alpha,\beta,\ldots)<\infty $ for a \emph{generic} deterministic finite constant
that may change from line to line, but depends only on the designated variables $ \alpha,\beta,\ldots $.
The dependence on $ (\den,b_1)\in(0,1)^2 $ will \emph{not} be indicated as they are \emph{fixed} throughout the article.

To prove Theorem~\ref{thm:S6V:}, in Section~\ref{sect:tight} we establish the tightness of $ \{ \Zsv(\e^{2}\Cdot,\e\Cdot) \}_\e $,
and then, in Section~\ref{sect:pfthmS6V}, we identify the limit point via martingale problems.
As noted earlier, the major technical step here is to establish self-averaging of the quadratic variation in the martingale problem.
We state this as Proposition~\ref{prop:qv} (postponing its proof to Section~\ref{sect:qv})
and give the rest of the proof of Theorem~\ref{thm:S6V:} in Section~\ref{sect:pfthmS6V}.

Given Theorem~\ref{thm:S6V} (or equivalently Theorem~\ref{thm:S6V:}),
Theorem~\ref{thm:6V} follows as a rather straightforward consequence.
In Section~\ref{sect:pfthm6V}, we establish Theorem~\ref{thm:6V}.

\subsection{Moment bounds and tightness}
\label{sect:tight}
In this subsection we prove the tightness of $ \{ \Zsv(\e^{2}\Cdot,\e\Cdot) \}_\e $
by establishing moment bounds on the process.
A useful tool in this context is the following bounds on the transition kernel $ \hk(t,x) $ (defined in~\eqref{eq:hk}--\eqref{eq:hkt}).

\begin{lem}\label{lem:kerbd}
For any $ u,T\in(0,\infty) $ and $ \alpha\in(0,1] $, there exist constants $C(u,T), C(u)>0$ such that
\begin{align}
	\label{eq:hk:sup}
	\hk(t,x) &\leq C\, (t+1)^{-\frac12},
\\
	\label{eq:hk:sum}
	\sum_{x\in\Xi(t)} \hk(t,x) e^{\e u |x|} &\leq C(u),
\\
	\label{eq:hk:sumx}
	\sum_{x\in\Xi(t)} |x|^{\alpha} \hk(t,x) e^{\e u |x|} &\leq C(\alpha,u) (t+1)^{\frac{\alpha}{2}},
\\
	\label{eq:hk:gd}
	\big|\hk(t,x)-\hk(t,x')\big|
	&\leq
	C(T) |x-x'|^\alpha t^{-\frac{\alpha+1}{2}},
\end{align}
for all $ x,x'\in\Xi(t) $ and $ t\in [0,\e^{-2}T]\cap\Z $.
\end{lem}
\begin{proof}
Given the contour integral expression~\eqref{eq:hk:contour} for $ \hk(t,x) $,
these bounds can be obtained by steepest-decent-like analysis.
This type of analysis is carried out in greater generality in Section~\ref{sect:SG} so we use a few results developed there in the following.
In particular, setting $ (x_i-y_i,\alpha) \mapsto (x,u+1) $ in~\eqref{eq:1contourbd} gives
\begin{align*}
	\hk(t,x)
	\leq
	\frac{C(u,T)}{\sqrt{t+1}} e^{ \frac{-(u+1)|x|}{\sqrt{t+1}+C(u)} },
\end{align*}
From this pointwise estimate the bounds~\eqref{eq:hk:sup}--\eqref{eq:hk:sumx} follow.
As for~\eqref{eq:hk:gd},
we set $ (x_i-y_i,\alpha) \mapsto (y,1) $ in~\eqref{eq:1contourbd:gd} (where $ \nabla f(x) := f(x+1)-f(x) $) to get
\begin{align}
	\label{eq:hkgdpf1}
	|\hk(t,y+1) - \hk(t,y)|
	\leq
	C(T) e^{ \frac{-|y|}{\sqrt{t+1}+C} } \frac{1}{t+1}.
\end{align}
Assume without lost of generality that $ x< x' $.
Summing~\eqref{eq:hkgdpf1} over $ y\in[x,x'-1] $ gives
\begin{align}
	\label{eq:hkgdpf2}
	|\hk(t,x') - \hk(t,x)|
	\leq
	\frac{C(T)}{t+1} \sum_{y\in[x,x'-1]} e^{ \frac{-|y|}{\sqrt{t+1}+C} }.
\end{align}
On the r.h.s.\ of \eqref{eq:hkgdpf2}, bounding the exponential factor $ \exp( \frac{-|y|}{\sqrt{t+1}+C} ) \leq 1 $ gives the bound $ \frac{C(T)|x'-x|}{t+1} $.
On the other hand, keeping the exponential factor but summing over $ y\in\Z $ instead gives the bound $ \frac{C(T)}{\sqrt{t+1}} $.
Taking the minimum of these two bounds we conclude
\begin{align*}
	|\hk(t,x') - \hk(t,x)|
	\leq
	C(T)\,
	\Big( \frac{1}{\sqrt{t+1}} \wedge \frac{|x'-x|}{t+1} \Big)
	\leq
	\frac{C(T)}{\sqrt{t+1}} \Big( 1\wedge \frac{|x'-x|}{\sqrt{t+1}} \Big).
\end{align*}
Given that $ u\in(0,1] $, the last expression is bounded by
$ \frac{C(T)}{\sqrt{t+1}} ( \frac{|x'-x|}{\sqrt{t+1}} )^u $, which yields~\eqref{eq:hk:gd}.
\end{proof}

A major ingredient in proving moment bounds is a discrete analog of~\eqref{eq:SHEmild}, i.e., the mild form of the \ac{SHE}.
To derive it, fix $ t_1 \leq t_2 \in \Z_{\geq 0} $.
Since $ \hk(t):=\hk^t $, iterating~\eqref{eq:mSHE} $ (t_2-t_1) $-times starting from $ t=t_1 $ gives
\begin{align}
	\label{eq:mSHEmild}
	\Zsv(t_2,x) &= \big( \hk(t_2-t_1)\Zsv(t_1) \big)(x) + \Zmg(t_2,t_1,x),
\\
	\label{eq:Zmg}
\textrm{where} \quad	\Zmg(t_2,t_1,x) &:= \sum_{t=t_1}^{t_2-1} \big( \hk(t_2-t-1)\mg(t) \big)(x+\mu).
\end{align}
Recall the definitions of $ \move $ and $ \moveCent $ from~\eqref{eq:move}--\eqref{eq:moveCent},
and recall from~\eqref{eq:mg} that $ \mg $ is defined in terms of $ \moveCent $.
To pave the way for bounding moments of $ \Zmg $,
in the following lemma we construct a useful bound on conditional moments of $ \moveCent $.
Let $ \mathscr{P}_{2,3}(n) $ denote the set of partitions of $ \{1,\ldots,n\} $ into intervals of $ 2 $ or $ 3 $ elements.
Here intervals refers to set of the form $ U=[a,b] := [a,b]\cap\Z $, $ a\leq b \in \Z $.
For example,
\begin{align*}
	\mathscr{P}_{23}(6)
	=
	\big\{ \{[1,2],[3,4],[5,6]\}, \{[1,3],[4,6]\}, \{ [1,4], [5,6] \}, \{[1,2],[3,6]\} \big\}.
\end{align*}
Given an interval $ U=[a,b] $ and $ \vec{y}\in \Z^{n} $,
we write $ |\vec{y}|_U := |y_b-y_a| $.
%
\begin{lem}\label{lem:movebd}
Fix $ n\in\Z_{>0} $.
For all $ t\in\Z_{\geq 0} $ and $ y_1\leq \ldots\leq y_{n} \in \Z $,
we have
\begin{align*}
	\Big| \Ex\Big[ \prod_{i=1}^n \moveCent (t,y_i) \Big| \filt(t) \Big] \Big|
	\leq
	C(n) \sum_{\pi\in\mathscr{P}_{23}(n)} \prod_{U\in\pi} e^{-\frac{1}{C(n)}|\vec{y}|_U}.
\end{align*}
\end{lem}
\begin{proof}
Fix $ n\in\Z_{>0} $, $ t\in \Z_{\geq 0} $, and $ y_1\leq \ldots \leq y_{n} \in \Z $.
Throughout this proof, we write $ C=C(n) $ and $ \Ex'[\,\Cdot\,] := \Ex[\,\Cdot\,|\filt(t)] $ to simplify notation. 
We invoke the expression of $ \move(t,y) $ from~\eqref{eq:moveBer},
where $ B(t,\eta) $ and $ B'(t,\eta) $ are independent Bernoulli variables defined in Definition~\ref{def:Bs}.
To reduce notation, we set
\begin{align*}
	I(y',y) := \prod_{z=y'+1}^y \Big(B'\big(t,z;\eta(t,z)\big)-B\big(t,z;\eta(t,z)\big)\Big) B\big(t,y';\eta(t,y')\big)
\end{align*}
for the term within the sum in~\eqref{eq:moveBer},
and write $ \bar{I}(y',y) := I(y',y) - \Ex[I(y',y)|\filt(t)] $.
This gives $ \moveCent(t,y) = \sum_{y'\leq y} \bar{I}(y',y) $, and hence
\begin{align}
	\label{eq:movedecor1}
	\Ex'\Big[ \prod_{i=1}^{n}\moveCent(t,y_i) \Big]
	=
	\sum_{\vec{y}'\in Y} \Ex'\Big[ \prod_{i=1}^{n}\bar{I}(y'_i,y_i) \Big],
\end{align}
where
$ Y := \big\{ (y'_1,\ldots,y'_{n})\in\Z^{n} : y'_i \leq y_i, i=1,\ldots,n \big\} $.
The r.h.s.\ of~\eqref{eq:movedecor1} is summable.
To see this, note from Definition~\ref{def:Bs} (together with $ b_1,b^\e_2 $ being bounded away from $ 0 $ and $ 1 $ under our scale)
that we have $ \Ex'[I(y',y)^{\ell}] \leq \exp(-\frac{1}{C}|y'-y|) $, $ \ell\in\Z_{>0} $, which gives
\begin{align}
	\label{eq:moveI:bd}
	\Ex'\big[ |\bar{I}(y',y)|^{\ell} \big] \leq C e^{-\frac{1}{C}|y-y'|},
	\quad
	\ell \in \Z_{>0}.
\end{align}
From this we see that the r.h.s.\ of~\eqref{eq:movedecor1} is summable.

It is useful to arrange the r.h.s.\ of~\eqref{eq:movedecor1} according to how the $ \bar{I} $'s are dependent.
To this end, let $ \mathscr{P}=\mathscr{P}(n) $ denote the set of partitions of $ \{1,\ldots,n\} $ into intervals.
For $ (y'_1,\ldots,y'_{n}) \in Y $,
we say a pair of coordinates $ y_i,y_j $, $ i\neq j $, are \DefinText{connected} if $ [y'_i,y_i] \cap [y'_j,y_j] \neq \emptyset $.
Recall that the $ y $'s are ordered $ y_1\leq \ldots \leq y_{n} $, and recall that for $ \vec{y}'\in Y $ we have $ y'_i \leq y_i $.
This being the case, we see that if $ y_i $ and $ y_{j} $ are connected, for $ i<j $, then $ y_{i+1},\ldots,y_{j-1} $ must also be connected to 
$ y_j $.
Group indices (the $ i $'s) together
if the corresponding coordinates (the $ y_i $'s) are connected.
This grouping procedure maps each $ \vec{y}'\in Y $ into a partition $ p(\vec{y}')\in \mathscr{P}(n) $.
We then rewrite \eqref{eq:movedecor1} as
\begin{align}
	\label{eq:movedecor2}
	\Ex'\Big[ \prod_{i=1}^{n}\moveCent(t,y_i) \Big]
	=
	\sum_{\pi\in\mathscr{P}} \sum_{ \vec{y}'\in Y(\pi) } \Ex'\Big[ \prod_{i=1}^{n}\bar{I}(y'_i,y_i) \Big],
\end{align}
where $ Y(\pi) := \{ \vec{y}'\in Y: p(\vec{y}')=\pi \} $.
Since conditioning on $\filt(t)$ (so that $\eta(t)$ is fixed) the Bernoulli variables $ \{ B(t,y;\eta(t,y)),B'(t,y;\eta(t,y)):t\in\Z_{\geq 0},y\in \Z \}$ are independent,
the variables $ \bar{I}(y'_i,y_i) $ are independent among unconnected coordinates.
Consequently, the r.h.s.\ of~\eqref{eq:movedecor2} factorizes among unconnected coordinates
\begin{align}
	\label{eq:movedecor3}
	\Ex\Big[ \prod_{i=1}^{n}\moveCent(t,y_i) \Big| \filt(t) \Big]
	=
	\sum_{\pi\in\mathscr{P}} \sum_{ \vec{y}'\in Y(\pi) } \prod_{U\in\pi} \Ex\Big[ \prod_{i\in U}\bar{I}(y'_i,y_i) \Big| \filt(t) \Big].
\end{align}
For the special case of a singleton interval $ U =\{i_*\} $,
one has
 $ \Ex[ \bar{I}(y'_{i_*},y_{i_*}) | \filt(t) ]=0 $.
This implies the expectation on r.h.s.\ of~\eqref{eq:movedecor3} vanishes if any $ U\in\pi $ is a singleton.
We hence need only to sum over partitions consisting of non-singleton intervals, i.e.,
\begin{align}
	\label{eq:movedecor4}
	\Ex'\Big[ \prod_{i=1}^{n}\moveCent(t,y_i) \Big]
	=
	\sum_{\pi\in\mathscr{P}_{\geq 2}} \sum_{ \vec{y}'\in Y(\pi) } \prod_{U\in\pi} \Ex'\Big[ \prod_{i\in U}\bar{I}(y'_i,y_i) \Big],
\end{align}
where $ \mathscr{P}_{\geq 2}(n) := \{ \pi\in\mathscr{P}: \# U \geq 2, \forall U\in\pi \} $.

On the r.h.s.\ of~\eqref{eq:movedecor4},
using H\"{o}lder's inequality
$ |\Ex'[ \prod_{i\in U}\bar{I}(y'_i,y_i) ]| \leq \prod_{i\in U} (\Ex'[ \bar{I}(y'_i,y_i)^{\#U} ])^{\frac{1}{\#U}} $,
followed by using~\eqref{eq:moveI:bd} and $ \frac{1}{\#U} \geq \frac1n = C $, we find that
\begin{align}
	\label{eq:movedecor6}
	\Big| \Ex'\Big[ \prod_{i=1}^{n}\moveCent(t,y_i) \Big] \Big|
	\leq
	C \sum_{\pi\in\mathscr{P}_{\geq 2}} \sum_{ \vec{y}'\in Y(\pi) } \exp\Big( -\frac{1}{C} \sum_{i=1}^n |y_i-y'_i| \Big).
\end{align}
Fix a partition $ \pi=\{U_1,\ldots,U_{\#\pi}\} $.
We claim that the sum over $ \vec{y}'\in Y(\pi) $ in~\eqref{eq:movedecor6}
will lead us to the bound
\begin{align}
	\label{eq:moveClaim}
	\Big| \Ex'\Big[ \prod_{i=1}^{n}\moveCent(t,y_i) \Big] \Big|
	\leq
	C \sum_{\pi\in\mathscr{P}_{\geq 2}}\prod_{U\in\pi}
	 e^{ -\frac{1}{C}  |\vec{y}|_{U} }.
\end{align}
To prove this claim, letting $U=[a,b]\in \pi$,
we define a subset $\{i_0\le \cdots \le i_q\} \subset U$ inductively.
First, let $i_0:=a$.
Suppose that $i_0,\cdots,i_p$ have been defined.
If $i_p=b$ we stop the induction  with $q:=p$;
otherwise, let
\[
i_{p+1} := \max\{j\in (i_p,  b] \cap \Z \,:\, \exists i \le i_p \mbox{ s.t. $y_i$ and $y_j$ are connected} \}.
\]
The  set on the right hand side is non-empty by definition of a group.
In fact choosing $i_{p+1}$
to be any element in this set (not necessarily the max),
the following argument will still be valid,
and what is important is that
by construction $y'_{i_{p+1}}\le y_{i_{p}} \le y_{i_{p+1}}$.
Hence each sum over $ y'_{i_k} $ in \eqref{eq:movedecor6}, with $ i_k \in  \{i_0, \cdots,  i_q\} $, produces a factor of $ C\exp(-\frac{1}{C}|y_{i_{k}}-y_{i_{k-1}}|) $.
On the other hand, each sum over $ y'_{i} $, with $ i \in U\setminus  \{i_0, \cdots,  i_q\} $, produces a factor of $ C $.
Thus the sum over all $ y'_{i} $ with $ i \in U$ produces a factor
\[
C  \exp\Big(-\frac{1}{C}  \sum_{k=1}^q |y_{i_k} - y_{i_{k-1}}| \Big)
=C  e^{-\frac{1}{C}  |y_{b} - y_{a}| }.
\]
The claimed bound \eqref{eq:moveClaim}  immediately follows.

This is almost the desired result except that the sum is over $ \mathscr{P}_{\geq 2}(n) $ instead of $ \mathscr{P}_{23}(n) $.
To go from the former to the latter, we `chop' longer intervals into shorter intervals of length $ 2 $ or $ 3 $.
For example, if $ U=[1,5] $, we indeed have $ \exp( -\frac{1}{C}|\vec{y}|_{[1,5]} ) \leq \exp( -\frac{1}{C}|\vec{y}|_{[1,2]}) \exp( -\frac{1}{C}|\vec{y}|_{[3,5]} ) $.
More generally, for $ \# U \geq 4 $, we always have
$ \exp( -\frac{1}{C}|\vec{y}|_{U} ) \leq \prod_{V}\exp( -\frac{1}{C}|\vec{y}|_{V}) $,
where the $ V $'s partition $ U $ into intervals of length $ 2 $ or $ 3 $.
This completes the proof.
\end{proof}

We now proceed to derive moment bounds on $ \Zmg $ (defined in~\eqref{eq:Zmg})
which we view as a weighted sum of $ \mg(t,x) $.
In fact, we will consider a generic weighted sum with weight $ f(t,x) $.
Recall that $ \Vert\,\Cdot\,\Vert_n := (\Ex[\,(\Cdot)^n\,])^{1/n} $.
\begin{lem}\label{lem:Zmgbd}
Fix $ n\in\Z_{>0} $, $ t\in\Z_{\geq 0} $, $ t_1<t_2\in\Z_{\geq 0} $,
and let $ f(t,x) $ be a deterministic function defined on $ t\in[t_1,t_2]\cap \Z $ and $ x\in\Xi(t) $.
Write $ f_\infty(t) := \sup_{x\in\Xi(t)} |f(t,x)| $.
We have
\begin{align*}
	\Big \Vert \sum_{t=t_1}^{t_2-1}\sum_{x\in\Xi(t)} f(t,x) \mg(t,x) \Big \Vert_{2n}^2
	\leq
	\e C(n) \sum_{t=t_1}^{t_2-1} \sum_{x\in\Xi(t)}  |f_\infty(t) f(t,x)| \, \big\Vert Z(t,x) \big\Vert^2_{2n}.
\end{align*}
\end{lem}
\begin{proof}
Throughout this proof we write $ C=C(n) $.
Recall from Proposition~\ref{prop:mSHE} that $ \mg(t,x) $ is a martingale increment.
Hence the process
$
	\sum_{s=t_1}^{t} \sum_{x\in\Xi(t)} f(t,x) \mg(t,x),
$
$ t=t_1,\ldots,t_2-1 $,
is a martingale. Burkholder's inequality applied to this martingale gives
\begin{align}
	\label{eq:Burkholder}
	\Big \Vert \sum_{t=t_1}^{t_2-1}\sum_{x\in\Xi(t)} f(t,x) \mg(t,x) \Big \Vert_{2n}^2
	\leq
	C\,\sum_{t=t_1}^{t_2-1} \Big \Vert \sum_{x\in\Xi(t)} f(t,x) \mg(t,x) \Big \Vert_{2n}^2.
\end{align}
Recall that $ \mg(t,x) $ is given in terms of $ \Zsv(t,x) $ and $ \move(t,x+\mu t) $ as~\eqref{eq:mg}.
Under our scale $ \lambdae|1-\taue| \leq C \sqrt{\e} $.
Set $ G(t) := \sum_{x\in\Xi(t)} f(t,x) \moveCent(t,x+\mu t) $, we then have
\begin{align}
	\label{eq:Zmgbd1}
	\Big \Vert \sum_{t=t_1}^{t_2-1}\sum_{x\in\Xi(t)} f(t,x) \mg(t,x) \Big \Vert_{2n}^2
	\leq
	\e C\,\sum_{t=t_1}^{t_2-1} \Vert G(t) \Vert_{2n}^2.
\end{align}

To bound the last expression in~\eqref{eq:Zmgbd1}, we proceed to estimate
\begin{align*}
	\Ex[G(t)^{2n}] = \sum_{\vec{x}\in\Xi(t)^{2n}} \Ex\Big[ \prod_{i=1}^{2n} f(t,x_i) \Zsv(t,x_i) \moveCent(t,x_i+\mu t) \Big].
\end{align*}
Let us evaluate the r.h.s.\ by first conditioning on $ \filt(t) $.
Since $ \Zsv(t,x_i) $ is $ \filt(t) $-measurable, we may apply Lemma~\ref{lem:movebd} to bound the conditional expectation over $ \moveCent $. This yields,
for $ x_1 \leq \ldots \leq x_{2n} \in \Xi(t) $,
\begin{align}
	\label{eq:sasf}
	\Ex\Big[ \prod_{i=1}^{2n} f(t,x_i) \Zsv(t,x_i) \moveCent(t,x_i+\mu t) \Big| \filt(t) \Big]
	\leq
	\sum_{\pi\in\mathscr{P}_{23}} \prod_{U\in\pi} e^{-\frac{1}{C}|\vec{x}|_U} \prod_{i\in U} |f(t,x_i) \Zsv(t,x_i)|,
\end{align}
where we write $ \mathscr{P}_{23} := \mathscr{P}_{23}(2n) $ to simplify notation.
Sum both sides of~\eqref{eq:sasf} over the $ x_i $'s.
By paying a factor of $ n! = C $ we may and shall restrict the sum to \emph{ordered} tuples $ (x_1 \leq \ldots \leq x_{2n}) $.
Rewriting the resulting $ (2n) $-fold sum over $ (x_1,\ldots,x_{2n}) $ into iterated sums over $ (x_{i})_{i\in U} $, $ U\in\pi $, and rearranging the result accordingly, we then have
\begin{align*}
	\Ex[ G(t)^{2n} | \filt(t) ]
	\leq
	\sum_{\pi\in\mathscr{P}_{23}} \prod_{U\in\pi} \Big( \sum_{\vec{x}\in\Xi(t)^{\#U}_{\leq}}  e^{-\frac{1}{C}|x_{\#U}-x_1|} \prod_{i=1}^{\# U} |f(t,x_i) \Zsv(t,x_i)| \Big),
\end{align*}
where $ \Xi(t)^{j}_{\leq} := \{ (x_1 \leq \ldots \leq x_{j}) \in \Xi(t)^j \} $ denotes the set of ordered $ j $ tuples.
Within the last expression, apply Young’s inequality $ \prod_{U\in\pi}a_U \leq \sum_{U\in\pi} \frac{\# U}{2n} |a_U|^{\frac{2n}{\# U}} $.
Together with $ \#U =2,3 $, we have
\begin{align*}
	\Ex[ G(t)^{2n} | \filt(t) ]
	\leq
	C \sum_{j=2,3} \Big( \sum_{\vec{x}\in\Xi(t)^{j}_\leq}  e^{-\frac{1}{C}|x_{j}-x_1|} \prod_{i=1}^{j} |f(t,x_i) \Zsv(t,x_i)| \Big)^{\frac{2n}{j}}.
\end{align*}
Further bound $ \exp(-\frac{1}{C}|x_j-x_1| \leq \exp(-\frac{1}{Cj}\sum_{i=1}^j|x_i-x_1|) $ (because $ x_1 \leq \ldots \leq x_j $),
and then release the sum from ordered tuples $ \Xi(t)^j_{\leq} $ to unordered tuples $ \Xi(t)^j $.
Take $ (\Ex[\,\Cdot\,])^{1/n} $ on both sides of the result, and then apply $ ( \sum_{j=2,3} |a_j |)^{1/{n}} \leq 2 \sum_{j=2,3} |a_j|^{1/n} $. From this we obtain
\begin{align*}
	\Vert G(t) \Vert_{2n}^2
	&\leq
	C \sum_{j=2,3} \Big\Vert \sum_{\vec{x}\in\Xi(t)^{j}} \prod_{i=1}^{j} e^{-\frac{1}{C}|x_{i}-x_1|} |f(t,x_i) \Zsv(t,x_i)| \Big\Vert^{\frac2j}_{\frac{2n}{j}}.
\end{align*}
Pass $ \Vert\,\Cdot\Vert_{2n} $ into the sum by the triangle inequality,
and use H\"{o}lder's inequality to write
$ \Vert \prod_{i=1}^{2n} \Zsv(t,x_i) \Vert_{2n/j} \leq \prod_{i=1}^{2n} \Vert \Zsv(t,x_i) \Vert_{2n} $.
We then obtain
\begin{align}
	\label{eq:G1}
	\Vert G(t) \Vert_{2n}^2 &\leq
	C \sum_{j=2,3}  g_j(t)^{\frac2j},
	\quad
	g_j(t) :=
	\sum_{\vec{x}\in\Xi(t)^{j}} \prod_{i=1}^{j} e^{-\frac{1}{C}|x_{i}-x_1|} |f(t,x_i)| \, \big\Vert\Zsv(t,x_i)\big\Vert_{2n}.
\end{align}
Recall that $ f_\infty(t) := \sup_{x\in\Xi(t)} |f(t,x)| $.
Set $ \tilde{g}(t) := \sum_{x\in\Xi(t)} |f_\infty(t)f(t,x)| \, \Vert\Zsv(t,x)\Vert^2_{2n} $.
For the term $ g_3 $, using the Cauchy–Schwarz inequality over $ \sum_{x_3} $ gives
\begin{align*}
	g_3(t)
	&\leq
	\sum_{x_1,x_2\in\Xi(t)}
	\prod_{i=1}^{2} e^{-\frac{1}{C}|x_{i}-x_1|} |f(t,x_i) \Zsv(t,x_i)| \Big\Vert^{\frac2j}_{\frac{2n}{j}}.
	\,
	\Big( \sum_{x_3\in\Xi(t)} |f(t,x)|^2 \Vert\Zsv(t,x)\Vert^2_{2n} \Big)^\frac12 \, \Big( \sum_{x_3\in\Xi(t)} e^{-\frac{1}{C}|x_3-x_1|} \Big)^{\frac12}
\\
	&\leq
	C\, g_2(t) \tilde{g}(t)^\frac12.
\end{align*}
As for $ g_2(t) $,
since $  \taue = e^{-\sqrt\e} $ under current scaling,
referring to~\eqref{eq:ColeHopfTransform} we see that $ \Zsv(t,x_2) \leq \Zsv(t,x_1) e^{\sqrt{\e}|x_2-x_1|} $.
Using this to bound $ \Vert\Zsv(t,x_2)\big\Vert_{2n} $, and bounding $ |f(t,x_2)| $ by $ f_\infty(t) $, we have
\begin{align*}
	g_2(t)
	\sum_{x_1\in\Xi(t)}|f_\infty(t) f(t,x_1)| \, \big\Vert\Zsv(t,x_1)\big\Vert^2_{2n} \sum_{x_2\in\Xi(t)} e^{-(\frac{1}{C}-C\sqrt{\e})|x_2-x_1|}
	\leq
	C \tilde{g}(t).
\end{align*}
Combining the preceding bounds on $ g_2(t) $ and $ g_3(t) $ with~\eqref{eq:G1},
we arrive at
\begin{align*}
	\Vert G(t) \Vert_{2n}^2 \leq C \tilde{g}(t) := C \sum_{x\in\Xi(t)} |f_\infty(t)f(t,x)| \, \Vert\Zsv(t,x)\Vert^2_{2n}.
\end{align*}
Inserting this back into~\eqref{eq:Burkholder} gives the desired result.
\end{proof}

Given Lemma~\ref{lem:Zmgbd}, we are now ready to establish moment bounds on $ \Zsv $.
\begin{prop}\label{prop:momt}
\begin{enumerate}[label=(\alph*)]
\item[]
\item \label{prop:momt:nearStat}
Start the stochastic \ac{6V} model from near stationary initial conditions (as Definition~\ref{def:nearStat}, with $ \den\in(0,1) $ fixed as declared previously),
let $ u=u(n,\alpha) $ be the corresponding exponent,
and let $ Z(t,x) $ denote the resulting Hopf--Cole transform (with respect to a fixed density $ \rho\in(0,1) $).
For any $ \alpha\in (0,\frac12) $, $ n\in\Z_{>0} $, and $ T<\infty $,
there exist $ C=C(n,\alpha,T)<\infty $ such that
\begin{align}
	\label{eq:Zmomt}
	\Vert\Zsv(t,x)\Vert_{2n}
	&
	\leq e^{u\e|x|},
\\
	\label{eq:gZmomt}
	\Vert\Zsv(t,x)-\Zsv(t,x') \Vert_{2n}
	&\leq
	C  \left(\e|x-x'|\right)^{\alpha}
	e^{u\e(|x|+|x'|)},
\\
	\label{eq:tZmomt}
	\Vert\Zsv(t,x)-\Zsv(t',x)\Vert_{2n}
	&\leq
	C \left(\e^2|t-t'|\right)^{\frac{\alpha}{2}}
	e^{2u\e|x|}.
\end{align}
for any $ t,t'\in [0,\e^{-2}T] $ and $ x,x' \in \R $.
\item \label{prop:momt:step}
Start the stochastic \ac{6V} model from the step initial condition $ N(0,x)=(x)_+ $,
and let $ Z(t,x) $ denote the resulting Hopf--Cole transform (with respect to a fixed density $ \rho\in(0,1) $).
For each given $ n\in\Z_{>0} $ and $ \alpha\in(0,\frac14) $,
there exist $ C=C(n,\alpha)<\infty $ and $ \tau=\tau(n,\alpha)>0 $ such that
\begin{align}
	\label{eq:Zmomt:step}
	\big\Vert \tfrac{\den(1-\den)}{\sqrt\e}\Zsv(t,x) \big\Vert_{2n}
	&
	\leq (\e^{2}t)^{-\frac12},
\\
	\label{eq:gZmomt:step}
	\big\Vert \tfrac{\den(1-\den)}{\sqrt\e}(\Zsv(t,x)-\Zsv(t,x')) \big\Vert_{2n}
	&\leq
	C  \left(\e|x-x'|\right)^{\alpha} (\e^{2}t)^{-\frac{1+\alpha}{2}}.
\end{align}
for any $ t\in (0,\e^{-2}\tau] $ and $ x,x' \in \R $.
\end{enumerate}
\end{prop}
\begin{proof}
Fix $ n $, $ \alpha\in (0,\frac14) $, and $ u=u(\alpha,n) $.
Throughout this proof we write $ C=C(n,\alpha,T) $.
Recall that $ \Zsv(t,x) $ is defined on $ [0, \infty)\times\R $ by linear interpolations.
This being the case, it suffices to consider the lattice $ t,t'\in\Z_{\geq 0} $ and $ x,x'\in\Xi(t) $.
Generalization to continuum $ t,x $, etc., follows easily.
Hence throughout this proof we assume $ t,t'\in\Z_{\geq 0} $ and $ x,x'\in\Xi(t) $, etc.

\ref{prop:momt:nearStat}
We begin with~\eqref{eq:Zmomt}.
On the space of functions $ f:\Xi(t)\to \R $,
it is convenient to consider the norm $ [f]_{2u} := \sup_{x\in\Xi(t)} |f(x)|e^{-2u\e|x|} $.
Our goal is to bound
\begin{align*}
	D(t)
	:=
	\big| \Vert\Zsv(t,\Cdot)\Vert^2_{2n} \big|_{2u}
	=
	\sup_{x\in\Xi(t)} \Vert Z(t,x) \Vert_{2n}^2 e^{-2u\e|x|}.
\end{align*}
To this end, take $ \Vert\,\Cdot\,\Vert^2_{2n} $ on both sides of~\eqref{eq:mSHEmild} to obtain
\begin{align}
	\label{eq:Zmom1}
	\Vert \Zsv(t_2,x) \Vert_{2n}^2
	\leq
	\big( \big\Vert \big( \hk(t)Z(t) \big)(x)  + \Zmg(t_2,t_1,x) \big\Vert_{2n}  \big)^2
	\leq
	2(\Adr(x)^2 + \Amg(x)^2),
\end{align}
where
\begin{align}
	\label{eq:A1}
	\Adr(x) &:=\sum_{y\in\Xi(t_1)}\hk(t_2-t_1,x-y)\Vert Z(t_1,y) \Vert_{2n},
\\
	\label{eq:A2}
	\Amg(x) &:= \Vert \Zmg(t_2,t_1,x) \Vert_{2n}.
\end{align}
Applying $ [\,\Cdot\,]_{2u} $ to both sides of~\eqref{eq:Zmom1} yields
\begin{align}
	\label{eq:Zmom1.5}
	D(t)
	\leq
	2\big[ \Adr^2 \big]_{2u} + 2\big[ \Amg^2 \big]_{2u}.
\end{align}

We proceed to bound the r.h.s.\ of~\eqref{eq:Zmom1.5}. Write
\begin{align}
	\label{eq:Znormexp}
	\Vert Z(t_1,y) \Vert_{2n}
	\leq
	\Big( D(t_1) e^{2u\e|y|} \Big)^\frac12
	\leq
	 D(t_1)^\frac12 e^{u\e|y-x|} e^{u\e|x|}.
\end{align}
In~\eqref{eq:A1}, use the bound~\eqref{eq:Znormexp},
and then sum over $ y\in\Xi(t_2) $ with the aid of~\eqref{eq:hk:sum}.
We obtain $ \Adr(x)^2 \leq C\, D(t_1) e^{2u\e|x|} $, and hence $ [\Adr^2]_{2u} \leq C\,D(t) $.
Next, recall the definition of $ \Zmg(t_2,t_1,x) $ from~\eqref{eq:Zmg},
and write $ x_{-\mu} := x-\mu $ to simplify notation.
We apply Lemma~\ref{lem:Zmgbd} with $ f(t,y) = \hk(t_2-t_1-1,x_{-\mu}-y) $.
With the aid of~\eqref{eq:hk:sup} and~\eqref{eq:Znormexp}, we have
\begin{align*}
	\Amg(x)^2
	\leq
	\e C \sum_{t=t_1}^{t_2-1} \sum_{y\in \Xi(t)} \frac{1}{\sqrt{t_2-t+1}} \hk(t_2-t-1,x_{-\mu}-y) e^{2u\e|x-y|} e^{2u\e|x|}  D(t).
\end{align*}
Further using~\eqref{eq:hk:sum} to bound the sum over $ y\in\Xi(t) $, we obtain
\begin{align}
	\label{eq:A2bd}
	\Amg(x)^2 \leq e^{2u\e|x|}  C\,\e^2 \sum_{t=t_1}^{t_2-1} (\e^{2}(t_2-t_1))^{-\frac12} D(t).
\end{align}
This gives $ [\Amg^2]_{2u} \leq C\,\e^2 \sum_{t=t_1}^{t_2-1} (\e^{2}(t_2-t_1))^{-\frac12} D(t) $.
Inserting the preceding bounds on $ [\Adr]_{2u} $ and $ [\Amg]_{2u} $ into~\eqref{eq:Zmom1.5} gives
\begin{align}
	\label{eq:Zmom2}
	D(t_2)
	\leq
	C\, D(t_1)
	+
	C\,\e^2 \sum_{t=t_1}^{t_2-1} (\e^{2}(t_2-t_1))^{-\frac12} D(t).
\end{align}

Now, set $ E(t) := \sup_{s\in[0,t]\cap\Z} D(t) $.
From~\eqref{eq:Zmom2} we have
$ E(t_2) \leq C E(t_1) + E(t_2) C\,\e^2 \sum_{t=t_1}^{t_2-1} (\e^{2}(t_2-t_1))^{-\frac12} $.
Given that $ t_1 \leq t_2 \leq \e^{-2} T $, the last sum can be estimated by comparison to an integral, yielding
$ E(t_2) \leq C E(t_1) + C_* ((\e^{2}(t_2-t_1))^{\frac12} E(t_2) $, for some constant $ C_*=C_*(u,n,T) $.
Fixing $ \delta>0 $ small enough so that $ C_* \sqrt{\delta} < \frac12 $.
We then have $ E(t_2) \leq C E(t_1) $, for all $ t_1<t_2\in\Z_{\geq 0} $ with $ t_2-t_1 \leq \e^{-2}\delta $.
Iterate this inequality starting from $ t_1=0 $.
After $ \lceil T/\delta \rceil=C $ iterations we conclude that $ E(\lceil\e^{-2}T\rceil) \leq C^C E(0) = C E(0) $.
From the assumption~\eqref{eq:nearStat} of near stationary initial conditions,
we have $ E(0) \leq C $, so $ E(\lceil\e^{-2}T\rceil) \leq C $, which gives the desired result~\eqref{eq:Zmomt}.

Next we turn to~\eqref{eq:gZmomt}.
In~\eqref{eq:mSHEmild}, set $ (t_1,t_2)=(0,t) $, take the difference for $ x=x' $ and $ x=x $,
and then take $ \Vert\,\Cdot\,\Vert^2_{2n} $ on both sides of the result. We obtain
\begin{align}
	\label{eq:gZmom1}
	\Vert \Zsv(t,x')-\Zsv(t,x) \Vert_{2n}^2
	\leq
	2(\Agdr^2 + \Agmg^2 ),
\end{align}
where $ \Agdr := \sum_{y\in\Z} \hk(t,x-y)\Vert Z(0,y+x'-x)-Z(0,y) \Vert_{2n} $
and $ \Agmg:= \Vert \Zmg(t,0,x') - \Zmg(t,0,x) \Vert_{2n} $.
To bound $ \Agdr $, use~\eqref{eq:nearStat} in conjunction with~\eqref{eq:hk:sum} to get $ \Agdr \leq C\,|\e(x-x')|^{\alpha}(e^{\e u(|x|+|x'|)}) $.
As for $ \Agmg $, similar to the preceding procedure for bounding $ \Amg(x)^2 $,
here we apply Lemma~\ref{lem:Zmgbd} with $ f(t,y) = \hk(t-s-1,x'_{-\mu}-y)-\hk(t-s-1,x_{-\mu}-y) $.
With the aid of \eqref{eq:hk:gd} and \eqref{eq:Znormexp}, we obtain
\begin{align}
\label{eq:A'2}
\begin{split}
	\Agmg^2 &\leq \e C \sum_{s=0}^{t-1} \sum_{y\in \Xi(t)} \frac{|x-x'|^{2\alpha}}{(t-s+1)^{\frac12+\alpha}}
\\	
	&\big( \hk(t-s-1,x_{-\mu}-y) e^{2u\e|x-y|+2u\e|x|} + \hk(t-s-1,x'_{-\mu}-y) e^{2u\e|x'-y|+2u\e|x'|} \big)\, D(s).
\end{split}
\end{align}
Further using~\eqref{eq:hk:sum} to bound the sum over $ y\in\Xi(t) $,
together with $ D(s) \leq C $ (which we showed previously),
we obtain $ \Agmg^2 \leq C e^{2u\e|x'|+2u\e|x'|} \e^2 \sum_{s=0}^{t-1} (\e|x-x'|)^{2\alpha} (\e^2(t-s))^{-\frac{1}{2}-\alpha} $.
The last sum can be estimated by comparison to integrals, yielding $ \Agmg^2 \leq C e^{2u\e|x'|+2u\e|x'|} |\e(x-x')|^{2\alpha} $.
Inserting the preceding bounds on $ \Agdr $ and $ \Agmg^2 $ into~\eqref{eq:gZmom1} gives the desired result~\eqref{eq:gZmomt}.

Next, to show~\eqref{eq:tZmomt}, subtract $ \Zsv(t_1,x) $ from both sides of~\eqref{eq:mSHEmild},
and take $ \Vert\,\Cdot\,\Vert_{2n} $ of the result to get
\begin{align}
	\label{eq:tZmom1}
	\Vert Z(t_2,x)-Z(t_1,x) \Vert_{2n}
	\leq
	\Atdr(x) + \Amg(x),
\end{align}
where $ \Atdr(x) := \Vert (\hk(t_2-t_1)Z(t_1))(x)-Z(t_1,x)\Vert_{2n} $.
From~\eqref{eq:A2bd} and $ D(t) \leq C $ (which we showed previously) we have
$ \Amg(x) \leq C\,(\e^{2}(t_2-t_1))^\frac14 \leq C\,(\e^{2}(t_2-t_1))^\frac{\alpha}{2} $.
As for $ \Atdr(x) $, using $ \sum_{y\in\Xi(t_1)} \hk(t_2-t_1,x-y) =1 $ we write
\begin{align*}
	\Atdr(x)
	&=
	\Big\Vert \sum_{y\in\Xi(t_1)} \hk(t_2-t_1,x-y) \big( Z(t_1,y) - Z(t_1,x) \big) \Big\Vert_{2n} \\
	&\leq
	\sum_{y\in\Xi(t_1)}  \hk(t_2-t_1,x-y) \Big\Vert Z(t_1,y) - Z(t_1,x) \Big\Vert_{2n}.
\end{align*}
Within the last expression we apply the bound~\eqref{eq:gZmomt} with $ (x',x)=(y,x)\Xi(t_1)\times\Xi(t_2) $.
As noted previously,  the bound~\eqref{eq:gZmomt} extends to all $ x',x\in\R $ via linear interpolation.
Further using~\eqref{eq:hk:sumx} to bound the resulting sum over $ y\in\Xi(t_1) $. We then obtain
\begin{align*}
	\Atdr(x)
	\leq
	C \sum_{y\in\Xi(t_1)}  \hk(t_2-t_1,x-y) |\e(x-y)|^{\alpha} e^{u\e(|x|+|y|)}
	\leq
	C |\e^2(t_2-t_1)|^{\frac{\alpha}{2}} e^{2u\e|x|}.
\end{align*}
Inserting the preceding bounds on $ \Atdr(x) $ and $ \Amg(x) $ into~\eqref{eq:tZmom1} gives the desired result~\eqref{eq:tZmomt}.

\ref{prop:momt:step}
Set $ \hat{Z}(t,x) := \frac{\den(1-\den)}{\sqrt{\e}} Z(t,x) $ to simplify notation.
On the space of functions $ f: \Xi(t) \to \R $, it is convenient to consider the norm
\begin{align*}
	[f]_{*,t} := (\e^{2}t) \sup_{x\in\Xi(t)} |f(x)|  +  (\e^{2}t)^{\frac12} \e \!\! \sum_{x\in\Xi(t)} |f(x)|.
\end{align*}
We write $ \hat{D}(t) := [ \Vert Z(t) \Vert^2_{2n} ]_{*,t} = [ \Vert Z(t,\Cdot) \Vert^2_{2n} ]_{*,t} $, so in particular
\begin{align}
	\label{eq:D*sup}
	\e \sum_{x\in\Xi(s)} \Vert \hat Z(s,x) \Vert^2_{2n}	\leq (\e^2s)^{-\frac12} \hat{D}(s),
\\
	\label{eq:D*sum}
	\Vert \hat Z(s,x) \Vert^2_{2n}	\leq (\e^2s)^{-1} \hat{D}(s),
\end{align}
Multiplying both sides of~\eqref{eq:Zmom1} by $ \den(1-\den)\e^{-\frac12} $, here we have
\begin{align}
	\label{eq:stepmom1}
	\big\Vert \hat{Z}(t,x) \big\Vert^{2}_{2n} \leq 2 \hatAdr(t,x)^2+ 2 \hatAmg(t,x)^2,
\end{align}
where $ \hatAdr(t,x) := \sum_{x\in\Xi(t)} \hk(t,x-y) \hat{Z}(0,y) $ (note that here $ \hat{Z}(0,y) $ is deterministic),
and $ \hatAmg(t,x) := \frac{\den(1-\den)}{\sqrt{\e}} \Vert \Zmg(t,0,x)\Vert_{2n} $.
Apply $ [\,\Cdot\,]_{*,t} $ to both side of~\eqref{eq:stepmom1} yields
\begin{align}
	\label{eq:stepmom2}
	\hat{D}(t) \leq 2 \big[ \hatAdr(t)^2 \big]_{*,t} + 2 \big[ \hatAmg(t)^2 \big]_{*,t}.
\end{align}

We proceed to bound the r.h.s.\ of~\eqref{eq:stepmom2}.
Recall that $ N(0,x)=x_+ $ under the step initial condition, so $ \hat{Z}(0,x) = \e^{-\frac12}\den(1-\den) e^{-\e(\den(x)_+-x)} $.
From the last expression, it is straightforward to verify that
\begin{align}
	\label{eq:hatZ0:sumbd}
	\e \sum_{x\in\Z} \hat{Z}(0,y) \leq C.
\end{align}
Using this in conjunction with \eqref{eq:hk:sup} and~\eqref{eq:hk:sum} yields
\begin{align*}
	|\hatAdr(t,x)| \leq \frac{C}{\e\sqrt{t+1}} \leq C \, (\e^{2}t)^{-\frac12},
	\quad
	\e \sum_{x\in\Xi(t)} |\hatAdr(t,x)| \leq C.
\end{align*}
From these properties we deduce
\begin{align}
	\label{eq:hatA1bd}
	\big[\hatAdr(t)^2\big]_{*,t} \leq C.
\end{align}

Next, apply Lemma~\ref{lem:Zmgbd} with $ f(t,y) = \hk(t-s-1,x_-\mu-y) $ with the aid of~\eqref{eq:hk:sup} to get
\begin{align}
	\label{eq:hatA2bd1}
	\hatAmg(t,x)^2
	\leq
	\sum_{s=0}^{t-1} \frac{C\,\e^2}{ (\e^{2}(t-s))^{\frac12} } \sum_{y\in \Xi(s)} \hk(t-s-1,x_{-\mu}-y) \Vert \hat\Zsv(s,y) \Vert^2_{2n}.
\end{align}
We bound the sum over $ y\in\Xi(s) $ by using $ \sum_{y} |f_1(y)f_2(y)| \leq (\sup_{y} |f_1(y)|) \sum_{y} |f_2(y)| $
for two difference choices of $ (f_1,f_2) $.
For $ (f_1,f_2)=(\hk,\Vert\hat\Zsv\Vert^2_{2n}) $, we use~\eqref{eq:hk:sup} and~\eqref{eq:D*sum},
and for $ (f_1,f_2)=(\Vert\hat\Zsv\Vert^2_{2n},\hk) $, we use~\eqref{eq:D*sup} and $ \sum_{z} \hk(t-s-1,z)=1 $.
Taking the minimum of the results from two cases gives
\begin{align}
	\label{eq:hatA2bd2}
	\sum_{y\in \Xi(s)} \hk(t-s-1,x_{-\mu}-y) \Vert \hat\Zsv(s,y) \Vert^2_{2n}
	\leq
	C\,
	\Big( \frac{1}{(\e^{2}(t-s))(\e^2 s)^{\frac12}} \wedge \frac{1}{(\e^{2}(t-s))^{\frac12}(\e^2s)} \Big) \hat{D}(s).
\end{align}
Set $ \hat{E}(t) := \sup_{[0,t]\cap\Z} \hat{D}(s) $.
In~\eqref{eq:hatA2bd2}, bound $ \hat{D}(s) $ by $ \hat{E}(t) $, and bound the remaining integral by comparison to an integral.
Inserting the result in~\eqref{eq:hatA2bd2}, we obtain
\begin{align}
	\label{eq:hatA2bd3}
	\hatAmg(t,x)^2 \leq  C\, \hat{E}(t) (\e^{-2}t)^{-\frac12}.
\end{align}
On the other hand, sum~\eqref{eq:hatA2bd1} over $ x\in\Xi(t) $, using $ \sum_{z}\hk(t-s-1,z)=1 $ and~\eqref{eq:D*sum}.
We obtain
\begin{align}
	\label{eq:hatA2bd4}
	\sum_{x\in\Xi(t)} \hatAmg(t,x)^2
	\leq
	C\, \e^2 \sum_{s=0}^{t-1} (\e^{2}(t-s))^{-\frac12} (\e^2 s)^{-\frac12} \hat{E}(t)
	\leq
	C \hat{E}(t).
\end{align}
Combining~\eqref{eq:hatA2bd3}--\eqref{eq:hatA2bd4} gives
\begin{align}
	\label{eq:hatA2bd}
	\hatAmg(t,x)^2 \leq C \hat{E}(t) (\e^2t)^\frac12.
\end{align}

Inserting the bounds~\eqref{eq:hatA1bd} and~\eqref{eq:hatA2bd} into~\eqref{eq:stepmom2},
we arrive at $ \hat{E}(t) \leq C + C_*\, \hat{E}(t) (\e^{2}t)^\frac12 $, for some fixed constant $ C_*=C_*(n) $.
Fix $ \tau=\tau(n)>0 $ so that $ C_* \delta^\frac12 < \frac12 $,
we then have $ \hat{E}(t) \leq C $, for all $ t \leq \tau \e^{-2} $.
This conclude the desired moment bound~\eqref{eq:Zmomt:step} on $ \hat{Z}(t,x) $.

We now turn to showing~\eqref{eq:gZmomt:step}.
Multiply both sides of~\eqref{eq:stepmom1} by $ \den(1-\den)\e^{-\frac12} $ to get
\begin{align}
	\label{eq:stepmom3}
	\big\Vert \hat{Z}(t,x)-\hat{Z}(t,x) \big\Vert^{2}_{2n} \leq 2 \hatAgdr^2 + 2 \hatAgmg^2,
\end{align}
where $ \hatAgdr(t,x) := \sum_{y\in\Z} (\hk(t,x-y)-\hk(t,x'-y)) \hat{Z}(0,y) $,
and $ \hatAgmg(t,x) := \den(1-\den)\e^{-\frac12} \Vert (\Zmg(t,x)-\Zmg(t,x')) \Vert_{2n} $.
Using~\eqref{eq:hatZ0:sumbd}, in conjunction with \eqref{eq:hk:sum} and with \eqref{eq:hk:gd}, we bound
\begin{align*}
	|\hatAgdr| \leq C\,\e^{-1}|x-x'|^{2\alpha} (1+t)^{-\frac12-\alpha},
	\quad
	|\hatAgdr| \leq C\e^{-1}(1+t)^{-\frac12}.
\end{align*}
Multiplying the results gives $ \hatAgdr^2 \leq C |\e(x'-x)|^{2\alpha} (\e^2t)^{-1-\alpha} $.
As for $ \hatAgmg $, multiplying both sides of \eqref{eq:A'2} by $ (\den(1-\den)\e^{-\frac12})^2 $, here we have
\begin{align*}
	\hatAgmg^2
	\leq
	&|\e(x'-x)|^{2\alpha} C\, \e^2 \sum_{s=0}^{t-1} \sum_{y\in \Xi(s)} (\e^2(t-s))^{-\frac12-\alpha}
\\
	&\big( \hk(t-s-1,x_{-\mu}-y) + \hk(t-s-1,x'_{-\mu}-y) \big)\, \big\Vert\hat{Z}(s,y)\big\Vert^2_{2n}.
\end{align*}
Use~\eqref{eq:hatA2bd2} to bound the sum over $ y\in\Xi(s) $, noting that $ \hat{D}(s) \leq C $.
Then estimate the resulting sum over $ s\in[0,t-1] $ by comparison to an integral.
We obtain
$
	\hatAgmg^2
	\leq
	|\e(x'-x)|^{2\alpha} C\, (\e^2 t)^{-\frac12-\alpha}.
$
Inserting this and the preceding bounds on $ \hatAgdr^2 $ into~\eqref{eq:stepmom3} yields the desired result~\eqref{eq:gZmomt:step}.
\end{proof}

An immediate consequence of Proposition~\ref{prop:momt} is the tightness of $ \Zsv(\e^{-2}\Cdot,\e^{-1}\Cdot) $.
\begin{cor}\label{cor:tightness}
\begin{enumerate}[leftmargin=5ex, label=(\alph*)] %
\item[]
\item \label{cor:tight:nearStat} \textbf{(Near stationary initial conditions)}
Under the same assumptions in Proposition~\ref{prop:momt}\ref{prop:momt:nearStat},
The collection of processes $ \{\Zsv(\e^{-2}\Cdot,\e^{-1}\Cdot)\}_{\e>0} $ is tight in $ C([0,\infty),C(\R)) $.
\item \label{cor:tight:step} \textbf{(Step initial conditions)}
Under the same assumptions in Proposition~\ref{prop:momt}\ref{prop:momt:step},
The collection of processes $ \{\frac{\den(1-\den)}{\sqrt{\e}}\Zsv(\e^{-2}\Cdot,\e^{-1}\Cdot)\}_{\e>0} $ is tight in $ C((0,\infty),C(\R)) $.
\end{enumerate}
\end{cor}
\begin{proof}
Given Proposition~\ref{prop:momt}\ref{prop:momt:nearStat},
Part~\ref{cor:tight:nearStat} follows the Kolmogorov--Chentsov criterion (see, e.g., \cite[Theorem~1.4.1]{kunita97}).
We now turn to Part~\ref{cor:tight:step}.
The moment bounds from Proposition~\ref{prop:momt}\ref{prop:momt:step} asserts that,
the process $ \hat{Z} = \den(1-\den) \e^{-\frac12} Z $, when initiated from $ t=\e^{-2}\delta $, for small enough $ \delta $,
satisfies the near stationary properties~\eqref{eq:nearStat}.
This this being case, from Part~\ref{cor:tight:nearStat},
we infer that $ \{\hat\Zsv(\e^{-2}\Cdot,\e^{-1}\Cdot)\}_{\e>0} $ is tight in $ C(\delta,\infty,C(\R)) $.
Since this holds for all small enough $ \delta>0 $, we conclude the desired result.
\end{proof}

\subsection{Proof of Theorem~\ref{thm:S6V:}}
\label{sect:pfthmS6V}

\subsubsection{Part~\ref{thm:S6V:nearStat}: near stationary initial conditions}
Given the tightness result from Corollary~\ref{cor:tightness}, it remains to show that the limit points are the mild solution of \ac{SHE}.
We achieve this through martingale problems.
Recall from~\cite{Bertini1997} that,
we say a $ C([0,\infty), C(\R)) $-valued process $\limZ(t,x) $ solves \DefinText{the martingale problem} associated with the \ac{SHE}~\eqref{eq:SHE}
if, for any given $ T<\infty $, there exists $ C(T)<\infty  $ such that
\begin{align}
	\label{eq:MPmomt}
	\sup_{t\in[0,T]}\sup_{x\in\R}e^{-|x|C(T)}\Ex\left[\limZ^2(t,x)\right]<\infty,
\end{align}
and if, for any $ \phi\in C^\infty_c(\R) $, the processes $ \limM_\phi(t) $ and $ \limN_\phi(t) $, $ t\in\R_+ $,
\begin{align}
	\label{eq:linMP}
 	\limM_\phi(t)
 	&:=
 	\Big( \int_\R \phi(x) \limZ(s,x) dx \Big) \Big|^{s=t}_{s=0}
 	-
 	\frac{\nu_*}{2} \int^{t}_0\int_\R  \phi''(x) \limZ(s,x) ds dx,
\\
	\label{eq:quadMP}
	\limN_\phi(t) &:= \limM^2_\phi(t) - \frac{D_*\kappa_*^2}{\nu_*^2} \int^t_0 \int_\R \phi^2(x) \limZ^2(s,x) ds dx
\end{align}
are local martingales.
%
It is shown in \cite{Bertini1997} that any solution $ \limZ $ of the prescribed martingale problem
is a solution\footnote{In fact this is a weak solution. But solving~\eqref{eq:SHE} in the weak and mild senses are equivalent as shown in \cite[Proposition~4.11]{Bertini1997}.}
of the \ac{SHE}~\eqref{eq:SHE}. Moreover, they show that there is a unique such solution.

Hence, it suffices to show that any limit point of $ \Zsv(\e^{-2}\Cdot,\e^{-1}\Cdot) $ solves the martingale problem.
As mentioned earlier, the major technical step occurs in establishing~\eqref{eq:quadMP} (i.e., the quadratic martingale problem),
where we need self-averaging of the  quadratic variation.
We now state the desired estimate on such self-averaging.
To this end, recall the expressions $ \Theta_1, \Theta_2 $ from~\eqref{eq:Theta1}--\eqref{eq:Theta2},
which are associated with the quadratic variation of the martingale increment $ \mg $ in Proposition~\ref{prop:mSHE}.
\begin{prop}
\label{prop:qv}
Start the stochastic \ac{6V} model from near stationary initial conditions.
Given any fixed $ T<\infty $, we have that,
for all $ t \in [0,\e^{-2}T]\cap\Z $, $ x_\star\in\Z $, and all $ \e>0 $ small enough,
\begin{align}
	\label{eq:selfav}
	\Bigg\Vert \e^2 \sum_{s=0}^{t}
	\Big(\e^{-1}\Theta_1\Theta_2-\frac{2b_1\den(1-\den)}{1+b_1}\Zsv^2\Big)
		(s,x_\star - \mue s + \lfloor\mue s\rfloor) \Bigg\Vert_2
	\leq
	\e^{\frac14} C(T)e^{C\e |x_\star|}.
\end{align}
\end{prop}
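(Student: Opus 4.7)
The plan is to establish \eqref{eq:selfav} by (i) algebraically decomposing $\Theta_1\Theta_2$ to extract the target constant times $\Zsv^2$ plus a centered fluctuation, and (ii) bounding the second moment of the time-integrated fluctuation by reducing it, via the two-particle duality of Proposition~\ref{prop:Zduality}, to a weighted double sum against discrete \emph{gradients} of the semigroup $\SG$, which in turn are controlled by steepest-descent analysis on the contour integral formula \eqref{eq:SG}.

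\textbf{Step 1: algebraic decomposition.} Expanding \eqref{eq:Theta1}--\eqref{eq:Theta2} gives $\Theta_1\Theta_2 = -\lambdae^2\taue^{-1}\Zsv^2 + \lambdae(1+\taue^{-1})\Zsv(\hk\Zsv) - (\hk\Zsv)^2$. Using the pointwise identity $\Zsv(s,x-1)=\taue^{\rho-\eta(s,x+\mue s)}\Zsv(s,x)$ to rewrite $\hk\Zsv$ as a sum over displacements of products of $\taue^{\rho-\etacnt(\cdot)}$ against $\Zsv$, then Taylor expanding $\taue=e^{-\sqrt\e}$, $\lambdae$, $\mue$ via \eqref{eq:lambdae}--\eqref{eq:mue}, one checks that the zeroth- and first-order (in $\sqrt\e$) contributions cancel identically and
\begin{align*}
\e^{-1}\Theta_1\Theta_2(s,x) = \tfrac{2b_1\rho(1-\rho)}{1+b_1}\Zsv(s,x)^2 + A(s,x) + B(s,x) + R(s,x),
\end{align*}
where $A$ is a finite linear combination, indexed by nearby $y_1,y_2$, of \emph{centered} bilinear expressions such as $(\etacnt^{+}\Zsv)(s,y_1)(\etacnt^{+}\Zsv)(s,y_2) - \rho^2\Zsv(s,y_1)\Zsv(s,y_2)$; $B$ is a discrete $x$-gradient term that, after a single summation by parts in $s$, reduces to boundary contributions controlled by Proposition~\ref{prop:momt}; and $R$ carries an extra $\sqrt\e$ and contributes at most $\mathcal{O}(\e^{1/2})$ after the $\e^2\sum_s$ averaging.

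\textbf{Step 2: variance estimate via duality.} Set $z_s:=x_\star-\mue s+\mues$ and $I:=\e^2\sum_{s=0}^{t} A(s,z_s)$. Writing $\Vert I\Vert_2^2$ as a double sum over $0\le s_1\le s_2\le t$, the diagonal piece is bounded by $\e^4(t+1)\max_s\Vert A(s,z_s)\Vert_2^2 \le C(T)\e^2 e^{C\e|x_\star|}$, which is already negligible relative to $\e^{1/2}$. For the off-diagonal piece we condition on $\filt(s_1)$ and apply both identities of Proposition~\ref{prop:Zduality} to express the inner expectation as
\begin{align*}
\Ex[A(s_2,z_{s_2})\mid\filt(s_1)] = \sum_{y_1<y_2\in\Xi(s_1)}\SG\big((y_1,y_2),(\cdot,\cdot);s_2-s_1\big)\,F(s_1,y_1,y_2),
\end{align*}
where $F$ is the bilinear form in $\Zsv$ and $\etacnt^{+}\Zsv$ at time $s_1$ coming from the bilinear summands of $A(s_2,z_{s_2})$. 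Because each summand of $A$ is a centered combination, two discrete summations by parts in $y_1$ and $y_2$ transfer the discrete differences from $F$ onto $\SG$, so that only $\nabla_{y_1}\nabla_{y_2}\SG$ appears in the effective kernel.

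\textbf{Step 3 and main obstacle.} Each discrete gradient on $\SG$ improves the pointwise decay by a factor $(s_2-s_1)^{-1/2}$, giving a total extra gain of $(s_2-s_1)^{-1}$ over the bare semigroup. These sharp, \emph{uniform} gradient bounds are precisely the content of Section~\ref{sect:SG}, obtained from steepest-descent analysis of the double contour integral \eqref{eq:SG}. Inserting them together with the moment bounds of Proposition~\ref{prop:momt} into the off-diagonal sum of Step~2 yields $\Vert I\Vert_2^2 \le C(T)\e^{1/2}e^{C\e|x_\star|}$, which combined with Step~1 gives \eqref{eq:selfav}. The technical heart of the argument is Step~3: the cross-factor $\SGfr$ in \eqref{eq:SG} is not small on the natural steepest-descent contour and has poles near it, so the contours $\Circ_r$ must be deformed carefully around these poles and adapted across the distinct asymptotic regimes of $(y_i-x_j,s_2-s_1)$; moreover the estimates must retain the sharp $(s_2-s_1)^{-1/2}$-type decay after one discrete differentiation in each $y_i$ while being uniform in all relevant parameters, which is the main technical burden of the paper.
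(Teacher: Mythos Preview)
Your overall architecture---algebraic decomposition, then duality, then semigroup gradient estimates---matches the paper's. But Step~2 contains a genuine gap. The centered bilinear expression $\tilZ(s,y_1,y_2)=(\etacnt^{+}\Zsv)(s,y_1)(\etacnt^{+}\Zsv)(s,y_2)-\den^2\Zsv(s,y_1)\Zsv(s,y_2)$ does \emph{not} reduce, after summation by parts, to only $\nabla_{y_1}\nabla_{y_2}\SG$. Expanding $\etacnt^{+}\Zsv=(\etacnt^{+}-\den)\Zsv+\den\Zsv$ in each factor produces cross terms $\den\,(\etacnt^{+}-\den)\Zsv(s,y_i)\,\Zsv(s,y_j)$ that carry only \emph{one} discrete gradient (via $(\etacnt^{+}-\den)\Zsv\approx -\e^{-1/2}\nabla\Zsv$); these allow only one summation by parts and contribute $\e^{-1/2}\nabla_{y_i}\SG$, not a double gradient. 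These single-gradient cross terms are in fact the dominant contribution, giving $|\Ex[\,\cdot\,\mid\filt(s_1)]|\le C\e^{-1/2}(s_2-s_1+1)^{-1/2}$ rather than your claimed $(s_2-s_1)^{-1}$. Moreover, Proposition~\ref{prop:SG} supplies only \emph{single}-gradient bounds on $\SG$; the double-gradient estimate you would need is never established in Section~\ref{sect:SG}.

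A secondary issue: your term $B$ in Step~1 is claimed to be handled by ``summation by parts in $s$,'' but there is no time-gradient structure in the decomposition to exploit. The paper's analogous term is $\Xg(t,x)=\sum\decay\cdot(\e^{-1/2}\nabla\Zsv)(t,x_1)\Zsv(t,x_2)$ (see Corollary~\ref{cor:Theta:Exp}), and it is handled by duality as well: the spatial gradient on $\Zsv$ is transferred to $\nabla_{x_j}\SG$ (Lemma~\ref{lem:cndEx}), again yielding the $\e^{-1/2}(s_2-s_1+1)^{-1/2}$ bound (Lemma~\ref{lem:cndEx:bd}). Summing this single-gradient bound over $s_1<s_2\le\e^{-2}T$ is what produces the $\e^{1/2}$ on the squared $L^2$ norm and hence the $\e^{1/4}$ in~\eqref{eq:selfav}.
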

\begin{rmk}
In~\eqref{eq:selfav}, we compensate the space variable $ x_\star\in\Z $ by $ \mue s-\mues \in [0,1) $
to ensure the resulting variables is in $ \Xi(s) $.
\end{rmk}
\begin{rmk}
\label{rem:self-av-mean}
Proposition~\ref{prop:qv} demonstrates a self-averaging upon integrating over long time interval, namely,
the quadratic variation of the martingale $\mg(t,x)$ subtracting the leading order term
(that is, a constant multiple of $Z^2$), vanishes as $\e \to 0$.
This is not obvious at all and is the linchpin of the analysis of the present paper.
The remainder of this subtraction is given in Lemma~\ref{lem:ThetaExpand},
which consists of terms of the form $(\e^{-\frac12} \nabla Z)(t,x_1) Z(t,x_2)$,
and $(\e^{-\frac12} \nabla Z)(t,x_1) (\e^{-\frac12} \nabla Z)(t,x_2)$ for $x_1<x_2$.
By the definition of $Z$, see \eqref{e:Gartner-eps}, $\nabla Z$ behaves as $\e^{\frac12} Z$,
so these remainder terms seem to be of the same order as the  leading order term. Self-averaging is key to showing that they are, in fact, of lower order.
The proof of Proposition~\ref{prop:qv} is
given in Section~\ref{sect:qv}, which
relies on duality argument in Section~\ref{sect:qv} and estimates of two-point transition kernels
given in Section~\ref{sect:SG}.
The heuristic on how duality and estimates of transition kernels lead to the proof of such a self-averaging is discussed in Appendix~\ref{sect:ASEP} with the simpler example of ASEP.
\end{rmk}

Postponing the proof of this proposition to Section~\ref{sect:qv},
we now finish the proof of Theorem~\ref{thm:S6V:}\ref{thm:S6V:nearStat}:
\begin{prop}\label{prop:martlimitprob}
Any limit point of $ \{\Zsv(\e^{-2}\Cdot,\e^{-1}\Cdot)\}_{\e>0} $ solves the martingale problems \eqref{eq:linMP}--\eqref{eq:quadMP}.
\end{prop}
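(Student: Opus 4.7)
The approach follows the standard martingale-problem framework of \cite{Bertini1997,CT15}: construct discrete-level martingales associated with $\Zsv_\e$, identify their drift and quadratic variation in the $\e\to 0$ limit, and transfer the martingale property to any weak limit using Corollary~\ref{cor:tightness}. Fix $\phi\in C^\infty_c(\R)$ and $T<\infty$, and set $I^\e_\phi(t):=\e\sum_{x\in\Xi(t)}\phi(\e x)\Zsv_\e(t,x)$. Summing~\eqref{eq:mSHE} tested against $\phi(\e\cdot)$ over $x\in\Xi(t)$ and telescoping in $t\in\{0,\ldots,\lfloor\e^{-2}T\rfloor-1\}$ produces the decomposition
\begin{align*}
I^\e_\phi(\lfloor\e^{-2}T\rfloor) - I^\e_\phi(0) = A^\e_\phi(T) + \limM^\e_\phi(T),
\end{align*}
where $\limM^\e_\phi$ is a discrete-time $\filt$-martingale assembled from $\e\sum_x\phi(\e(x{-}\mue))\mg(t,x)$ and $A^\e_\phi$ is its compensator built from $\gen\Zsv_\e$. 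To identify the drift limit I would perform summation by parts to transfer $\gen$ onto $\phi(\e\cdot)$ and Taylor-expand. The key cancellation is that the $O(\e)$ first-derivative term coming from the nonzero mean $\mue$ of $\Rw$ cancels exactly against the shift between the lattices $\Xi(t)$ and $\Xi(t+1)$, leaving $(\gen^*\phi(\e\cdot))(y)=\tfrac{\e^2}{2}\sigma^2_\e\,\phi''(\e y)+O(\e^3)$, with $\sigma^2_\e=\mathrm{Var}(\Rw)\to 2b_1/(1-b_1)=\nu_*$ as $\e\to 0$ by a direct computation from~\eqref{eq:rw}. Combined with the moment estimates of Proposition~\ref{prop:momt}, this yields $A^\e_\phi(T)\to\tfrac{\nu_*}{2}\int_0^T\!\!\int\phi''(x)\limZ(s,x)\,ds\,dx$ in $L^2$ along any subsequence on which $\Zsv_\e(\e^{-2}\cdot,\e^{-1}\cdot)\Rightarrow\limZ$.

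The crux is the convergence of the predictable bracket $\langle\limM^\e_\phi\rangle(T)$. By~\eqref{eq:Quadvar},
\begin{align*}
\langle\limM^\e_\phi\rangle(T) = \e^2\sum_{t=0}^{\lfloor\e^{-2}T\rfloor-1}\sum_{x_1,x_2}\phi(\e x_1)\phi(\e x_2)(b_1\tau^{1-\den})^{|x_1-x_2|}\Theta_1\Theta_2(t,x_1\wedge x_2).
\end{align*}
Since $b_1<1$ is fixed, the weight $(b_1\tau^{1-\den})^{|x_1-x_2|}$ decays geometrically uniformly in $\e$, so only $|x_1-x_2|=O(1)$ contributes. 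Replacing $\phi(\e x_2)$ by $\phi(\e x_1)$ at cost $O(\e)$ and summing the geometric series reduces the right-hand side to
\begin{align*}
\tfrac{1+b_1\tau^{1-\den}}{1-b_1\tau^{1-\den}}\,\e^2\sum_t\sum_y\phi^2(\e y)\,\Theta_1\Theta_2(t,y)+o(1).
\end{align*}
I would then invoke Proposition~\ref{prop:qv} pointwise in $y$, letting the external Riemann sum $\e\sum_y\phi^2(\e y)$ absorb the error $\e^{1/4}e^{C\e|y|}$, to replace $\e^{-1}\Theta_1\Theta_2$ by $\tfrac{2b_1\den(1-\den)}{1+b_1}\Zsv_\e^2$. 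Since $b_1\tau^{1-\den}\to b_1$, this produces
\begin{align*}
\langle\limM^\e_\phi\rangle(T)\longrightarrow\tfrac{2b_1\den(1-\den)}{1-b_1}\int_0^T\!\!\int\phi^2(x)\limZ^2(s,x)\,ds\,dx,
\end{align*}
which equals $\tfrac{D_*\kappa_*^2}{\nu_*^2}\int_0^T\!\!\int\phi^2\limZ^2\,ds\,dx$ by~\eqref{eq:ceffints} (since $\kappa_*=\nu_*$).

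Given these two convergences, the closing argument is routine. Along any subsequence $\e_k\to 0$ with $\Zsv_{\e_k}(\e_k^{-2}\cdot,\e_k^{-1}\cdot)\Rightarrow\limZ$, Skorokhod embedding reduces to almost-sure convergence; Burkholder combined with Proposition~\ref{prop:momt} gives uniform integrability of $\limM^{\e_k}_\phi(T)$, $(\limM^{\e_k}_\phi(T))^2$, and $\langle\limM^{\e_k}_\phi\rangle(T)$. The martingale property of $\limM^{\e_k}_\phi$ and of $(\limM^{\e_k}_\phi)^2-\langle\limM^{\e_k}_\phi\rangle$ then passes to the limit, identifying $\limM_\phi$ in~\eqref{eq:linMP} as a local martingale with predictable bracket $\tfrac{D_*\kappa_*^2}{\nu_*^2}\int_0^\cdot\!\int\phi^2\limZ^2\,ds\,dx$, whence $\limN_\phi$ in~\eqref{eq:quadMP} is also a local martingale; the moment condition~\eqref{eq:MPmomt} is immediate from Proposition~\ref{prop:momt} via Fatou. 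The substantive obstacle is the replacement in the quadratic variation step: the remainder in the expansion $\e^{-1}\Theta_1\Theta_2=\tfrac{2b_1\den(1-\den)}{1+b_1}\Zsv_\e^2+(\text{gradient cross-terms})$ is pointwise of the \emph{same} order as the leading term, so genuine time-averaging is needed, and this is precisely what Proposition~\ref{prop:qv}---proved via the two-particle duality Proposition~\ref{prop:Zduality} and the semigroup estimates of Section~\ref{sect:SG}---supplies.
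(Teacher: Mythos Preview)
Your proposal is correct and follows essentially the same approach as the paper's sketch: both construct the discrete martingale $\limM^\e_\phi$ from \eqref{eq:mSHE}, identify the drift via $\mathrm{Var}(\RW_\e)\to\nu_*$, reduce the bracket $\langle\limM^\e_\phi\rangle$ by summing the geometric weight $(b_1\tau^{1-\den})^{|x_1-x_2|}$ to $\tfrac{1+b_1}{1-b_1}$, invoke Proposition~\ref{prop:qv} pointwise in the spatial variable to replace $\e^{-1}\Theta_1\Theta_2$ by $\tfrac{2b_1\den(1-\den)}{1+b_1}\Zsv^2$, and pass to the limit by the standard Skorokhod/uniform-integrability argument from \cite{CT15}. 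One small imprecision: in the paper's setup the generator $\gen$ in~\eqref{eq:gen} is already that of the \emph{centered} walk $\RW=\Rw-\mue$, so the first-derivative cancellation you describe is effectively built into the definition rather than something to be checked separately; but this does not affect the validity of your argument.
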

\begin{proof}
Fix a limit point $ \limZ $, and, after passing to a subsequence,
we assume $ \Zsv(\e^{-2}\Cdot,\e^{-1}\Cdot) $ converges in distribution to $ \limZ $.
The condition~\eqref{eq:MPmomt} is readily verified from the moment bounds in Proposition~\ref{prop:momt}.

We now turn to verifying the condition~\eqref{eq:linMP},
i.e., showing that $ \limM_\phi $ is a local martingale.
To this end, fixing a test function $ \phi\in C^\infty_c(\R) $,
we consider the discrete, microscopic analog of $ \limM_\phi $.
Recall from~\eqref{eq:hk} that $ \hk $ denote the one-step transition kernel.
Define the corresponding generator
\begin{align}
	\label{eq:gen}
	(\gen f)(x) := \sum_{y\in \Xi(t)} \big( \hk(x-y) - \ind_{\set{x+\mu=y}} \big) f(y),
	\quad
	x\in \Xi(t+1).
\end{align}
We now consider
\begin{align*}
	\mg_\phi(t)
	:=
	\e\sum_{x\in\Xi(s)} \phi(\e x) \Zsv(s,x) \Big|_{s=0}^{s=t}
	+
	\e\sum_{s=1}^{t}\sum_{x\in\Xi(s)} \phi(\e x)\big(\gen \Zsv(s-1) \big)(x),
\end{align*}
Recall the definition of $ \mg(t,x) $ from~\eqref{eq:mg}.
From Proposition~\ref{prop:mSHE}, we have
\begin{align*}
	\mg_\phi(t) = \e\sum_{s=0}^{t-1}\sum_{x\in\Xi(s)} \phi(\e x)\mg(s,x).
\end{align*}
Since $ \mg(s,x) $ is an $ \filt $ martingale increment (from Proposition~\ref{prop:mSHE}),
the process $ \mg_\phi(t) $, $ t\in\Z_{\geq 0} $, is a martingale.
Given the assume $ \Zsv(\e^{-2}\Cdot,\e^{-1}\Cdot) \Rightarrow \limZ(\Cdot,\Cdot) $,
with the aid of moment bounds from Proposition~\ref{prop:momt}\ref{prop:momt:nearStat},
it is standard (see for instance \cite[proof of Proposition~5.6]{corwin2016open}) to show that $ \mg_\phi(\e^{-2}\Cdot) \Rightarrow \limM_\phi(\Cdot) $, under the topology of uniform convergence over bounded intervals in $ [0,\infty) $.
This concludes that $ \limM_\phi(t) $ is a local martingale.
The factor $ \nu_* $ arises as the variance of the Brownian motion
which is the limit of the
random walk $R$ associated to the generator $\gen$.
More precisely, from~\eqref{eq:hk} and \eqref{eq:lambdae}--\eqref{eq:mue},
with $ b^\e_2=e^{-\sqrt\e}b_1 $, we  calculate
\begin{align}
	\text{Var}(\RW_\e)
	&=\mu_\e^2 \lambda_\e b_1 + \sum_{n\ge 1} (n-\mu_\e)^2 \lambda_\e (1-b_1)(1-b^\e_2)
	(b^\e_2)^{n-1} \tau_\e^{-n\den}  \notag \\
	&= \mu_\e^2 \lambda_\e b_1 +  \lambda_\e (1-b_1)(1-b^\e_2) \tau_\e^{-\den}
	\Big( \frac{\mu_\e^2}{1-b_2^\e \tau_\e^{-\den}}-\frac{2\mu_\e+1}{(1-b_2^\e \tau_\e^{-\den})^2}+\frac{2}{(1-b_2^\e \tau_\e^{-\den})^3}   \Big)  \notag \\
	 &\longrightarrow  \nu_* = \frac{2b_1}{1-b_1},
	\quad
	\text{as }
	\e \to 0.
		\label{e:VarR-nustar}
\end{align}
Here we used the fact that the sum over $n$ multiplied by a factor $(1-b_2^\e \tau_\e^{-\den})^2$ gives a quantity that can be summed as geometric series.

The proof of~\eqref{eq:quadMP} follows by a discrete-to-continuous scheme.
Specifically, the process
\begin{align*}
	\mg_\phi  (t) - \langle \mg_\phi \rangle (t),
	\quad
	t\in\Z_{\geq 0}
\end{align*}
is an $ \filt $-martingale, where $ \langle \mg_\phi \rangle (t) $ is the quadratic variation of $ \mg_\phi(t) $, given by
\begin{align*}
	\langle \mg_\phi \rangle (t)
	&:=
	\sum_{s=1}^{t} \Ex\big[ (\mg_\phi(s)-\mg_\phi(s-1))^2 \big| \filt(t) \big].
\end{align*}
The major step here is to argue that $ \langle \mg_\phi \rangle (t) $
is well-approximated by a discrete analog of\\
$ \frac{D_*\kappa_*^2}{\nu_*^2} \int_0^t \int_\R (\mathcal{Z}^2\phi^2)(s,x)dsdx $. 
To this end, using~\eqref{eq:Quadvar}, we calculate $ \langle \mg_\phi \rangle (t) $ as
\begin{align*}
	\langle \mg_\phi \rangle (t)
	=
	\e^2\sum_{s=0}^{t-1} \Big(  \sum_{x,x'\in\Xi(s)} \phi(\e x)\phi(\e x') (b_1 e^{-\sqrt\e (1-\den)})^{|x-x'|} \Theta_1(t,x\wedge x')\Theta_2(t,x\wedge x') \Big).
\end{align*}
With $ b_1<1 $, the factor $ (b_1e^{-\sqrt\e(1-\den)})^{|x-x'|} $ introduces an exponential decay in $ |x-x'| $.
Since $ \phi\in C^\infty_c(\R) $,
one can bound  $|\phi(\e x) - \phi(\e x')| $ by a constant times $\e |x-x'|$,
so one can
  show that the previous expression is well-approximated by
the corresponding expression where $ \phi(\e x)\phi(\e x') $ is replaced by $ \phi^2(\e (x\wedge x')) $.
More precisely,
letting $ \mathcal{E}_\e(t) $ denote a generic process such that
\begin{align}
	\label{eq:ptwise}
	\lim_{ \e \to 0}
	\sup_{t\in\Z\cap[0,\e^{-2}T]} \Vert\mathcal{E}_\e(t)\Vert_2 =0,
	\quad
	\text{for any given } T<\infty,
\end{align}
the continuity of $ \phi $ gives that
\begin{align*}
	\langle \mg_\phi \rangle (t)
	=
	&\e^2\sum_{s=0}^{t-1} \Big(  \sum_{x,x'\in\Xi(s)} \phi^2(\e (x\wedge x')) (b_1e^{-\sqrt\e(1-\den)})^{|x-x'|} \Theta_1(t,x\wedge x')\Theta_2(t,x\wedge x') \Big)
	+
	\mathcal{E}_\e(t).
\end{align*}
With $ \sum_{y\in\Z} (b_1e^{-\sqrt\e(1-\den)})^{|y|} = \frac{1+b_1e^{-\sqrt\e(1-\den)}}{1-b_1e^{-\sqrt\e(1-\den)}} \to \frac{1+b_1}{1-b_1} $,
we now have
\begin{align}
	\label{eq:eq:qv:goal:}
	\langle \mg_\phi \rangle (t)
	-
	\frac{1+b_1}{1-b_1} \e^2 \sum_{s=0}^{t-1} \e \sum_{x\in\Xi(s)} \e^{-1}\Theta_1(t,x)\Theta_2(t,x) \phi^2(\e x)
	=
	\mathcal{E}_\e(t).
\end{align}
Further, fixing some large enough $ L<\infty $ with $ \text{supp}(\phi) \subset [-L,L] $,
we have
\begin{align*}
	&\Big\Vert
		\e^2\sum_{s=0}^{t}
		\e\sum_{x\in\Xi(s)} \big(\e^{-1}\Theta_1\Theta_2-\tfrac{2b_1\den(1-\den)}{1+b_1}\Zsv^2\big)(s,x) \phi^2(\e x)
	\Big\Vert_2
\\
	=&	
	\Big\Vert
		\e\sum_{x_\star\in\Z}
		\e^2\sum_{s=0}^{t}
		\big(\e^{-1}\Theta_1\Theta_2-\tfrac{2b_1\den(1-\den)}{1+b_1}\Zsv^2\big)(s,x_\star+\mue s- \lfloor\mue s\rfloor) \phi^2(\e (x_\star+\mue s- \lfloor\mue s\rfloor))
	\Big\Vert_2	
\\
	\leq&	
	C(L,\phi)
	\sup_{x_\star\in[-\e L,\e L]\cap\Z}
	\Big\Vert
		\e^2\sum_{s=0}^{t}
		\big(\e^{-1}\Theta_1\Theta_2-\tfrac{2b_1\den(1-\den)}{1+b_1}\Zsv^2\big)(s,x_\star+\mue s- \lfloor\mue s\rfloor)
	\Big\Vert_2.
\end{align*}
The last expression, by Proposition~\ref{prop:qv},
is bounded by $ C(T,L,\phi)\e^{\frac14} $, for all $ t\in\Z\cap[0,\e^{-2}T] $,
for each fixed time horizon $ T<\infty $.
Consequently,
\begin{align*}
	\e^2\sum_{s=0}^{t}
	\e\sum_{x\in\Xi(s)} \big(\e^{-1}\Theta_1\Theta_2-\tfrac{2b_1\den(1-\den)}{1+b_1}\Zsv^2\big)(s,x) \phi^2(\e x)
	=
	\mathcal{E}_\e(t).
\end{align*}
Inserting this into~\eqref{eq:eq:qv:goal:},
together with
\[
 \frac{2b_1\den(1-\den)}{1+b_1}\frac{1+b_1}{1-b_1}=\frac{D_*\kappa_*^2}{\nu_*^2} ,
 \]
we now arrive at
\begin{align}
	\label{eq:eq:qv:goal::}
	\langle \mg_\phi \rangle (t)
	-
	\frac{D_*\kappa_*^2}{\nu_*^2} \e^{2}\sum_{s=0}^{t-1} \e \sum_{x\in\Xi(s)}\phi^2(\e x) \Zsv^2(s,x)
	=
	\mathcal{E}_\e(t).
\end{align}
So far, we have only shown that the expression~\eqref{eq:eq:qv:goal::} converges to zero (in $ L^2 $) \emph{pointwise in $ t $},
(i.e., \eqref{eq:ptwise}).
Given the moment bounds from Proposition~\ref{prop:momt},
a standard argument  (see for instance \cite[Section~4]{Bertini1997})  leverages such pointwise convergence to convergence at process level, yielding
\begin{align*}
	\sup_{t\in\Z\cap[0,\e^{-2}T]}
	\Big| \langle \mg_\phi \rangle (t) - \frac{D_*\kappa_*^2}{\nu^2_*} \e^{2} \sum_{s=0}^{t-1} \e \sum_{x\in\Xi(s)} \Zsv^2(s,x) \phi^2(\e x) \Big|
	\longrightarrow_\text{P}
	0.
\end{align*}
Given this, the rest of the proof is standard. We omit the details.
\end{proof}

\subsubsection{Part~\ref{thm:S6V:step}: step initial condition}
Consider $ \hat{\Zsv}(t,x) := \frac{\den(1-\den)}{\sqrt\e} \Zsv(t,x) $
under the step initial condition $ N(0,x) = (x)_+ $.
From~\eqref{eq:ColeHopfTransform},
\begin{align*}
	\hat{\Zsv}(0,x) = \left\{\begin{array}{l@{,}l}
		\frac{\den(1-\den)}{\sqrt\e} e^{-\sqrt\e(1-\den)x}	&\text{ for } x \geq 0,
		\\
		\frac{\den(1-\den)}{\sqrt\e} e^{-\sqrt\e\den x}	&\text{ for } x < 0.	
	\end{array}\right.
\end{align*}
In particular $\e \sum_{x\in\Z} \hat{Z}(0,x) = 	\frac{\den(1-\den)}{\sqrt\e} ( \frac{1}{1-e^{-\sqrt\e(1-\den)}} + \frac{e^{-\sqrt\e\den}}{1-e^{-\sqrt\e\den}} ) \to 1 $.
This together with the exponential decay (in $ |x| $) of $ \hat{\Zsv}(0,x) $
shows that $ \hat{\Zsv}(0,\e^{-1}x) $ converges to $ \delta(x) $.
Given this and the convergence result for near stationary initial conditions (i.e., part~\ref{thm:S6V:nearStat}),
Part~\ref{thm:S6V:step} follows by a procedure of two-step convergence:
first working on $ t\in [\e^{-2}\delta,\infty) $ and sending $ \e\to 0 $ with $ \delta>0 $, and then sending $ \delta\to 0 $.
This procedure is now standard, and is carried out in \cite[Section~3]{Amir11} for the ASEP, so we do not  repeat the argument here.

\subsection{Proof of Theorem~\ref{thm:6V}}
\label{sect:pfthm6V}
Recall that Proposition~\ref{prop:constrK} asserts an extension of the stationary solution of the \ac{SBE} to all values of $ t>-\infty $.
We being by giving this construction.
\begin{proof}[Proof of Proposition~\ref{prop:constrK}]
The construction of $ \mathcal{K} $ follows standard, Kolmogorov-type argument.
To begin with, given~\eqref{eq:stat:SBE}, we have that
\begin{align*}
	\big( \mathcal{H}_\text{stat}(t,\Cdot) - \mathcal{H}_\text{stat}(t,0) \big)_{t\geq 0} =: \big( \tilde{\mathcal{K}}(t,\Cdot) \big)_{t\geq 0}
\end{align*}
is a stationary (in $ t $) process.
Consider the space $ \mathcal{X} := \prod_{\R} C(\R) $,
endowed with the product $ \sigma $-algebra and with the product topology.
For each $ t_1<\ldots<t_n\in \R $, we define a probability distribution $ \Pr_{t_1,\ldots,t_n} $ on
$ \prod_{\set{t_1,\ldots,t_n}} C(\R) $ given by that of
\begin{align*}
	\Big( \tilde{\mathcal{K}}(0,\Cdot),
	\tilde{\mathcal{K}}(t_2-t_1,\Cdot),
	\ldots,
	\tilde{\mathcal{K}}(t_n-t_1,\Cdot)
	\Big).	
\end{align*}
Thanks to the stationarity of $ \mathcal{K}(t,\Cdot) $, the laws $ \Pr_{t_1,\ldots,t_n} $ are consistent among $ \{t_1<\ldots<t_n\} \in \R $.
Thus, the Kolmogorov extension theorem gives an $ \mathcal{X} $-valued process $ \hat{\mathcal{K}}(t,x) $,
such that, for any $ t_0\in\R $,
\begin{align}
	\label{eq:Kfdd}
	\hat{\mathcal{K}}(t-t_0,\Cdot) = \mathcal{H}_\text{stat}(t,\Cdot) - \mathcal{H}_\text{stat}(t,0),
	\quad
	\text{in finite dimensional (in }t\text{) distributions.}
\end{align}

The next step is to further construct a \emph{continuous version} of $ \hat{\mathcal{K}} $.
That is, a $ C(\R,C(\R)) $-valued process that shares the same finite dimensional (in $ t $) distributions as $ \hat{\mathcal{K}}(t,x) $.
To this end, for each $ n\in\Z_{>0} $, we construct a $ C(\R,C(\R)) $-valued process $ \mathcal{K}_n $ by setting
$ \mathcal{K}_n(\frac{i}{2^n},x) := \hat{\mathcal{K}}(\frac{i}{2^n},x) $, for $ i\in\Z $,
and linearly interpolate in $ t $.
For such dyadic approximations,
given any fixed $ [t_1,t_2]\times[x_1,x_2]:= D\subset \R^2 $, we have that
\begin{align*}
	\sup_{(t,x)\in D}
	\big| &\mathcal{K}_n(t,x)-\mathcal{K}_{n+m}(t,x) \big|
\\
	&\leq
	\sup
	\Big\{
		\big| \hat{\mathcal{K}}(t,x)-\hat{\mathcal{K}}(s,x) \big|
		:
		s,t\in[t_1,t_2]\cap 2^{-(m+n)}\Z,
		\
		|t-s| \leq 2^{-n},
		\
		x\in[x_1,x_2]
	\Big\}.
\end{align*}
As $ \mathcal{H}_\text{stat} $ is continuous,
with~\eqref{eq:Kfdd}, we see that the r.h.s.\ converges to zero in distribution (and hence converges to zero in probability)
as $ (n,m)\to(\infty,\infty) $.
This being the case, using the first Borel--Cantelli lemma,
it is standard to construct  a subsequence of $ \{\mathcal{K}_n\}_n $ that is almost surely Cauchy in $ C(\R,C(\R)) $.
The resulting limiting process $ \mathcal{K}\in C(\R,C(\R)) $ gives the desired continuous version of $ \hat{\mathcal{K}} $.
With $ \mathcal{K} $ and $ \mathcal{H}_\text{stat} $ both being continuous,
the desired property~\eqref{eq:KeqH} follows from \eqref{eq:Kfdd}.
\end{proof}

We now prove Theorem~\ref{thm:6V}.
\begin{proof}[Proof of Theorem~\ref{thm:6V}]
Recall the definition of $ \Vert \Cdot \Vert_{C^{-1}(\R^2),[-\ell,\ell]^2} $ from~\eqref{eq:C-1:}.
Referring to~\eqref{eq:C-1},
we see that $ U_\e \to U $ in $ C^{-1}(\R^2) $, if and only if, for any fixed $ \ell \in \Z_{>0} $,
$	
	\Vert U_\e-U \Vert_{C^{-1}(\R^2),[-\ell,\ell]^2} \to 0.
$
With this in mind, we henceforward fix $ \ell \in \Z_{>0} $.
Further, even though the relevant test functions in~\eqref{eq:C-1:} have support in $ [-\ell,\ell]^2 $,
with both $ \limU $ and stochastic Gibbs state being translation invariant in $ y $,
after a suitable translation, we assume without lost of generality that relevant test functions are supported in
$ (x,y) \in [-\ell,\ell]\times [0,3\ell] $.

The next step is to translate the statements regarding the symmetric 6V model into the context of the \emph{stochastic}  6V model.
Recall that, for a given $ (a,b,c) $-symmetric \ac{6V} model with $ \Delta>1 $,
defining $ b_1,b_2\in(0,1) $ by 
the relation~\eqref{eq.bs},
the stochastic Gibbs state $ \Gibbs(b_1,b_2;h,v) $ for the $ (a,b,c) $-symmetric model is equivalent to
the $ (b_1,b_2) $-stochastic model in its stationary measure.
Here $ (h,v)\in(0,1)^2 $ is an one parameter family of parameters satisfying~\eqref{eq:slope},
and the corresponding stationary measure for the $ (b_1,b_2) $-stochastic model
is the product Bernoulli measure $ \bigotimes_{x\in\Z} $Ber$ (\den) $ with $ \den:=v $.
While for the symmetric model we have used coordinates $(x,y)$ for the $x$ and $y$ axes, for the stochastic model we will use $(x,t)$ with $y$ replaced by $t$ to represent the temporal axis. We will also tend to write these coordinates as $(t,x)$ with time first and then space.
The purpose of the shifting in $ y $ described above is to ensure that $ t\geq 0 $ for the stochastic model.

Recall from~\eqref{eq:vl:ind}--\eqref{eq:empirical}
that $ u(x,y) $ denote the indicator of an incoming vertical line,
and that $ U_\e $ is the corresponding empirical measure.
Under this mapping between the symmetric and stochastic models,
the former becomes the occupation variable
\begin{align*}
	u(x,y) = \ind_{\set{\text{having a particle at } (t=y,x)}} = \eta(y,x).
\end{align*}
Fix $ f \in C^\infty(\R^2) $ with support $ (x,y)\in[-\ell,\ell]\times[0,3\ell] $.
With $ \eta(y,x) := N(y,x) - N(y,x-1) $, we have
\begin{align}
	\label{eq:Thm6V:}
	\langle U_\e, f \rangle
	&=
	\e^{\frac52} \sum_{x,y\in\Z} \big(N(y,x) - N(y,x-1)- \den \big) f(\e^{-1}x-\mue \e^{-2}y,\e^{-2}y).
\end{align}
In order to apply Theorem~\ref{thm:S6V},
note that by direct calculation using \eqref{eq.bs} and \eqref{eq:projPara}
\begin{align}\label{e:b1b2-bax}
b_1= \frac{e^{\eta} \sinh(u)}{\sinh(u+\eta)},
\qquad
b_2= \frac{e^{-\eta} \sinh(u)}{\sinh(u+\eta)}
\end{align}
so $b_2/b_1=e^{-2\eta} = e^{-\sqrt\e}$.
One can also choose $u=u_\e=\frac12\zeta\sqrt\e+o(\sqrt\e)$ so that
$b_1\in(0,1)$ is independent of $\e$.
\footnote{One can also consider more explicit choice of parameter $(u_\e,\eta_\e)=(\frac12\zeta\sqrt\e,\frac12\sqrt\e)$ (without the lower order part in $u_\e$). This would lead to parameter $b_1$ which also depends on $\e$, though the relation $b_2/b_1 =e^{-\sqrt\e}$ still holds. Our proof should still go through with extra notational complexity; we do not pursue this direction here.}
We are thus in the scope of Theorem~\ref{thm:S6V}, from which we know that the centered scaled height function
\begin{align*}
	\tilde{N}(t,x) := \sqrt\e \Big( N_\e\big(\e^{-2}t,\e^{-1}x+\mue \e^{-2} t\big) - \den(\e^{-1}x+\mue \e^{-2} t) - \e^{-2}t\log\lambdae \Big).
\end{align*}
converges to solution of \ac{KPZ} equation with coefficients $\nu_* ,\kappa_*,D_* $ given by
\eqref{eq:ceffints}. With \eqref{e:b1b2-bax} and our choice of $(u_\e,\eta_\e)$
with matching $\rho=v$,
these coefficients are written as \eqref{e:SBEcoeff} in terms of $\zeta$.

In~\eqref{eq:Thm6V:}, we can substitute in $ \tilde{N} $ for $ N $,
switch from the $ (x,y) $-coordinates to $ (t,x) $, and apply summation by parts in $ x $.
This gives
\begin{align*}
	\langle U_\e, f \rangle
	&=
	\e^{2} \sum_{t\in \e^{2}\Z_{\geq 0}}
	\Big( \e \sum_{x\in \e\Xi(t)} \e^{-1}\big( \tilde{N}(t,x) -\tilde{N}(t,x-\e) \big) f(x,t) \Big)
\\
	&=
	-\e^{2} \sum_{t\in \e^{2}\Z_{\geq 0}}
	\Big( \e \sum_{x\in \e\Xi(t)} \tilde{N}(t,x) \big( \e (f(x+\e,t)-f(x,t)) \big) \Big).
\end{align*}
The last expression is indeed similar to $ \langle \limU, f \rangle $ defined in~\eqref{eq:SBE:},
with integrations replaced by sums, and derivative on $ f $ replaced by difference.
Recall that $ N(t,x) $ is linearly interpolated onto $ (t,x)\in\R_+\times\R $ to give a $ C(\R_+,C(\R)) $-valued process.
This being the case, we further write
\begin{align}
	\label{eq:Uef}
	\langle U_\e, f \rangle
	=
	-\int_0^\infty \int_\R \tilde{N}(t,x) \partial_x f(x,t) dxdt
	+
	A_\e(t,x),
\end{align}
where $ A_\e(t,x) $ denotes a residue term with
$ |A_\e(t,x)| \leq  \sqrt\e C(\ell) (\Vert f \Vert_\infty + \Vert \partial_x f \Vert_\infty) $.

Recall that, here, the stochastic model starts from Bernoulli initial condition
\begin{align*}
	(\eta(0,x))_x \sim \bigotimes_{x\in\Z} \text{Ber}(\den),
	\quad
	N(0,x) := \sum_{y\in(0,x]} (\eta(0,y)-\den).
\end{align*}
It is standard to check that such an initial condition indeed satisfies the conditions in Definition~\ref{def:nearStat}.
Further, as $ \e\to 0 $, we have $ \tilde{N}(0,\Cdot) \Rightarrow \sqrt{\den(1-\den)}B(\Cdot) $ in $ C(\R) $,
where $ B $ denotes a standard Brownian motion.
Given these properties, Theorem~\ref{thm:S6V} asserts that
\begin{align*}
	\tilde{N}(\Cdot,\Cdot) \Longrightarrow \limH_\text{stat}(\Cdot,\Cdot),
	\quad
	\text{in } C(\R_+,C(\R)).
\end{align*}
By Skorokhod's representation theorem,
we further assume that this convergence holds in probability under a suitable
coupling of $ \tilde{N} $ and $ \limH_\text{stat} $,
whereby
\begin{align}
	\sup_{t\in[0,3\ell]} \sup_{x\in[-\ell,\ell]} \big | \tilde{N}(t,x)-\limH_\text{stat}(t,x) |
	\longrightarrow_\text{P}
	0.
\end{align}

Recall that $ f_\delta(x,y) := f(\delta^{-1}x,y) $.
Now, under the aforementioned coupling,
take the difference of~\eqref{eq:SBE:} and~\eqref{eq:Uef},
and replace $ f $ with $ f_\delta $.
This gives
\begin{align*}
	\big| \langle U_\e-\limU, f_\delta \rangle \big|
	&\leq
	\Vert \partial_x f_\delta \Vert_\infty \, \sup_{t\in[0,3\ell]} \sup_{x\in[-\ell,\ell]} \big | \tilde{N}(t,x)-\limH_\text{stat}(t,x) |
	+
	C(\ell)
	\e \big( \Vert \partial_x f_\delta \Vert_\infty + \Vert f_\delta \Vert_\infty \big)
\\
	&=
	\delta^{-1} \Vert \partial_x f \Vert_\infty \, \sup_{t\in[0,3\ell]} \sup_{x\in[-\ell,\ell]} \big | \tilde{N}(t,x)-\limH_\text{stat}(t,x) |
	+
	C(\ell)
	\e \big( \delta^{-1} \Vert \partial_x f \Vert_\infty + \Vert f \Vert_\infty \big).
\end{align*}
As this holds true for all $ f\in C^\infty(\R^2) $ with supp$ (f)\subset [\ell,\ell]\times[0,3\ell] $,
referring to~\eqref{eq:C-1:}, we see that
\begin{align*}
	\Vert  U_\e-\limU \Vert_{C^{-1}(\R^2),[-\ell,\ell]^2}
	\leq
	\sup_{t\in[0,3\ell]} \sup_{x\in[-\ell,\ell]} \big | \tilde{N}(t,x)-\limH_\text{stat}(t,x) |
	+
	C(\ell)
	\e.
\end{align*}
Taking $ \e\to 0 $, we thus conclude $ \Vert U_\e-\limU \Vert_{C^{-1},[-\ell,\ell]^2} \to_\text{P} 0 $.
This being true for arbitrary $ \ell\in\Z_{>0} $, we conclude the desired result:
$ \metric(U_\e,\limU) \to_\text{P} 0 $.
\end{proof}

\section{Estimating the two-point semigroup}
\label{sect:SG}
Recall from~\eqref{eq:SG} that $ \SGe $ denotes
the semigroup for the two-point functions of $ \Zsv $, where we put $ \e $ in the notation of $ \SGe $ to emphasize the dependence.
In order to complete the proof of Theorem~\ref{thm:S6V:}, it remains to prove Proposition~\ref{prop:qv}.
The proof will be carried out in Section~\ref{sect:qv} with the aid of duality.
Key to this proof is certain estimates on $ \SGe $ and its gradients, which are the subjects of this section.

Recall that $ \nabla f(x) := f(x+1)-f(x) $ denotes discrete gradient.
In the sequel we use notation such as $ \nabla_{x} $ to highlight the variable on which the gradient acts.
Recall that $ \SGe\big((y_1,y_2),(x_1,x_2);t\big) $ is related to the stochastic \ac{6V} model
only within the Weyl chamber: $ x_1<x_2 $ and $ y_1<y_2 $.
Thus, for expressions such as
\begin{align*}
	\nabla_{x_1}\SGe\big((y_1,y_2),(x_1,x_2);t\big) = \SGe\big((y_1,y_2),(x_1+1,x_2);t\big) - \SGe\big((y_1,y_2),(x_1,x_2);t\big)
\end{align*}
to be relevant, we must impose an additional constraint $ x_1+1<x_2 $.
In this case we say $ (x_1,x_2,y_1,y_2) $ is in the \DefinText{$ \nabla $-Weyl chamber},
which is understood with respect to whichever gradient is taken.

\medskip
The goal of this section is to establish:
\begin{prop}\label{prop:SG}
For any $ \alpha,T\in(0,\infty) $, there exist constants $C(\alpha,T), C(\alpha)>0$ such that
\begin{align*}
	&\big|\SGe\big((y_1,y_2),(x_1,x_2);t\big)\big|
	\leq
	\frac{C(\alpha,T)}{t+1} e^{ \frac{-\alpha(|x_1-y_1|+|x_2-y_2|)}{\sqrt{t+1}+C(\alpha)} },
\\
	&\big|\nabla_{x_j}\SGe\big((y_1,y_2),(x_1,x_2);t\big)\big|,
	\
	\big|\nabla_{y_j}\SGe\big((y_1,y_2),(x_1,x_2);t\big)\big|	
	\leq
	\frac{C(\alpha,T)}{(t+1)^{3/2}} e^{ -\frac{\alpha(|x_1-y_1|+|x_2-y_2|)}{\sqrt{t+1}+C(\alpha)} },
\end{align*}
for all $ x_1<x_2\in\Xi(t+s) $, $ y_1<y_2\in\Xi(s) $,
$ s,t\in [0,\e^{-2}T]\cap\Z $, $ j=1,2 $,
and $ (x_1,x_2,y_1,y_2) $ in their respective Weyl or $ \nabla $-Weyl chamber.
\end{prop}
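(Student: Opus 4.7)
The plan is to perform steepest descent analysis on the double contour integral \eqref{eq:SG}. After combining $\SGt(t,z_j)/z_j = z_j^{\muet - 1}(\lambdae \TrPrct(z_j))^{t}$ (with $\TrPrct(z) := \frac{b_1+(1-b_1-b_2)/(\taue^\den z)}{1-b_2/(\taue^\den z)}$) with the monomial factors $z_j^{x_j-y_j+(\mue t-\muet)}$ and their exchange counterparts, the integrand takes the form $\prod_{j=1}^{2} z_j^{x_j-y_j+\mue t-1}(\lambdae \TrPrct(z_j))^{t}$ times either $1$ (free term) or $\SGfr(z_1,z_2)$ (exchange term). The relevant exponential in $t$ is controlled by
\[
	\Phi_j(z) := \log\bigl(\lambdae \TrPrct(z)\bigr) + \tfrac{x_j-y_j+\mue t}{t}\log z.
\]
A direct computation using \eqref{eq:lambda} gives $\lambdae \TrPrct(1) = 1$, and by construction $\mue = \Ex[\Rw]$ so that $\Phi_j'(1)=0$ when $x_j=y_j$. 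Hence the saddle lies at $z_c \approx 1$, with a shift of order $(x_j-y_j)/t$ off-diagonal, and a Taylor expansion yields $\Phi_j(z)-\Phi_j(1) = -\tfrac{\nu_\e}{2}(z-1)^2 + \tfrac{x_j-y_j}{t}(z-1) + O(|z-1|^3)$, where $\nu_\e\to\nu_*$ matches the variance of $\Rw$ as in \eqref{e:VarR-nustar}.

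For the \emph{free} portion (first term in \eqref{eq:SG}), the integrand factorizes into a product of two one-dimensional contour integrals. I deform each from $\Circ_r$ to a circle $|z|=e^{\sigma_j\alpha/\sqrt{t+1}}$ with $\sigma_j:=-\sgn(x_j-y_j)$; on this circle $|z|^{x_j-y_j}$ contributes exactly $e^{-\alpha|x_j-y_j|/\sqrt{t+1}}$, while $|\lambdae\TrPrct(z)|^{t}$ is bounded uniformly in $\e$ and $t\in[0,\e^{-2}T]$ via the quadratic expansion above. A Laplace-type estimate, exploiting Gaussian concentration in the angular variable of width $O((t+1)^{-1/2})$, yields an additional $(t+1)^{-1/2}$ prefactor per integration, hence $(t+1)^{-1}$ for the two-dimensional integral. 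The singularity of $\TrPrct(z)$ at $z=b_2\taue^{-\den}$ sits near $b_1<1$ under weak asymmetry, so no residue is crossed.

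The \emph{exchange} portion is more delicate because the denominator $D(z_1,z_2):= 1+\taue^{-1+2\den}z_1 z_2 - (1+\taue^{-1})\taue^\den z_1$ of $\SGfr$ vanishes on a curve through $(z_1,z_2)=(1,1)$ under $\taue=e^{-\sqrt\e}$; direct expansion shows $D(1,1)=O(\sqrt\e)$ and $\partial_{z_j}D(1,1)=O(1)$. I therefore use nested radii $|z_1|=e^{\alpha/\sqrt{t+1}}$ and $|z_2|=e^{(\alpha+\kappa)/\sqrt{t+1}}$ with small separation $\kappa>0$; this forces $|D(z_1,z_2)|$ to be bounded below by a constant multiple of $\kappa/\sqrt{t+1}$, while the numerator $(1+\taue^{-1})\taue^\den(z_1-z_2)$ is of the same order, so that $\SGfr$ remains $O(1)$ on the deformed contours. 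Thus the exchange integral obeys the same bound as the free one. Pole crossings incurred when deforming $\Circ_r$ to these nested circles are computed as residues; in the Weyl chamber these contributions either vanish by the ordering constraint $x_1<x_2$, $y_1<y_2$, or inherit an even stronger decay, as can be verified by direct inspection.

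Finally, the gradient bounds follow from the observation that $\nabla_{x_j}$ multiplies the integrand by $(z_j-1)$ while $\nabla_{y_j}$ multiplies it by $(z_j^{-1}-1)$. On the deformed contour both factors have modulus $O((t+1)^{-1/2})$, producing the extra $(t+1)^{-1/2}$ and hence the $(t+1)^{-3/2}$ prefactor. The main obstacle is the uniform control of $\SGfr$ near its pole, which under weak asymmetry shrinks to the saddle $(1,1)$ at the same rate $\sqrt\e\sim(t+1)^{-1/2}$ as the desired contour separation; all residue bookkeeping must remain uniform in $\e$, $t\in[0,\e^{-2}T]$, and $(x_1,x_2,y_1,y_2)$ in the respective Weyl or $\nabla$-Weyl chamber. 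A separate direct estimate on the original contour $\Circ_r$ handles the small-time regime $t=O(1)$, where asymptotic analysis is inapplicable.
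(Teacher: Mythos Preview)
Your treatment of the free term and of the small-time regime matches the paper's, as does the idea that $\nabla_{x_j}$ and $\nabla_{y_j}$ insert a factor of $(z_j^{\pm 1}-1)=O((t+1)^{-1/2})$ on the deformed contours. The gap is in the exchange (interacting) term.

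First, a minor point: for the exchange integrand the monomial factors are $z_1^{x_2-y_1}$ and $z_2^{x_1-y_2}$, so to extract the spatial decay you must choose the radii according to the signs of $x_2-y_1$ and $x_1-y_2$, not a single fixed pair $e^{\alpha/\sqrt{t+1}}$, $e^{(\alpha+\kappa)/\sqrt{t+1}}$. This forces a case analysis (the paper's $(+-)$, $(--)$, $(++)$ cases).

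More seriously, the sentence ``pole crossings \ldots\ either vanish by the ordering constraint or inherit an even stronger decay, as can be verified by direct inspection'' is not correct. When you shrink the $z_2$-contour past the zero of the denominator $D(z_1,z_2)$ at $z_2=\pole(z_1)$, the residue is a single integral in $z_1$ whose integrand involves $\SGte(t,z_1)\,\SGte(t,\pole(z_1))$. To control this by steepest descent you need $|\SGteLim(z)\,\SGteLim(2-z^{-1})|<1$ on the $z_1$-contour away from $z=1$, and this \emph{fails} on circles centered at the origin (the paper verifies this explicitly; see the discussion around $\SGttLim$ and Figure~\ref{fig:SGLim}). Consequently the residue term cannot be bounded on your proposed nested circles, and there is no cancellation from the Weyl-chamber ordering that saves you. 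The paper resolves this by deforming the $z_1$-contour to (perturbations of) the ``magic'' circle $\{|z-\tfrac12|=\tfrac12\}$ and its variants $\MagicC$, $\Magicc$, on which the required steepest-descent condition for $\SGttLim$ does hold; in two of the three sign-cases a further re-deformation is needed because the bulk and residue parts favor different contours. This contour geometry, and the accompanying case analysis, is the substantive content your proposal is missing.
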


In proving Proposition~\ref{prop:SG}, it is convenient to consider
`small $ t $' and `large enough $ t $' separately.
More precisely, in the following we use the phrase \DefinText{for large enough $ t $} if the referred statement holds for all $ t\geq t_0 $,
for some generic threshold $ t_0<\infty $ that may change from line to line, but depends only on $ \alpha $ and $ T $.
This is \emph{not to be confused with} the global assumption $ t \leq \e^{-2}T $.

The case with $ t\leq t_0 $ is simple. Let us first settle it.
\begin{proof}[Proof of Proposition~\ref{prop:SG}, the case with $ t\leq t_0=t_0(\alpha,T) $]
Fix an arbitrary $ t_0<\infty $, and assume $ t\leq t_0 $ throughout the proof.
Since $ (t+1) $ is bounded away from zero and infinity, it suffices to show
\begin{align}
	\label{eq:SGbd:shrt}
	\big|\SGe((y_1,y_2),(x_1,x_2);t)\big|
	\leq
	C(t_0) e^{ \frac{-1}{C(t_0)}(|x_1-y_1|+|x_2-y_2|) }.
\end{align}
From this the desired estimates on $ |\SGe| $ and $ |\nabla\SGe| $ \emph{both} follow.

Instead of directly proving this bound for $ \SGe $, let us first consider $ \TrPrc $ and prove that
\begin{align}
	\label{eq:SGbd:shrttimeone}
	\big|\TrPrc((y_1,y_2),(x_1,x_2);t)\big|
	\leq
	C(t_0) e^{ \frac{-1}{C(t_0)}(|x_1-y_1|+|x_2-y_2|) }.
\end{align}
Recall from Proposition \ref{prop:TransitionProbability} that $ \TrPrc\big((y_1,y_2),(x_1,x_2);t\big) = \TrP\big((y_1,y_2)\to(x_1,x_2);t\big) $ denotes the transition probability of stochastic \ac{6V} particle system with two particles.
Here we will appeal to the probabilistic interpretation of $ \TrPrc = \TrP $, and not rely upon contour integral formulas.
Let $ (x_1(t)<x_2(t)) \in \Z^2 $ denote the time $ t $ locations of the particles, starting from $ x_i(0)=y_i $.
To show \eqref{eq:SGbd:shrttimeone}, it suffices to show such a statement with $t=1$.
To see this, observe that $\TrPrc\big((y_1,y_2),(x_1,x_2);t\big)$ can be written as a $t$-fold convolution of one-step transition probabilities. The convolution can be expanded into a sum over all trajectories $(x_1(\Cdot),x_2(\Cdot))$ with $x_i(0)=y_i$ and $x_i(t)=x_i$. The contribution to each trajectory can be bounded by $t$ products of the one-step bound, leading to the contribution $C^t  e^{ \frac{-1}{C}(|x_1-y_1|+|x_2-y_2|)}$ for some $C>0$. (Note that the exponential terms came from telescoping.) The total number of trajectories to sum over is upper-bounded by $\binom{|x_1-y_1| + t}{t}\binom{|x_2-y_2| + t}{t}$ which, for $t<t_0$, is bounded by $C(t_0) |x_1-y_1|^t |x_2-y_2|^t$. Combining these two bounds and using that $t<t_0$, we arrive at \eqref{eq:SGbd:shrttimeone}. The $t=1$ version of \eqref{eq:SGbd:shrttimeone} is easy shown directly from the definition of the dynamics of the stochastic \ac{6V} model.
Finally, recall that $ \SGe $ is related to $ \TrPrc $ through~\eqref{eq:SG:}.
Given that $ \lambdae \to 1 $, $ \mue\to 1 $, $ \taue \to 1 $, and $ t\leq t_0 $,
the preceding bound on $ |\TrPrc| $ immediately yields the desired result~\eqref{eq:SGbd:shrt}.
\end{proof}

Having settled Proposition~\ref{prop:SG} for short time,
we now turn to the case for large enough $ t $.
For this we appeal to the contour integral representation, and analyze the integrals therein.
To begin with, referring back the expression~\eqref{eq:SG},
we decompose $ \SGe = \SGFr - \SGIn $ into the difference of a `free part' and an `interacting part', where
\begin{align}
	\label{eq:SGFr}
	\SGFr\big((y_1,y_2),(x_1,x_2);t\big)
	&:=
	\prod_{i=1}^2\oint_{\Circ_r}
	 z_i^{x_i-y_i+(\mue t-\muet)}\frac{\SGte(t,z_i)dz_i}{2\pi\img z_i},
\\
	\label{eq:SGIn}
	\SGIn\big((y_1,y_2),(x_1,x_2);t\big)
	&:=
	\oint_{\Circ_r}\oint_{\Circ_r}
	z_1^{x_2-y_1+(\mue t-\muet)}z_2^{x_1-y_2+(\mue t-\muet)} \SGfre(z_1,z_2)
	\prod_{i=1}^2
	\frac{\SGte(t,z_i)dz_i}{2\pi\img z_i}.
\end{align}
Here $ \SGfre $ and $ \SGte $ are given by~\eqref{eq:SGfr} and \eqref{eq:SGt} under the weak asymmetry scaling.
Recall from~\eqref{eq:hk}--\eqref{eq:hkt} that $ \hk(t,x) $ denotes the one-particle transition kernel.
Comparing~\eqref{eq:SGFr} with~\eqref{eq:hk:contour}, we see that $ \SGFr $ is exactly the product of one-particle transition kernels, i.e.,
\begin{align}
	\label{eq:SGFr=hkhk}
	\SGFr\big((y_1,y_2),(x_1,x_2);t\big)
	=
	\hk(t,x_1-y_1) \hk(t,x_2-y_2).
\end{align}
Given this decomposition,
we breakdown the proof of Proposition~\ref{prop:SG} into proving:

\begin{prop}\label{prop:SG:}
For any  $ \alpha,T\in(0,\infty) $ and $ t_0=t_0(\alpha,T) $, there exist $C(\alpha,T), C(\alpha)>0$ such that
\begin{enumerate}[label=(\alph*)]%
\item\label{enu:SGFr}
$	
	\displaystyle
	\big|\SGFr\big((y_1,y_2),(x_1,x_2);t\big)\big|
	\leq
	\frac{C(\alpha,T)}{t+1} e^{ \frac{-\alpha(|x_1-y_1|+|x_2-y_2|)}{\sqrt{t+1}+C(\alpha)} },
$
\item\label{enu:SGFrg}
$	
	\displaystyle
	\big|\nabla_{x_j}\SGFr\big((y_1,y_2),(x_1,x_2);t\big)\big|,
	\
	\big|\nabla_{y_j}\SGFr\big((y_1,y_2),(x_1,x_2);t\big)\big|	
	\leq
	\frac{C(\alpha,T)}{(t+1)^{3/2}} 	
	e^{ \frac{-\alpha(|x_1-y_1|+|x_2-y_2|)}{\sqrt{t+1}+C(\alpha)} },
$
\item\label{enu:SGIn}
$
	\displaystyle
	\big|\SGIn\big((y_1,y_2),(x_1,x_2);t\big)\big|
	\leq
	\frac{C(\alpha,T)}{t+1} e^{ -\frac{\alpha(|x_2-y_1|+|x_1-y_2|)}{\sqrt{t+1}+C(\alpha)} },
$
\item\label{enu:SGIng}
$
	\displaystyle
	\big|\nabla_{x_j}\SGIn\big((y_1,y_2),(x_1,x_2);t\big)\big|,
	\
	\big|\nabla_{y_j}\SGIn\big((y_1,y_2),(x_1,x_2);t\big)\big|	
	\leq
	\frac{C(\alpha,T)}{(t+1)^{3/2}} e^{ -\frac{\alpha(|x_2-y_1|+|x_1-y_2|)}{\sqrt{t+1}+C(\alpha)} },
$
\end{enumerate}
for all $ x_1<x_2\in\Xi(t+s) $, $ y_1<y_2\in\Xi(s) $,
$ s\in[0,\e^{-2}T]\cap\Z $, $ t\in[t_0,\e^{-2}T]\cap\Z $, $ j=1,2 $,
and $ (x_1,x_2,y_1,y_2) $ in their respective Weyl or $ \nabla $-Weyl chamber.
\end{prop}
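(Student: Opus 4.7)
The strategy is a steepest-descent analysis of the contour integrals
\eqref{eq:SGFr} and \eqref{eq:SGIn}, done separately for the free part (a)--(b)
and the interacting part (c)--(d). The short-time case $t \leq t_0$ has already
been handled by the preceding argument, so throughout we may assume $t$ is as
large as needed (depending on $\alpha, T$).

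\textbf{Free part.} Since $\SGFr$ factors into two one-variable integrals, it
suffices to bound
\[
I(a,t) := \oint_{\Circ_r} z^{a-1} \SGte(t,z) \, \frac{dz}{2\pi\img},
\qquad a = x_j - y_j + (\mue t - \muet) \in \Z,
\]
and the analogues carrying an extra factor $z-1$ (for $\nabla_{x_j}$) or
$z^{-1}-1$ (for $\nabla_{y_j}$). Writing
$\SGte(t,z) = z^{\muet} G_\e(z)^t$ with
$G_\e(z) = \lambdae (b_1 + (1-b_1-b_2)/(\taue^\den z))/(1 - b_2/(\taue^\den z))$,
the integral $I(a,t)$ coincides with the $t$-step transition probability of the
centered walk $\RW_\e$ from \eqref{eq:hk}, which under weak asymmetry scaling has
mean zero, variance converging to $\nu_*$ as in \eqref{e:VarR-nustar}, and
uniform exponential tails. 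I would express $G_\e = \exp(F_\e)$ on a neighborhood
of $|z|=1$, solve $a + t\,(z\partial_z F_\e)(r_*)=0$ for a saddle radius
$r_* = r_*(a,t)$ that depends on the ratio $a/\sqrt{t+1}$, deform $\Circ_r$ to
$|z|=r_*$, and apply a standard Laplace-type estimate. This yields
$|I(a,t)| \leq C(\alpha,T)(t+1)^{-1/2} \exp(-\alpha|a|/(\sqrt{t+1}+C(\alpha)))$,
and the product of two such one-variable bounds gives (a). For the gradient (b),
the additional factor $z-1$ (resp.\ $z^{-1}-1$) vanishes at $z=1$, hence is of
order $(t+1)^{-1/2}$ along the fluctuation window around the saddle, producing
the extra $(t+1)^{-1/2}$ decay.

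\textbf{Interacting part.} The factor $\SGfre(z_1,z_2)$ does not factor, so
$\SGIn$ is a genuine two-variable integral. However, along any product contour
$|z_1|=r_1$, $|z_2|=r_2$ that stays uniformly (on an $\e$-dependent scale) away
from the zero locus of the denominator
$1+\taue^{-1+2\den}z_1 z_2 - (1+\taue^{-1})\taue^\den z_1$, the quantity
$|\SGfre|$ is uniformly bounded. The roles of $x_j-y_j$ in the free analysis are
now played by $x_2-y_1$ and $x_1-y_2$, so choosing $r_j = r_*(x_{3-j}-y_j,t)$
from the one-variable analysis produces the joint saddle-point contour. The
same Laplace-type estimate then gives (c), and an additional factor $z_j-1$ from
a discrete gradient yields (d) with the extra $(t+1)^{-1/2}$ gain.

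\textbf{Main obstacle.} The chief difficulty is to control $\SGfre$ uniformly
on the deformed product contour across the full spacetime range. Its denominator
vanishes on a curve that, under $\taue=e^{-\sqrt\e}$, approaches the unit torus
as $\e\to 0$, so the contour must stay a definite distance from this curve while
simultaneously passing through the joint saddle for arbitrary exponents
$x_2-y_1$, $x_1-y_2$. A secondary issue is that the stated sub-exponential form
$e^{-\alpha|a|/(\sqrt{t+1}+C(\alpha))}$, rather than a Gaussian, must cover both
the diffusive regime $|a|\lesssim\sqrt{t+1}$ and the moderate-deviation regime
$|a|\gg\sqrt{t+1}$ uniformly; this forces the saddle radius $r_*$ to depend on
$a/\sqrt{t+1}$ rather than being fixed, and requires that the resulting Laplace
estimates be glued along dyadic ranges of the ratio.
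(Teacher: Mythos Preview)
Your free-part argument is essentially the paper's, but you are overcomplicating it. The paper does not solve for a saddle radius depending on $a/\sqrt{t+1}$ or glue over dyadic ranges. It simply deforms each $\Circ_r$ to the circle of radius $\rad(t,-\sgn(a)\alpha)=\exp\bigl(-\sgn(a)\alpha/(\sqrt{t+1}+|\alpha|C_*)\bigr)$; along this circle $|z|^a=e^{-\alpha|a|/(\sqrt{t+1}+C(\alpha))}$ gives exactly the stated sub-exponential decay, and a three-step analysis (value at $\theta=0$, Taylor expansion for small $\theta$, the global condition $|\SGteLim(e^{i\theta})|<1$ for $\theta\neq 0$) controls $|\SGte|$ by $C\,e^{-(t+1)\theta^2/C}$. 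Integrating $\theta$ gives $(t+1)^{-1/2}$ per factor, and the gradient brings $|z^{\pm 1}-1|\le C((t+1)^{-1/2}+|\theta|)$. So no moving saddle is needed; the sub-exponential (rather than Gaussian) target is precisely what makes a fixed radius sufficient.

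For the interacting part there is a genuine gap. You propose to keep $(z_1,z_2)$ on a product of circles and bound $|\SGfre|$ uniformly by staying away from the zero set of its denominator. This cannot work: the pole locus $z_2=\pole(z_1)$ passes through the joint saddle $(z_1,z_2)=(1,1)$ (since $\pole(1)\to 1$ as $\e\to 0$), so any product contour capturing the Laplace asymptotics necessarily meets the singularity, and $|\SGfre|$ blows up like $(t+1)^{1/2}$ there, not $O(1)$. The paper's resolution is structurally different from what you outline: one deforms $z_2$ through the pole, picking up a residue, so that $\SGIn=\SGbk+\SGres$. In $\SGbk$ the radius of the $z_2$-circle is chosen (depending on $|\pole(z_1)|$) to stay a distance $\ge c/\sqrt{t+1}$ from the pole, which yields only $|\SGfre|\le C(1+\sqrt{t+1}\,|\theta_1|+\sqrt{t+1}\,|\theta_2|)$; this polynomial growth is then absorbed by the Gaussian $e^{-(t+1)\theta_i^2/C}$. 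The residue $\SGres$ is a single $z_1$-integral involving $\SGtt(t,z_1)=\SGte(t,z_1)\SGte(t,\pole(z_1))$, and here lies the real obstruction you did not identify: the steepest-descent condition $|\SGttLim(e^{i\theta})|<1$ \emph{fails} on the unit circle, so circular $z_1$-contours do not control $\SGres$. The paper introduces the contour $\Magic=\{|z-\tfrac12|=\tfrac12\}$ (and perturbations $\MagicC,\Magicc$ to avoid $z=0$), verifies by explicit computation that both $|\SGteLim|<1$ and $|\SGttLim|<1$ hold on $\Magic\setminus\{1\}$, and then runs a case analysis according to the signs of $x_2-y_1$ and $x_1-y_2$, each case requiring its own combination of circular and $\Magic$-type contours (and in two cases a further re-deformation of the $z_1$-contour in $\SGres$). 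None of this machinery---the residue decomposition, the failure of steepest descent for $\SGttLim$ on circles, the construction of $\Magic$, or the sign-dependent case split---appears in your proposal.
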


Note that in Proposition~\ref{prop:SG:}\ref{enu:SGIn}--\ref{enu:SGIng},
the pairing of $ x_i $'s and $ y_j $'s is swapped compared to Proposition~\ref{prop:SG}.
This arises naturally from the contour integral structure of $ \SGIn $,
and in fact gives a stronger bound than the one in the original pairing.
To see this, under the assumption $ x_1<x_2 $ and $ y_1<y_2 $,
considering separately the four cases distinguished by the signs of $ x_1-y_1 $ and $ x_2-y_2 $,
we check that
\begin{align*}
	&|x_1-y_1|+|x_2-y_2| \stackrel{(++)}{=} (x_1-y_1)+(x_2-y_2) = (x_1-y_2) + (x_2-y_1) \leq |x_1-y_2| + |x_2-y_1|,
\\
	&|x_1-y_1|+|x_2-y_2| \stackrel{(--)}{=} (y_1-x_1)+(y_2-x_2) = (y_1-x_2) + (y_2-x_1) \leq |y_1-x_2| + |y_2-x_1|,
\\
	&|x_1-y_1|+|x_2-y_2| \stackrel{(+-)}{=} (x_1-y_1)+(y_2-x_2) \leq (x_2-y_1) + (y_2-x_1) \leq |x_2-y_1| + |y_2-x_1|,
\\
	&|x_1-y_1|+|x_2-y_2| \stackrel{(-+)}{=} (y_1-x_1)+(x_2-y_2) \leq (y_2-x_1) + (x_2-y_1) \leq |y_2-x_1| + |x_2-y_1|.
\end{align*}

Throughout the rest of this section, we \emph{fix} an exponent $ \alpha\in(0,\infty) $, a time horizon $ T\in(0,\infty) $,
and assume $ t\leq \e^{-2}T $ is large enough.
In the sequel we will frequently use polar coordinates $ z=re^{\img\theta} $ to parametrize complex numbers.
Throughout this section we will operator under convention $ \theta \in (-\pi,\pi] $.

\subsection{Estimating the free part $ \SGFr $}
Let us explain the strategy before starting the estimate.
We plan to deform $ \Circ_r\times\Circ_r $ to some suitable contours,
along which we easily extract the spatial exponential decay.
To this end, for $ \beta\in\R $ set
\begin{align}
	\label{eq:radius}
	\rad(t,\beta) := \exp\big( \tfrac{\beta}{\sqrt{t+1}+|\beta| C_*} \big).
\end{align}
We \emph{fixed} the constant $ C_*\in(0,\infty) $ large enough so that
$ \rad(t,\beta) \geq \exp(-1/C_*) \geq \frac{1+b_1}{2} $.
This is to avoid the pole of $ \SGte(t,z) $ (given in~\eqref{eq:SGt}) at $ z=b_1e^{\sqrt{\e}(\den-1)} $.
Now, let $ \sgn(x):=\ind_{\set{x > 0}} $ denote the sign function,
and let
\[
 r_i = \rad(t,-\sgn(x_i-y_i)\alpha)
 \]
  where $\alpha\in(0,\infty)$ is the parameter given in Proposition~\ref{prop:SG:}.
Along the contour $ (z_1,z_2)\in \Circ_{r_1}\times \Circ_{r_2} $,
we have the desired exponential decay:
\begin{align*}
	|z_i|^{x_i-y_i} = \exp\big(-\tfrac{\alpha|x_i-y_i|}{\sqrt{t+1}+\alpha C_*}\big).
\end{align*}

Given the exponential decay,
we still need to show that each of the remaining integrals (for $i=1,2$)
\[
 \int_{-\pi}^{\pi} |\SGt(t,z_i(\theta_i))| \frac{d\theta_i}{2\pi|z_i(\theta_i)|}
 \]
  are bounded by  $ (t+1)^{-\frac12}C $.
This is achieved by steepest decent analysis.
Under weak asymmetry scaling, the function $ \SGte(t,z) $ (given in~\eqref{eq:SGt}) reads
\begin{align}
	\label{eq:SGte}
	\SGte(t,z)
	=
	z^{\muet}
	\Big(
		\frac{1-b_1e^{\sqrt\e(\den-1)}}{b_1+e^{\sqrt\e\den}-b_1e^{\sqrt\e\den}-b_1e^{\sqrt\e(\den-1)}}
		\frac{b_1+(e^{\sqrt\e\den}-b_1e^{\sqrt\e\den}-b_1e^{\sqrt\e(\den-1)})z^{-1}}{1-b_1e^{\sqrt\e(\den-1)}z^{-1}}
	\Big)^t.
\end{align}
As we show in Lemma~\ref{lem:SGtbd} below,
along the contour $ \Circ_{r_i} $, under the polar parametrization $ z_i=r_ie^{\img\theta_i} $,
\begin{itemize}[leftmargin=5ex]
\item $ |\SGte(t,z_i(\theta_i))| $ has Gaussian decay in $\theta_i$ of the form $ \exp(-\frac{1}{C}\theta_i^2(t+1)) $ in a neighborhood of $ \theta_i=0 $,
\item $ |\SGte(t,z_i(\theta_i))| $ has an exponential decay in $t$ of the form $ \exp(-\frac{1}{C}(t+1)) $ away from $ \theta_i=0 $.
\end{itemize}
The first bullet point is done by Taylor expansion,
and relies only on \emph{local} properties of $ \SGte(t,z_i) $ and $ \Circ_{r_i} $ near $ \theta_i=0 $.
The second bullet point holds because of \emph{global} properties of $ \SGt(t,z_i) $.
More precisely, set
\begin{align}
	\label{eq:SGteLim}
	\SGteLim(z) := \frac{b_1z+1-2b_1}{1-b_1/z}.
\end{align}
Referring to the definition~\eqref{eq:SGt} of $ \SGte(t,z) $,
with $ \mue\to 1 $ as $ \e \to 0 $,
we have that
\begin{align*}
	\lim_{(t,\e)\to(\infty,0)} |\SGte(t,z)|^{\frac{1}{t}}
	=
	|\SGteLim(z)|,
	\quad
	\text{uniformly over } z\in \Circ_{1}.
\end{align*}
Now, with $ r_i=\rad(t,\pm\alpha) \to 1 $ as $ t\to\infty $, we see that the second bullet point holds only if
\begin{align}
	\tag{SD.$ \Circ_1 $}
	\label{eq:steep:circ}
	|\SGteLim(z)| <1,
	\quad
	\forall z \in \Circ_1\setminus\set{1}.
\end{align}

Conditions of the type~\eqref{eq:steep:circ} will turn out to be decisive in showing that steepest decent analysis works.
The condition~\eqref{eq:steep:circ} can be verified by
interpreting $ \SGteLim(z) $ as a probability generating function $ \Ex[z^{X}] $ of a random variable $ X $.
We will, instead, verify~\eqref{eq:steep:circ} (Lemma~\ref{lem:SGtbd}) by viewing $ \SGteLim(z) $ as a rational function
and directly calculating its modulus along the unit circle $ \Circ_1 $.
This approach has the advantage of generalizing to the case for the interacting part $ \SGIn $.

We now begin the steepest-decent-like bound on $ |\SGte(t,z)| $.
\begin{lem}\label{lem:SGtbd}
Given any $ \beta\in\R $ and $ T<\infty $, there exists $C(\beta,T), C>0$ such that
\begin{align*}
	|\SGte(t,z)|
	\leq
	C(\beta,T)\exp\big(-\tfrac1C\theta^2(t+1) \big),
	\qquad \textrm{with}\quad
	z=\rad(t,\beta)e^{\img\theta} \in \Circ_{\rad(t,\beta)},
\end{align*}
for all $ \theta\in (-\pi,\pi] $, large enough $ t\leq \e^{-2}T $, and small enough $ \e>0 $.
\end{lem}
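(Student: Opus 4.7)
The plan is a steepest-descent analysis at the saddle point $z=1$. Writing $r_\beta:=\rad(t,\beta)$ and $\Psi_\e(z):=\mue\log z+\log K_\e(z)$, where $K_\e(z)$ denotes the rational factor (the one raised to the $t$-th power) appearing in~\eqref{eq:SGte}, I decompose $\SGte(t,z)=z^{\lfloor\mue t\rfloor-\mue t}\exp(t\Psi_\e(z))$. The prefactor is bounded by a universal constant on $\Circ_{r_\beta}$ since $|\lfloor\mue t\rfloor-\mue t|\le 1$ and $|\log r_\beta|\le 1/C_*$, so it suffices to bound $\exp(t\,\mathrm{Re}\,\Psi_\e(r_\beta e^{\img\theta}))$. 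The key structural facts, to be checked by direct computation, are $\Psi_\e(1)=\Psi_\e'(1)=0$ and $\mathrm{Re}\,\Psi_\e''(1)\to\nu_*=\tfrac{2b_1}{1-b_1}>0$ as $\e\to 0$. The first equality is immediate from $K_\e(1)=1$; the second is equivalent to (and essentially motivates) the definition~\eqref{eq:mu} of $\mue$; and the third follows from a short logarithmic-derivative computation on $\SGteLim(z)=z(b_1z+1-2b_1)/(z-b_1)$, which gives $(\log\SGteLim)''(1)=\nu_*$.

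Fix a small $\delta>0$ and split the range of $\theta$ into $|\theta|\le\delta$ and $|\theta|>\delta$. In the local region, writing $z-1=(r_\beta-1)+r_\beta(e^{\img\theta}-1)$ and Taylor-expanding $\Psi_\e$ to second order around $z=1$ gives
\begin{align*}
t\,\mathrm{Re}\,\Psi_\e(z)=-\tfrac{t}{2}\mathrm{Re}\,\Psi_\e''(1)\,\theta^2+\tfrac{t}{2}\mathrm{Re}\,\Psi_\e''(1)(\log r_\beta)^2+\mathcal{O}(t|\theta|^3+t|\log r_\beta|^3).
\end{align*}
The second term is $\mathcal{O}(\beta^2)$ since $t(\log r_\beta)^2\le\beta^2$; the cubic errors are $\mathcal{O}(\beta^3)+\mathcal{O}(t\delta^3)$, and taking $\delta$ small enough that $Ct\delta^3$ is absorbed into half of the leading quadratic term produces $\exp(-\theta^2(t+1)/C)$ up to the $\beta$-dependent prefactor $C(\beta,T)$.

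For the global region $|\theta|>\delta$, the plan is to verify condition~\eqref{eq:steep:circ} directly. Substituting $z=e^{\img\theta}$ into $\SGteLim(z)$ and simplifying,
\begin{align*}
|\SGteLim(e^{\img\theta})|^2=\frac{b_1^2+(1-2b_1)^2+2b_1(1-2b_1)\cos\theta}{1+b_1^2-2b_1\cos\theta},
\end{align*}
and one readily checks that numerator minus denominator equals $-4b_1(1-b_1)(1-\cos\theta)\le 0$, vanishing only at $\theta=0$. Combined with the uniform convergence of $K_\e$ to $\SGteLim/z$ on compact subsets away from the pole $z=b_1$ and with $r_\beta\to 1$, this yields $|\SGte(t,r_\beta e^{\img\theta})|\le C(\beta)e^{-\eta t}$ uniformly in $|\theta|>\delta$ for some $\eta=\eta(\delta)>0$ and all small $\e$, large $t$; this dominates the right-hand side of the claimed bound.

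The main technical obstacle is that because $\mue-1=\mathcal{O}(\sqrt\e)$ and $t\le\e^{-2}T$, the factor $z^{\mue t}$ on its own contributes a term of order $\beta\sqrt{t+1}$ to $\log|\SGte(t,r_\beta)|$, which by itself would preclude any $\mathcal{O}(1)$-in-$t$ bound. The decisive cancellation is the saddle-point identity $\Psi_\e'(1)=0$, which removes this linear-in-$\log r_\beta$ contribution and reduces it to the harmless $\mathcal{O}(\beta^2)$ quadratic term; verifying this identity from the explicit formula for $K_\e(z)$ together with~\eqref{eq:mu} is the one genuine algebraic calculation the proof requires.
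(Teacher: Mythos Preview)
Your proposal is correct and follows essentially the same steepest-descent approach as the paper's proof: a local Taylor expansion near $z=1$ combined with the global condition~\eqref{eq:steep:circ}. The paper organizes the argument as a three-step procedure (zero/small/large $\theta$) using the probabilistic identity $\SGtee(z)=z^{\lfloor\mue t\rfloor/t-\mue}\Ex[z^{-R_\e}]$, which makes $\Psi_\e'(1)=\mathcal O(1/t)$ and $\Psi_\e''(1)=\text{Var}(R_\e)+\mathcal O(1/t)$ transparent; your bookkeeping instead peels off $z^{\lfloor\mue t\rfloor-\mue t}$ as a bounded prefactor so that $\Psi_\e'(1)=0$ exactly, which is a clean equivalent. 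Your verification of~\eqref{eq:steep:circ}, reducing to numerator minus denominator $=-4b_1(1-b_1)(1-\cos\theta)$, is in fact neater than the paper's displayed computation.
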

\begin{proof}
Our first step is to recognize $ \SGte(t,z) $ as the $ t $-th power of a given function.
To this end, referring to~\eqref{eq:SGt}, observe that
\begin{align*}
	\SGtee(z)
	:= \SGte(t,z)^{\frac1t}
	=
	z^{\frac{\muet}{t}}
	\lambdae \frac{b_1+(1-b_1-b^\e_2)(\taue^\den z)^{-1}}{1-b^\e_2(\taue z)^{-1}}.
\end{align*}
Indeed, $ \SGtee(z) $ has a $ t $-dependence through $ z^{\frac{\muet}{t}} $,
but since $ \mue\to 1 $ as $ \e\to 0 $, we expect the $ t $-dependence to be `weak' and hence suppress it in notation.
Due to the non-integer power $ z^{\frac{\muet}{t}} $, the function $ \SGtee(z) $ is not meromorphic on $ \C $.
However,  since $ \mue\to 1 $ as $ \e\to 0 $, there exists a fixed neighborhood $ O $ of $ z=1 $,
such that $ \SGtee (z)$ is analytic on $ z\in O $. 
Throughout the proof we will operate on $ O $ whenever we refer to the function $ \SGtee(z) $.

As in the statement of Lemma \ref{lem:SGtbd}, set $ z(\theta)=\rad(t,\beta)e^{\img\theta} $.
The proof follows a three-step procedure:
\begin{enumerate}%
\myitem{(Zero $ \theta $)} \label{enu:3step:theta=0}
Show that $ |\SGtee(z(0))| \leq \exp(C(\beta,T)\frac{1}{t+1}) $, for all $ t \leq \e^{-2}T $ large enough and $ \e>0 $ small enough.
Note that the right hand side of this bound also `weakly' depends on $t$ for $t$ sufficiently large.
\myitem{(Small $ \theta $)} \label{enu:3step:small}
Show that there exists $ \theta_0>0 $, such that
$ |\SGtee(z(\theta))| \leq |\SGtee(z(0))|\exp(-\frac{\theta^2}{C}) $, for all $ |\theta|\leq\theta_0 $, and $ \e>0 $ small enough.
\myitem{(Large $ \theta $)} \label{enu:3step:large}
Show that $ |\SGte(t,z(\theta))| \leq \exp(-\frac{t}{C}) $, for $ |\theta| > \theta_0 $, $ t \geq 0 $ large enough, and $ \e>0 $ small enough.
\end{enumerate}

Once these have been established, with $ \SGte(t,z) = \SGtee(z)^t $, the desired result follows immediately.
Our task is hence to carry out the steps \ref{enu:3step:theta=0}, \ref{enu:3step:small}, and \ref{enu:3step:large}.

\medskip
\ref{enu:3step:theta=0}:
First, since the function $ \SGtee(z) $ is invoked here,
let us check that the claimed assumption $ z(0)\in O $ holds.
Indeed, with $ \rad(t,\beta)\to 1 $ as $ t\to\infty $, we have that $ z(0)\in O $, for all $ t $ large enough.

Recall that $ \RW_\e := \Rw_\e -\mue $,
and that $ \Rw_\e $ is defined in~\eqref{eq:rw}--\eqref{eq:Rw} with $\mue=\Ex(\Rw_\e)$.
One readily checks that
$	\SGtee(z) = z^{\frac{\muet}{t}-\mue}\Ex[z^{-\RW_\e}]$, $z\in O$.
Given this, it is straightforward to calculate
\begin{subequations}
\begin{align}
	\label{eq:SGte:D}
	\partial_z \big(\log\SGte(z)\big) &= \frac{\muet}{t}-\mue - \frac{\Ex[\RW_\e z^{-\RW_\e-1}]}{\Ex[z^{-\RW_\e}]},
\\
	\label{eq:SGte:DD}
	\partial^2_z \big(\log\SGte(z)\big) &=
		\frac{\Ex[\RW_\e(\RW_\e+1) z^{-\RW_\e-1}]}{\Ex[z^{-\RW_\e}]}
		-\Big(\frac{\Ex[\RW_\e z^{-\RW_\e-1}]}{\Ex[z^{-\RW_\e}]}\Big)^2,
\\
	\label{eq:SGte:DDD}
	\big|\partial^3_z \big(\log\SGte(z)\big)\big| & \leq C,
\end{align}
\end{subequations}
for all $ z\in O $.
Using~\eqref{eq:SGte:D}--\eqref{eq:SGte:DDD} we see that $\big|\partial_z \big(\log\SGte(z)\big)\vert_{z=1}\big| \leq t^{-1}$ and $\big|\partial^2_z \big(\log\SGte(z)\big)\vert_{z=1}\big| \leq C$ for some $C>0$. Using this, along with $ \log\SGtee(1)=0 $, we may Taylor expand around $z=1$ and bound $\big|\log\SGtee(z)\big| \leq t^{-1}|z-1| + C|z-1|^2 $.
Now, set $ z=z(0)=\rad(t,\beta) $, and use the fact that $|\rad(t,\beta)-1|\leq C(\beta,T)(t+1)^{-1/2}$ to bound (after exponentiating)
\begin{align*}
	| \SGtee(z(0)) |
	\leq
	\exp\big(t^{-1}|\rad(t,\beta)-1|+C|\rad(t,\beta)-1|^2\big)
	\leq
	\exp\big( C(\beta,T) \tfrac{1}{t+1} \big).
\end{align*}

\ref{enu:3step:small}:
First, with $ \rad(t,\beta)\to 1 $ as $ t\to\infty $,
it is readily verified that there exists a small enough $ \theta_0>0 $
such that the assumption $ z(\theta)\in O $ holds for all $ |\theta|\leq \theta_0 $ and $ t $ large enough.
From~\eqref{eq:SGte:D}--\eqref{eq:SGte:DDD}, we calculate (recall $\nu_*$ from \eqref{eq:ceffints})
\begin{align*}
	&
	\partial_\theta (\log\SGtee(z(\theta)))|_{\theta=0} \in \img\R,
\\
	&\lim_{ \e \to 0} \partial^2_\theta (\log\SGtee(z(\theta)))|_{\theta=0}
	=
	-\rad(t,\beta)^2\lim_{\e\to 0} \text{Var}(\RW_\e) \leq -\tfrac1C \nu_*,
\\
	&\big|\partial^3_\theta (\log\SGtee(z(\theta)))\big|
	\leq
	C.
\end{align*}
Given these properties,
Taylor expanding $ \log \SGte(t,z(\theta)) $ in $ \theta $ around $ \theta=0 $ to the second order yields
\begin{align*}
	\text{Re}\big[\log \SGte(t,z(\theta))-\log \SGte(t,z(0))\big]
	\leq
	-\tfrac{1}{C}\theta^2,
	\qquad
	|\theta| \leq \theta_0,
\end{align*}
for some fixed $ \theta_0>0 $.
Further exponentiating this gives the desired result
\begin{align*}
	\big|\SGtee(z(\theta))\big| \leq |\SGtee(z(0))| e^{ -\frac{1}{C}\theta^2},
	\quad
	\forall |\theta| \leq \theta_0,
\end{align*}
and $ \e>0 $ small enough.

\ref{enu:3step:large}:
Recall the definition of $ \SGteLim(z) $ from~\eqref{eq:SGteLim}.
With $ \mue\to 1 $ as $ \e\to 0 $, referring to the expression~\eqref{eq:SGte} for $ \SGte(t,z) $,
we readily verify that
\begin{align}
	\label{eq:SGtelim:1}
	\lim_{(t,\e)\to(\infty,0)} |\SGte(t,z(\theta))|^{\frac{1}{t}}
	=
	|\SGteLim(e^{\img\theta})|,
	\quad
	\text{uniformly over } \theta\in(-\pi,\pi].
\end{align}
The r.h.s.\ of~\eqref{eq:SGtelim:1} leads us to want to show \eqref{eq:steep:circ}.
To verify~\eqref{eq:steep:circ}, we calculate
\begin{align*}
	\big|\SGteLim(e^{\img\theta})\big|^2
	&=
	\Big(1+ \frac{b_1(w+w^{-1}-2)}{1-b_1w^{-1}} \Big)	
	\Big(1+ \frac{b_1(w^{-1}+w-2)}{1-b_1w} \Big)\Big|_{w=e^{\img\theta}}
\\
	&=
	1 + \frac{(w+w^{-1}-2)(2b_1+2-(b_1^2+1)(w+w^{-1}))}{|1-b_1w|^2}\Big|_{w=e^{\img\theta}}
\\
	&=
	1 - \frac{4(1-\cos\theta)(1+b_1-(1+b_1^2)\cos\theta)}{|1-b_1e^{\img\theta}|^2}
	< 1,
	\quad
	\forall \theta \in (-\pi,\pi]\setminus \{0\}.
\end{align*}
This calculation shows $ |\SGteLim(e^{\img\theta})| < 1- \frac{1}{C} $ for $ |\theta| > \theta_0 $.
Combining  with~\eqref{eq:SGtelim:1} gives the desired result:
\begin{align*}
	|\SGte(t,z(\theta))|^{\frac1t} \leq  1 - \tfrac{1}{C},
	\quad
	\forall |\theta| > \theta_0,
\end{align*}
for $ t\leq \e^{-2}T $ large enough, and $ \e>0 $ small enough.
\end{proof}

\begin{proof}[Proof of Proposition~\ref{prop:SG:}\ref{enu:SGFr}--\ref{enu:SGFrg}]
Given the expression~\eqref{eq:SGFr=hkhk}, it suffices to analyze each piece of $ \hk(t,x_i-y_i) $. We will do son using the contour integral expression given in~\eqref{eq:hk:contour}.
To begin with, we deform the contours $ \Circ_r \mapsto \Circ_{r_1}\times\Circ_{r_2} $,
where $ r_i := \rad(-\sgn(x_i-y_i)\alpha) $.
With $ r_i \geq \frac{1+b_1}{2} $ as explained below \eqref{eq:radius}, the deformation does not cross any pole, and gives
\begin{align}
	\label{eq:SGFr:}
	\hk(t,x_i-y_i)
	&=
	\oint_{\Circ_{r_i}} z_i^{x_i-y_i+(\mue t-\muet)}\frac{\SGte(t,z_i)dz_i}{2\pi\img z_i}.
\end{align}
Along the new contour $ \Circ_{r_i} $,
we have the desired exponential decay $ |z_i|^{x_i-y_i} = \exp\big( -\frac{\alpha|x_i-y_i|}{\sqrt{t+1}+\alpha C_*} \big) $.
Hence, under the parametrization $ z_i = r_ie^{\img\theta_i} $, we have
\begin{align*}
%
	\big| \hk(t,x_i-y_i) \big|
	\leq
	e^{ \frac{-\alpha(|x_i-y_i|)}{\sqrt{t+1}+C(\alpha)} }
	\int_{-\pi}^{\pi}\frac{|\SGte(t,z_i)|d\theta_i}{2\pi r_i}.	
\end{align*}
Now, using the bound on $ \SGte(t,z_i) $ from Lemma~\ref{lem:SGtbd}, we have
\begin{align}
	\label{eq:1contourbd}
	\big| \hk(t,x_i-y_i) \big|
	\leq
	C(\alpha,T) e^{ \frac{-\alpha(|x_i-y_i|)}{\sqrt{t+1}+C(\alpha)} }
	\int_{\R} e^{-\frac1C(t+1)\theta_i^2}d\theta_i
	=
	C(\alpha,T) e^{ \frac{-\alpha(|x_i-y_i|)}{\sqrt{t+1}+C(\alpha)} } \frac{1}{\sqrt{t+1}}.
\end{align}
Inserting this bound for $ i=1,2 $ into~\eqref{eq:SGFr=hkhk} yields desired estimate on $ |\SGFr| $.

Turning to the gradients,
since the expression~\eqref{eq:SGFr=hkhk} is symmetric in the indices $ i=1,2 $,
without lost of generality we assume $ j=1 $.
Taking gradient $ \nabla_{x_1}, \nabla_{y_1} $ in \eqref{eq:SGFr=hkhk} gives
\begin{align}
	\label{eq:SGFr=hkhk:nabla}
	\nabla_{x_1} \SG = (\nabla \hk(t,x_1-y_1)) \hk(x_1-y_1),
	\quad
	\nabla_{y_1} \SG = (-\nabla \hk(t,x_1-y_1-1)) \hk(x_1-y_1).
\end{align}
Given this expression, it suffices to analyze $ \nabla \hk(t,x_i-y_i) $.
To this end, take $ \nabla $ in~\eqref{eq:hk:contour} to get
\begin{align*}
	\nabla\hk(t,x_i-y_i)
	&=
	\oint_{\Circ_{r_i}}
	(z_i-1)\prod_{i=1}^2 z_i^{x_i-y_i+(\mue t-\muet)}\frac{\SGte(t,z_i)dz_i}{2\pi\img z_i},
\end{align*}
With $ r_i=\rad(t,\pm\alpha) $, we have $ |z^\pm_i-1| \leq \frac{C(\alpha)}{\sqrt{t+1}} + \theta_i $
for $ z_i = r_ie^{\img\theta_j} $.
Using this bound and the preceding procedure for bounding $ |\hk(t,x_i-y_i)| $,
we obtain
\begin{align}
\label{eq:1contourbd:gd}
\begin{split}
	\big|\nabla\hk(t,x_i-y_i)\big|
	&\leq
	C(\alpha,T)
	e^{ \frac{-\alpha(|x_i-y_i|)}{\sqrt{t+1}+C(\alpha)} }
	\int_{\R} \Big(\frac{1}{\sqrt{t+1}} + \theta_i\Big) e^{-\frac1C (t+1)\theta_i^2}d\theta_i
\\
	&=
	C(\alpha,T) e^{ \frac{-\alpha(|x_i-y_i|)}{\sqrt{t+1}+C(\alpha)} } \frac{1}{t+1}.
\end{split}
\end{align}
Inserting~\eqref{eq:1contourbd:gd} for $ i=1 $ and~\eqref{eq:1contourbd} for $ i=2 $ into~\eqref{eq:SGFr=hkhk:nabla}
yields the desired bound on $ \nabla_{x_1} \SGFr $ and $ \nabla_{y_1}\SGFr $.
\end{proof}

\subsection{Estimating the interacting part~$ \SGIn $, an overview}
\label{sect:SGIn:ov}
In this subsection, we give an overview of the strategy for estimating~$ \SGIn $.
Compared to the estimate for $ \SGFr $,
the major difference is that the expression $ \SGfre(z_1,z_2) $
introduces a pole during contour deformations.
More explicitly, under weak asymmetry scaling, $ \SGfre(z_1,z_2) $ (defined in~\eqref{eq:SGfr}) reads
\begin{align}
	\label{eq:SGfre}
	\SGfre(z_1,z_2)
	=
	\frac{
		1+e^{\sqrt{\e}(1-2\den)}z_1z_2-(e^{-\sqrt{\e}\den}+e^{\sqrt{\e}(1-\den)}) z_2
	}{
		1+e^{\sqrt{\e}(1-2\den)}z_1z_2-(e^{-\sqrt{\e}\den}+e^{\sqrt{\e}(1-\den)}) z_1
	}.
\end{align}
This expression has a pole at $ z_2=\pole(z_1) $, where
\begin{align}
	\label{eq:pole}
	\pole(z) := (e^{\sqrt{\e}(\den-1)}+e^{\sqrt{\e}\den})-e^{\sqrt{\e}(2\den-1)}z^{-1}.
\end{align}

For the variable $ z_1 $, we will devise a suitable contour $ \zoneC $,
on a case-by-case basis depending on the signs of $ x_2-y_1 $.
Starting with the expression~\eqref{eq:SGIn}, we deform the contours in two steps.
First, with $ z_2\in\Circ_{r} $ being fixed,
we deform the contour of $ z_1 $: $ \Circ_r \mapsto \zoneC $.
For the suitable $ \zoneC $ so constructed in the sequel, we will check that
\begin{align}
	\tag{No Pole}
	\label{eq:nopole}
	\text{no pole is crossed during the deformation } z_1\in\Circ_r\longmapsto \zoneC,
	\text{ if } r \text{ is large enough.}
\end{align}
In particular, here $ r $ must be so large that $ \Circ_{r} $ contains $ \pole(\zoneC) $.
Next, for the $ z_2 $-contour, consider
\begin{align}
	\label{eq:r2s}
	r_2 := \rad(t,\sgn(x_1-y_2)k_2\alpha),
	\
	r'_2 := \rad(t,\sgn(x_1-y_2)2k_2\alpha),
	\
	r''_2 := \rad(t,\sgn(x_1-y_2)3 k_2 \alpha),	
\end{align}
where $ k_2\in\Z_{>0} $ is an auxiliary parameter, irrelevant for the general discussion in this subsection.
With $ z_1\in\zoneC $ being fixed, we shrink the contour of $ z_2 $ from the large circle $ \Circ_r $ to $ \Circ_{\tilde{r_2}(z_1)} $,
where the radius $ \tilde{r}_2(z_1) $ depends on the location of $ \pole(z_1) $, given by
\begin{align}
	\label{eq:tilr2}
	\tilde{r}_2(z_1) := \ind_{\set{|\pole(z_1)| \leq r'_2}} (r_2\vee r''_2) + \ind_{\set{|\pole(z_1)| >r'_2}} (r_2\wedge r''_2).
\end{align}
That is, for a \emph{fixed} $ z_1\in\zoneC $, we examine the location of $ \pole(z_1) $,
and if it sits outside of $ \Circ_{r'_2} $, we shrink the large circle $ z_2\in\Circ_r $ to a smaller circle with radius $ r_2\wedge r''_2 \leq r'_2 $,
otherwise shrink $ \Circ_r $ to a circle with radius $ r_2\vee r''_2 > r'_2 $.

During the second deformation $ z_2\in\Circ_r \mapsto \Circ_{\tilde{r}_2(z_1)} $,
we cross a pole at $ z_2=\pole(z_1) $ if $ r'_2<|\pole(z_1)| $.
This is a simple pole from the term $ \SGfre(z_1,z_2) $, with
\begin{align*}
	\underset{z_2=\pole(z_1)}{\text{Res}} \SGfre(z_1,z_2)
	=
	\big(e^{\sqrt{\e}(\den-1)}+e^{\sqrt{\e}\den}\big)
	\big( \tfrac{\pole(z_1)}{z_1}-1 \big).
\end{align*}
Set
\begin{align}
	\label{eq:SGtt}
	\SGtt(t,z) :=& \SGte(t,z)\SGte(t,\pole(z))
\\
\begin{split}
	\label{eq:zp}
	\zp(z_1) :=&
	z_1^{x_2-y_1+(\mue t-\muet)} \pole(z_1)^{x_1-y_2+(\mue t-\muet)}
\\
	=&
	z_1^{x_2-y_1-1+(\mue t-\muet)} \pole(z_1)^{x_1-y_2+1+(\mue t-\muet)}
\\
	&\qquad
	- z_1^{x_2-y_1+(\mue t-\muet)} \pole(z_1)^{x_1-y_2+(\mue t-\muet)}.
\end{split}
\end{align}
For each \emph{fixed} $ z_1\in\zoneC $, applying the residue theorem to calculate the resulting expression
after the deformation $ z_2\in\Circ_{r}\mapsto \Circ_{\tilde{r_2}(z_1)} $, we have
\begin{align*}
	\SGIn = \SGbk + \SGres,
\end{align*}
where $ \SGbk $ and $ \SGres $ respectively contribute the `bulk' and `residue' parts of the deformed integral:
\begin{align}
	\label{eq:SGbk}
	\SGbk &:=
	\oint_{\zoneC}
	\Bigg(
		\oint_{\Circ_{\tilde{r}_2(z_1)}}
		\hspace{-10pt}
		z_1^{x_2-y_1+(\mue t-\muet)}z_2^{x_1-y_2+(\mue t-\muet)} \SGfre(z_1,z_2)
		\frac{\SGte(t,z_2)dz_2}{2\pi\img z_2}
	\Bigg)
	\frac{\SGte(t,z_1)dz_1}{2\pi\img z_1},
\\
	\label{eq:SGres}
	\SGres &:=
	\oint_{\zoneC} \ind_{\set{|\pole(z_1)|>r'_2}} \big(e^{\sqrt{\e}(\den-1)}+e^{\sqrt{\e}\den}\big) \zp(z_1) \frac{\SGtt(t,z_1)dz_1}{2\pi\img z_1 \pole(z_1)}.
\end{align}
The integral in~\eqref{eq:SGbk} is \emph{iterated} because $ \tilde{r_2}(z_1) $ depends on $ z_1 $.

Recall that $ |\SGfre(z_1,z_2)|=\infty $ at $ z_2=\pole(z_1) $.
By having $ \tilde{r_2}(z_1) $ as in~\eqref{eq:tilr2}, we avoid the point $ z_2=\pole(z_1) $ in the integral~\eqref{eq:SGbk}.
More precisely, from~\eqref{eq:tilr2}, together with \eqref{eq:radius}, we have that
\begin{align}
	\label{eq:z2-polez1}
	|z_2-\pole(z_1)| \geq (|r''_2-r'_2|\wedge|r'_2-r_2|) \geq \tfrac{1}{C\sqrt{t+1}},
	\quad
	(z_1,z_2) \in \zoneC\times\Circ_{\tilde{r}_2(z_1)}.
\end{align}
(\emph{Alternatively}, one could also fix the radius $ \tilde{r_2}(z_1)=r'_2 $ for the $ z_2 $ contour.
The resulting integrand in \eqref{eq:SGbk} in this case has a singularity at $ z_2=\pole(z_1) $,
which is integrable over $ (z_1,z_2)\in\zoneC\times\Circ_{r'_2} $.
Proceeding this way however, requires elaborated estimates near the singularly jointly as $ (t,\e) $ varies.
We avoid doing so by constructing $ \tilde{r}_2(z_1) $ in such a way that \eqref{eq:z2-polez1} holds.)

The contour $ \zoneC $ needs be constructed in such a way that both $ \SGbk $ and $ \SGres $
are controlled by steepest decent analysis.
In particular, a steepest decent condition analogous to~\eqref{eq:steep:circ} needs to hold here.
To formulate the condition, assume that $ \zoneC $ converges to a limiting contour $ \zoneCLim $ as $ (t,\e)\to(\infty,0) $.
Given $ \lim_{\e\to 0}\pole(z) =2-z^{-1} $ from~\eqref{eq:pole},
we define
\begin{align}
	\label{eq:SGttLim}
	\SGttLim(z):= \SGteLim(z)\SGteLim(2-z^{-1})
	=
	\frac{b_1z+1-2b_1}{1-b_1z^{-1}}\frac{b_1(2-z^{-1})+1-2b_1}{1-b_1/(2-z^{-1})}.
\end{align}
The analogous steepest decent condition we must check here is
\begin{align*}
	|\SGteLim(z)| < 1 \text{ for all } z\in \zoneCLim\setminus\set{1},
	\quad
	|\SGttLim(z)| < 1 \text{ for all } z\in \zoneCLim\setminus\set{1}.
\end{align*}

\begin{figure}[ht]
	\begin{subfigure}{0.4\textwidth}
	\includegraphics[width=\linewidth]{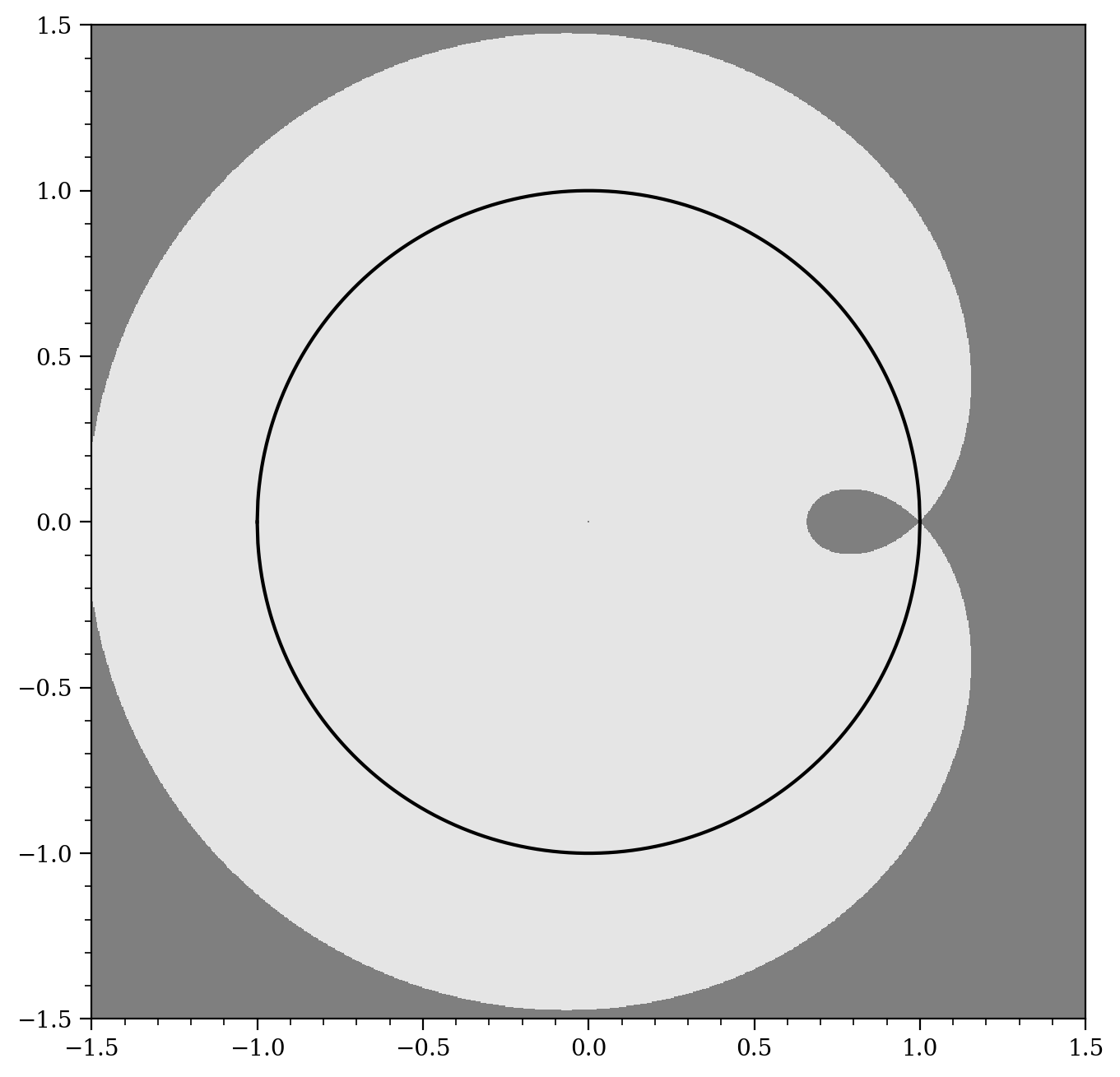}
	\caption{The function $ \SGteLim $ }
	\end{subfigure}
  	\hfil
	\begin{subfigure}{0.4\textwidth}
	\includegraphics[width=\linewidth]{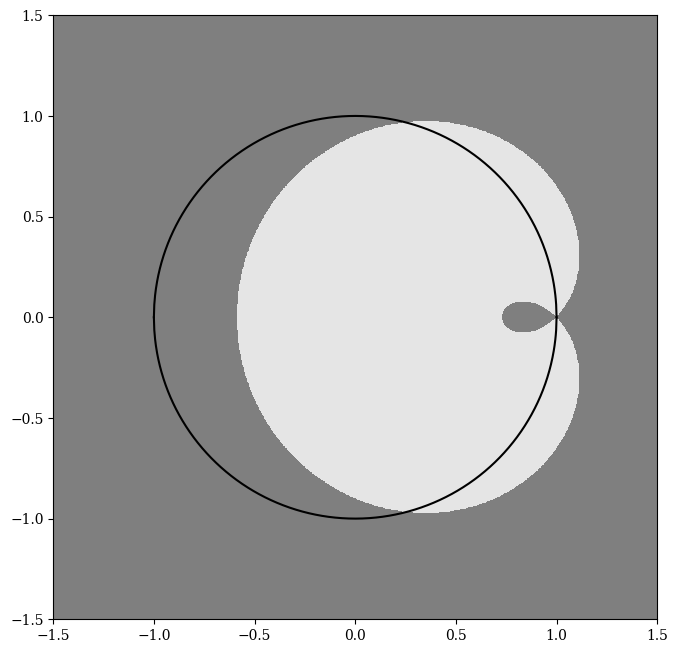}
	\caption{The function $ \SGttLim $}
	\end{subfigure}
	\caption{The figures show where the designated function is larger (darker) or smaller (lighter) than 1 in absolute value,
		for $ b_1=0.7 $.
		The unit circle is  shown for comparison.
	}
	\label{fig:SGLim}
\end{figure}

Figure~\ref{fig:SGLim} shows the region in $ \C $ where $ |\SGteLim(z)| < 1 $ and where $ |\SGttLim(z)| < 1 $, for $ b_1=0.7 $.
In particular, we see that $ |\SGttLim(z)| < 1 $ \emph{fails} for a portion of the unit circle $ \Circ_1 $.
This being the case, we need to devise a different type of contour than the contour $ \Circ_{r_1} $ used in the preceding subsection.
We begin with a prototype
\begin{align*}
	\Magic := \{z: |z-\tfrac12|=\tfrac12\}.
\end{align*}
This contour $ \Magic $ satisfies the steepest decent condition
\begin{align*}
	\tag{SD.$ \Magic $}
	\label{eq:steep:Magic}
	|\SGteLim(z)| < 1 \text{ for all } z\in \Magic\setminus\set{1},
	\quad
	|\SGttLim(z)| < 1 \text{ for all } z\in \Magic\setminus\set{1},
\end{align*}
which we verify now.
\begin{proof}[Proof of \eqref{eq:steep:Magic}]
First, express $ \SGteLim(z) $ and $ \SGttLim(z) $ (defined in~\eqref{eq:SGteLim} and \eqref{eq:SGttLim}) as
\begin{align*}
	\SGteLim(z) &= \frac{b_1z+1-2b_1}{1-b_1z^{-1}}
	=
	1 + \frac{b_1z+b_1z^{-1}-2b_1}{1-b_1z^{-1}}.
\\
	\SGttLim(z) &= \frac{b_1z+1-2b_1}{1-b_1z^{-1}}\frac{b_1(2-z^{-1})+1-2b_1}{1-b_1/(2-z^{-1})}
	=
	1 + \frac{2b_1z+2b_1z^{-1}-4b_1}{2-b_1-z^{-1}}.
\end{align*}
under the parametrization $ z(\theta) := \frac{1+e^{\img\theta}}{2}\in\Circc $,
we calculate
\begin{subequations}
\label{eq:SGexact}
\begin{align*}
	\Big|\SGteLim\Big(\frac{1+e^{\img\theta}}{2}\Big)\Big|^2
	&=
	\Big(1+ \frac{b_1(w-1)^2}{2(w+1-2b_1)}\Big)	
	\Big(1+ \frac{b_1(w^{-1}-1)^2}{2(w^{-1}+1-2b_1)}\Big)\Big|_{w=e^{\img\theta}}
\\
	&=
	1 + \frac{b_1(w-2+w^{-1})((2-3b_1)(w+w^{-1})+4-2b_1)}{|2(w^{-1}+1-2b_1)|^2}\Big|_{w=e^{\img\theta}}
\\
	&=
	1 - \frac{b_1(1-\cos\theta)(2-b_1+(2-3b_1)\cos\theta)}{|(w^{-1}+1-2b_1)|^2}.
\\
	\Big|\SGttLim\Big(\frac{1+e^{\img\theta}}{2}\Big)\Big|^2
	&=
	\Big(1+ \frac{b_1(w-1)^2}{(2-b_1)w-b_1}\Big)	
	\Big(1+ \frac{b_1(w^{-1}-1)^2}{(2-b_1)w^{-1}-b_1}\Big)\Big|_{w=e^{\img\theta}}
\\
	&=
	1 + \frac{4b_1(1-b_1)(w-2+w^{-1})}{|(2-b_1)w-b_1|^2}\Big|_{w=e^{\img\theta}}
\\
	&=
	1 - \frac{8b_1(1-b_1)(1-\cos\theta)}{|(2-b_1)e^{\img\theta}-b_1|^2}.
\end{align*}
\end{subequations}
It is now readily checked that these expressions are strictly less than $ 1 $ for all $ \theta\in(\pi,\pi]\setminus\set{0} $ (and $ b_1\in(0,1) $),
which gives exactly the desired properties.
\end{proof}

\begin{figure}[h]
\includegraphics[width=.85\linewidth]{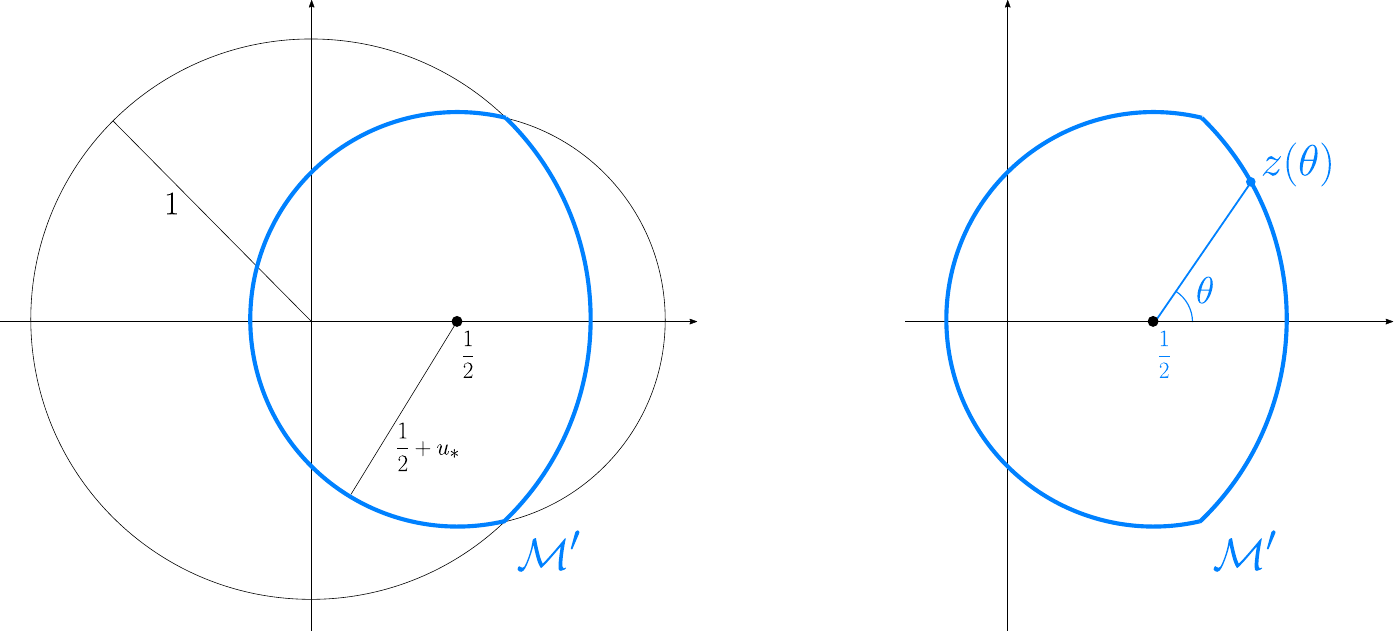}
\caption{The contour $ \MagicC $ and its parametrization.}
\label{fig:MagicC}
\end{figure}
\begin{figure}[h]
\includegraphics[width=.85\linewidth]{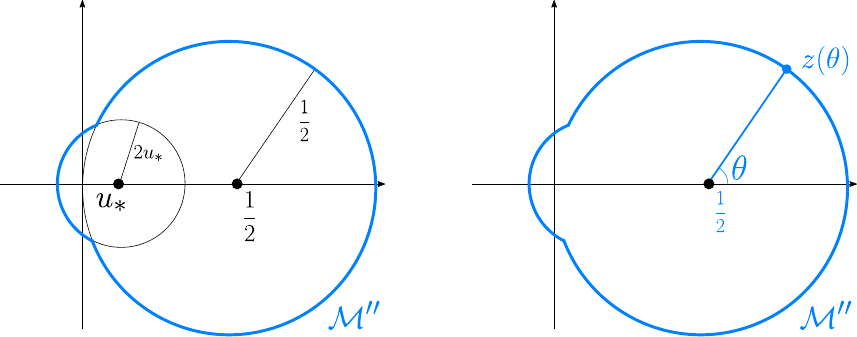}
\caption{The contour $ \Magicc $ and its parametrization.}
\label{fig:Magicc}
\end{figure}

Even though $ \Magic $ enjoys the desired property~\eqref{eq:steep:Magic}, it cuts through the point $ z=0 $.
This could cause issues, as the integrals~\eqref{eq:SGbk}--\eqref{eq:SGres} generally contain poles at $ z_1=0 $.
To circumvent this problem, we consider modifications $ \MagicC $ and $ \Magicc $ of $ \Magic $:
\begin{align}
	\label{eq:MagicC}
	\MagicC=\MagicC(u_*) &:= \partial \big( \big\{ |z| \leq 1 \big\}\cap\big\{ |z-\tfrac12| \leq \tfrac{1}{2}+u_* \big\} \big),
\\
	\label{eq:Magicc}
	\Magicc=\Magicc(u_*) &:= \partial \big( \big\{ |z-\tfrac12| \leq \tfrac12 \big\}\cup\big\{ |z-u_*| \leq 2u_* \big\} \big),
\end{align}
counterclockwise oriented; see Figures~\ref{fig:MagicC}--\ref{fig:Magicc}.
Here $ u_*\in(0,\frac1{12}\wedge b_1) $ is a parameter, which we \emph{fix} in Lemma~\ref{lem:magicc}
so that the resulting contours~$ \MagicC $ and $ \Magicc $ also enjoy the steepest decent condition.
We now verify the steepest decent condition for $ \MagicC $ and $ \Magicc $.
\begin{lem}
\label{lem:magicc}
There exists $ u_*\in(0,\frac1{12}\wedge b_1) $ such that, for the contours $ \MagicC(u_*)$ and $ \Magicc(u_*)$  we have
\begin{align}
	\tag{SD.$ \MagicC $}
	\label{eq:steep:MagicC}
	&|\SGteLim(z)| < 1, \ |\SGttLim(z)| < 1
	\quad
	z\in \MagicC\setminus\set{1},
\\
	\tag{SD.$ \Magicc $}
	\label{eq:steep:Magicc}	
	&|\SGteLim(z)| < 1, \ |\SGttLim(z)| < 1
	\quad
	z\in \Magicc\setminus\set{1}.
\end{align}
\end{lem}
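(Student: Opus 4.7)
The idea is to treat $\MagicC(u_*)$ and $\Magicc(u_*)$ as $u_*$-perturbations of the prototype $\Magic$, on which \eqref{eq:steep:Magic} is already known, and to analyse the pieces of each new contour separately. Since $\SGteLim$ and $\SGttLim$ are rational with poles only at $z=b_1$ and $z=1/(2-b_1)$, both strictly inside $\Magic$, by compactness and continuity \eqref{eq:steep:Magic} upgrades to a uniform gap $|\SGteLim|, |\SGttLim| \leq 1 - c(\delta)$ on a $c(\delta)$-neighborhood of $\Magic \setminus B_\delta(1)$, for any $\delta>0$. Inspection of \eqref{eq:MagicC}--\eqref{eq:Magicc} shows that both $\MagicC(u_*)$ and $\Magicc(u_*)$ are within Hausdorff distance $O(u_*)$ of $\Magic$ outside fixed neighborhoods of $\set{0,1}$, so for $u_*$ small the portions outside $B_\delta(0) \cup B_\delta(1)$ inherit the bound. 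The remaining task is to check each new contour inside $B_\delta(1)$, and, for $\Magicc$, inside $B_\delta(0)$.

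For the behavior near $z=1$, set $w := z-1$ and compute directly
\[
	\SGteLim(z) = \frac{(1-b_1+b_1 w)(1+w)}{1-b_1+w} = 1 + \frac{b_1\, w^2}{1-b_1} + O(w^3).
\]
Since $(2-z^{-1}) - 1 = w - w^2 + O(w^3)$, the same expansion gives $\SGteLim(2 - z^{-1}) = 1 + \frac{b_1 w^2}{1-b_1} + O(w^3)$, and consequently $\SGttLim(z) = 1 + \frac{2 b_1 w^2}{1-b_1} + O(w^3)$. Along the arc of $\MagicC(u_*)$ near $z=1$, which is a subset of the unit circle by \eqref{eq:MagicC}, we parametrize $z = e^{\img\phi}$, so that $w^2 = -\phi^2 + O(\phi^3)$ and
\[
	|\SGteLim(e^{\img\phi})|^2 = 1 - \tfrac{2 b_1}{1-b_1}\,\phi^2 + O(\phi^3),
	\qquad
	|\SGttLim(e^{\img\phi})|^2 = 1 - \tfrac{4 b_1}{1-b_1}\,\phi^2 + O(\phi^3),
\]
both strictly less than $1$ for small $\phi \neq 0$. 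A direct computation shows the arc of $\MagicC$ on the unit circle has angular half-length $O(\sqrt{u_*})$, so the expansion is valid uniformly along it once $u_*$ is small. For $\Magicc$, by construction the modification is localized in $B_{3u_*}(0)$, hence $\Magicc$ coincides with $\Magic$ inside $B_\delta(1)$ for $u_*$ small, and the bound is inherited directly from \eqref{eq:steep:Magic}.

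It remains to handle the added arc $|z-u_*|=2u_*$ of $\Magicc$ near the origin. Direct limits give $\SGteLim(0) = 0$ and $\lim_{z\to 0}\SGttLim(z) = 1-2b_1$, with $|1-2b_1|<1$ for $b_1\in(0,1)$; by continuity, $|\SGteLim|, |\SGttLim| < 1-\frac{1}{C}$ on this small arc once $u_*$ is small and $u_* < b_1/3$ (keeping the pole at $z=b_1$ outside the small disk). Taking $u_*\in(0,\tfrac13\wedge b_1)$ small enough simultaneously handles both contours. The only subtle point is the transition junctions $z = 1-u_*\pm\img\sqrt{2u_*}$ of $\MagicC$ where the contour switches circles, but these lie $O(\sqrt{u_*})$ from $z=1$, well within the Taylor regime, so the quadratic bound from the paragraph above matches continuously with the uniform estimate on the remainder of $\MagicC$ (inherited via the first step) and no additional argument is needed.
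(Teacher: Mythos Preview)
Your overall strategy matches the paper's, and your treatment of $\Magicc$ is correct. However, there is a genuine gap in your argument for $\MagicC$ near $z=1$.

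You fix $\delta>0$, cover $\MagicC\setminus B_\delta(1)$ by the perturbation argument, and then address $\MagicC\cap B_\delta(1)$ via Taylor expansion along the unit circle. But the assertion that this intersection ``is a subset of the unit circle'' is not right: the junctions sit at distance $\sim\sqrt{2u_*}\ll\delta$ from $1$, so $\MagicC\cap B_\delta(1)$ also contains two arcs of the larger circle $|z-\tfrac12|=\tfrac12+u_*$, running from the junctions out to $\partial B_\delta(1)$. Your unit-circle computation does not touch these arcs, and your bulk step (set up only outside $B_\delta(1)$) does not cover them either. The last paragraph, claiming that continuous matching at the junctions suffices, does not close the gap; to shrink $\delta$ down to $O(\sqrt{u_*})$ you would need a quantitative lower bound like $c(\delta)\gtrsim\delta$ on the neighborhood size in your first step, and you have not established this.

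The paper handles this by replacing the ball $B_\delta(1)$ with an \emph{hourglass} region $\mathcal{A}$ around $z=1$, consisting of those $z$ with $|z-1|\le v_0$ and $\arg(z-1)$ within $\phi_0$ of $\pm\tfrac\pi2$. From the expansion $\SGteLim(z)=1+\tfrac12\nu_*(z-1)^2+O((z-1)^3)$ (and similarly for $\SGttLim$) one gets $|\SGteLim|,|\SGttLim|\le 1-\tfrac1C|z-1|^2$ throughout $\mathcal{A}\setminus\{1\}$. Since the unit circle, $\Magic$, and the larger circle $|z-\tfrac12|=\tfrac12+u_*$ all meet $z=1$ (or pass within $u_*$ of it) with \emph{vertical} tangent, the whole of $\MagicC$ inside a small fixed ball around $1$---including the larger-circle arcs you missed---lies in $\mathcal{A}$ once $u_*$ is small. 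Outside $\mathcal{A}$ the paper then runs the same Hausdorff-convergence argument as yours.
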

\begin{proof}
We will show that for all small enough $ u>0 $,
\begin{align*}
	&|\SGteLim(z)| < 1, \ |\SGttLim(z)| < 1
	\quad
	z\in \MagicC(u)\setminus\set{1},
\\
	&|\SGteLim(z)| < 1, \ |\SGttLim(z)| < 1
	\quad
	z\in \Magicc(u)\setminus\set{1}.
\end{align*}

We begin with the statement for $ \Magicc(u) $.
Indeed, this contour differs from $ \Magic $ only in the neighborhood $ O(3u):=\{z\in\C: |z| < 3u \} $ of $ z=0 $.
This being the case, instead of the entire contour $ \Magicc(u) $, we need only to consider the part $ \Magicc(u)\cap O(3u) $.
We already know from~\eqref{eq:steep:Magic} that $ |\SGteLim(0)|<1 $ and $ |\SGttLim(0)|<1 $.
It is readily checked from~\eqref{eq:SGteLim} and \eqref{eq:SGttLim} that $ \SGteLim(z) $ and $ \SGttLim(z) $
are continuous at $ z=0 $, hence we see that
$ |\SGteLim(z)| < 1, \ |\SGttLim(z)| < 1 $ holds on $ z\in \Magicc(u)\cap O(3u)  $ for all small enough $ u>0 $.
	
We now turn to $ \MagicC(u) $.
Let us first analyze the local behaviors of $ \SGteLim(z) $ and $ \SGttLim(z) $ near $ z=1 $.
Straightforward calculation gives
\begin{align*}
	\SGteLim(1)=1,& &\partial_z\SGteLim(1)=0,& &\partial^2_z\SGteLim(1)=\nu_*,&
	&
	\SGttLim(1)=1,& &\partial_z\SGttLim(1)=0,& &\partial^2_z\SGttLim(1)=2\nu_*,
\end{align*}
so Taylor expansion of $ \SGteLim(z) $ around $ z=1 $ gives $ 1 + \frac12\nu_*(z-1)^2 $ up the second order,
and Taylor expansion of $ \SGttLim(z) $ around $ z=1 $ gives $ 1 + \nu_*(z-1)^2 $ up the second order.
The expression $ \nu_*(z-1)^2 $ is real and negative along the vertical direction: $ z-1 \in \img\R $.
Since $ \SGteLim(z) $ and $ \SGttLim(z) $ are analytic in a neighborhood of $ z=1 $, we have
\begin{align*}
	\big| \SGteLim(z) \big|, \
	\big| \SGttLim(z) \big| \leq 1 - \tfrac{1}{C}|z-1|^2,
	\quad
	\forall \, z \in \mathcal{A},
\end{align*}
where $ \mathcal{A} := \{ z=ve^{\img\phi}: v\in[0,v_0], |\phi\pm\tfrac{\pi}{2}| \leq \phi_0 \} $
is an `hourglass-shape' region centered at $ z=1 $,
and $ v_0,\phi_0 >0 $ are fixed. See Figure~\ref{fig:hoursglass}.
This property ensures that $ |\SGteLim|, |\SGttLim|< 1 $ within $ \mathcal{A}\setminus\set{1} $,
so instead of the entire contour $ \MagicC(u) $, it suffices to consider the part $ (\MagicC(u)\setminus\mathcal{A}) $.

\begin{figure}
\includegraphics[width=.45\linewidth]{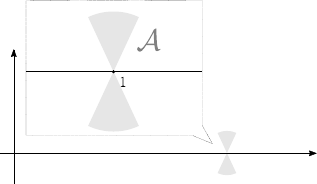}
\caption{The hourglass-shape region $ \mathcal{A} $.}
\label{fig:hoursglass}
\end{figure}

Instead of $ (\MagicC(u)\setminus\mathcal{A}) $, let us first consider $ (\Magic\setminus\mathcal{A}) $.
Since the contour $ \Magic $ passes through the point $ z=1 $ vertically, under the parametrization $ z(\theta) = \frac{1+e^{\img\theta}}{2} $,
the part $ (\Magic\setminus\mathcal{A}) $ avoids a neighborhood of $ \theta=0 $.
This being the case, referring to the calculations~\eqref{eq:SGexact}, we see that
\begin{align*}
	\sup_{z\in\Magic\setminus\mathcal{A}} |\SGteLim(z)| <1,
	\quad
	\sup_{z\in\Magic\setminus\mathcal{A}} |\SGttLim(z)| <1.
\end{align*}
Let dist$ (A,B) := \inf\{|z_1-z_2|:z_1\in A, z_2\in B\} $ denotes the distance of two sets $ A,B\subset \C $.
Referring to the definition~\eqref{eq:MagicC} of $ \MagicC(u) $, we see that
\begin{align*}
	\lim_{u\downarrow 0} \text{dist}\big( (\Magic\setminus\mathcal{A}) ,(\MagicC(u)\setminus\mathcal{A})\big) =0.
\end{align*}
Further, it is readily verified (from~\eqref{eq:SGteLim} and \eqref{eq:SGttLim}) that $ \SGteLim $ and $ \SGttLim $ are uniformly continuous $ \Magic $.
These properties together give
\begin{align*}
	&\lim_{u\downarrow 0}
	\Big( \sup_{z\in\MagicC(u)\setminus\mathcal{A}} |\SGteLim(z)| \Big)
	=
	\sup_{z\in\Magic\setminus\mathcal{A}} |\SGteLim(z)| <1,
&
	&\lim_{u\downarrow 0}
	\Big( \sup_{z\in\MagicC(u)\setminus\mathcal{A}} |\SGttLim(z)| \Big)
	=
	\sup_{z\in\Magic\setminus\mathcal{A}} |\SGttLim(z)| <1,
\end{align*}
which concludes the proof.
\end{proof}

In the following subsections we prove Proposition~\ref{prop:SG:}\ref{enu:SGIn}--\ref{enu:SGIng},
namely establishing the desired estimates on $ \SGIn $ and its gradients.
To this end, we treat separately the cases distinguished by the signs of $ x_2-y_1 $ and $ x_1-y_2 $,
which we refer to as the $ (+-) $, $ (--) $, and $ (++) $-cases:
\begin{itemize}
\item[] $ x_2-y_1 >0 $ and $ x_1-y_2 \leq 0 $, the $ (+-) $-case;
\item[] $ x_2-y_1 \leq 0 $ and $ x_1-y_2 \leq 0 $, the $ (--) $-case;
\item[] $ x_2-y_1 >0 $ and $ x_1-y_2 >0 $, the $ (++) $-case.
\end{itemize}
The $ (-+) $-case (i.e., $ x_2-y_1 \leq 0 $ and $ x_1-y_2 >0 $)
is irrelevant due the assumption $ x_1<x_2 $ and $ y_1<y_2 $.

Let us introduce one more convention about Taylor expansion which will be used in the subsequent arguments.
Recall the assumption $ t \leq \e^{-2}T $ which ensures that $ \e \leq C(T)(t+1)^{-1/2} $.
At times we will Taylor expand expressions in the variables $ (\sqrt{\e}, \frac{1}{\sqrt{t+1}}) $.
In the course of doing so, we adopt the following ordering convention in light of the aforementioned condition on $ \e $.
\begin{defn}
\label{def:Taylor}
To Taylor expand a given expression $ f(\sqrt{\e},\frac{1}{\sqrt{t+1}}) $,
we assign $ \sqrt{\e} $ the order of $ (t+1)^{-1/4} $.
For example, Taylor expansion of $ f(\sqrt{\e},\frac{1}{\sqrt{t+1}}) $ up to order $ \frac{1}{\sqrt{t+1}} $ reads
\begin{align*}
	f(0,0) + \partial_1 f(0,0) \sqrt{\e} + \tfrac12 \partial^2_{1} f(0,0) \e + \partial_{2}f(0,0) \tfrac{1}{\sqrt{t+1}}.
\end{align*}
\end{defn}

\subsection{Estimating the interacting part $ \SGIn $, the $ (+-) $-case}
\label{sect:+-}
We begin by constructing the contour $ \zoneC $.
For the $ (+-) $-case considered here, $ \zoneC $ is constructed as perturbation of $ \MagicC $.
More precisely, recall the definition of $ \rad(t,\beta) $ from~\eqref{eq:radius}. For $ \beta\in\R $, set
\begin{align}
	\label{eq:magicC}
	\magicC(t,\beta) := \partial \big( \set{|z| \leq \rad(t,\beta)} \cap \set{|z-\tfrac12|\leq\tfrac12} \big),
\end{align}
counterclockwise oriented; see Figure~\ref{fig:magicC} and compare it with Figure~\ref{fig:MagicC}.
Under these notation, we set\footnote{%
	Here $ \zoneC $ does not depend on $ \e $, but we keep this notation to be consistent throughout all cases.}
\begin{align*}
	\zoneC := \magicC(t,-k_1\alpha),
\end{align*}
where $ k_1=k_1(\alpha,T)\in\Z_{>0} $ is an auxiliary parameter to be fixed later.

\begin{figure}[h]
\includegraphics[width=.85\linewidth]{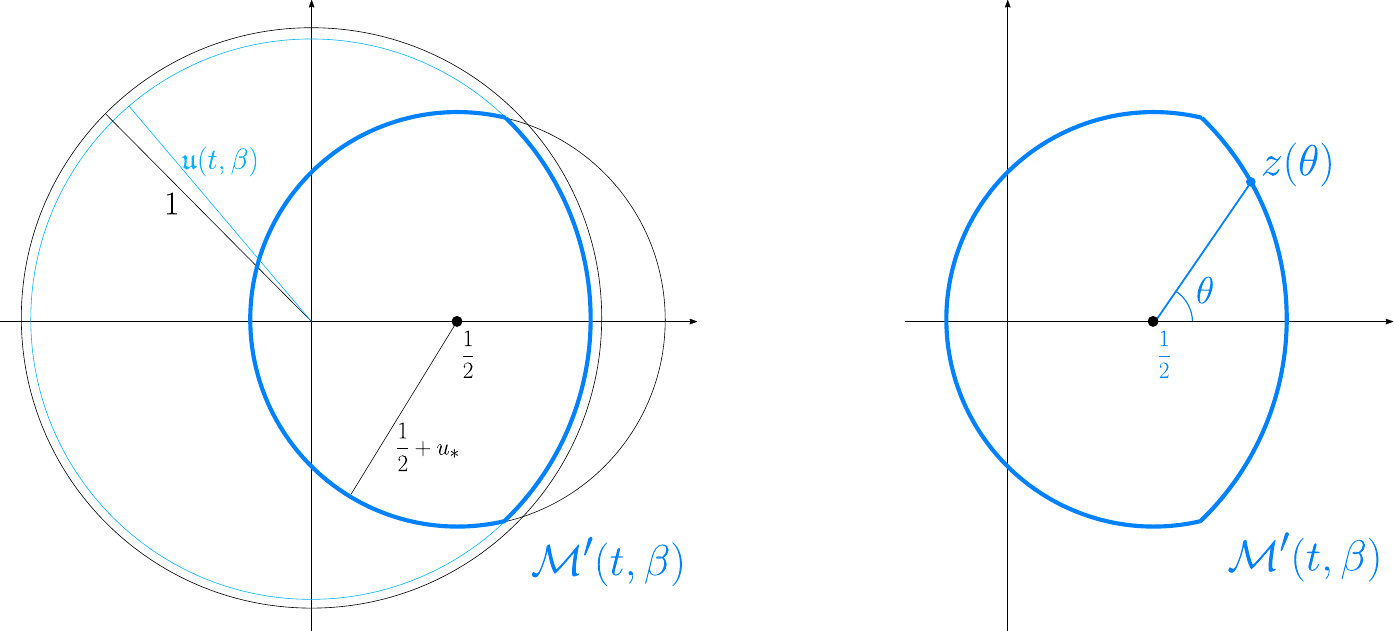}
\caption{The contour $ \magicC(t,\beta) $ and its parametrization.
The figure shows the case $ \beta<0 $.
}
\label{fig:magicC}
\end{figure}

Hereafter we parametrize $ z_1=z_1(\theta_1)\in\magicC(t,-k_1\alpha) $ as depicted in Figure~\ref{fig:magicC}.
As for the $ z_2 $-contour, we fix $ k_2:=1 $ in~\eqref{eq:r2s}.
Recalling  $ \tilde{r}_2(z_1) $ from~\eqref{eq:tilr2},
we parametrize $ z_2(\theta):=\tilde{r}_2(z_1)e^{\img\theta_2} \in \Circ_{\tilde{r}_2(z_1)} $.

The parameter $ k_1\in\Z_{>0} $ is to ensures that
\begin{align}
	\label{eq:cntur:intersect}
	r'_2 \geq \pole(z_1(0)) + \tfrac{1}{\sqrt{t+1}} \in \R.
\end{align}
To see why this holds for large enough $ k_1 $, recall from Definition~\ref{def:Taylor} the announced convention on Taylor expansion,
and expand the expression $ r'_2-\pole(z_1(0)) = \rad(t,2\alpha) - \pole(\rad(t,-k_1\alpha)) $ in $ (\sqrt\e,\frac{1}{\sqrt{t+1}}) $
to the leading order in $ \frac{1}{\sqrt{t+1}} $ to get
\begin{align*}
	z_2(0)-\pole(z_1(0)) = 0\cdot\sqrt\e - \den(1-\den)\e + \tfrac{(k_1+2)\alpha}{\sqrt{t+1}} + \ldots.
\end{align*}
From this, together with $ \e \leq \frac{C(T)}{\sqrt{t+1}} $ under current assumptions,
we see that the condition~\eqref{eq:cntur:intersect} holds for a large enough $ k_1=k_1(\alpha,T) $,
and we fix such a $ k_1 \in\Z_{>0} $ hereafter.

The purpose of imposing the condition~\eqref{eq:cntur:intersect} is to control the region $ \{z_1:|\pole(z_1)| >r'_2\} $,
as will be relevant toward controlling the integral $ \SGres $~\eqref{eq:SGres}.
Under the aforementioned parametrization $ z_1=z_1(\theta) $,
the condition~\eqref{eq:cntur:intersect} ensures a lower bound on $ |\theta_1| $
for which $ |\pole(z_1(\theta))| >r'_2 $.
That is,
\begin{align}
	\label{eq:theta1:+-}
	|\pole(z_1(\theta_1))| > r'_2
	\text{ holds only if }
	|\theta_1| \geq \frac{1}{C(\alpha)(t+1)^{1/4}}.
\end{align}
\begin{proof}[Proof of~\eqref{eq:theta1:+-}]
Set $ f(\theta_1) := |\pole(z_1(\theta_1))| -r'_2 $.
Our goal is to obtain a lower bound on those $ |\theta_1| $ such that $ f(\theta_1) \geq 0 $.
Given the explicit expression $ \pole(z_1(\theta_1)) = e^{\sqrt{\e}(\den-1)}+e^{\sqrt\e\den} - e^{\sqrt\e(2\den-1)} \rad(t,k_1\alpha)e^{-\img\theta_1} $,
one readily checks that $ \frac{d~}{d\theta_1}f(0) = 0 $, and that $ |\frac{d~}{d\theta_1}f(\theta_1)| \leq C(\alpha) $,
for all $ (\theta_1,t,\e)\in (-\pi,\pi]\times\Z_{\geq 0}\times(0,1) $.
Taylor expanding $ f(\theta_1) $ accordingly as
\begin{align*}
	f(\theta_1) = f(0) + \int_0^{\theta_1} (\theta_1-\theta)\tfrac{d~}{d\theta}f(\theta) d\theta,
\end{align*}
we see $ f(\theta_1) \geq 0 $ only if $ f(0) + C(\alpha)\theta_1^2 \geq 0 $.
Now, the condition~\eqref{eq:cntur:intersect} ensures that $ f(0) \leq -\frac{1}{\sqrt{t+1}} $.
From this we conclude~\eqref{eq:theta1:+-}.
\end{proof}

Recall that $ \SGtt(t,z) := \SGte(t,z) \SGte(t,\pole(z)) $.
Let us check that, along the contour $ \magicC(t,-k_1\alpha) $,
we do have the desired Gaussian decay of $ |\SGte| $ and $ |\SGtt| $.
\begin{lem}
\label{lem:SGtbd:+-}
Given any $ T\in(0,\infty) $ and $ \beta\in\R $,
\begin{align*}
	\big|\SGte(t,z)\big|,
	\
	\big|\SGtt(t,z)\big|
	\leq C(\beta,T) \exp(-\tfrac{\theta^2}{C}(t+1)),
	\quad
	z=z(\theta) \in\magicC(t,\beta),
\end{align*}
for all $ \theta\in (-\pi,\pi] $, large enough $ t\leq \e^{-2}T $, and small enough $ \e>0 $.
\end{lem}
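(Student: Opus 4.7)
I plan to mimic the three-step procedure from the proof of Lemma~\ref{lem:SGtbd}: (i) at the distinguished point $z(0)$, bound $|\SGte(t,z(0))|^{1/t}$ and $|\SGtt(t,z(0))|^{1/t}$ by $\exp(C(\beta,T)/(t+1))$; (ii) Taylor expand $\log|\SGte(t,z(\theta))|$ and $\log|\SGtt(t,z(\theta))|$ around $\theta=0$ to extract Gaussian decay $\exp(-\theta^2(t+1)/C)$ on a neighborhood $|\theta|\leq\theta_0$; (iii) invoke the global steepest-descent bound (SD.$\MagicC$) from Lemma~\ref{lem:magicc} to obtain $|\SGte(t,z(\theta))|,|\SGtt(t,z(\theta))|\leq\exp(-t/C)$ for $|\theta|>\theta_0$, which is stronger than the claim in that range. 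Concatenating these three steps and raising the per-step factors $|\SGte|^{1/t}$ and $|\SGtt|^{1/t}$ to the $t$-th power, exactly as in the proof of Lemma~\ref{lem:SGtbd}, yields the result.

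The geometry of $\magicC(t,\beta)$ is central. For $\beta\geq 0$ the contour reduces to $\Magic=\{|z-1/2|=1/2\}$, which passes through the critical point $z=1$ tangent to the imaginary axis. For $\beta<0$ the contour is the boundary of a lens-shaped region with two corners at $z=\rad(t,\beta)^2\pm i\,\rad(t,\beta)\sqrt{1-\rad(t,\beta)^2}$; since $\rad(t,\beta)\to 1$ as $t\to\infty$, these corners sit $\mathcal{O}((t+1)^{-1/2})$-close to $z=1$. In either case, near $z=1$ the contour is locally a circular arc whose tangent direction is (asymptotically) imaginary, matching the direction of steepest descent for $\SGteLim$ and $\SGttLim$ dictated by $\partial^2_z\log\SGteLim(1)=\nu_*>0$ and $\partial^2_z\log\SGttLim(1)=2\nu_*>0$, both of which are computed in the proof of Lemma~\ref{lem:magicc}.

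For step (i), Taylor expansion in $(\sqrt\e,1/\sqrt{t+1})$ in the sense of Definition~\ref{def:Taylor} around $z=1$, combined with $\SGteLim(1)=\SGttLim(1)=1$ and bounded third derivatives, gives the claimed bound, whose $t$-th power contributes the multiplicative constant $C(\beta,T)$. For step (ii), note that $\SGtt(t,z)=\SGte(t,z)\,\SGte(t,\pole(z))$ and $\lim_{\e\to 0}\pole(1)=1$, so the quadratic expansion of $\log|\SGtt|$ is obtained from that of $\log|\SGte|$ via the chain rule, producing a coefficient twice as large; together with $\partial_\theta z(0)\in i\R$ up to $\mathcal{O}((t+1)^{-1/2})$ corrections, this gives the Gaussian decay. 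Step (iii) combines (SD.$\MagicC$) with the uniform limits $|\SGte(t,z)|^{1/t}\to|\SGteLim(z)|$ and $|\SGtt(t,z)|^{1/t}\to|\SGttLim(z)|$ on compact sets avoiding the poles of $\SGteLim$ and $\SGttLim$, together with the fact that $\magicC(t,\beta)$ converges in Hausdorff distance to $\Magic$ uniformly outside any fixed neighborhood of $z=1$.

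I expect the main obstacle to be the non-smooth corners of $\magicC(t,\beta)$ for $\beta<0$, which lie $\mathcal{O}((t+1)^{-1/2})$ from $z=1$ and thus straddle the transition between steps (ii) and (iii). The resolution is that these corners fall inside the small-$\theta$ regime where the pointwise bounds rely only on local convexity at $z=1$, which holds simultaneously for both constituent arcs of the lens; one must check that the tangent directions along each arc approach the imaginary axis as $\rad\to 1$, so that the Gaussian coefficient does not degenerate as $\beta\to 0^-$. This check is elementary since both arcs — the piece of $\{|z|=\rad\}$ and the piece of $\{|z-1/2|=1/2\}$ — are tangent to the imaginary axis at their respective closest points to $z=1$, with tangent deviation controlled by $|\rad-1|=\mathcal{O}((t+1)^{-1/2})$.
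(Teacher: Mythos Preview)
Your three-step strategy is exactly the paper's: the paper says steps (i)--(ii) follow verbatim from the proof of Lemma~\ref{lem:SGtbd} and does not repeat them, and for step (iii) it uses the uniform convergence of $\magicC(t,\beta)$ to its limiting contour together with the steepest-descent condition from Lemma~\ref{lem:magicc}.

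However, your description of the contour rests on a misreading arising from a typo in~\eqref{eq:magicC}: the second disk should read $\{|z-\tfrac12|\leq\tfrac12+u_*\}$, not $\{|z-\tfrac12|\leq\tfrac12\}$. The paper itself confirms this in several places: the sentence preceding~\eqref{eq:magicC} says $\zoneC$ is ``constructed as perturbation of $\MagicC$''; in Section~\ref{sect:--} the paper recalls that $\magicC(t,3\alpha)$ agrees with $\{|z-\tfrac12|=\tfrac12+u_*\}$ away from $z=1$; and the assertion in step~\ref{enu:+-SGres:1} that $|z_1|$ is bounded away from $0$ along $\magicC(t,-k_1\alpha)$ would fail under your reading, since the contour would then pass through $z=0$. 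With the correct definition, for \emph{every} $\beta\in\R$ the contour near $z=1$ is simply an arc of the origin-centered circle $\Circ_{\rad(t,\beta)}$, so steps (i)--(ii) are literally the circular case already treated in Lemma~\ref{lem:SGtbd}. The corners where $\Circ_{\rad(t,\beta)}$ meets $\{|z-\tfrac12|=\tfrac12+u_*\}$ converge to the corners of $\MagicC$ and hence stay at \emph{fixed} positive distance from $z=1$; they fall cleanly into the large-$\theta$ regime of step~(iii). The limiting contour is $\MagicC$, not $\Magic$ --- consistent with your appeal to~\eqref{eq:steep:MagicC}, but inconsistent with the last sentence of your step~(iii). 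Your entire ``main obstacle'' paragraph therefore addresses a difficulty that does not exist. (Incidentally, even under your reading the corner distance from $z=1$ would be of order $(t+1)^{-1/4}$, not $(t+1)^{-1/2}$.)
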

\begin{proof}
The proof follows the same three-step procedure as the proof of Lemma~\ref{lem:SGtbd}.
Given the identities~\eqref{eq:SGte:D}--\eqref{eq:SGte:DDD},
the proof of the first two steps~\ref{enu:3step:theta=0}--\ref{enu:3step:small}
follows the same argument via Taylor expansion as in Lemma~\ref{lem:SGtbd},
and we do not repeat it here.

We now focus on establishing the last step~\ref{enu:3step:large}.
First, the contour $ \magicC(t,\beta) $ converges, as $ t\to\infty $, to $ \magicC $.
More precisely, write $ z_{\magicC(t,\beta)}(\theta;t,\beta) $ and $ z_{\magicC}(\theta) $
for the respectively polar parametrization as depicted in Figures~\ref{fig:magicC} and \ref{fig:MagicC}.
We have $ \lim_{t\to\infty }z_{\magicC(t,\beta)}(\theta;t,\beta) = z_{\magicC}(\theta) $, uniformly over $ \theta\in(-\pi,\pi] $.
This being the case,
from the given expressions~\eqref{eq:SGte}--\eqref{eq:SGteLim}, \eqref{eq:SGtt} and \eqref{eq:SGttLim} of $ \SGte(t,z) $, $ \SGteLim(z) $, $ \SGtt(z) $, and $ \SGttLim(z) $,
it is readily checked that
\begin{align*}
	\lim_{t\to\infty } \big|\SGte(t,z_{\magicC(t,\beta)}(\theta))\big|^{\frac1t}
	&=
	\big| \SGteLim(z_{\magicC}(\theta)) \big|,
\\	
\lim_{t\to\infty } \big| \SGtt(t,z_{\magicC(t,\beta)}(\theta)) \big|^{\frac1t}
	&=
	\big|\SGttLim(z_{\magicC}(\theta))\big|,
\end{align*}
uniformly over $ \theta\in(-\pi,\pi] $.
The limiting expressions on the r.h.s.\ put us into the considerations of the steepest decent condition~\eqref{eq:steep:MagicC},
which has been verified in Lemma~\ref{lem:magicc}.
From this we conclude the desired conclusion: there exists $ t_0<\infty $ such that, for any given $ \theta_0>0 $,\\
$
	\hphantom{aaaaa}
	\displaystyle
	\big|\SGte(t,z)\big|^{\frac1t}
	\leq
	1 -\tfrac1{C(\theta)},
	\quad
	\big| \SGte(t,z) \big|^{\frac1t}
	\leq
	1 - \tfrac1{C(\theta)},
	\quad
	\forall z= z_{\magicC(t,\beta)}(\theta) \in \magicC(t,\beta),
	\
	|\theta| \geq \theta_0.
$
\end{proof}

We have all the necessary ingredients for estimating $ \SGIn $.
\begin{proof}[Proof of Proposition~\ref{prop:SG:}\ref{enu:SGIn}--\ref{enu:SGIng}, the $ (+-) $-case, with large enough $ t $]
The proof begins with the contour deformation described in Section~\ref{sect:SGIn:ov}.
Let us check the condition~\eqref{eq:nopole}.
For a fixed $ z_2\in\Circ_r $, the integrand in~\eqref{eq:SGIn} has poles in $ z_1=0 $, $ z_1=e^{\sqrt\e(\den-1)}b_1 $,
and $ \pole(z_1)=z_2 $.
Referring to the definition~\eqref{eq:magicC} of $ \magicC(t,-k_1\alpha) $
(or Figure~\ref{fig:magicC}), we see that the first two poles are contained in $ \magicC(t,-k_1\alpha) $.
As for the pole $ \pole(z_1)=z_2 $, the function $ \pole(z) $ (defined in~\eqref{eq:pole})
is uniformly bounded (in $ (\e,z) $) away from $ z=0 $.
This being the case, by making $ r $ large enough, we ensure that $ |\pole(z_1)| < r=|z_2| $
throughout the contour deformation $ z_1\in\Circ_{r} \mapsto \magicC(t,-k_1\alpha) $.
Having checked the condition~\eqref{eq:nopole},
we are now given the decomposition $ \SGIn = \SGbk + \SGres $.
The proof amounts to bounding $ \SGbk $ and $ \SGres $, as well as their gradients.

We begin with $ \SGbk $~\eqref{eq:SGbk}.
The proof consists of a sequence of bounds on terms appearing in the integrand~\eqref{eq:SGbk}.
In the following we  assume $ z_1=z_1(\theta_1)\in\magicC(t,-k_1\alpha) $ and $ z_2=z_1(\theta_2)\in\Circ_{\tilde{r}_2(z_1)} $.
\medskip
\begin{itemize}
\setlength\itemsep{2pt}
\myitem{($ \SGbk $.$ z_1 $)} \label{enu:+-SGbk:1}
	Show that $ |z_1|^{x_2-y_1+\mue t-\muet} \leq \exp(-\frac{\alpha|x_2-y_1|}{\sqrt{t+1}+C(\alpha)}) $:\\
	Referring to the definition~\eqref{eq:magicC} of $ \magicC(t,-k_1\alpha) $
	(or Figure~\ref{fig:magicC}),
	we see that $ \magicC(t,-k_1\alpha) $ is contained in $ \Circ_{\rad(t,-k_1\alpha)} $, so
	\begin{align*}
		|z_1|^{x_2-y_1+\mue t-\muet}
		\leq
		\rad(t,-k_1\alpha)^{|x_2-y_1|}
		\leq
		C(\alpha) e^{-\frac{\alpha|x_2-y_1|}{\sqrt{t+1}+C(\alpha)}}.
	\end{align*}	
\myitem{($ \SGbk $.$ z_2 $)} Show that $ |z_2|^{x_1-y_2+\mue t-\muet} \leq C(\alpha)\exp(-\frac{\alpha|x_1-y_2|}{\sqrt{t+1}+C(\alpha)}) $:\\
	Recall the current assumption $ x_1-y_2\leq 0 $.
	The power $ x_1-y_2+\mue t-\muet $ would have a definitive sign (i.e., non-positive) if we offset it by $ -(\mue t-\muet) $.
	Since $ |z_2| \leq C(\alpha) $ is bounded along its contour $ z_2\in\Circ_{\tilde{r}_2(z_1)} $,
	offsetting the exponent costs only a factor of $ C(\alpha) $:
	\begin{align*}
		|z_2|^{x_1-y_2+\mue t-\muet}	\leq C(\alpha) |z_2|^{x_1-y_2} = C(\alpha)|z_2|^{-|x_1-y_2|}.
	\end{align*}
	Recall the definitions of the $ r_2 $'s and of $ \tilde{r}_2 $ from~\eqref{eq:r2s}--\eqref{eq:tilr2},
	and recall that $ k_2:=1 $ here.
	We see that $ \tilde{r}_2(z_2) \geq \rad(t,\alpha) $, so
	\begin{align*}
		|z_2|^{x_1-y_2+\mue t-\muet}	
		\leq
		C(\alpha) \rad(t,\alpha)^{-|x_1-y_2|}
		\leq
		C(\alpha) e^{\frac{-\alpha|x_1-y_2|}{\sqrt{t+1}+C(\alpha)}}.
	\end{align*}
\myitem{($ \SGbk $.$ \SGfre $)} Show that $ |\SGfre(z_1,z_2)| \leq C(\alpha)(1+|\theta_1|\sqrt{t+1}+|\theta_2|\sqrt{t+1}) $:\\
Recall the expression~\eqref{eq:SGfre} of $ \SGfre $, and rewrite it as
\begin{align}
	\label{eq:SGfre:}
	\SGfre(z_1,z_2)
	=
	1+
	\big(e^{\sqrt{\e}(\den-1)}+e^{\sqrt{\e}\den}\big)
	\frac{ z_2/z_1-1 }{ z_2 -\pole(z_1) }.
\end{align}
Referring to the definition~\eqref{eq:magicC} of $ \magicC(t,-k_1\alpha) $ (or Figure~\ref{fig:magicC}),
we see that $ \magicC(t,-k_1\alpha) $ coincides with the circle $ \Circ_{\rad(t,-k_1\alpha)} $ for small $ \theta_1 $,
i.e., $ |\theta_1|\leq\phi_1^*  $, fixed $ \phi^*_1>0 $.
Also $ z_2(\theta_2) = \tilde{r}_2(z_1)e^{\img\theta_2} = \rad(t,(2\pm 1)\alpha)e^{\img\theta_2} $,
where the $ \pm $ depends on whether $ \pole(z_1(\theta_1))>r'_2 $ or not.
Taylor expanding $ (z_2/z_1-1) $ in $ \theta_1,\theta_2 $ then yields
\begin{align}
	\notag
	|z_2/z_1-1| &\leq \tfrac{C(\alpha)}{\sqrt{t+1}} + C(\alpha)|\theta_2-\theta_1|
\\
	\label{eq:z2ovz1-1}
	&\leq \tfrac{C(\alpha)}{\sqrt{t+1}} + C(\alpha)|\theta_1|+C(\alpha)|\theta_2|.
\end{align}
for all $ \theta_1 $ and $ \theta_2 $ small enough.
Further, since both $ |z_1| $ and $ |z_2| $ are bounded away from $ 0 $ and $ \infty $ along their relevant contours,
the bound~\eqref{eq:z2ovz1-1} actually extends to all values of $ \theta_1,\theta_2 $.
Using~\eqref{eq:z2ovz1-1} and~\eqref{eq:z2-polez1} on the r.h.s.\ of~\eqref{eq:SGfre:},
we conclude the desired bound on $ |\SGfre(z_1,z_2)| $.
\myitem{($ \SGbk $.$ \SGte $)} \label{enu:+-SGbk:4}
	Show that $ |\SGte(z_i)| \leq C(\alpha,T)\exp(-\frac{\theta^2_i}{C}(t+1)) $:\\
	This is the content of Lemma~\ref{lem:SGtbd:+-}.
\end{itemize}
Expressing~\eqref{eq:SGbk} as an integral over $ (\theta_1,\theta_2)\in (-\pi,\pi]^2 $,
and inserting the bounds from~\ref{enu:+-SGbk:1}--\ref{enu:+-SGbk:4} into the resulting expression, we arrive at
\begin{align*}
	|\SGbk| \leq
	C(\alpha,T)
	\int_{-\pi}^{\pi} \int_{-\pi}^{\pi}
	 e^{ -\frac{\alpha(|x_2-y_1|+|x_1-y_2|)}{\sqrt{t+1}+C(\alpha)} }
	 (1+\sqrt{t+1}|\theta_1|+\sqrt{t+1}|\theta_2|)
	e^{-\frac1C(t+1)\theta_i^2}d\theta_i.
\end{align*}
Performing the change of variables $ \sqrt{t+1}\theta_i \mapsto \theta_i $,
and extending the integration domain to $ \R^2 $ (which only increases its value) we obtain the desired bound on $ |\SGbk| $:
\begin{align*}
	|\SGbk| \leq
	C(\alpha,T)
	e^{ -\frac{\alpha(|x_2-y_1|+|x_1-y_2|)}{\sqrt{t+1}+C(\alpha)} }
	\frac{1}{t+1}\int_{\R^2}
	(1+|\theta_1| +|\theta_2|) e^{-\frac1C\theta_i^2}d\theta_i	
	=
	\frac{C(\alpha,T)}{t+1}
	 e^{ -\frac{\alpha(|x_2-y_1|+|x_1-y_2|)}{\sqrt{t+1}+C(\alpha)} }.
\end{align*}

As for $ \SGres $, the proof similarly consists of bounds on terms involved in the integral~\eqref{eq:SGres}.
In the following we always assume $ z_1=z_1(\theta_1)\in\magicC(t,-k_1\alpha) $.
\medskip
\begin{itemize}
\setlength\itemsep{2pt}
\myitem{($ \SGres $.$ \frac{1}{z_1\pole} $)} \label{enu:+-SGres:1}
	Show that $ \frac{1}{|\pole(z_1)||z_1|} \leq C(\alpha) $:\\
	Referring to the definition~\eqref{eq:magicC} of $ \magicC(t,-k_1\alpha) $ (or Figure~\ref{fig:magicC}),
	we see that $ |z_1| $ is bounded away from $ 0 $ and $ \infty $ along $ \magicC(t,-k_1\alpha) $.
	This being the case, referring to the definition~\eqref{eq:pole} of $ \pole(z) $, we see that the same holds for $ |\pole(z_1)| $.
	Hence the claim follows.
\myitem{($ \SGres $.$ \zp $)}\label{enu:+-SGres:2}
	Show that $ \ind_{\set{|\pole(z_1)|>r'_2}} |\zp(z_1)| \leq \exp(-\frac{\alpha(|x_2-y_1|+|x_1-y_2|)}{\sqrt{t+1}+C(\alpha)}) $:\\
	Recall from~\eqref{eq:zp} that $ \zp(z_1) $ consists of products of powers of $ z_1 $ and $ \pole(z_1) $.
	As argued in the previous step~\ref{enu:+-SGres:1},
	the terms $ |z_1|,|z_1|^{-1}, |\pole(z_1)|, |\pole(z_1)|^{-1} \leq C(\alpha) $ are bounded along $ \magicC(t,-k\alpha_1) $.
	This being the case, we alter the powers in~\eqref{eq:zp} by some fixed amount, at the cost of $ C(\alpha) $,
	and write
	\begin{align*}
		\ind_{\set{|\pole(z_1)|>r'_2}} |\zp(z_1)|
		\leq
		C(\alpha) \ind_{\set{|\pole(z_1)|>r'_2}} |z_1|^{|x_2-y_1|} |\pole(z_1)|^{-|x_1-y_2|}.
	\end{align*}
	In the last expression, using $ |z_1|\leq \rad(t,-k_1\alpha) $ (as argued in~\ref{enu:+-SGbk:1}) and the given constraint $ |\pole(z_1)| > r'_2 = \rad(t,2\alpha) $,
	we obtained the desired property:
	\begin{align*}
		\ind_{\set{|\pole(z_1)|>r'_2}} |\zp(z_1)|
		\leq	
		C(\alpha) \rad(t,-k_1\alpha)^{|x_2-y_1|} \rad(t,\alpha)^{-|x_1-y_2|}
		\leq
		C(\alpha) e^{-\frac{\alpha(|x_2-y_1|+|x_1-y_2|)}{\sqrt{t+1}+C(\alpha)}}.	
	\end{align*}
\myitem{($ \SGres $.$ \SGtt $)} \label{enu:+-SGres:3}
	Show that $ |\SGtt(z_1)| \leq C(\alpha,T)\exp(-\frac{\theta^2_1}{C}(t+1)) $:\\
	This is the content of Lemma~\ref{lem:SGtbd:+-}.
\end{itemize}
Express~\eqref{eq:SGres} as an integral over $ \theta_1\in (-\pi,\pi] $,
and insert the bounds from~\ref{enu:+-SGres:1}--\ref{enu:+-SGres:3} into the resulting expression.
This together the derived lower bound~\eqref{eq:theta1:+-} on $ |\theta_1| $ gives
\begin{align*}
	|\SGres|
	\leq
	C(\alpha,T) e^{-\frac{\alpha(|x_2-y_1|+|x_1-y_2|)}{\sqrt{t+1}+C(\alpha)}}
	\int_{(-\pi,\pi]}
	\ind\{|\theta_1|\geq \tfrac{1}{C(\alpha)(t+1)^{1/4}}\} e^{-\frac{1}{C}(t+1)\theta_1^2} d\theta_1.
\end{align*}
Extending the integration domain to $ \R $, and performing a change of variable $ \sqrt{t+1}\theta_1\mapsto\theta_1 $ yields
\begin{align*}
	|\SGres|
	\leq
	C(\alpha,T) e^{-\frac{\alpha(|x_2-y_1|+|x_1-y_2|)}{\sqrt{t+1}+C(\alpha)}}
	\frac{1}{\sqrt{t+1}}\int_{\R}
	\ind\{|\theta_1|\geq \tfrac{(t+1)^{1/4}}{C(\alpha)}\} e^{-\frac{1}{C}\theta_1^2} d\theta_1.
\end{align*}
Here, unlike in the case for $ \SGFr $,
we get $ \frac{1}{\sqrt{t+1}} $ instead of $ \frac{1}{t+1} $ in front of the integral.
This insufficiency is compensated by having the constraint $|\theta_1|\geq (t+1)^{1/4}/C(\alpha)$.
Indeed,
\begin{align*}
	\int_{\R}
	\ind\{|\theta_1|\geq \tfrac{(t+1)^{1/4}}{C(\alpha)}\} e^{-\frac{1}{C}\theta_1^2} d\theta_1
	\leq
	\exp\big(-\tfrac{1}{C(\alpha)}(t+1)^{1/4}\big),
\end{align*}
and fractional exponentials such as $ \exp(-\frac1{C(\alpha)}(t+1)^{1/4}) $ decay faster than any power $ (t+1)^{-n} $.
From this we conclude the desired bound on $ |\SGres| $:
\begin{align*}
	|\SGres|
	\leq
	\frac{C(\alpha,T)}{t+1} e^{-\frac{\alpha(|x_2-y_1|+|x_1-y_2|)}{\sqrt{t+1}+C(\alpha)}}.
\end{align*}

So far we have derived bounds on $ |\SGbk| $ and $ |\SGres| $, and this concludes
the proof of Proposition~\ref{prop:SG:}\ref{enu:SGIn}.
Part~\ref{enu:SGIng} amounts to performing similar estimates on the gradients, e.g., $ |\nabla_{x_j}\SGbk| $ and $ |\nabla_{x_j}\SGres| $.
Taking a gradient merely introduces a factor of $ (z_j^\pm-1) $ in the contour integrals~\eqref{eq:SGbk}--\eqref{eq:SGres}.
It is straightforward to check that
\begin{align}
	\label{eq:zpm-1}
	 |z_j^\pm-1| \leq \tfrac{1}{\sqrt{t+1}} + |\theta_j|,
	 \quad
	 z_1=z_1(\theta) \in \magicC(t,-k_1\alpha),
	 \quad
	 z_2=z_2(\theta) \in \Circ_{\tilde{r}_2(z_1)}.	
\end{align}
Incorporate this bound into the preceding analysis gives the desired bounds on the gradients.
Compared to the bounds on~$ |\SGbk| $ and $ |\SGres| $,
an additional factor of $ \frac{1}{\sqrt{t+1}} $ arises due to~\eqref{eq:zpm-1}.
\end{proof}

\subsection{Estimating the interacting part $ \SGIn $, the $ (--) $-case}
\label{sect:--}
The case considered here is more involved than the $ (+-) $-case:
we face a conflict in the choice of the $ z_1 $-contour.
As discussed in Section~\ref{sect:SGIn:ov}, in order to control the term $ \SGtt(t,z) $ in $ \SGres $ by steepest decent analysis,
we favor contours of the type $ \magicC(t,\beta) $.
On the other hand, with $ x_2-y_1 \leq 0 $ under current assumptions,
we need $ |z_1| >1 $ in $ \SGbk $ to obtain the desired spatial exponential decay $ \exp(-\frac{\alpha|x_2-y_1|}{\sqrt{t+1}+C(\alpha)}) $.
Referring to the definition~\eqref{eq:magicC} of $ \magicC(t,-k_1\alpha) $ (or Figure~\ref{fig:magicC}),
we see that $ |z_1|>1 $ \emph{fails} for a portion of $ \magicC(t,\beta) $, regardless of the sign of $ \beta $---i.e., the bulk part $ \SGbk $ and the residue part $ \SGres $ favor different contours.

In view of the preceding discussion, we choose
\begin{align*}
	\zoneC := \Circ_{\rad(t,3\alpha)},
\end{align*}
which is preferred for controlling $ \SGbk $ but not $ \SGres $,
and then, \emph{re-deforming} contour $ \Circ_{\rad(t,3\alpha)} \mapsto \magicC(t,3\alpha) $ in $ \SGres $.
Let us check that doing so does not cross a pole.
\begin{lem}
\label{lem:redeform:--}
For all $ t>0 $ large enough and $ \e>0 $ small enough, we have $ \SGres=\SGres' $, where
\begin{align}
	\label{eq:SGres'}
	\SGres' &:=
	\oint_{\magicC(t,3\alpha)} \ind_{\set{|\pole(z_1)|>r'_2}}  	 \zp(z_1) \frac{1}{ z_1 \pole(z_1)} \SGtt(t,z_1)dz_1
\end{align}
is the same as $ \SGres $ except the contour is replaced by $ \magicC(t,3\alpha) $.
\end{lem}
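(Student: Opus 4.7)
The plan is to justify the contour deformation $\Circ_{\rad(t,3\alpha)}\mapsto\magicC(t,3\alpha)$ inside $\SGres$ via Cauchy's theorem. Since $\rad(t,3\alpha)>1$ for $t\geq 0$, the smaller disk $\{|z-\tfrac12|\leq\tfrac12\}$ sits inside $\{|z|\leq\rad(t,3\alpha)\}$, so $\magicC(t,3\alpha)$ coincides with the fixed circle $\{|z-\tfrac12|=\tfrac12\}$, and the annular region between the two contours is $A:=\{|z|\leq\rad(t,3\alpha)\}\setminus\{|z-\tfrac12|<\tfrac12\}$. Two ingredients are needed: first, that the non-indicator integrand $g(z_1):=\zp(z_1)\SGtt(t,z_1)/(z_1\pole(z_1))$ extends holomorphically across $\overline A$; second, that the indicator $\ind_{\{|\pole(z_1)|>r'_2\}}$ equals $1$ throughout $\overline A$ away from a measure-zero set, so that both integrals reduce to contour integrals of $g$ alone and Cauchy's theorem gives the equality.

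For the holomorphy of $g$, the candidate singularities are at $z_1=0$; at the zero $z_1^{*}\to\tfrac12$ of $\pole(z_1)$; at the pole $z_1=b_1 e^{\sqrt\e(\den-1)}$ of $\SGte(t,z_1)$; and at the point $z_1\to 1/(2-b_1)\in(\tfrac12,1)$ where $\pole(z_1)$ equals that same pole location. The latter three lie strictly inside $\{|z-\tfrac12|<\tfrac12\}$ (thanks to $b_1\in(0,1)$), hence outside $\overline A$. The apparent singularity at $z_1=0$, which sits on $\magicC(t,3\alpha)$, is in fact removable: using $\pole(z_1)\sim -e^{\sqrt\e(2\den-1)}/z_1$ as $z_1\to 0$ and expanding~\eqref{eq:zp}, each summand in $\zp(z_1)/(z_1\pole(z_1))$ carries an overall power of $z_1$ equal to $(x_2-x_1)+(y_2-y_1)-2$ or $(x_2-x_1)+(y_2-y_1)$, both nonnegative since $x_1<x_2$ and $y_1<y_2$ are integer-separated; meanwhile $\SGtt(t,z_1)$ stays bounded at $z_1=0$, because the $z_1^{\muet}$ factor of $\SGte(t,z_1)$ is exactly cancelled by the $z_1^{-\muet}$ emerging from $\SGte(t,\pole(z_1))$ via its $\pole(z_1)^{\muet}$ factor (cf.~\eqref{eq:SGt}).

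For the indicator, writing $\pole(z_1)=A-B/z_1$ with $A=e^{\sqrt\e(\den-1)}+e^{\sqrt\e\den}$ and $B=e^{\sqrt\e(2\den-1)}$, a direct computation shows $\{|\pole(z_1)|\leq c\}$ is a closed disk depending continuously on $(c,A,B)$, which at the limit $(c,\e,1/\sqrt{t+1})\to(1,0,0)$ becomes $\{|z-\tfrac23|\leq\tfrac13\}$---tangent to $\{|z-\tfrac12|=\tfrac12\}$ at $z=1$ and otherwise strictly contained inside it. The key quantitative input is the cancellation $A-B=1+O(\e)$ of the $\sqrt\e$-terms, giving $|\pole(z_1)|\geq A-B/\rad(t,3\alpha)\geq 1+3\alpha/\sqrt{t+1}+O(\e)>r'_2$ on the outer contour (for $t$ large enough), so the indicator is identically $1$ on $\Circ_{\rad(t,3\alpha)}$. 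By the same continuity, $\{|\pole(z_1)|\leq r'_2\}$ remains inside $\{|z-\tfrac12|\leq\tfrac12\}$ except possibly on a small sliver near $z_1=1$; this sliver is handled by locally detouring $\magicC(t,3\alpha)$ along the arc $\{|\pole(z_1)|=r'_2\}\cap A$, on which $\ind\cdot g\equiv 0$ by strict inequality, after which Cauchy's theorem applied to $g$ on $A\cap\{|\pole(z_1)|>r'_2\}$ yields $\SGres=\SGres'$. The main obstacle is precisely this last geometric step: verifying uniformly within the regime $t\leq\e^{-2}T$ that the $(t,\e)$-dependent level set $\{|\pole(z_1)|=r'_2\}$ does not drift into $\overline A$ near its tangency with $\magicC(t,3\alpha)$ at $z_1=1$ in a way that would leave an unmatched boundary contribution---an issue resolved by exploiting the $\sqrt\e$-cancellation in $A-B$.
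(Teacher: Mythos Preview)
Your argument rests on reading $\magicC(t,3\alpha)$ as the bare circle $\{|z-\tfrac12|=\tfrac12\}$, but the displayed definition~\eqref{eq:magicC} omits $u_*$ by typo: the intended contour is $\partial(\{|z|\leq\rad(t,3\alpha)\}\cap\{|z-\tfrac12|\leq\tfrac12+u_*\})$. This is confirmed by the paper's own proof of the lemma (which takes the crescent between the contours to be $\mathcal{G}(t)=\{|z|\leq\rad(t,3\alpha)\}\setminus\{|z-\tfrac12|<\tfrac12+u_*\}$), by the later step ($\SGres'$.$\zp$) describing $\magicC(t,3\alpha)$ as following $\{|z-\tfrac12|=\tfrac12+u_*\}$ away from $z=1$, and by the requirement in the $(+-)$-case that $z=0$ lie strictly inside $\magicC$. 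The decisive geometric consequence is that $\magicC(t,3\alpha)$ \emph{coincides with} $\Circ_{\rad(t,3\alpha)}$ on a fixed arc around $z=\rad(t,3\alpha)$, so the crescent $\mathcal{G}(t)$ stays bounded away from $z=1$.

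This matters because your assertion that the indicator equals $1$ on all of $\Circ_{\rad(t,3\alpha)}$ is false. The choice of $k_2$ in this section is made precisely so that $r'_2\geq\pole(\rad(t,3\alpha))+(t+1)^{-1/2}$; since $\pole(\rad(t,3\alpha))=A-B/\rad(t,3\alpha)$ is the minimum of $|\pole|$ over that circle, this forces $\ind_{\{|\pole(z_1)|>r'_2\}}=0$ at and near $z_1=\rad(t,3\alpha)$. In the paper's argument this is harmless---that arc is shared by both contours and drops out of $\SGres-\SGres'$, leaving only $\oint_{\partial\mathcal{G}(t)}$; on $\mathcal{G}(t)$ the bound~\eqref{eq:zplwbd} gives $|\pole|$ uniformly bounded above $1$, so $\ind\equiv 1$ there and Cauchy's theorem applies directly to the holomorphic integrand. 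Under your reading the two contours are merely tangent at $z=1$ and share no arc, so the indicator-zero region on $\Circ_{\rad(t,3\alpha)}$ has nothing to cancel against; your detour along $\{|\pole|=r'_2\}$ would leave an uncontrolled integral of $g$ (not $\ind\cdot g$) along that level-set arc. Your holomorphy checks (removability at $z_1=0$, pole locations) are correct but become unnecessary once the $u_*$-contour is used, since $z=0$ and a full neighborhood of $z=1$ then lie outside $\mathcal{G}(t)$.
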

\begin{proof}
Referring to the definition~\eqref{eq:magicC} of $ \magicC(t,3\alpha) $ (or Figure~\ref{fig:magicC}),
we see that the difference $ \Circ_{\rad(t,3\alpha)}-\magicC(t,3\alpha) $ is the boundary of the crescent
\begin{align*}
	\mathcal{G}(t) := \big\{z\in\C: |z|\leq\rad(t,3\alpha)\big\} \setminus \big\{z\in\C:|z-\tfrac12|<\tfrac12+u_*\big\}.
\end{align*}
See Figure~\ref{fig:crescent}.
We write $ \partial\mathcal{G}(t) $ for the boundary, counterclockwise oriented.
This gives
\begin{align*}
	\SGres-\SGres' =
	\frac{e^{\sqrt{\e}(\den-1)}+e^{\sqrt{\e}\den}}{2\pi\img}
	\oint_{\partial\mathcal{G}(t)}
 	\ind_{\set{|\pole(z_1)|>r'_2}} \zp(z_1) \frac{1}{ z_1 \pole(z_1)} \SGtt(t,z_1)dz_1.
\end{align*}
Along $ \partial\mathcal{G}(t) $, the indicator $ \ind_{\set{|\pole(z_1)|>r'_2}} $ is in fact irrelevant.
More precisely, setting
\begin{align}
	\label{eq:calH}
	\mathcal{H}(t,\e,\beta) := \{ |\pole(z)| \leq \rad(t,\beta) \},
\end{align}
let us check that
\begin{align}
	\label{eq:redeform:--:claim}
	\text{ given any } \beta\in\R \text{ and } u>0,
	\
	\mathcal{H}(t) \subset \big\{ |z-\tfrac12| \leq \tfrac12 +u \big\},
\end{align}
for all $ t $ large enough and $ \e $ small enough.
Referring to the definition~\eqref{eq:pole} of $ \pole(z) $, we have $ \lim_{\e\to 0} \pole(z) := \poleLim(z) = 2-z^{-1} $.
Consider $ \mathcal{H}_* := \{z\in\C: |\poleLim(z)| \leq 1 \} $, which is the $ (t,\e)\to(\infty,\e) $ limit of $ \mathcal{H}(t,\e,\beta) $.
Indeed, along the contour $ \Magic:=\frac12+\Circ_\frac12 $,we have $ |z\poleLim(z)|=2|z-\frac12|=1 $ and $ |z|>1 $ except when $ z=1 $.
Consequently, $ \Magic\cap\mathcal{H}_* = \{1\} $.
Also, it is readily checked that $ \frac12 \in \mathcal{H}_* $ and that $ \mathcal{H}_* $ is connected.
From these properties, we deduce that $ \mathcal{H}_* \subset \{|z-\frac12| \leq \frac12 \} $.
Since $ \mathcal{H}_* $ is the $ (t,\e)\to(\infty,0) $ limit of $ \mathcal{H}(t,\e,\beta) $,
for all $ t $ large enough and $ \e $ small enough, the claim~\eqref{eq:redeform:--:claim} follows.

\begin{figure}[h]
\includegraphics[width=.35\linewidth]{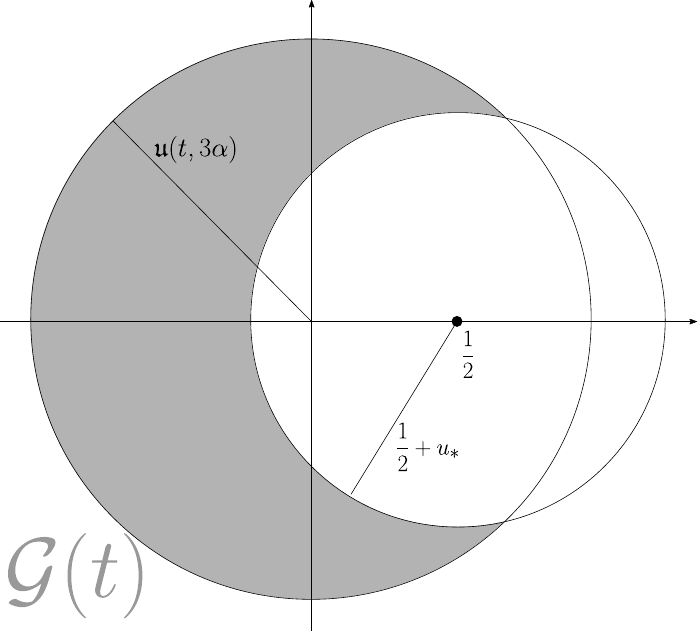}
\caption{The region $ \mathcal{G}(t) $.}
\label{fig:crescent}
\end{figure}

Given~\eqref{eq:redeform:--:claim}, we drop the indicator $ \ind_{\set{|\pole(z_1)|>r'_2}} $ and write
\begin{align*}
	\SGres'-\SGres =
	\frac{e^{\sqrt{\e}(\den-1)}+e^{\sqrt{\e}\den}}{2\pi\img}
	\oint_{\partial\mathcal{G}(t)}
 	  \frac{\zp(z_1)}{ z_1 \pole(z_1)} \SGtt(t,z_1)dz_1.
\end{align*}
Our goal is to show that the integral is zero.
To this end, set $ \polee(z) := z\pole(z) := (e^{\sqrt\e\den}+e^{\sqrt\e(\den-1)})z - e^{\sqrt\e(2\den-1)} $,
recall the definition of $ \zp(z_1) $ from~\eqref{eq:zp}, that $ \SGtt(t,z_1):= \SGte(t,z_1) \SGte(t,\pole(z_1)) $,
and recall the definition of $ \SGte(t,z) $ from~\eqref{eq:SGt}.
We express the integrand of the last integral as
\begin{align}
	\notag
	\frac{\zp(z_1)}{ z_1 \pole(z_1)} \SGtt(t,z_1)
	=&
	\big( z_1^{(x_2-y_1)-(x_1-y_2)-2+(\mue t - \muet)} - z_1^{(x_2-y_1)-(x_1-y_2)+(\mue t - \muet)} \big) \polee(z_1)^{x_1-y_2-1+\muet},
\\
	\label{eq:zp:}
	&\Big(
			\lambdae \Big(\frac{z_1b_1+(1-b_1-b^\e_2)\taue^{-\den}}{z_1-b^\e_2\taue^{-\den}}
		\Big)\Big)^t
	\Big(
		\lambdae \Big(\frac{\pole(z_1)b_1+(1-b_1-b^\e_2)\taue^{-\den}}{\pole(z_1)-b^\e_2\taue^{-\den}}
	\Big)\Big)^t.
\end{align}
It suffices to check that this expression has no poles within $ z_1\in\mathcal{G}(t) $.
The assumption $ x_1<x_2 $ and $ y_1<y_2 $ ensures that $ (x_2-y_1)-(x_1-y_2) \geq 2 $.
Thus,
the expression~\eqref{eq:zp:} can only have poles at $ \polee^{-1}(0) $, $ b^\e_2\taue^{-\den} $, or $ \pole^{-1}(b^\e_2\taue^{-\den}) $.
With $ b^\e_2\to b_1 $, $ \taue\to 1 $, and $ \pole(z)\to 2-z^{-1} $, we have
\begin{align*}
	\polee^{-1}(0) \longrightarrow \tfrac12,
	\quad
	b^\e_2\taue^{-\den} \longrightarrow b_1,
	\quad
	\pole^{-1}(b^\e_2\taue^{-\den}) \longrightarrow \tfrac{1}{2-b_1},
	\quad
	\text{as }\e\to 0.
\end{align*}
Referring to Figure~\ref{fig:crescent}, we see that $ \frac12 $, $ b_1 $, and $ \frac{1}{2-b_1} $,
all sit strictly outside of $ \mathcal{G}(t) $.
Consequently, no poles enter into $ \mathcal{G}(t) $ as long as $ t>0 $ is large enough and $ \e>0 $ is small enough.
\end{proof}

Having introduced the contours $ \Circ_{\rad(t,3\alpha)} $ and $ \magicC(t,3\alpha) $,
hereafter we write $ z_1(\theta_1) = \rad(t,3\alpha)e^{\img\theta_1} \in \Circ_{\rad(t,3\alpha)} $,
and write $ \tilde{z}_1(\theta_1)\in\magicC(t,3\alpha) $ for the parametrization depicted in Figure~\ref{fig:magicC}.

To control $ \SGres $ in the following,
similarly to the $ (+-) $-case done previously, we need the analogous condition~\eqref{eq:cntur:intersect} to hold:
\begin{align}
	\tag{\ref{eq:cntur:intersect}'}
	\label{eq:cntur:intersect:--}
	r'_2 \geq \pole(\tilde{z}_1(0)) + \tfrac{1}{\sqrt{t+1}} \in \R.
\end{align}
We achieve this by making the auxiliary parameter $ k_2\in\Z_{>0} $ in~\eqref{eq:r2s} large enough.
Recall from Definition~\ref{def:Taylor} the announced convention on Taylor expansion,
and expand the expression $ r'_2-\pole(\tilde{z}_1(0)) = \rad(t,2k_2\alpha) - \pole(\rad(t,3\alpha)) $ in $ (\sqrt\e,\frac{1}{\sqrt{t+1}}) $
to the leading order in $ \frac{1}{\sqrt{t+1}} $ to get
\begin{align*}
	z_2(0)-\pole(\tilde{z}_1(0)) = 0\cdot\sqrt\e - \den(1-\den)\e + \tfrac{(k_2-3)\alpha}{\sqrt{t+1}} + \ldots.
\end{align*}
With $ \e \leq \frac{C(T)}{\sqrt{t+1}} $,
from the expansion we see that~\eqref{eq:cntur:intersect:--} does hold for some large enough $ k_2=k_2(\alpha,T) $,
and we fix such a $ k_2\in\Z_{>0} $ hereafter.
Given this condition, following the same procedure of deriving~\eqref{eq:theta1:+-} as in the $ (+-) $-case, here we have
\begin{align}
	\tag{\ref{eq:theta1:+-}'}
	\label{eq:theta1:--}
	|\pole(\tilde{z}_1(\theta_1))| > r'_2
	\text{ holds only if }
	|\theta_1| \geq \frac{1}{C(\alpha)(t+1)^{1/4}}.
\end{align}

\begin{proof}[Proof of Proposition~\ref{prop:SG:}\ref{enu:SGIn}--\ref{enu:SGIng}, the $ (--) $-case, with large enough $ t $]
The proof begins with the contour deformation described in Section~\ref{sect:SGIn:ov}.
The condition~\eqref{eq:nopole} is checked the same way as in the $ (+-) $-case,
which gives the decomposition $ \SGIn = \SGbk + \SGres $.
We next perform the aforementioned re-deformation $ \Circ_{\rad(t,3\alpha)} \mapsto \magicC(t,3\alpha) $ in $ \SGres $.
Lemma~\ref{lem:redeform:--} ensures that no pole is crossed during this step, giving $ \SGIn = \SGbk + \SGres' $.

The proof amounts to bounding $  \SGbk $, $ \SGres' $, and their gradients.
We begin with $ \SGbk $, given by the integral expression~\eqref{eq:SGbk}.
In the following we check a sequence of bounds on terms involved in~\eqref{eq:SGbk},
and we always assume $ z_1=z_1(\theta_1)\in\Circ_{\rad(t,3\alpha)} $ and $ z_2=z_1(\theta_2)\in\Circ_{\tilde{r}_2(z_1)} $
in the course of doing so.
\medskip
\begin{itemize}
\setlength\itemsep{2pt}
\myitem{($ \SGbk $.$ z_1 $)} \label{enu:--SGbk:1}
	Show that $ |z_1|^{x_2-y_1+\mue t-\muet} \leq \exp(-\frac{\alpha|x_2-y_1|}{\sqrt{t+1}+C(\alpha)}) $:\\
	This is so because $ |z_1| = \rad(t,3\alpha) $ and $ x_2-y_1 \leq 0 $ under current assumptions.
\myitem{($ \SGbk $.$ z_2 $)}
Show that $ |z_2|^{x_1-y_2+\mue t-\muet} \leq C(\alpha)\exp(-\frac{\alpha|x_1-y_2|}{\sqrt{t+1}+C(\alpha)}) $:\\
	With $ k_2 \geq 1 $ and with $ \tilde{r}_2 $ defined in \eqref{eq:tilr2},
	we have $ |z_2| \geq \rad(t, k_2\alpha) \geq \rad(t,\alpha) $.
	This and the assumption $ x_1-y_2 \leq 0 $ gives the desired claim.
\myitem{($ \SGbk $.$ \SGfre $)} Show that $ |\SGfre(z_1,z_2)| \leq C(\alpha)(1+|\theta_1|\sqrt{t+1}+\theta_2\sqrt{t+1}) $:\\
	This is established by the same argument as in the $ (+-) $-case.
	We do not repeat it here.
\myitem{($ \SGbk $.$ \SGte $)} \label{enu:--SGbk:4}
	Show that $ |\SGte(z_i)| \leq C(\alpha,T)\exp(-\frac{\theta^2_i}{C}(t+1)) $:\\
	This is the content of Lemma~\ref{lem:SGtbd}.
\end{itemize}
Given \ref{enu:--SGbk:1}--\ref{enu:--SGbk:4},
the desired bound on $ \SGbk $ follows by inserting the bounds into~\eqref{eq:SGbk}, and integrating the result.
The procedure is the same as the $ (+-) $-case, and we do not repeat it here.

We now turn to $ \SGres $. In the following we always assume $ \tilde{z}_1=\tilde{z}_1(\theta_1)\in\magicC(t,3\alpha) $.
\medskip
\begin{itemize}
\setlength\itemsep{2pt}
\myitem{($ \SGres' $.$ \frac{1}{z_1\pole} $)} \label{enu:--SGres:1}
	Show that $ \frac{1}{|\pole(\tilde{z}_1)\tilde{z}_1|} \leq C(\alpha) $:\\
	Referring to the definition~\eqref{eq:magicC} of $ \Magic(t,3\alpha) $ (or Figure~\ref{fig:magicC}),
	we see that $ |\tilde{z}_1| $ is bounded away from $ 0 $ and $ \infty $ along $ \magicC(t,3\alpha) $.
	This being the case, referring to the definition~\eqref{eq:pole} of $ \pole(z) $,
	the same holds for $ |\pole(\tilde{z}_1)| $.
\myitem{($ \SGres' $.$ \zp $)}\label{enu:--SGres:2}
	Show that $ |\zp(\tilde{z}_1)| \leq C(\alpha)\exp(-\frac{\alpha(|x_2-y_1|+|x_1-y_2|)}{\sqrt{t+1}+C(\alpha)}) $:\\
	Recall from~\eqref{eq:zp} that $ \zp(z_1) $ consists of products of powers of $ \tilde{z}_1 $ and $ \pole(\tilde{z}_1) $.
	As argued in the previous step~\ref{enu:--SGres:1},
	the terms $ |\tilde{z}_1|,|\tilde{z}_1|^{-1}, |\pole(\tilde{z}_1)|, |\pole(\tilde{z}_1)|^{-1} \leq C(\alpha) $ are bounded along $ \magicC(t,3\alpha) $.
	This being the case, we alter the powers~\eqref{eq:zp} in by some fixed amount, at the cost of $ C(\alpha) $,
	and write
	\begin{align*}
		|\zp(\tilde{z}_1)|
		\leq
		C(\alpha) |\tilde{z}_1|^{-|x_2-y_1|} |\pole(\tilde{z}_1)|^{-|x_1-y_2|}.
	\end{align*}
	
	Set $ n_1:=|x_2-y_1| $ and $ n_2:=|x_1-y_2| $.
	Instead of bounding $ |\tilde{z}_1|^{-n_1} $ and $ |\pole(\tilde{z}_1)|^{-n_2} $ separately,
	here we need to `bundle' part of them together.
	The assumption $ y_1<y_2 $, $ x_1<x_2 $ in the $ (--) $-case yields $ n_2>n_1 $.
	Given this, we write
	\begin{align*}
		|\zp(\tilde{z}_1)|
		\leq
		C(\alpha) |\tilde{z}_1|^{-n_1} |\pole(\tilde{z}_1)|^{-n_2}
		=
		C(\alpha) |\tilde{z}_1\pole(\tilde{z}_1)|^{-n_1} |\pole(\tilde{z}_1)|^{-(n_2-n_1)}.
	\end{align*}
	We claim that, for all $ t\leq \e^{-2}T $ large enough and $ \e>0 $ small enough,
	\begin{align}
		\label{eq:--sGres:claim}
		|\tilde{z}_1\pole(\tilde{z}_1)| \geq \rad(t,\alpha),\quad |\pole(\tilde{z}_1)| \geq \rad(t,2\alpha),
		\quad
		\tilde{z}_1 \in \magicC(t,3\alpha).
	\end{align}	
	Once these bounds are established, it follows that
	\begin{align*}
		|\zp(\tilde{z}_1)|
		\leq
		C(\alpha)  \rad(t,\alpha)^{-n_1} \rad(t,2\alpha)^{-(n_2-n_1)}
		\leq
		C(\alpha) e^{-\frac{2\alpha n_1}{\sqrt{t+1}+C(\alpha)}} e^{-\frac{\alpha(n_2-n_1)}{\sqrt{t+1}+C(\alpha)}}.
	\end{align*}
	This concludes the desired bound on $ |\zp(\tilde{z}_1)| $, and it hence suffices to verify the claim~\eqref{eq:--sGres:claim}.
	
	Recall from~\eqref{eq:magicC} that $ \magicC(t,3\alpha) $ is given by $ \Circ_{\rad(t,3\alpha)} $ near $ z=1 $,
	and the rest by $ \tilde{\Magic}:=\{|z-\frac12|=\frac12+u_*\} $.
	With this in mind, let us check the bounds separately on $ \Circ_{\rad(t,3\alpha)} $ and $ \tilde{\Magic} $.

	We begin with  $ \Circ_{\rad(t,3\alpha)} $.
	Adopt the parametrization $ \Circ_{\rad(t,3\alpha)}\ni \tilde{z}_1(\theta_1) = \rad(t,3\alpha) e^{\img\theta_1} $ and write
	\begin{align}
		\label{eq:zpz1}
		\tilde{z}_1\pole(\tilde{z}_1) &= \rad(t,3\alpha) e^{\img\theta_1}(e^{\sqrt{\e}\den}+e^{\sqrt{\e}(\den-1)}) - e^{\sqrt{\e}(2\den-1)},
	\\
		\label{eq:zpz2}
		\pole(\tilde{z}_1) &= (e^{\sqrt{\e}\den}+e^{\sqrt{\e}(\den-1)}) - e^{\sqrt{\e}(2\den-1)}\rad(t,-3\alpha) e^{-\img\theta_1}.	
	\end{align}
	As $ \theta_1 $ varies, the r.h.s.\  of~\eqref{eq:zpz1}--\eqref{eq:zpz2} trace out circles,
	denoted by $ \tilde{\Circ}(t,\e) $ and $ \tilde{\Circ}'(t,\e) $ respectively.
	The circle $ \tilde{\Circ}(t,\e) $ is centered at a point in $ (-\infty,0) $.
	For such circles, the nearest point to the origin occurs at the right-end.
	This gives
	\begin{align*}
		\inf_{\tilde{z}_1\in\Circ_{\rad(t,3\alpha)}} |\tilde{z}_1\pole(\tilde{z}_1)|
		=
		\rad(t,3\alpha) (e^{\sqrt{\e}\den}+e^{\sqrt{\e}(\den-1)}) - e^{\sqrt{\e}(2\den-1)}.
	\end{align*}
	A similarly geometric reasoning gives
	\begin{align*}
		\inf_{\tilde{z}_1\in\Circ_{\rad(t,3\alpha)}} |\pole(\tilde{z}_1)|
		=
		(e^{\sqrt{\e}\den}+e^{\sqrt{\e}(\den-1)}) - e^{\sqrt{\e}(2\den-1)} \rad(t,-3\alpha).
	\end{align*}	
	To bound the r.h.s., under the convention announced in Definition~\ref{def:Taylor},
	we Taylor expand the r.h.s.\ in $ (\sqrt\e,\frac{1}{\sqrt{t+1}}) $ up to the leading order in $ \frac{1}{\sqrt{t+1}} $ to get
	\begin{align*}
		\rad(t,3\alpha) (e^{\sqrt{\e}(\den-1)}-e^{\sqrt{\e}(\den-1)}) - e^{\sqrt{\e}(2\den-1)}
		=
		1+0\cdot\sqrt{\e} + \den(1-\den)\e + \tfrac{3\alpha}{\sqrt{t+1}}+\ldots,
	\\
		(e^{\sqrt{\e}(\den-1)}-e^{\sqrt{\e}(\den-1)}) - e^{\sqrt{\e}(2\den-1)} \rad(t,-3\alpha)
		=
		1+0\cdot\sqrt{\e} + \den(1-\den)\e + \tfrac{3\alpha}{\sqrt{t+1}}+\ldots.	
	\end{align*}
	From this, together with $ \e \leq \frac{C(T)}{\sqrt{t+1}} $ (because $ t \leq \e^{-2}T $),
	we see that the desired bounds
	$ |\tilde{z}_1\pole(\tilde{z}_1)| \geq \rad(t,\alpha) $, $ |\pole(\tilde{z}_1)| \geq \rad(t,2\alpha) $
	hold on $ \Circ_{\rad(t,3\alpha)} $, for all large enough $ t \leq \e^{-2}T $ and small enough $ \e>0 $.

	We now turn to $ \tilde{\Magic} $.
	Recall that $ \poleLim(z):=2-z^{-1} $ denotes the $ \e\downarrow 0 $ limit of $ \pole(z) $.
	Along the contour $ \tilde{\Magic}:=\{|z-\frac12|=\frac12+u_*\} $ we have $ |z\poleLim(z)|=1+2u_* > 1 $.
	This being the case, the bound $ |\tilde{z}_1\pole(\tilde{z}_1)| \geq \rad(t,\alpha) $ holds on $ \tilde{\Magic} $
	for large enough $ t $.
	The other bound $ |\pole(\tilde{z}_1)| \geq \rad(t,2\alpha) $ follows from~\eqref{eq:redeform:--:claim}.
	
\myitem{($ \SGres' $.$ \SGtt $)} \label{enu:--SGres:3}
	Show that $ |\SGtt(\tilde{z}_1)| \leq C(\alpha,T)\exp(-\frac{\theta^2_1}{C}(t+1)) $:\\
	This is the content of Lemma~\ref{lem:SGtbd:+-}.
\end{itemize}
Given \ref{enu:--SGres:1}--\ref{enu:--SGres:3}, and the derived constraint~\eqref{eq:theta1:--} on $ |\theta_1| $,
the desired bound on $ \SGres $ follows the same integration procedure as the $ (+-) $-case.

As for the gradient, similarly to the $ (+-) $-case, here we have
\begin{align}
	\tag{\ref{eq:zpm-1}'}
	\label{eq:zpm-1:--}
	 |z_j^\pm-1| \leq \tfrac{1}{\sqrt{t+1}} + |\theta_j|,
	 \quad
	 z_1=z_1(\theta) \in \Circ_{\rad(t,3\alpha)} \text{ or } \magicC(t,3\alpha),
	 \quad
	 z_2=z_2(\theta) \in \Circ_{\tilde{r}_2(z_1)}.
\end{align}
Incorporate this bound into the preceding analysis gives the desired bounds on the gradients.
\end{proof}

\subsection{Estimating the interacting part $ \SGIn $, the $ (++) $-case}
\label{sect:++}
Before heading to the construction of $ \zoneC $, we begin with some general discussion that motivates the construction.
As it turns out, the analysis of the residue part $ \SGres $ favors contours of the type:
\begin{align}
	\label{eq:zplv}
	\zplv(t,\e,\beta)
	:= \Big\{
		\Big| z- \frac{e^{\sqrt\e(2\den-1)}}{e^{\sqrt\e\den}+e^{\sqrt\e(\den-1)}} \Big|
		=
		\frac{\rad(t,\beta)}{e^{\sqrt\e\den}+e^{\sqrt\e(\den-1)}}	
	\Big\}.
\end{align}
First, it is readily checked that $ \zplv(t,\e,\beta) $ is the $ \rad(t,\beta) $-level set of $ |z\pole(z)| $, i.e.,
\begin{align*}
	\zplv(t,\e,\beta)
	= \big\{ |z\pole(z)|=\rad(t,\beta)\big\}.
\end{align*}
This property is useful toward extracting the spatial exponential decay $ \exp(-\frac{\alpha(|x_2-y_1|+|x_1-y_2|)}{\sqrt{t+1}+C(\alpha)}) $
from  $ \SGres $.
Further, $ \zplv(t,\e,\beta) $ is itself a circle,
and, as $ (t,\e)\to(\infty,0) $, converges to $ \Magic := \{|z-\frac12|=\frac12\} $.
With $ \Magic $ satisfying the steepest decent condition~\eqref{eq:steep:Magic},
it is conceivable that $ \SGtt(t,z_1) $ will be controlled along the contour $ \zplv(t,\e,\beta) $.

However, if we choose $ \zoneC $ to be $\zplv(t,\e,\beta) $ (with $ \beta\in\R $),
for all $ \den>\frac12 $, the first stage of contour deformation $ \Circ_r\mapsto \zoneC $
will \emph{inevitably} cross a pole at $ \pole(z_1)=z_2 $ no matter how large $ r $ is.
To avoid this issue, we consider a modification $ \magicc(t,\e,\beta) $ of $ \zplv(t,\e,\beta) $.
This modification is similar to how we modified $ \Magic $ to get $ \Magicc $.
Recall that $ u_*>0 $ is a fixed parameter in the definition of $ \MagicC $ and $ \Magicc $ (see~\eqref{eq:MagicC}--\eqref{eq:Magicc}).
We set
\begin{align}
	\label{eq:magicc}
	\magicc(t,\e,\beta) :=
	\partial \Big(
		\big\{|z-u_*| \leq 2u_*\big\}
		\cup
		\Big\{
		\Big| z- \frac{e^{\sqrt\e(2\den-1)}}{e^{\sqrt\e\den}+e^{\sqrt\e(\den-1)}} \Big|
		\leq
		\frac{\rad(t,\beta)}{e^{\sqrt\e\den}+e^{\sqrt\e(\den-1)}}
		\Big\}
	\Big),
\end{align}
counterclockwise oriented. See Figure~\ref{fig:magicc}.

\begin{figure}[h]
\includegraphics[width=\linewidth]{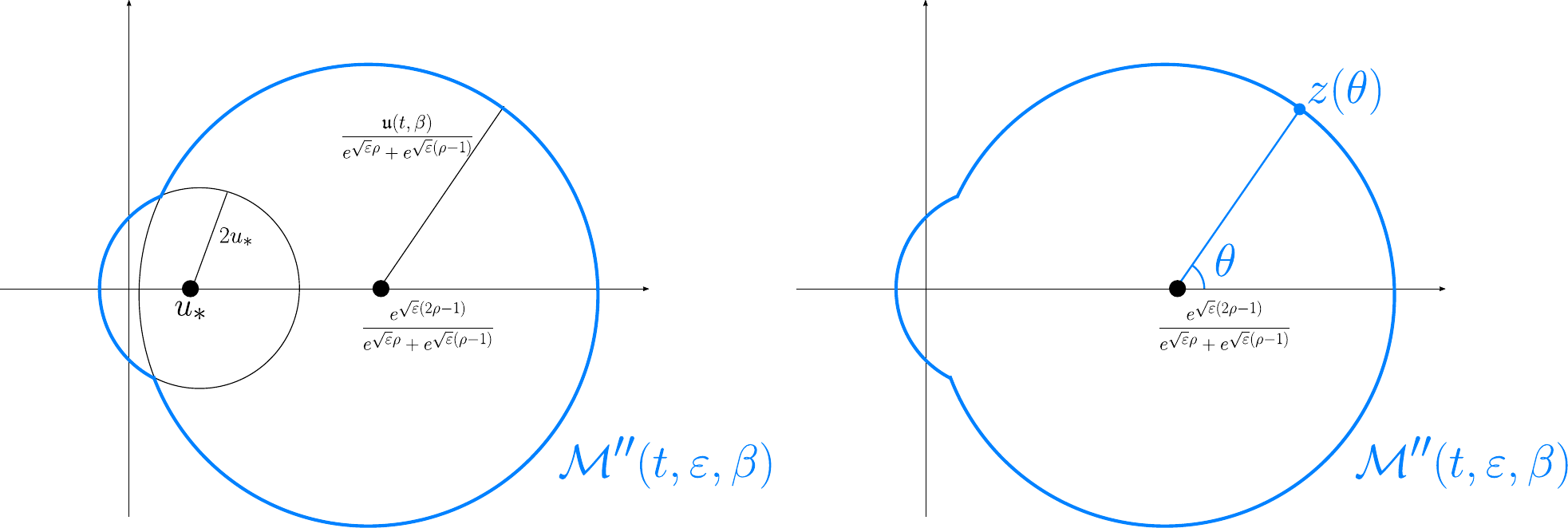}
\caption{The contour $ \magicc(t,\e,\beta) $ and its parametrization.}
\label{fig:magicc}
\end{figure}

We now define the $ z_1 $-contour
\begin{align*}
	\zoneC := \magicc(t,\e,-k_1\alpha).
\end{align*}
As for the $ z_2 $-contour, we fix $ k_2:=1 $ in~\eqref{eq:r2s}.
Recall the definition of $ \tilde{r}_2(z_1) $ from~\eqref{eq:tilr2},
we parametrize $ z_2(\theta):=\tilde{r}_2(z_1)e^{\img\theta_2} \in \Circ_{\tilde{r}_2(z_1)} $.

The auxiliary parameter $ k_1=k_1(\alpha)\in\Z_{\geq 2} $ is in place for technical purpose.
We delay specifying $ k_1 $, and first explain the contour deformation we need here.
Similar to the $ (--) $-case, here we need a re-deformation
$ \magicc(t,\e,-k_1\alpha) \mapsto \zplv(t,\e,-k_1\alpha) $ for $ \SGres $.
As explained earlier, the analysis of $ \SGres $ favors the contour $ \zplv(t,\e,-k_1\alpha) $.
Unfortunately, we could not have chosen $ \zoneC $ to be $ \zplv(t,\e,-k_1\alpha) $ in the first place,
because the bulk part $ \SGbk $ is sensitive to crossing $ z_1=0 $ (due to the pole at $ \pole(z_1)=z_2 $).
On the other hand, $ \SGres $ is \emph{not}.
We utilize this fact to deliver the desired contour to $ \SGres $ via re-deformation.
Let us verify that re-deformation for $ \SGres $ does not cross a pole.
\begin{lem}
\label{lem:redeform:++}
For all $ t>0 $ large enough and $ \e>0 $ small enough, we have $ \SGres=\SGres' $, where
\begin{align}
	\label{eq:SGres''}
	\SGres'' :=
	\oint_{\zplv(t,\e,-k_1\alpha)} \ind_{\set{|\pole(z_1)|>r'_2}} \big(e^{\sqrt{\e}(\den-1)}+e^{\sqrt{\e}\den}\big) \zp(z_1) \frac{\SGtt(t,z_1)dz_1}{2\pi\img z_1 \pole(z_1)}
\end{align}
is the same as $ \SGres $ except the $ z_1 $-contour is replaced by  $ \zplv(t,\e,-k_1\alpha) $.
\end{lem}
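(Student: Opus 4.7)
The plan is to mirror the argument of Lemma~\ref{lem:redeform:--}. Let $G = G(t,\e)$ denote the region enclosed between $\magicc(t,\e,-k_1\alpha)$ and $\zplv(t,\e,-k_1\alpha)$, oriented so that its boundary (with appropriate signs) equals $\magicc(t,\e,-k_1\alpha) - \zplv(t,\e,-k_1\alpha)$. Referring to~\eqref{eq:magicc} and to the fact that $\zplv(t,\e,-k_1\alpha)$ is a circle centered near $B/A \to \tfrac12$, we see that $G$ is (up to measure-zero pieces) the small disk $\{|z-u_*|\leq 2u_*\}$ with the portion inside $\zplv(t,\e,-k_1\alpha)$ removed. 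In particular $G$ is a bounded region located near the origin, and $\SGres - \SGres''$ equals the integral of the same integrand over $\partial G$.

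First I would show that the indicator $\ind_{\set{|\pole(z_1)|>r'_2}}$ equals $1$ throughout $G$ for large $t$ and small $\e$, in analogy with~\eqref{eq:redeform:--:claim}. Since $G$ lies near the origin and is bounded away from the point $B/A$ (the center of $\zplv$), the identity $\pole(z) = A - B/z$ forces $|\pole(z)|$ to stay bounded below by a constant strictly larger than $1$ on $G$. With $r'_2 = \rad(t,2k_2\alpha) \to 1$ as $t\to\infty$, the indicator equals $1$ throughout $G$ and may be dropped.

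Next, using $\pole(z_1) = (Az_1-B)/z_1$ with $A = e^{\sqrt\e\den}+e^{\sqrt\e(\den-1)}$ and $B = e^{\sqrt\e(2\den-1)}$, I would rewrite the integrand $\zp(z_1)\SGtt(t,z_1)/(z_1\pole(z_1))$ as a genuine rational function of $z_1$, in analogy with~\eqref{eq:zp:}. Combining the $z_1$-powers and $(Az_1-B)$-powers, the two terms in $\zp$ produce, for instance, $z_1^{(x_2-y_1)-(x_1-y_2)-2}(Az_1-B)^{x_1-y_2+\muet}$ times the rational factors $F(z_1)^t F(\pole(z_1))^t$ coming from $\SGte$. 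The hypothesis $x_1<x_2$, $y_1<y_2$ in the $(++)$-case gives $(x_2-y_1)-(x_1-y_2)\geq 2$, which precisely kills the apparent pole at $z_1=0$. The remaining candidate poles sit at $z_1 = b^\e_2\taue^{-\den}$, $z_1 = B/A$, and $z_1 = \pole^{-1}(b^\e_2\taue^{-\den})$, converging respectively to $b_1$, $\tfrac12$, and $\tfrac{1}{2-b_1}$ as $\e\to 0$. Each of these three limit points sits outside the small disk $\{|z-u_*|\leq 2u_*\}$ provided $u_* < b_1/3$, which I can arrange by shrinking the $u_*$ obtained from Lemma~\ref{lem:magicc} (since its proof only requires $u_*$ sufficiently small); each also sits outside $\zplv(t,\e,-k_1\alpha)$ in the cases of $b_1$ and $\tfrac{1}{2-b_1}$, while $B/A$ is the center of $\zplv$. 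Therefore all three points lie outside $G$ for $t$ large and $\e$ small, and Cauchy's theorem yields $\SGres - \SGres'' = 0$.

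The main obstacle will be the bookkeeping in the second and third steps: one must verify uniformly under the weak asymmetry scaling that the small disk component of $G$ avoids the three pole-candidate points, and that the algebraic rewriting indeed cancels the spurious pole at $z_1=0$ (keeping track of the non-integer-looking exponents $\mue t - \muet$, which combine with the $z_1^{\muet}\pole(z_1)^{\muet}$ factors hidden in $\SGtt$ to give an integer power of $z_1$). Once this is done, the rest is routine analog of the $(--)$-case.
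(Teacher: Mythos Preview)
Your approach matches the paper's exactly: pass to the crescent $G$ between $\magicc(t,\e,-k_1\alpha)$ and $\zplv(t,\e,-k_1\alpha)$, drop the indicator by checking $|\pole|>r'_2$ on $G$ (the paper invokes \eqref{eq:zplwbd} for this, noting $G\subset\{|z|\le 3u_*\}$), rewrite the integrand as in \eqref{eq:zp:} so that the apparent pole at $z_1=0$ cancels via $(x_2-y_1)-(x_1-y_2)\ge 2$, and then verify that the three remaining candidate poles avoid $G$.

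One small slip: your assertion that $b_1$ and $\tfrac{1}{2-b_1}$ lie \emph{outside} $\zplv(t,\e,-k_1\alpha)$ is false---both lie strictly inside the limiting circle $\Magic=\{|z-\tfrac12|=\tfrac12\}$, since $|b_1-\tfrac12|<\tfrac12$ and $|\tfrac{1}{2-b_1}-\tfrac12|=\tfrac{b_1}{2(2-b_1)}<\tfrac12$. This is harmless, however: what you actually need is that these points lie outside $G$, and being outside the small disk $\{|z-u_*|\le 2u_*\}$ (which you do verify after shrinking $u_*$) already guarantees that. The paper argues directly via this last observation, without the extraneous claim about $\zplv$.
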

\begin{proof}
Referring to the definitions~\eqref{eq:zplv}--\eqref{eq:magicc} of $ \zplv(t,\e,-k_1\alpha) $ and $ \magicc(t,\e,-k_1\alpha) $
(see also Figure~\ref{fig:magicc}),
we see that the difference $ \zplv(t,\e,-k_1\alpha)-\magicc(t,\e,-k_1\alpha) $ is the boundary of the crescent
\begin{align*}
	\mathcal{G}(t,\e)
	:=
	\big\{z\in\C: |z-u_*|\leq 2u_*\big\}
	\setminus
	\Big\{
		\Big| z- \frac{e^{\sqrt\e(2\den-1)}}{e^{\sqrt\e\den}+e^{\sqrt\e(\den-1)}} \Big|
		\leq
		\frac{\rad(t,-k_1\alpha)}{e^{\sqrt\e\den}+e^{\sqrt\e(\den-1)}}
	\Big\}.
\end{align*}
See Figure~\ref{fig:crescent1}.
With $ \partial\mathcal{G}(t,\e) $ denoting the boundary, counterclockwise oriented, we have
\begin{align*}
	\SGres-\SGres'' =
	\frac{e^{\sqrt{\e}(\den-1)}+e^{\sqrt{\e}\den}}{2\pi\img}
	\oint_{\partial\mathcal{G}(t,\e)}
 	\ind_{\set{|\pole(z_1)|>r'_2}} \zp(z_1) \frac{1}{ z_1 \pole(z_1)} \SGtt(t,z_1)dz_1.
\end{align*}
Recall that $ \poleLim(z) = 2-z^{-1} $ denotes the $ \e\to 0 $ limit of $ \pole $.
Since $ \mathcal{G}(t,\e) \subset \{ |z|\leq 3u_* \} $ and $ u_*< \frac1{12} $,
on $ \mathcal{G}(t,\e) $ we have $ |\poleLim(z)| \geq |z|^{-1}-2 \geq 2 $.
Consequently, $ |\pole(z)| >r'_2 $, $ z\in z\in\mathcal{G}(t,\e) $,
for all $ t>0 $ large enough and $ \e>0 $ small enough.
We hence drop the indicator $ \ind_{\set{|\pole(z_1)|>r'_2}} $ and write
\begin{align*}
	\SGres-\SGres'' =
	\frac{e^{\sqrt{\e}(\den-1)}+e^{\sqrt{\e}\den}}{2\pi\img}
	\oint_{\partial\mathcal{G}(t)}
 	 \frac{\zp(z_1)}{ z_1 \pole(z_1)} \SGtt(t,z_1)dz_1.
\end{align*}

\begin{figure}[h]
\includegraphics[width=.55\linewidth]{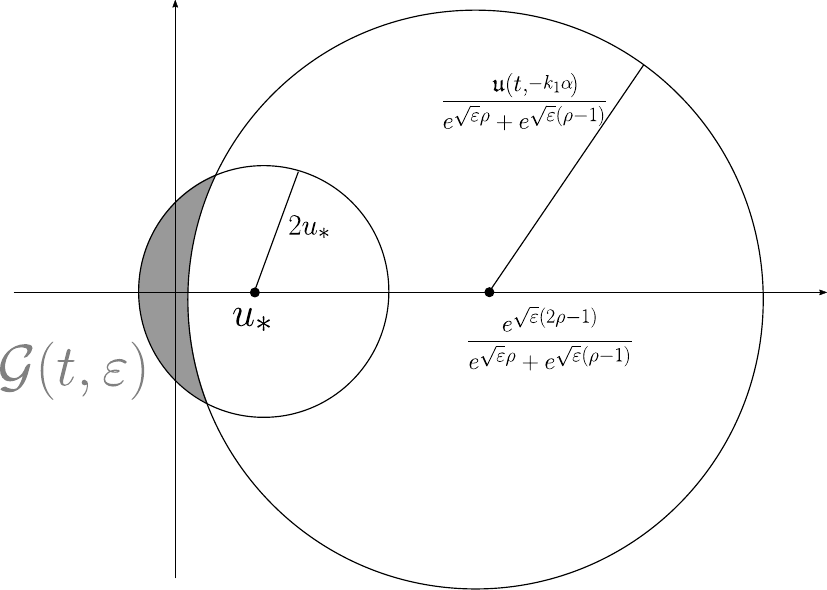}
\caption{The region $ \mathcal{G}(t,\e) $.}
\label{fig:crescent1}
\end{figure}

It suffices to check that the integrand $ \frac{\zp(z_1)}{ z_1 \pole(z_1)} \SGtt(t,z_1) $
has no poles within $ z_1 \mathcal{G}(t,\e) $.
This was carried out in the proof of Lemma~\ref{lem:redeform:--} already.
There we found that the $ \e\to 0 $ limit of the poles occurs at $ \frac12 $, $ b_1 $, and $ 2-b_1 $.
With $ u_*<\frac1{12}\wedge b_1 $,
these points sit strictly outside of $ \mathcal{G}(t,\e) $ for all $ t>0 $ large enough and $ \e>0 $ small enough.
Hence, no poles of $ \frac{\zp(z_1)}{ z_1 \pole(z_1)} \SGtt(t,z_1) $ enters $  \mathcal{G}(t,\e) $,
as long as $ t>0 $ is large enough and $ \e>0 $ is small enough.
\end{proof}

Having introduce the contours $ \magicc(t,\e,-k_1\alpha) $ and $ \zplv(t,\e,-k_1\alpha) $,
hereafter we write $ z_1(\theta_1) \in \magicc(t,\e,-k_1\alpha) $ for the parametrization depicted in Figure~\ref{fig:magicc},
and write $ \tilde{z}_1(\theta_1)\in\zplv(t,\e,-k_1\alpha) $ for the parametrization give in~\eqref{eq:zplv}.
We now turn to the auxiliary parameter $ k_1=k_1(\alpha)\in\Z_{\geq 2} $.
Similar to previous cases, the parameter $ k_1 $ is chosen large enough to ensure that
\begin{align*}
	r'_2= \rad(-2\alpha) \geq \pole(\tilde{z}_1(0)) + \tfrac{1}{\sqrt{t+1}} \in \R.
\end{align*}
Such a condition holds for a large enough $ k_1=k_1(\alpha,T) $,
as can be checked by the same calculations by Taylor expansion as in the $ (+-) $-case.
We do not repeat the calculations, and fix such $ k_1 \in \Z_{\geq 2} $.
Given this condition, using the same argument for obtaining~\eqref{eq:theta1:+-} in the $ (+-) $-case,
here we have
\begin{align}
	\tag{\ref{eq:theta1:+-}''}
	\label{eq:theta1:++}
	|\pole(\tilde{z}_1(\theta_1))| > r'_2
	\text{ holds only if }
	|\theta_1| \geq \frac{1}{C(\alpha)(t+1)^{1/4}}.
\end{align}

Let us check that, along the contours $ z_1\in\magicc(t,\e,-k_1\alpha) $ and $ z_1\in\zplv(t,-k\alpha) $, and
we do have the desired Gaussian decay of $ |\SGte| $ and $ |\SGtt| $.
\begin{lem}
\label{lem:SGtbd:++}
Given any $ T\in(0,\infty) $ and $ \beta\in\R $,
\begin{align*}
	&\big|\SGte(t,z)\big|,
	\
	\big|\SGtt(t,z)\big|
	\leq C(\beta,T) \exp(-\tfrac{\theta^2}{C}(t+1)),
	\quad
	z=z(\theta) \in\magicc(t,\e,\beta),
\\
	&\big|\SGte(t,z)\big|,
	\
	\big|\SGtt(t,z)\big|
	\leq C(\beta,T) \exp(-\tfrac{\theta^2}{C}(t+1)),
	\quad
	z=z(\theta) \in\zplv(t,\e,\beta),
\end{align*}
for all $ \theta\in (-\pi,\pi] $, large enough $ t\leq \e^{-2}T $, and small enough $ \e>0 $.
\end{lem}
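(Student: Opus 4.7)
The plan is to mirror the three-step procedure used in the proofs of Lemma~\ref{lem:SGtbd} and Lemma~\ref{lem:SGtbd:+-}. First, I factor $\SGte(t,z) = \SGtee(z)^t$ (where $\SGtee$ is analytic in a fixed neighborhood $O$ of $z=1$) and $\SGtt(t,z) = \SGte(t,z)\SGte(t,\pole(z))$, and work separately on the rightmost point of each contour ($\theta=0$), a small-$\theta$ neighborhood, and a bounded-away-from-zero regime. Raising the $t$-th root to the $t$-th power at the end turns a bound of the form $|\SGtee(z(\theta))| \leq \exp(-\theta^2 /C)$ into the claimed Gaussian decay.

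For the $\theta=0$ step, I observe that both $\zplv(t,\e,\beta)$ and $\magicc(t,\e,\beta)$ pass through a point that lies within $C(\beta,T)(t+1)^{-1/2}$ of $z=1$ (reading off the center-plus-radius formulas in \eqref{eq:zplv} and \eqref{eq:magicc}, using that $\rad(t,\beta)\to 1$). The identities \eqref{eq:SGte:D}--\eqref{eq:SGte:DDD} combined with $\log \SGtee(1)=0$ give the Taylor bound $|\log \SGtee(z)| \leq t^{-1}|z-1| + C|z-1|^2$, which yields $|\SGtee(z(0))| \leq \exp(C(\beta,T)/(t+1))$. Because $\pole$ is analytic near $z=1$ and $\pole(1)=1$, the same bound applies to $\SGte(t,\pole(z(0)))$, and hence to $\SGtt$.

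For the small-$\theta$ step, I check the tangent direction of each contour at its rightmost point. Both contours are circles (or smooth arcs) crossing a point near $1$ with vertical tangent, so the parametrization satisfies $z(\theta) - z(0) = \img c \theta + O(\theta^2)$ with $c\in\R\setminus\{0\}$. Using $\partial_z \SGtee(1)=0$ and $\partial_z^2 \SGtee(1) \to -\frac{1}{2}\nu_* = -\mathrm{Var}(\RW_\e)$ as $\e\to 0$ (as in \eqref{e:VarR-nustar}), Taylor expansion gives $\mathrm{Re}\,\partial_\theta^2 \log \SGtee(z(\theta))|_{\theta=0} \leq -\frac{1}{C}$, and a third-order remainder control via \eqref{eq:SGte:DDD} yields $|\SGtee(z(\theta))| \leq |\SGtee(z(0))|\exp(-\theta^2/C)$ for $|\theta|\leq \theta_0$. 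The $\SGtt$ bound follows by applying the same argument to the composition $z\mapsto \pole(z)$, whose derivatives at $z=1$ are bounded.

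For the large-$\theta$ step, I use that $\magicc(t,\e,\beta)\to \Magicc$ and $\zplv(t,\e,\beta)\to \Magic$ uniformly (in the polar parametrizations) as $(t,\e)\to(\infty,0)$. This gives $|\SGte(t,z(\theta))|^{1/t}\to |\SGteLim(z_\infty(\theta))|$ and $|\SGtt(t,z(\theta))|^{1/t}\to |\SGttLim(z_\infty(\theta))|$ uniformly on $\{|\theta|\geq \theta_0\}$. The steepest-descent conditions \eqref{eq:steep:Magic} and \eqref{eq:steep:Magicc}, already established in Lemma~\ref{lem:magicc}, ensure both of these limit moduli are uniformly $<1$ away from $\theta=0$, so for $t$ large enough and $\e$ small enough the one-step factor is bounded by $1-\frac{1}{C}$. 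The main obstacle I anticipate is book-keeping the analytic domain $O$ for $\SGtee$ when $\pole$ is applied: for large $\theta$ the point $\pole(z)$ can wander, so one must verify that along both contours $\pole(z)$ stays in a region where $|\SGteLim(\pole(z))|<1$, which again amounts to a compactness argument together with \eqref{eq:zplwbd} and the explicit verification of \eqref{eq:steep:Magic}--\eqref{eq:steep:Magicc} via the rational-function calculation carried out for Lemma~\ref{lem:magicc}.
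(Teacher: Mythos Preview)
Your proposal is correct and follows essentially the same three-step route as the paper: the zero-$\theta$ and small-$\theta$ steps via Taylor expansion using \eqref{eq:SGte:D}--\eqref{eq:SGte:DDD}, and the large-$\theta$ step via uniform convergence $\magicc(t,\e,\beta)\to\Magicc$, $\zplv(t,\e,\beta)\to\Magic$ together with the already-verified conditions \eqref{eq:steep:Magic} and \eqref{eq:steep:Magicc}. One clarification on what you flag as the ``main obstacle'': for large $\theta$ you do not need to track $\pole(z)$ through the analytic domain $O$ at all, because the large-$\theta$ step works directly with $|\SGtt(t,z)|^{1/t}\to|\SGttLim(z_\infty)|$, and $\SGttLim(z)=\SGteLim(z)\SGteLim(2-z^{-1})$ already has the composition with $\poleLim$ built in---so the bound $|\SGttLim|<1$ on $\Magic\setminus\{1\}$ and $\Magicc\setminus\{1\}$ is exactly the statement you need, with no further book-keeping required.
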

\begin{proof}
The proof follows the same three-step procedure as the proof of Lemma~\ref{lem:SGtbd}.
Given the identities~\eqref{eq:SGte:D}--\eqref{eq:SGte:DDD},
the proof of the first two steps~\ref{enu:3step:theta=0}--\ref{enu:3step:small}
follows the same argument via Taylor expansion as in Lemma~\ref{lem:SGtbd},
and we do not repeat it here.
As for the last step~\ref{enu:3step:large},
as argued in the proof of Lemma~\ref{lem:SGtbd:+-}, it amounts to checking the corresponding limiting condition.
Recall that $ \Magic = \{|z-\frac12|=\frac12\} $ and recall the definition of $ \Magicc $ from~\eqref{eq:Magicc}.
It is readily checked that $ \magicc(t,\e,\beta) $ converges uniformly to $ \Magicc $ as $ (t,\e)\to(\infty,0) $, under their respective polar parametrization,
and similarly $ \zplv(t,\e,\beta) $ converges uniformly to $ \Magic $ as $ (t,\e)\to(\infty,0) $.
This being the case, the proof reduces to checking the steepest decent condition~\eqref{eq:steep:Magic} and \eqref{eq:steep:Magicc},
which have been verified.
\end{proof}

We have all the necessary ingredients for estimating $ \SGIn $.
\begin{proof}[Proof of Proposition~\ref{prop:SG:}\ref{enu:SGIn}--\ref{enu:SGIng}, the $ (++) $-case, with large enough $ t $]
The proof begins with the contour deformation described in Section~\ref{sect:SGIn:ov}.
The condition~\eqref{eq:nopole} is checked by the same argument in the $ (+-) $-case,
which gives the decomposition $ \SGIn = \SGbk + \SGres $.
We next perform the aforementioned re-deformation $ \magicc(t,\e,-k_1\alpha) \mapsto \zplv(t,\e,-k_1\alpha) $ in $ \SGres $.
Lemma~\ref{lem:redeform:++} ensures that no pole is crossed during this step, giving $ \SGIn = \SGbk + \SGres'' $.

The proof amounts to bounding $  \SGbk $, $ \SGres'' $, and their gradients.
We begin with $ \SGbk $.
In the following we check a sequence of bounds on terms involved in~\eqref{eq:SGbk},
and we always assume $ z_1=z_1(\theta_1)\in\magicc(t,\e,-k_1\alpha) $ and $ z_2=z_1(\theta_2)\in\Circ_{\tilde{r}_2(z_1)} $
in the course of doing so.
\medskip
\begin{itemize}
\myitem{($ \SGbk $.$ z_1 $)} \label{enu:++SGbk:1}
	Show that $ |z_1|^{x_2-y_1+\mue t-\muet} \leq \exp(-\frac{\alpha|x_2-y_1|}{\sqrt{t+1}+C(\alpha)}) $:\\
	With $ x_2-y_1>0 $ under current assumptions, we need an upper bound on $ |z_1| $.
	To this end, instead of $ z_1\in\magicc(t,\e,-k_1\alpha) $,
	let us first consider $ \tilde{z}_1 \in \zplv(t,\e,-k_1\alpha) $.
	This contour $ \zplv(t,\e,-k_1\alpha) $ is a circle with a center in $ (0,\infty) $.
	For such circles, the farthest point to the origin occurs at the right-end.
	This gives
	\begin{align*}
		\sup_{\tilde{z}_1(\theta_1) \in \zplv(t,\e,-k_1\alpha) } |\tilde{z}_1(\theta_1)| = \tilde{z}_1(0)
		=
		\frac{e^{\sqrt\e(2\den-1)}+\rad(t,-k_1\alpha)}{e^{\sqrt\e\den}+e^{\sqrt\e(\den-1)}}.
	\end{align*}
	Recall from Definition~\ref{def:Taylor} the announced convention on Taylor expansion,
	and expand the last expression in $ (\sqrt\e,\frac{1}{\sqrt{t+1}}) $ up to the leading order in $ \frac{1}{\sqrt{t+1}} $.
	This gives
	\begin{align*}
		\sup_{\tilde{z}_ \in \zplv(t,\e,-k_1\alpha) } |\tilde{z}_1| = 1 + 0\cdot\sqrt\e - \tfrac12\den(1-\den) \e - \frac{k_1\alpha}{\sqrt{t+1}} + \ldots,
	\end{align*}
	With $ k_1 \geq 2 $, and $ \e\leq\frac{C(T)}{\sqrt{t+1}} $ under current assumptions,
	we have
	\begin{align}
		\label{eq:++:z1bd}
		\sup_{\tilde{z}_1 \in \zplv(t,\e,-k_1\alpha) } |\tilde{z}_1| \leq  \rad(t,-\alpha),
	\end{align}
	for all large enough $ t $.
	
	Now, recall from~\eqref{eq:magicc} that $ \magicc(t,\e,-k_1\alpha) $ differs from $ \zplv(t,\e,-k_1\alpha) $ only in $ \{|z-u_*|\leq 2u_*\} \subset \{|z|\leq 3u_*\} $.
	With $ 3u_*<1 $, the bound~\eqref{eq:++:z1bd} readily implies
	\begin{align*}
		\sup_{z_1(\theta_1) \in \magicc(t,\e,-k_1\alpha) } |\tilde{z}_1(\theta_1)| \leq  \rad(t,-\alpha)\vee (3u_*) =\rad(t,-\alpha),
	\end{align*}
	for all $ t $ large enough.
	Consequently, $ |z_1|^{x_2-y_1+\mue t-\muet} \leq \rad(t,-\alpha)^{|x_2-y_1|} \leq \exp(-\frac{\alpha|x_2-y_1|}{\sqrt{t+1}+C(\alpha)}). $
\myitem{($ \SGbk $.$ z_2 $)} Show that $ |z_2|^{x_1-y_2+\mue t-\muet} \leq C(\alpha)\exp(-\frac{\alpha|x_1-y_2|}{\sqrt{t+1}+C(\alpha)}) $:\\
	With $ k_2 := 1 $ and with $ \tilde{r}_2 $ defined in \eqref{eq:tilr2},
	we have $ |z_2| \leq \rad(t,-\alpha) $.
	This and the assumption $ x_1-y_2 >0 $ gives the desired claim.
\myitem{($ \SGbk $.$ \SGfre $)} Show that $ |\SGfre(z_1,z_2)| \leq C(\alpha)(1+|\theta_1-\theta_2|\sqrt{t+1}) $:\\
	This bound is establish by the same argument as in the $ (+-) $-case.
	We do not repeat it here.
\myitem{($ \SGbk $.$ \SGte $)} \label{enu:++SGbk:4}
	Show that $ |\SGte(z_i)| \leq C(\alpha,T)\exp(-\frac{\theta^2_i}{C}(t+1)) $:\\
	This is the content of Lemma~\ref{lem:SGtbd:++}.
\end{itemize}
Given \ref{enu:++SGbk:1}--\ref{enu:++SGbk:4},
the desired bound on $ \SGbk $ follows by inserting the bounds into~\eqref{eq:SGbk}, and integrating the result.
The procedure is the same as the $ (+-) $-case, and we do not repeat it here.

We now turn to $ \SGres'' $. In the following we always assume $ \tilde{z}_1=\tilde{z}_1(\theta_1)\in\zplv(t,\e,-k_1\alpha) $.
\medskip
\begin{itemize}
\setlength\itemsep{2pt}
\myitem{($ \SGres'' $.$ \frac{1}{z_1\pole} $)} \label{enu:++SGres:1}
	Show that $ \frac{1}{|\pole(\tilde{z}_1)\tilde{z}_1|} \leq C(\alpha) $:\\
	This is true because $ |\pole(\tilde{z}_1)\tilde{z}_1|= \rad(t,-k_1\alpha) $.
\myitem{($ \SGres'' $.$ \zp $)}\label{enu:++SGres:2}
	Show that $ |\zp(\tilde{z}_1)| \leq C(\alpha)\exp(-\frac{\alpha(|x_2-y_1|+|x_1-y_2|)}{\sqrt{t+1}+C(\alpha)}) $:\\
	Set $ n_1:=|x_2-y_1| $ and $ n_2:=|x_1-y_2| $.
	The assumption $ y_1<y_2 $, $ x_1<x_2 $ in the $ (++) $-case yields $ n_1-2\geq n_2 >0 $.
	Given this, recalling the definition of $ \zp $ from~\eqref{eq:zp}, we write
	\begin{align*}
		|\zp(\tilde{z}_1)|
		\leq
		(|\tilde{z}_1|^{n_1-n_2-2} + |\tilde{z}_1|^{n_1-n_2})|\tilde{z}_1\pole(\tilde{z}_1))|^{n_2}.
	\end{align*}
	Given the bound~\eqref{eq:++:z1bd} on $ |\tilde{z}_1| $
	and given that $ |\pole(\tilde{z}_1)\tilde{z}_1|= \rad(t,-k_1\alpha) $,
	we have
	\begin{align*}
		|\zp(\tilde{z}_1)|
		\leq
		2 \rad(t,-\alpha)^{n_1-n_2-2} \rad(t,-k_1\alpha)^{n_2}.
	\end{align*}
	With $ k_1 \geq 2 $, the desired result follows:
	\begin{align*}
		|\zp(\tilde{z}_1)|
		\leq
		C(\alpha) e^{\frac{-\alpha(n_1-n_2)}{\sqrt{t+1}+C(\alpha)}} e^{-\frac{2\alpha n_2}{\sqrt{t+1}+C(\alpha)}}
		=
		C(\alpha) e^{-\frac{\alpha(n_1+n_2)}{\sqrt{t+1}+C(\alpha)}}.
	\end{align*}

\myitem{($ \SGres'' $.$ \SGtt $)} \label{enu:++SGres:3}
	Show that $ |\SGtt(\tilde{z}_1)| \leq C(\alpha,T)\exp(-\frac{\theta^2_1}{C}(t+1)) $:\\
	This is the content of Lemma~\ref{lem:SGtbd:++}.
\end{itemize}
Given \ref{enu:++SGres:1}--\ref{enu:++SGres:3}, and the derived constraint~\eqref{eq:theta1:++} on $ |\theta_1| $,
the desired bound on $ \SGres $ follows the same integration procedure is the same as the $ (+-) $-case.

As for the gradient, similarly to the $ (+-) $-case, here we have
\begin{align}
	\tag{\ref{eq:zpm-1}''}
	\label{eq:zpm-1:++}
	 |z_j^\pm-1| \leq \tfrac{1}{\sqrt{t+1}} + |\theta_j|,
	 \
	 z_1=z_1(\theta) \in \magicc(t,\e,-k_1\alpha) \text{ or } \zplv(t,\e,-k_1\alpha),
	 \
	 z_2=z_2(\theta) \in \Circ_{\tilde{r}_2(z_1)}.
\end{align}
Incorporate this bound into the preceding analysis gives the desired bounds on the gradients.
\end{proof}

\section{Controlling the quadratic variation: Proof of Proposition~\ref{prop:qv}}
\label{sect:qv}
Based on the estimates from Section~\ref{sect:SG}
and the duality of the stochastic \ac{6V} model from Section~\ref{sect:duality},
here we prove Proposition~\ref{prop:qv}.
\subsection{Expanding the quadratic variation}
The first step toward proving Proposition~\ref{prop:qv} is to find
an expression for $ \e^{-1}\Theta_1(t,x)\Theta_2(t,x) $
that exposes the limiting behavior
$ \frac{2b_1 \den(1-\den)}{1+b_1}\Zsv^2(t,x) $.
Recall the definition of $ \Theta_1(t,x) $ and $ \Theta_2(t,x) $ from~\eqref{eq:Theta1}--\eqref{eq:Theta2}.
With $ \sum_{i=0}^\infty \hke(i-\mu) =1 $, we rewrite them as
\begin{align}
	\label{eq:Theta1:}
	\e^{-\frac12}\Theta_1(t,x)
	&=
	\e^{-\frac12}(\lambdae\taue^{-1} -1)\Zsv(t,x) - \e^{-\frac12}\sum_{i=0}^\infty \hke(i-\mu) \big( \Zsv(t,x-i)- \Zsv(t,x)\big),
\\
	\label{eq:Theta2:}
	\e^{-\frac12}\Theta_2(t,x)
	&=
	\e^{-\frac12}(1-\lambdae)\Zsv(t,x) + \e^{-\frac12}\sum_{i=0}^\infty \hke(i-\mu) \big( \Zsv(t,x-i)- \Zsv(t,x)\big).
\end{align}
In order the extract the relevant limiting behaviors,
in the sequel we will perform a sequence of expansions on the r.h.s.\ of~\eqref{eq:Theta1:}--\eqref{eq:Theta2:}.
Here, let us prepare some notation to express various error terms throughout the subsequent expansions.
We use $ \Decay(t,x_1,\ldots,x_n;x) $
to denote a \emph{generic} (random) process that has a uniform exponential decay off the point $ x $;
and use $ \Bdd(t,x_1,\ldots,x_n) $ to denote a generic uniformly bounded (random) process.
More precisely, there exists deterministic $ a>0 $, $ C<\infty $ such that,
for all $ \e\in (0,1) $, $ t\in\Z_{\geq 0} $ , $x_1,\ldots,x_n,x\in\Xi(t) $,
\begin{align*}
	|\Decay(t,x_1,\ldots,x_n;x)| &\leq C \exp(-a|x_1-x|-\ldots-a|x_1-x|),
\\
	|\Bdd(t,x_1,\ldots,x_n)| &\leq C.
\end{align*}
With these notation
we write $ \Xbd(t,x) $ for a \emph{generic} expression of the form
\begin{align}
	\label{eq:Xbd}
	\Xbd(t,x)  = \sum_{x_1,x_2\in\Xi(t)} \Decay(t,x_1,x_2;x) \Zsv(t,x_1)\Zsv(t,x_2),
\end{align}
where `bdd' stands for `bounded'.
In the sequel $ \Decay $, $ \Bdd $ and $ \Xbd $ may differ from line to line,
as they refer to generic expressions of the declared \emph{type}.
Under this notation, we view expression of the type $ \e^{u} \Xbd(t,x) $, $ u>0 $, small and negligible.

We will also consider expressions that involve gradients.
To motivate the definitions of the following expressions,
let us first consider an expansion of $ \nabla\Zsv(t,x) $.
Recall that $ \nabla f(x) := f(x+1)-f(x) $ denotes the (forward) discrete gradient,
and recall from~\eqref{eq:etaCentered} that $ \etacnt(t,x)\in\{0,1\} $, $ x\in\Xi(t) $, denote the centered occupation variable.
Referring back to the definition~\eqref{eq:ColeHopfTransform} of $ \Zsv $,
with $ \taue=\exp(-\sqrt\e) $, we see that
$
	\nabla \Zsv(t,x)
	 = (e^{-\sqrt\e(\etacnt^{+}(t,x)-\den)}-1) \Zsv(t,x).
$
Taylor expanding the exponential gives
\begin{align}
	\label{eq:gZtaylor}
	\e^{-\frac12}\nabla \Zsv(t,x)
	&=
	-\big(\etacnt^+\Zsv\big)(t,x) + \den \Zsv(t,x) + \sqrt{\e} \Bdd(t,x)\Zsv(t,x),
\end{align}
In particular,
\begin{align}
	\label{eq:gZ:basic}
	\e^{-\frac12}\nabla \Zsv(t,x) = \Bdd(t,x) \Zsv(t,x).
\end{align}
Such a bound~\eqref{eq:gZ:basic} is \emph{pointwise}.
As it turn out, after a suitable time averaging,
expressions that involves $ \e^{-\frac12}\nabla $ acting on $ \Zsv $ \emph{decay} to zero
(except for a product of two $ \e^{-\frac12}\nabla  \Zsv $ evaluated at the same site, see \eqref{eq:Ygg} and Lemma~\ref{lem:gZ:diag} below).
The underlying mechanism arises from the structure for the semigroup $ \SGe $:
referring to Proposition~\ref{prop:SG}, we see that $ \SGe $ gains an extra factor $ (t+1)^{-\frac12} $ upon taking gradient.
This being the case, we view expressions of the type
\begin{align*}
	\Zg(t,x_1,x_2) := (\e^{-\frac12}\nabla \Zsv(t,x_1)) \Zsv(t,x_2)
\end{align*}
as small,
and consider {\it generic} linear combinations of them
\begin{align}
	\label{eq:Yg}
	\Yg(t,x) &=  \sum_{x_1,x_2\in\Xi(t)} \decay(t,x_1,x_2;x) \Zg(t,x_1,x_2),
\end{align}
with some \emph{deterministic} coefficients $ \decay(t,x_1,x_2;x) $ that decay exponentially off $ x $:
\begin{align}
	\label{eq:decay}
	|\decay(t,x_1,x_2;x)| \leq C \exp(-a|x_1-x|-a|x_2-x|).
\end{align}

We will also consider {\it generic} expressions that involves {\it two pieces} of gradient:
\begin{align}
	\label{eq:Ygg}
	\Ygg(t,x) = \sum_{x_1<x_2\in\Xi(t)} \decay(t,x_1,x_2;x) (\e^{-\frac12}\nabla \Zsv)(t,x_1) (\e^{-\frac12}\nabla \Zsv)(t,x_2),
\end{align}
for some generic deterministic coefficients $ \decay(t,x_1,x_2;x) $ satisfying \eqref{eq:decay}, (and may differ from line to line in the sequel).

Note that in \eqref{eq:Ygg}, the sum ranges over {\it distinct} $ x_1 $ and $ x_2 $.
In fact, diagonal terms $ x_1=x_2 $ contains non-negligible contributions:
\begin{lem}
\label{lem:gZ:diag}
We have that
\begin{align*}
	(\e^{-\frac12}\nabla \Zsv)^2(t,x)
	-
	\den(1-\den) \Zsv^2(t,x)
	=
	-
	\Zg(t,x,x+1)
	+
	\e^\frac12 \Bdd(t,x)\Zsv^2(t,x).
\end{align*}
\end{lem}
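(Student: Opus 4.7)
The plan is to combine the exact microscopic Hopf--Cole identity with a short Taylor expansion and an algebraic identity specific to $\{0,1\}$-valued variables. First I would use that~\eqref{eq:ColeHopfTransform}, together with $N(t,y)-N(t,y-1)=\eta(t,y)$ and the notation~\eqref{eq:etaCentered} for $\etacnt^+$, give the \emph{exact} identity
\begin{equation*}
  \Zsv(t,x+1)=\Zsv(t,x)\,\taue^{\etacnt^+(t,x)-\den}=\Zsv(t,x)\,e^{-\sqrt{\e}(\etacnt^+(t,x)-\den)},
\end{equation*}
and therefore $\e^{-1/2}\nabla\Zsv(t,x)=\e^{-1/2}\bigl(e^{-\sqrt{\e}(\etacnt^+(t,x)-\den)}-1\bigr)\Zsv(t,x)$ with no approximation yet.

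Because $|\etacnt^+(t,x)-\den|\leq 1$ is uniformly bounded, a second-order Taylor expansion of the exponential reproduces~\eqref{eq:gZtaylor}, namely $\e^{-1/2}\nabla\Zsv=-(\etacnt^+-\den)\Zsv+\sqrt{\e}\,\Bdd\Zsv$. Squaring this and observing that the cross term already carries an explicit $\sqrt{\e}$ (hence is absorbable into the error) yields
\begin{equation*}
  (\e^{-1/2}\nabla\Zsv)^2(t,x)=(\etacnt^+(t,x)-\den)^2\,\Zsv^2(t,x)+\sqrt{\e}\,\Bdd(t,x)\Zsv^2(t,x).
\end{equation*}

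The linchpin of the argument is an algebraic identity exploiting that $\etacnt^+(t,x)\in\{0,1\}$, so that $(\etacnt^+)^2=\etacnt^+$:
\begin{equation*}
  (\etacnt^+-\den)^2=\etacnt^+(1-2\den)+\den^2=(1-2\den)(\etacnt^+-\den)+\den(1-\den).
\end{equation*}
This decomposes the quadratic Bernoulli quantity into the Bernoulli \emph{variance} $\den(1-\den)$ --- exactly the centering term subtracted on the left-hand side of the lemma --- plus a \emph{linear} residue proportional to $(\etacnt^+-\den)$. Without the Bernoulli collapse $(\etacnt^+)^2=\etacnt^+$ one would be stuck with a genuine quadratic fluctuation, which cannot be recast into the one-gradient form $\Zg$; this identity is therefore the one non-routine input and I expect it to be the crux of the proof.

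To finish, I would convert the remaining linear residue into $\Zg(t,x,x+1)$. Writing $\Zsv(t,x+1)=\Zsv(t,x)+\nabla\Zsv(t,x)=\Zsv(t,x)+\sqrt{\e}\,(\e^{-1/2}\nabla\Zsv)(t,x)$ and using the pointwise bound~\eqref{eq:gZ:basic} to absorb the second factor, one gets
\begin{equation*}
  (\e^{-1/2}\nabla\Zsv)(t,x)\,\Zsv(t,x)=\Zg(t,x,x+1)+\sqrt{\e}\,\Bdd(t,x)\Zsv^2(t,x);
\end{equation*}
combining with~\eqref{eq:gZtaylor} then yields $(\etacnt^+-\den)\Zsv^2(t,x)=-\Zg(t,x,x+1)+\sqrt{\e}\,\Bdd(t,x)\Zsv^2(t,x)$. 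Substituting this last relation together with the Bernoulli-variance decomposition of the previous paragraph into the formula for $(\e^{-1/2}\nabla\Zsv)^2$ delivers the lemma. Apart from the Bernoulli collapse, every step is a routine uniform Taylor estimate controlled by $|\etacnt^+-\den|\leq 1$, so I foresee no serious obstacle.
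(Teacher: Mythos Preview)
Your argument is essentially the paper's own proof, just organized around the centered variable $(\etacnt^+-\den)$ rather than $\etacnt^+$ and $\den$ separately: both proofs Taylor-expand via~\eqref{eq:gZtaylor}, square, exploit the Bernoulli collapse $(\etacnt^+)^2=\etacnt^+$ (which is exactly your identity $(\etacnt^+-\den)^2=(1-2\den)(\etacnt^+-\den)+\den(1-\den)$), and then use~\eqref{eq:gZtaylor} in reverse to convert the linear residue back to a gradient term. One minor point: your substitution actually produces $-(1-2\den)\,\Zg(t,x,x+1)$ rather than $-\Zg(t,x,x+1)$ with unit coefficient, and the paper's computation lands on $-(1-2\den)(\e^{-\frac12}\nabla\Zsv)(t,x)\Zsv(t,x)$; both differ from the literal statement of the lemma, but this is harmless since the lemma is only ever invoked through the generic type $\Yg$ in Lemma~\ref{lem:ThetaExpand}.
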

\begin{proof}
To expose the relevant contribution from this expression,
we appeal the expansion~\eqref{eq:gZtaylor} of $ \e^{-\frac12}\nabla \Zsv $,
square it, followed by using $ \etacnt^2=\etacnt $.
This gives (recall $\etacnt$ from \eqref{eq:etaCentered})
\begin{align*}
	(&\e^{-\frac12}\nabla \Zsv(t,x))^2
	=
	\Big(-\etacnt(t,x+1)\Zsv(t,x)+\den\Zsv(t,x)+ \e^{\frac12} \Bdd(t,x)\Zsv(t,x) \Big)^2
\\
	&=
	\Big(\etacnt(t,x+1)\Zsv^2(t,x) - 2\den\etacnt(t,x+1)\Zsv^2(t,x) + \den^2\Zsv^2(t,x)\Big)
	+
	\e^{\frac12} \big(\Bdd(t,x)\Zsv^2(t,x)\big)
\\
	&=
	\Big( \big( (1-2\den)\etacnt^+\Zsv^2  +\den^2\Zsv^2\big) + \e^{\frac12} \Bdd\Zsv^2 \Big)\Big|_x.
\end{align*}
Use~\eqref{eq:gZtaylor} in reverse:
$ \etacnt^{+}\Zsv = -\e^{-\frac12} \nabla \Zsv + \den \Zsv + \e^{\frac12} \Bdd \Zsv $,
we rewrite the expression $ \etacnt^+\Zsv^2 $ as
$  -(\e^{-\frac12}\nabla \Zsv)\Zsv + \den \Zsv^2 + \e^{\frac12} \Bdd \Zsv^2 $.
Inserting this into the last displayed equation gives the desired result.
\end{proof}

Having introduced the necessary notation and tools,
we now begin to expand $ \Theta_1 $ and $ \Theta_2 $.
\begin{lem}
\label{lem:ThetaExpand}
We have that
\begin{align*}
	\e^{-1}\Theta_1(t,x)\Theta_2(t,x)-\tfrac{2b_1\den(1-\den)}{1+b_1}&\Zsv^2(t,x)
	=
	\sqrt\e \Xbd(t,x) + \Yg(t,x) + \Ygg(t,x).
\end{align*}
\end{lem}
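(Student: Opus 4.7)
The plan is to start from the rewritings \eqref{eq:Theta1:}--\eqref{eq:Theta2:}, turn the inner sums into gradient sums by Abel summation, expand the resulting product, and identify the leading $\Zsv^2(t,x)$ coefficient via small-$\e$ expansions combined with Lemma \ref{lem:gZ:diag}. Using $\Zsv(t,x-i)-\Zsv(t,x)=-\sum_{k=1}^{i}\nabla\Zsv(t,x-k)$ and swapping the order of summation, I would write
\[
\e^{-\frac12}\Theta_1(t,x) = A_1\Zsv(t,x)+S(t,x),\qquad \e^{-\frac12}\Theta_2(t,x) = A_2\Zsv(t,x)-S(t,x),
\]
with $A_1:=\e^{-\frac12}(\lambdae\taue^{-1}-1)$, $A_2:=\e^{-\frac12}(1-\lambdae)$, and $S(t,x):=\sum_{k\geq 1}a_k\,\e^{-\frac12}\nabla\Zsv(t,x-k)$, where $a_k := \Pr(\Rw\geq k)$ is the geometric tail of the tilted step distribution from \eqref{eq:Rw}. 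Multiplying out,
\[
\e^{-1}\Theta_1\Theta_2 = A_1A_2\,\Zsv^2(t,x) + (A_2-A_1)\,\Zsv(t,x)\,S(t,x) - S(t,x)^2,
\]
and the proof reduces to analysing these three pieces.

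A Taylor expansion in $\sqrt\e$ of \eqref{eq:lambdae} and $\taue=e^{-\sqrt\e}$ gives $A_1=(1-\den)+\mathcal{O}(\sqrt\e)$ and $A_2=\den+\mathcal{O}(\sqrt\e)$, hence $A_1A_2\,\Zsv^2 = \den(1-\den)\Zsv^2+\sqrt\e\,\Xbd$. The cross term $(A_2-A_1)\,\Zsv\,S$ is manifestly of the $\Yg$-form, since $A_2-A_1$ is bounded and the coefficients $a_k$ decay geometrically, uniformly in small $\e$; explicitly, \eqref{eq:rw}--\eqref{eq:Rw} give $a_k\leq C\,(b_2^\e\taue^{-\den})^{k-1}$ with $b_2^\e\taue^{-\den}=b_1\taue^{1-\den}\to b_1<1$. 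For $S(t,x)^2 = \sum_{k_1,k_2\geq 1}a_{k_1}a_{k_2}(\e^{-\frac12}\nabla\Zsv(t,x-k_1))(\e^{-\frac12}\nabla\Zsv(t,x-k_2))$, I split into off-diagonal and diagonal. The off-diagonal part $\{k_1\neq k_2\}$ is directly of the $\Ygg$-form after relabelling $x_1 = x-\max(k_1,k_2)<x-\min(k_1,k_2)=x_2$. For the diagonal, Lemma \ref{lem:gZ:diag} applied at $y=x-k$ converts each square into
\[
(\e^{-\frac12}\nabla\Zsv)^2(t,x-k) = \den(1-\den)\Zsv^2(t,x-k) - \Zg(t,x-k,x-k+1) + \sqrt\e\,\Bdd\,\Zsv^2,
\]
so that, summed against $a_k^2$, the $\Zg$-pieces fit $\Yg$ and the $\sqrt\e\,\Bdd\,\Zsv^2$-pieces fit $\sqrt\e\,\Xbd$.

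It remains to turn the sum $\den(1-\den)\sum_{k\geq 1}a_k^2\,\Zsv^2(t,x-k)$ into a constant multiple of $\Zsv^2(t,x)$ modulo the admissible error classes. Writing $\Zsv^2(t,x-k)-\Zsv^2(t,x) = -\sqrt\e\sum_{j=1}^{k}\e^{-\frac12}\nabla\Zsv(t,x-j)\,(\Zsv(t,x-j+1)+\Zsv(t,x-j))$, the difference is $\sqrt\e$ times a finite sum of bounded $\Zsv\cdot\Zsv$ products, and absorbs into $\sqrt\e\,\Xbd$. For the constant $\sum_{k\geq 1}a_k^2$, the explicit form of the tilted step distribution yields $a_k\to(1-b_1)b_1^{k-1}$ as $\e\downarrow 0$, so $\sum_{k\geq 1}a_k^2 = (1-b_1)/(1+b_1)+\mathcal{O}(\sqrt\e)$. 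Combining everything, the coefficient of $\Zsv^2(t,x)$ is
\[
\den(1-\den) - \den(1-\den)\cdot\frac{1-b_1}{1+b_1} = \frac{2b_1\,\den(1-\den)}{1+b_1},
\]
matching the claim, and all remainders have been shown to lie in $\sqrt\e\,\Xbd+\Yg+\Ygg$. The only step that requires more than a routine computation is verifying that the exponential-decay envelopes built into $\Decay$ and $\decay$ are respected uniformly in small $\e$; this reduces to the uniform geometric bound $a_k\leq C\,b_1^{(k-1)(1-\delta)}$, which is an immediate consequence of $b_2^\e\taue^{-\den}\to b_1<1$.
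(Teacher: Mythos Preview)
Your proof is correct and follows essentially the same approach as the paper: the paper likewise telescopes to write $\e^{-\frac12}\Theta_i$ as a constant times $\Zsv$ plus $\sum_{j\geq 1}u_\e(j)\,\e^{-\frac12}\nabla\Zsv(t,x-j)$ with $u_\e(j)=\Pr(\Rw\geq j)$ (your $a_j$), multiplies out, separates diagonal from off-diagonal in the square, invokes Lemma~\ref{lem:gZ:diag} on the diagonal, and uses $\sum_j u_\e(j)^2=(1-b_1)/(1+b_1)+\mathcal O(\sqrt\e)$ together with $|\Zsv(t,x-j)-\Zsv(t,x)|\leq \sqrt\e\,|j|e^{\sqrt\e|j|}\Zsv(t,x)$ to extract the $\tfrac{2b_1\den(1-\den)}{1+b_1}$ coefficient. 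The only cosmetic difference is that the paper replaces the constants $A_1,A_2$ by $(1-\den),\den$ \emph{before} multiplying, whereas you carry them through and expand $A_1A_2$ afterwards.
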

\begin{proof}
The starting point of the proof is
the expressions~\eqref{eq:Theta1:}--\eqref{eq:Theta2:} for $ \Theta_1(t,x) $ and $ \Theta_2(t,x) $.
First, from~\eqref{eq:lambdae} and $ \taue^{-1}= e^{\sqrt\e} $,
we have that $  \e^{-\frac12}(\lambdae\taue^{-1} -1) = (1-\den) + \mathcal{O}(\e^\frac12) $
and that $ \e^{-\frac12}(1-\lambdae) = \den + \mathcal{O}(\e^\frac12) $.
Given this, in~\eqref{eq:Theta1:}--\eqref{eq:Theta2:}
we replace $ \e^{-\frac12}(\lambdae\taue^{-1} -1) $ with $ (1-\den) $
and replace $ \e^{-\frac12}(1-\lambdae) $ with $ \den $, up to errors of the form $ \e^\frac12 \Bdd(t,x) $.
Further, telescope the expression $ \Zsv(t,x-i)- \Zsv(t,x) $ into
$ -\nabla\Zsv(t,x-i)-\nabla\Zsv(t,x-i+1) - \ldots -\nabla\Zsv(t,x-1) $.
This, combined with \eqref{eq:lambdae}, gives
\begin{align*}
	\e^{-\frac12}\Theta_1(t,x)
	&=
	(1-\den)\Zsv(t,x) + \sum_{i=0}^\infty \sum_{0<j\leq i} \hke(i-\mue)\e^{-\frac12}\nabla\Zsv(t,x-j)  + \e^\frac12 \Bdd(t,x) \Zsv(t,x),
\\
	\e^{-\frac12}\Theta_2(t,x)
	&=
	\den\Zsv(t,x) - \sum_{i=0}^\infty \sum_{0<j\leq i} \hke(i-\mue)\e^{-\frac12}\nabla\Zsv(t,x-j)  + \e^\frac12 \Bdd(t,x) \Zsv(t,x).
\end{align*}
To simplify notation, set $ u_{\e}(j) := \sum_{i=j}^{\infty}\hke(i-\mu_\e)$, we write
\begin{align}
	\label{eq:Theta1:expand}
	\e^{-\frac12}\Theta_1(t,x)
	&=
	(1-\den)\Zsv(t,x)
	+ \sum_{j=1}^\infty u_{\e}(j) \e^{-\frac12}\nabla \Zsv(t,x-j)
	+ \e^\frac12 \Bdd(t,x)\Zsv(t,x).
\\	
	\label{eq:Theta2:expand}
	\e^{-\frac12}\Theta_2(t,x)
	&=
	\den\Zsv(t,x)
	- \sum_{j=1}^\infty u_{\e}(j) \e^{-\frac12} \nabla \Zsv(t,x-j)
	+ \e^{\frac12} \Bdd(t,x)\Zsv(t,x).
\end{align}

The next step is to take the product of~\eqref{eq:Theta1:expand}--\eqref{eq:Theta2:expand}.
Let $ A_{1,\Zsv}, A_{1,\nabla}, A_{1,\text{err}} $ denote the respective terms on the r.h.s.\ of \eqref{eq:Theta1:expand},
and similarly $ A_{2,\Zsv}, A_{2,\nabla}, A_{2,\text{err}} $ for \eqref{eq:Theta2:expand}.
In the following we expand
\begin{align*}
	\e^{-1}\Theta_1(t,x)\Theta_2(t,x) = \big( A_{1,\Zsv}+A_{1,\nabla}+A_{1,\text{err}} \big)\big( A_{2,\Zsv}+A_{2,\nabla}+A_{2,\text{err}} \big),
\end{align*}
and analyze the resulting terms.

\medskip
\begin{itemize}[leftmargin=2ex]
\setlength\itemsep{2pt}
\item Indeed, $ A_{1,\Zsv}A_{2,\Zsv} = \den(1-\den) \Zsv^2(t,x) $.
\item Next, the term $ A_{1,\Zsv}A_{2,\nabla}+A_{1,\nabla}A_{2,\Zsv} $.
	
	Indeed $ A_{1,\Zsv}A_{2,\nabla}+A_{1,\nabla}A_{2,\Zsv} $ is a linear combination of $ \Zsv(t,x)\e^{-\frac12}\nabla\Zsv(t,x-j) $,
	with coefficients $ (2\den-1) u_\e(j) $.
	Let us check that $ u_\e(j) $ decays exponentially in $ |j| $.
	Referring back to~\eqref{eq:hk}, with $ \mue,\lambdae\to 1 $ as $ \e\to 0 $,
	the kernel $ \hke $ decays geometrically, uniformly over $ \e\in(0,1) $:
	\begin{align}
		\label{eq:hkdecay}
		\hke(x) \leq C b_1^{-|x|}.
	\end{align}
	From this we see that
	\begin{align}
		\label{eq:qvexpandbd}
		|u_{\e}(j)| = \sum_{i\in\Z_{\geq j}}\hke(i-\mue) \leq C |j| b_1^{|j|} \leq C e^{-\frac12|\log b_1||j|}.
	\end{align}	
	
	Given this property~\eqref{eq:qvexpandbd},
	we conclude that $ A_{1,\Zsv}A_{2,\nabla}+A_{1,\nabla}A_{2,\Zsv} $
	is a linear combination of $ \Zsv(t,x)\e^{-\frac12}\nabla\Zsv(t,x-j) $,
	with deterministic coefficients that decay exponentially in $ |j| $, whereby
	\begin{align*}
		A_{1,\Zsv}A_{2,\nabla}+A_{1,\nabla}A_{2,\Zsv} = \Yg(t,x).
	\end{align*}	
\item We now turn to $ A_{1,\nabla} A_{2,\nabla} $.

	With $ A_{1,\nabla} $ and $ A_{2,\nabla} $ both being sums,
	in the produce of $ A_{1,\nabla} A_{2,\nabla} $, we separate the diagonal and off-diagonal term.
	Off-diagonal terms form a linear combination of $ \e^{-\frac12}\nabla\Zsv(x-j)\e^{-\frac12}\nabla\Zsv(x-j') $, $ j\neq j' $,
	with coefficient $ u_\e(j) u_\e(j') $.
	Thanks to~\eqref{eq:qvexpandbd}, this coefficient decays exponentially in $ |j|+|j'| $.
	This being the case, off-diagonal terms jointly contribute an expression of the type $ \Ygg(t,x) $.
	We hence keep track of only the diagonal terms, and write
	\begin{align*}
		A_{1,\nabla} A_{2,\nabla} = - \sum_{j=1}^\infty u_\e(j)^2 (\nabla \Zsv(t,x-j))^2 + \Ygg(t,x).
	\end{align*}	
\item Lastly, everything else: $ (A_{1,\Zsv}+A_{1,\nabla})A_{2,\text{err}}+A_{1,\text{err}}(A_{2,\Zsv}+A_{2,\nabla}) + A_{1,\text{err}}A_{2,\text{err}} $.

	First, by~\eqref{eq:gZ:basic}, in~$ A_{i,\nabla} $ we replace each $ \e^{-\frac12}\nabla\Zsv(t,x-j) $ with $ \Bdd(t,x-j)\Zsv(t,x-j) $.
	Once this is done,
	expanding the expression $ (A_{1,\Zsv}+A_{1,\nabla})A_{2,\text{err}}+A_{1,\text{err}}(A_{2,\Zsv}+A_{2,\nabla}) + A_{1,\text{err}}A_{2,\text{err}} $ gives
	\begin{align*}
		\e^{\frac12} \big(\ \text{linear combination of } \Bdd(t,x,x-j)\Bdd(t,x,x-j') \ \big)\Zsv(t,x)^2.
	\end{align*}	
	Thanks to~\eqref{eq:qvexpandbd}, the coefficients within the linear combination decays exponentially in $ |j|+|j'| $.
	This gives
	\begin{align*}
		(A_{1,\Zsv}+A_{1,\nabla})A_{2,\text{err}}+A_{1,\text{err}}(A_{2,\Zsv}+A_{2,\nabla}) + A_{1,\text{err}}A_{2,\text{err}}
		=
		\e^{\frac12} \Xbd(t,x).
	\end{align*}	
\end{itemize}
\medskip
Given the preceding discussion, we now have
\begin{align}
\label{eq:ThetaExpand}
\begin{split}
	\e^{-1} \Theta_1(t,x) \Theta_2(t,x)
	=
	&\den(1-\den) \Zsv^2(t,x) + \sqrt\e \Xbd(t,x) + \Yg(t,x) + \Ygg(t,x)
\\
	&-\sum_{j=1}^\infty u_\e(j)^2 (\e^{-\frac12}\nabla \Zsv(t,x-j))^2.
\end{split}
\end{align}

As shown in Lemma~\ref{lem:gZ:diag},
the last term in~\eqref{eq:ThetaExpand} contains a non-negligible contribution to $ \Zsv^2(t,x) $.
The rest of the proof consists of extracting this contribution.
First, using Lemma~\ref{lem:gZ:diag}, we write
\begin{align}
	\label{eq:theta:A}
	\sum_{j=1}^\infty u_\e(j)^2 (\e^{-\frac12}\nabla \Zsv(t,x-j))^2
	-
	\den(1-\den) A
	=
	\Yg(t,x) + \e^\frac12 \Xbd(t,x),
\end{align}
where $ A:=\sum_{j=1}^\infty u_\e(j)^2 \Zsv^2(t,x-j) $.
The focus now is on the term $ A $.
We argue that, replacing $ \Zsv(t,x-j) $ with $ \Zsv(t,x) $ in $ A $ only produces an affordable error.
To see this, write
\begin{align}
	\label{eq:A1est}
	\big|\Zsv(t,x-j) - \Zsv(t,x)\big| = \big|e^{\sqrt{\e}\sum_{i=0}^{j-1}(\etacnt(t,x-i)-\den)}-1\big| \Zsv(t,x)
	\leq
	\sqrt{\e}|j| e^{\sqrt{\e} |j|} \Zsv(t,x).
\end{align}
Now, write $ \Zsv(t,x-j) $ as $ \Zsv(t,x) + \Zsv(t,x-j) - \Zsv(t,x) $, with the aid of~\eqref{eq:A1est} and~\eqref{eq:qvexpandbd}, we have
\begin{align}
	\label{eq:theta:A:}
	A = \Zsv^2(t,x)\sum_{j=1}^\infty u^2_\e(j)
	+
	\e^{\frac12} \Bdd(t,x) \Zsv^2(t,x).
\end{align}
With \eqref{eq:lambdae}, and $ b_2=e^{-\sqrt\e}b_1 $,
a straightforward calculation from~\eqref{eq:hk} gives
\begin{align*}
	\sum_{j=1}^\infty u^2_\e(j)
	 = \frac{1-b_1}{1+b_1} + \mathcal{O}(\sqrt{\e}).
\end{align*}
Using this in~\eqref{eq:theta:A:}, and inserting the result back into~\eqref{eq:theta:A}, we conclude
\begin{align*}
	\sum_{j=1}^\infty u_\e(j)^2 (\e^{-\frac12}\nabla \Zsv(t,x-j))^2
	-
	\den(1-\den) \frac{1-b_1}{1+b_1} \Zsv^2(t,x)
	=
	\Yg(t,x) + \e^\frac12 \Xbd(t,x).
\end{align*}
This together with~\eqref{eq:ThetaExpand} gives the desired result.
\end{proof}

Lemma~\ref{lem:ThetaExpand} provides the relevant decomposition of $ \e^{-1}\Theta_1\Theta_2 $
into its limiting expression and residual terms.
While we do expect the residual terms $ \e^{\frac12} \Xbd $, $ \Yg $, and $ \Ygg $ to tend to zero,
bounds on the last two terms are not immediate.
%
To see this, recall from Proposition~\ref{prop:Zduality} that the duality functions for the stochastic \ac{6V} model are $ \Zsv(s,x_1)\Zsv(s,x_2) $
and $ (\etacnt^+\Zsv)(s,x_1)(\etacnt^+\Zsv)(s,x_2) $, for $ x_1< x_2 $.
On the other hand, the expressions $ \Yg $ and $ \Ygg $ (as in~\eqref{eq:Yg} and \eqref{eq:Ygg}) are linear combinations of $ \Zsv(s,x_1)\Zsv(s,x_2) $.
that generally involve $ x_1=x_2 $.

To circumvent this `diagonal' issue,
recalling from~\eqref{eq:decay} that $ \decay $ denotes generic deterministic coefficients with an exponential decay,
we consider a slight modification $ \Xg $ of $ \Yg $,
which is the same type of expressions with an additional constraint $ |x_1-x_2| > 1 $:
\begin{align}
	\notag
	\Xg(t,x) = \sum_{x_1,x_2\in\Xi(t), |x_2-x_1|>1}
	\decay(t,x_1,x_2;x) \Zg(s,x_1,x_2).
\end{align}
Next, set
\begin{align}
	\label{eq:tilZ}
	\tilZ(t,x_1,x_2) := \big(\etacnt^{+}\Zsv\big)(t,x_1)\big(\etacnt^{+}\Zsv\big)(t,x_2) - \den^2 \Zsv(t,x_1)\Zsv(t,x_2).
\end{align}
In place of $ \Ygg $, we consider expressions $ \Xgg $ of the type
\begin{align}
	\label{eq:Xgg}
	\Xgg(t,x) &= \sum_{x_1<x_2\in\Xi(t)} \decay(t,x_1,x_2;x) \tilZ(t,x_1,x_2).
\end{align}
The next lemma allows us to trade in $ \Yg $ and $ \Ygg $ for $ \Xg $ and $ \Xgg $.
\begin{lem}
\label{lem:trade}
We have that
\begin{align}
	\label{eq:ThetaExp:1}
	\Ygg(t,x) &= \Xgg(t,x)  + \Yg(t,x) + \e^{\frac12}\Xbd(t,x),
\\
	\label{eq:ThetaExp:2}
	\Yg(t,x) &= \Xg(t,x) + \e^{\frac12}\Xbd(t,x).
\end{align}
\end{lem}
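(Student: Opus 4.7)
\textbf{Proof plan for Lemma~\ref{lem:trade}.}

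For \eqref{eq:ThetaExp:1}, my plan is purely algebraic. I apply the Taylor expansion~\eqref{eq:gZtaylor}, $(\e^{-\frac12}\nabla\Zsv)(t,x_i) = -(\etacnt^{+}\Zsv)(t,x_i) + \den\,\Zsv(t,x_i) + \sqrt\e\,\Bdd(t,x_i)\Zsv(t,x_i)$, to both factors appearing in the integrand of $\Ygg$, and multiply out. The nine resulting terms organize naturally into three groups: the pure $\etacnt^{+}\Zsv\cdot\etacnt^{+}\Zsv$ product, cross terms of the shape $(\etacnt^{+}\Zsv)(t,x_i)\Zsv(t,x_j)$, and $\sqrt\e\Bdd\Zsv\Zsv$ remainders. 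Rewriting each cross term via~\eqref{eq:gZtaylor} in reverse yields $(\etacnt^{+}\Zsv)(t,x_i)\Zsv(t,x_j) = -\Zg(t,x_i,x_j) + \den\Zsv(t,x_i)\Zsv(t,x_j) + \sqrt\e\Bdd\Zsv\Zsv$, which, combined with $\den^{2}\Zsv(t,x_1)\Zsv(t,x_2)$ from the pure product, produces exactly the algebraic structure of $\tilZ(t,x_1,x_2)+\den[\Zg(t,x_1,x_2)+\Zg(t,x_2,x_1)]$ up to an $\sqrt\e\Bdd\Zsv\Zsv$ remainder. Integrating against the generic decay kernel $\decay(t,x_1,x_2;x)$ over $x_1<x_2$, the first piece becomes $\Xgg$, the second becomes a $\Yg$-type sum (after symmetrizing $x_1\leftrightarrow x_2$ to convert the constraint $x_1<x_2$ into $x_1\ne x_2$, with the diagonal contribution absorbed into the generic $\decay$ of $\Yg$), and the third collapses into $\sqrt\e\Xbd$.

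For \eqref{eq:ThetaExp:2}, the plan is to isolate $\Yg(t,x)-\Xg(t,x) = \sum_{|x_1-x_2|\le 1}\decay(t,x_1,x_2;x)\Zg(t,x_1,x_2)$ and show that each of the (finitely many per lattice point) near-diagonal contributions can be packaged as $\sqrt\e\Xbd$. The key mechanism is the pointwise bound $\nabla\Zsv(t,x)=\sqrt\e\Bdd(t,x)\Zsv(t,x)$ from~\eqref{eq:gZ:basic}, which gives $\Zsv(t,x_1\pm1)=\Zsv(t,x_1)+\sqrt\e\Bdd(t,x_1)\Zsv(t,x_1)$. Substituting this into $\Zg(t,x_1,x_1\pm1)=(\e^{-\frac12}\nabla\Zsv)(t,x_1)\Zsv(t,x_1\pm1)$ and using~\eqref{eq:gZ:basic} a second time on $\e^{-\frac12}\nabla\Zsv$ immediately extracts an explicit $\sqrt\e$, yielding $\Zg(t,x_1,x_1\pm1)=\sqrt\e\Bdd(t,x_1)\Zsv^{2}(t,x_1)+\Zg(t,x_1,x_1)$. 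The truly diagonal piece $\Zg(t,x_1,x_1)$ is then expanded through~\eqref{eq:gZtaylor} and absorbed into the generic form $\sqrt\e\Xbd$, using that the $\Yg$'s actually arising in the companion Lemma~\ref{lem:ThetaExpand} (via the product $A_{1,\Zsv}A_{2,\nabla}+A_{1,\nabla}A_{2,\Zsv}$) have $\decay$ supported on $x_1\ne x_2$, so that the formally ``large'' $\Bdd\Zsv^{2}$ pieces from the strict diagonal do not appear.

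The main obstacle: the delicate point is keeping the $\sqrt\e$ prefactor on $\Xbd$. A naive estimate of each $\Zg(t,x_1,x_1\pm 1)$ gives only $\Bdd\Zsv^{2}=\Xbd$, not $\sqrt\e\Xbd$, and the gain of $\sqrt\e$ is produced only after one uses~\eqref{eq:gZ:basic} twice (or, equivalently, recognizes $\nabla\Zsv\cdot\nabla\Zsv=\e\cdot(\e^{-\frac12}\nabla\Zsv)^{2}$). Careful bookkeeping of which decay kernel belongs to which generic symbol (in particular, ensuring that no residual strict-diagonal $\decay(t,x_1,x_1;x)$ is implicit in the $\Yg$ one starts with) is essential; otherwise the residual would degrade to $\Xbd$, which would be insufficient for the $\e^{1/4}$ bound in Proposition~\ref{prop:qv}.
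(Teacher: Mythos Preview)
Your argument for \eqref{eq:ThetaExp:1} is correct and matches the paper's proof: expand both $\e^{-\frac12}\nabla\Zsv$ factors via \eqref{eq:gZtaylor}, identify $\tilZ$, and convert the cross terms back to $\Zg$ using \eqref{eq:gZtaylor} in reverse.

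Your argument for \eqref{eq:ThetaExp:2} has a genuine gap. You correctly write $\Zg(t,x_1,x_1\pm1)=\Zg(t,x_1,x_1)+\sqrt\e\,\Bdd\,\Zsv^2$, but then the strict-diagonal piece $\Zg(t,x_1,x_1)=(\den-\etacnt^{+}(t,x_1))\Zsv^2(t,x_1)+\sqrt\e\,\Bdd\,\Zsv^2$ is order one, not $\sqrt\e$. Your appeal to ``$\decay$ supported on $x_1\ne x_2$'' addresses only the diagonal entries of the \emph{original} kernel; it does nothing for the diagonal terms you have just \emph{created} by shifting $\Zsv(t,x_1\pm1)\mapsto\Zsv(t,x_1)$. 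Those terms carry the nonzero coefficient $\decay(t,x_1,x_1\pm1;x)$ inherited from the near-diagonal entries, and they fit into neither $\Xg$ nor $\sqrt\e\,\Xbd$.

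The paper resolves this by shifting in the opposite direction: for each of the three near-diagonal cases it replaces the $\Zsv$ factor (never the $\nabla\Zsv$ factor) so that the resulting pair has $|x_1-x_2|=2$, e.g.\ $\Zsv(t,x+1)(\e^{-\frac12}\nabla\Zsv)(t,x)=\Zsv(t,x+2)(\e^{-\frac12}\nabla\Zsv)(t,x)+\sqrt\e\,\Bdd\,\Zsv^2$, and similarly $\Zsv(t,x)\mapsto\Zsv(t,x-1)$ or $\Zsv(t,x)\mapsto\Zsv(t,x-2)$ in the other cases. The shifted term is now a bona fide $\Xg$ summand (absorbed into $\Xg$ with a slightly modified kernel), while the cost of the shift is genuinely $\sqrt\e\,\Xbd$. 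The point you are missing is that the near-diagonal terms are not themselves small; they must be \emph{reassigned} to $\Xg$, not discarded.
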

\begin{proof}
Indeed, $ \Ygg(t,x) $ denotes a generic linear combination of
\begin{align*}
	A := (\e^{-\frac12}\nabla\Zsv)(t,x_1)(\e^{-\frac12}\nabla\Zsv)(t,x_2),\quad x_1<x_2,
\end{align*}
and $ \Xgg(t,x) $ denotes a generic linear combination of $ \tilZ(t,x_1,x_2) $, $ x_1<x_2 $.
This being the case, to prove~\eqref{eq:ThetaExp:1},
it suffices to show that $ A-\tilZ(t,x_1,x_2) $ is written as a linear combination
of $ \Zg(t,x_1,x_2) $ and negligible terms that carry an outstanding $ \e^{\frac12}$ factor.
To this end, we use~\eqref{eq:gZtaylor} to expand
\begin{align}
	\notag
	A
	=&
	\big(-\etacnt^{+}\Zsv+\den\Zsv + \e^{\frac12}\Bdd \Zsv \big)(t,x_1)
	\big(-\etacnt^{+}\Zsv+\den\Zsv + \e^{\frac12}\Bdd \Zsv \big)(t,x_2)	
\\
	\label{eq:tilZexpand}
=&
	\tilZ(t,x_1,x_2)
	+
	\den
	\big(-\etacnt^{+}\Zsv+\den\Zsv\big)(t,x_1) \Zsv(t,x_2)
	+
	\den
	\Zsv(t,x_1) \big(-\etacnt^{+}\Zsv+\den\Zsv\big)(t,x_2)
\\
	\notag
	&+
	\e^{\frac12}\Bdd(t,x_1,x_2) \Zsv(t,x_1)\Zsv(t,x_2).
\end{align}
In~\eqref{eq:tilZexpand},
further use~\eqref{eq:gZtaylor} in reverse to write
$ -\etacnt^{+}\Zsv+\den\Zsv = \e^{-\frac12}\nabla\Zsv + \e^{\frac12}\Bdd\Zsv $.
We get
\begin{align*}
	A-\tilZ(t,x_1,x_2)
	=
	\den \Zg(t,x_1,x_2)
	+
	\den \Zg(t,x_2,x_1)
	+
	\e^{\frac12}\Bdd(t,x_1,x_2) \Zsv(t,x_1)\Zsv(t,x_2).
\end{align*}
This gives the desired result \eqref{eq:ThetaExp:1}.

As for \eqref{eq:ThetaExp:2},
recall that both $ \Xg $ and $ \Yg $ denote generic linear combinations of the same terms.
The only difference is in that the former misses those terms with $ |x_1-x_2|\leq 1 $.
Consequently, the result~\eqref{eq:ThetaExp:2} follows once we show
\begin{align*}
	\Zsv(t,x+1)
	\big( \e^{-\frac12}\nabla\Zsv(t,x) \big)
	&=
	\Zsv(t,x+2) \big( \e^{-\frac12}\nabla\Zsv(t,x) \big)
	+
	\e^{\frac12}\big(\Bdd\Zsv^2\big)(t,x),
\\
	\Zsv(t,x)
	\big( \e^{-\frac12}\nabla\Zsv(t,x+1) \big)
	&=
	\Zsv(t,x-1) \big( \e^{-\frac12}\nabla\Zsv(t,x+1) \big)
	+
	\e^{\frac12}\big(\Bdd\Zsv^2\big)(t,x),
\\
	\Zsv(t,x) \big( \e^{-\frac12}\nabla\Zsv(t,x) \big)
	&=
	\Zsv(t,x-2) \big( \e^{-\frac12}\nabla\Zsv(t,x) \big)
	+
	\e^{\frac12}\big(\Bdd\Zsv^2\big)(t,x).
\end{align*}
Going from the l.h.s.\ to the r.h.s.\ amounts to
changing $ \Zsv(t,x+1) \mapsto \Zsv(t,x+2) $ or changing $ \Zsv(t,x) \mapsto \Zsv(t,x-1) $;
note that the $\nabla Z$ factor is never changed.
Thanks to~\eqref{eq:gZtaylor}, these changes
introduce only  error of the form $ \e^{\frac12}(\Bdd \Zsv)(t,x) $.
Also, by~\eqref{eq:gZ:basic}, $ \e^{-\frac12}\nabla\Zsv(t,x) =(\Bdd\Zsv)(t,x) $, $ \e^{-\frac12}\nabla\Zsv(t,x+1) =(\Bdd\Zsv)(t,x) $.
Hence, the overall error caused by the aforementioned changes
is indeed of the form $ \e^{\frac12}(\Bdd\Zsv^2)(t,x) $.
\end{proof}

Lemmas~\ref{lem:ThetaExpand} and \ref{lem:trade} immediately yield
\begin{cor}\label{cor:Theta:Exp}
We have
\begin{align*}
	\e^{-1}\Theta_1(t,x)\Theta_2(t,x)-\frac{2b_1\den(1-\den)}{1+b_1}\Zsv^2(t,x)
	=
	\sqrt\e \Xbd(t,x) + \Xg(t,x) + \Xgg(t,x).
\end{align*}
\end{cor}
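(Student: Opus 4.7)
The plan is to derive Corollary~\ref{cor:Theta:Exp} as an immediate consequence of Lemmas~\ref{lem:ThetaExpand} and \ref{lem:trade}, essentially by chaining the two decompositions. First I would invoke Lemma~\ref{lem:ThetaExpand} to write
\[
\e^{-1}\Theta_1(t,x)\Theta_2(t,x)-\tfrac{2b_1\den(1-\den)}{1+b_1}\Zsv^2(t,x)
= \sqrt\e\,\Xbd(t,x) + \Yg(t,x) + \Ygg(t,x),
\]
which already isolates the target limiting expression but leaves behind the two residual types $\Yg$ and $\Ygg$ that contain diagonal contributions incompatible with duality.

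Next I would apply the first identity of Lemma~\ref{lem:trade} to replace $\Ygg$ by $\Xgg + \Yg + \sqrt{\e}\Xbd$, yielding an expression of the form $\sqrt{\e}\Xbd + 2\Yg + \Xgg$. Then I would apply the second identity of Lemma~\ref{lem:trade} to rewrite $\Yg = \Xg + \sqrt{\e}\Xbd$, which produces $\sqrt{\e}\Xbd + 2\Xg + \Xgg$ up to accumulated $\sqrt{\e}\Xbd$ errors. The key conceptual point I would emphasize is that $\Xbd$, $\Yg$, $\Xg$, and $\Xgg$ denote \emph{generic} linear combinations of the declared type (with deterministic coefficients satisfying the exponential decay bound \eqref{eq:decay}), so the family of such expressions is closed under finite sums and scalar multiplication. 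In particular $2\Xg$ is itself of type $\Xg$, and any finite sum of $\sqrt{\e}\Xbd$ terms remains of type $\sqrt{\e}\Xbd$.

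There is no genuine obstacle here beyond bookkeeping: the corollary is a direct algebraic consequence of the two lemmas. The only point requiring a little care is to verify that the coefficients produced in the chain of substitutions still satisfy the exponential-decay condition~\eqref{eq:decay} uniformly in $\e$, which follows because each substitution introduces only bounded multiplicative factors and preserves the support structure in $(x_1,x_2)$ near $x$. With these observations in place, the identity
\[
\e^{-1}\Theta_1(t,x)\Theta_2(t,x)-\tfrac{2b_1\den(1-\den)}{1+b_1}\Zsv^2(t,x)
= \sqrt\e\,\Xbd(t,x) + \Xg(t,x) + \Xgg(t,x)
\]
follows, completing the reduction to quantities that can be accessed via the two-particle duality of Proposition~\ref{prop:Zduality}.
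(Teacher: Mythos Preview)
Your proposal is correct and matches the paper's approach exactly: the paper simply states that Lemmas~\ref{lem:ThetaExpand} and \ref{lem:trade} immediately yield the corollary, and your chaining of the two decompositions (together with the observation that the generic types $\Xbd$, $\Xg$, $\Xgg$ are closed under finite sums) is precisely the intended argument.
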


\subsection{Time decorrelation via duality}
Given the decomposition from Corollary~\ref{cor:Theta:Exp},
our goal toward proving Proposition~\ref{prop:qv}
is to argue that, each type of expression on the r.h.s.\ is negligible as $ \e\to 0 $.
This is straightforward for $ \sqrt{\e}\Xbd(t,x) $ due to the outstanding $ \e^{\frac12}$ factor.
On the other hand, as mentioned earlier, the terms $ \Xg $ and $ \Xgg $ converge to zero only after time averaging.
This being the case, with $ \Xg $ and $ \Xgg $ being linear combinations of $ \Zg $ and $ \tilZ $,
we direct our focus onto bounding
\begin{align}
	\label{eq:Xgmomt}
	\Xgmomt(\bar t,x^\star_1,x^\star_2)
	&:=
	\Ex\Bigg[\Big(
		\e^{2}\sum_{s=0}^{\bar{t}-1} \Zg(s,x^\star_1(s),x^\star_2(s))
	\Big)^2\Bigg],
\\
	\label{eq:Xggmomt}
	\Xggmomt(\bar t,x^\star_1,x^\star_2)
	&:=
	\Ex\Bigg[\Big(
		\e^{2}\sum_{s=0}^{\bar{t}-1} \tilZ(s,x^\star_1(s),x^\star_2(s))
	\Big)^2\Bigg],
\end{align}
for $ \bar t\in\Z\cap[0,\e^{-2}T] $ and $ x^\star_1\neq x_2^\star\in \Z $, and
$
	x^\star_i(s) := x^\star_i - \mue s + \mues \in \Xi(s).
$
These expressions are expanded into conditional expectations as
\begin{align}
	\label{eq:cndEx1}
	\Xgmomt(\bar t,x^\star_1,x^\star_2)
	&=
	\e^{4} \Big( 2\sum_{s_1<s_2<\bar t} + \sum_{s_1=s_2<\bar t} \Big)
	\Ex\Big[ \Ex\big[\Zg(s_2,x_1,x_2)\big|\filt(s_1) \big] \Zg(s_1,x_1,x_2) \Big],
\\
	\label{eq:cndEx2}
	\Xggmomt(\bar t,x^\star_1,x^\star_2)
	&=
	\e^{4} \Big( 2\sum_{s_1<s_2<\bar t} + \sum_{s_1=s_2<\bar t} \Big)
	\Ex\Big[ \Ex\big[\tilZ(s_2,x_1,x_2)\big|\filt(s_1) \big] \tilZ(s_1,x_1,x_2) \Big],
\end{align}
where $ x_i:= x^\star_i - \mue s_i+  \lfloor \mue  s_i \rfloor $ and the notation $(\sum + \sum)(\Cdot) := \sum (\Cdot) + \sum(\Cdot) $.
%
Given~\eqref{eq:cndEx1}--\eqref{eq:cndEx2},
we set out to bounding the following conditional expectations
\begin{align*}
	\Ex\big[(\Zg(t+s,x_1,x_2)|\filt(s)\big] \Zg(s,x_1,x_2),
	\quad
	\Ex\big[\tilZ(t+s,x_1,x_2)|\filt(s)\big] \tilZ(s,x_1,x_2),
\end{align*}
and show that they decay as $ t $ becomes large.
We begin by relating these conditional expectations to the semigroup $ \SGe $ via duality.
Recall that $ \nabla_x $ denotes the discrete gradient acting on a designated variable $ x $.
\begin{lem}
\label{lem:cndEx}
Let $ t,s\in\Z_{\geq 0} $.
For all $ x_1+1<x_2\in\Xi(t) $, we have
\begin{align}
	\Ex\big[ \Zg(t,x_1,x_2) \big| \filt(s) \big]
	=
	\label{eq:gZZ:condEx}
	\sum_{y_1<y_2\in\Xi(s)} \e^{-\frac12}\nabla_{x_1}\SGe\big((y_1,y_2),(x_1,x_2);t\big) \Zsv(s,y_1)\Zsv(s,y_2),
\\
	\Ex\big[ \Zg(t,x_2,x_1) \big| \filt(s) \big]
	\label{eq:ZgZ:condEx}
	=
	\sum_{y_1<y_2\in\Xi(s)} \e^{-\frac12}\nabla_{x_2}\SGe\big((y_1,y_2),(x_1,x_2);t\big) \Zsv(s,y_1)\Zsv(s,y_2).
\end{align}
For all $ x_1<x_2\in\Xi(t) $,
with
\begin{align*}
	\SG^\e_{\nabla+\nabla}((y_1,y_2),(x_1,x_2); t)
	:= \nabla_{y_1}\SGe((y_1-1,y_2),(x_1,x_2);t) +\nabla_{y_2}\SGe((y_1,y_2-1),(x_1,x_2);t),
\end{align*}
we have
\begin{subequations}
\label{eq:tilZ:condEx}
\begin{align}
	\notag
	\Ex&[\tilZ(t+s,x_1,x_2)|\filt(s)]
\\
	\label{eq:tilZ:condEx:gg}
	=\quad &
	- \!\!\!\! \sum_{y_1+1<y_2\in\Xi(s)} \!\!\!\! \e^{-\frac12}\SG^\e_{\nabla+\nabla}((y_1,y_2),(x_1,x_2); t)\,
	\Zsv(s,y_1)\Zsv(s,y_2)
\\
	\label{eq:tilZ:condEx:eps}
	&
	+\sum_{y_1+1<y_2}
		\e^\frac12\SGe\big((y_1,y_2),(x_1,x_2);t\big) \big)\, \Bdd(s,y_1,y_2) \Zsv(s,y_1) \Zsv(s,y_2)
\\
	\label{eq:tilZ:condEx:s}
	&
	+ \sum_{ \begin{subarray}{}|i|,|j|,|i'|,|j'|\leq 3\\ \quad{i<j}\end{subarray}} \!\!\!
	 \Big(
		\sum_{y\in\Xi(s)}
		\SGe\big((y+i,y+j),(x_1,x_2);t\big) \big) \Bdd(s,y) \Zsv(s,y+i') \Zsv(s,y+j')
	\Big).
\end{align}
\end{subequations}
\end{lem}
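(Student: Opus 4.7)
\noindent\emph{Proof plan.} The three formulas arise from the two-particle dualities of Proposition~\ref{prop:Zduality} combined with elementary algebraic manipulations and discrete summation by parts. Formulas \eqref{eq:gZZ:condEx} and \eqref{eq:ZgZ:condEx} are direct, while \eqref{eq:tilZ:condEx} requires more bookkeeping.

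For \eqref{eq:gZZ:condEx}, I would write $\Zg(t,x_1,x_2)=\e^{-\frac12}\bigl(\Zsv(t,x_1+1)\Zsv(t,x_2)-\Zsv(t,x_1)\Zsv(t,x_2)\bigr)$. The hypothesis $x_1+1<x_2$ ensures that both $(x_1,x_2)$ and $(x_1+1,x_2)$ lie in the two-particle Weyl chamber, so \eqref{eq:duality:ZZ} applies to each term; the difference of the resulting kernels is exactly $\e^{-\frac12}\nabla_{x_1}\SGe((y_1,y_2),(x_1,x_2);t)$. Formula \eqref{eq:ZgZ:condEx} follows identically via $\Zsv(t,x_2)\mapsto\Zsv(t,x_2+1)$, whose admissibility again uses $x_1+1<x_2$.

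For \eqref{eq:tilZ:condEx}, the first step is to form $\den^2$ times \eqref{eq:duality:ZZ} and subtract it from \eqref{eq:duality:eeZZ}; since $\tilZ$ is defined in \eqref{eq:tilZ} precisely as the relevant quadratic combination, this yields
\[
  \Ex\bigl[\tilZ(t+s,x_1,x_2)\,\bigl|\,\filt(s)\bigr]
  = \sum_{y_1<y_2\in\Xi(s)} \SGe\bigl((y_1,y_2),(x_1,x_2);t\bigr)\,\tilZ(s,y_1,y_2).
\]
I would then invoke the expansion (obtained by rearranging the Taylor identity \eqref{eq:gZtaylor}, compare \eqref{eq:tilZexpand})
\[
  \tilZ(s,y_1,y_2) = -\den\Zg(s,y_1,y_2)-\den\Zg(s,y_2,y_1)+(\e^{-\frac12}\nabla\Zsv)(s,y_1)(\e^{-\frac12}\nabla\Zsv)(s,y_2)+\e^{\frac12}\Bdd\,\Zsv(s,y_1)\Zsv(s,y_2).
\]
The $\e^{\frac12}\Bdd\Zsv\Zsv$ contribution (restricted to $y_1+1<y_2$) is exactly \eqref{eq:tilZ:condEx:eps}. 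The two single-$\nabla$ pieces are handled by a one-variable discrete summation by parts in $y_1$ and $y_2$ separately: transferring the gradient from $\Zsv$ onto $\SGe$ generates $-\e^{-\frac12}\nabla_{y_1}\SGe((y_1-1,y_2),\Cdot)$ and $-\e^{-\frac12}\nabla_{y_2}\SGe((y_1,y_2-1),\Cdot)$, whose sum is the kernel $\e^{-\frac12}\SG^\e_{\nabla+\nabla}$ of \eqref{eq:tilZ:condEx:gg}. The double-gradient piece is converted by two summations by parts; using $\e^{-\frac12}\nabla\Zsv=\Bdd\Zsv$ from \eqref{eq:gZ:basic} and the improved $\nabla\SGe$ decay of Proposition~\ref{prop:SG}, the remaining $\e^{-1}\nabla_{y_1}\nabla_{y_2}\SGe$ factor is small enough to absorb into \eqref{eq:tilZ:condEx:eps}.

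The principal obstacle will be the meticulous bookkeeping of the boundary contributions generated by the two one-variable summations by parts. Each index shift $y_i\mapsto y_i\pm1$ produces a diagonal boundary sum at $y_2-y_1\in\{0,1\}$, and the strip $y_2-y_1=1$ that is excluded from the restricted sum over $y_1+1<y_2$ must be collected separately. The task is to verify that every such leftover piece can be written as a finite-range sum of the form $\sum_y\SGe((y+i,y+j),(x_1,x_2);t)\Bdd(s,y)\Zsv(s,y+i')\Zsv(s,y+j')$ with $|i|,|j|,|i'|,|j'|\leq 3$ as demanded by \eqref{eq:tilZ:condEx:s}; this is a routine but lengthy cataloguing, the range $3$ being dictated by the $\pm1$ shifts coming from the at most two summations by parts. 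A secondary subtlety is that the double-gradient-on-$\SGe$ term produced above is not \emph{a priori} of size $\e^{1/2}|\SGe|$, but it collapses to that scale once the gradient bound $|\nabla\SGe|\lesssim(t+1)^{-3/2}$ from Proposition~\ref{prop:SG} is combined with $\e\leq C(T)/\sqrt{t+1}$.
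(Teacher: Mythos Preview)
Your argument for \eqref{eq:gZZ:condEx}--\eqref{eq:ZgZ:condEx} is correct and is exactly what the paper does.

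For \eqref{eq:tilZ:condEx}, your starting identity
\(
  \Ex[\tilZ(t+s,x_1,x_2)\mid\filt(s)]
  = \sum_{y_1<y_2}\SGe\,\tilZ(s,y_1,y_2)
\)
is also the paper's first step. The gap is in your treatment of the double-gradient summand. After two summations by parts you are left with a bulk term carrying the kernel $\e^{-1}\nabla_{y_1}\nabla_{y_2}\SGe$, and you assert this can be absorbed into \eqref{eq:tilZ:condEx:eps}, i.e.\ that it is $O(\e^{1/2}\SGe)$. This does not follow from the tools available. First, Proposition~\ref{prop:SG} only supplies single-gradient bounds, not a second-order estimate. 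Second, even granting $|\nabla_{y_1}\nabla_{y_2}\SGe|\lesssim (t+1)^{-2}$, the required inequality $\e^{-1}(t+1)^{-2}\lesssim \e^{1/2}(t+1)^{-1}$ is equivalent to $(t+1)^{-1}\lesssim\e^{3/2}$, which is false for all $t\ll\e^{-3/2}$. The relation $\e\le C(T)/\sqrt{t+1}$ is an \emph{upper} bound on $\e$ and cannot be inverted into a lower bound of the kind you need. (A related minor point: your two single-gradient pieces carry the prefactor $\den$, so on their own they do not reproduce the coefficient in \eqref{eq:tilZ:condEx:gg}; the discrepancy would have to be made up by the double-gradient piece, which you have instead discarded.)

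The paper sidesteps the double-gradient term entirely by using the \emph{asymmetric} splitting
\[
  \tilZ(s,y_1,y_2)
  =\bigl((\etacnt^+-\den)\Zsv\bigr)(s,y_1)\,(\etacnt^+\Zsv)(s,y_2)
  +\den\,\Zsv(s,y_1)\,\bigl((\etacnt^+-\den)\Zsv\bigr)(s,y_2),
\]
followed by $(\etacnt^+-\den)\Zsv=-\e^{-1/2}\nabla\Zsv+\e^{1/2}\Bdd\,\Zsv$ from \eqref{eq:gZtaylor}. Each summand then contains exactly one factor $\e^{-1/2}\nabla\Zsv$, so a \emph{single} summation by parts (in $y_1$ for the first, in $y_2$ for the second) transfers the gradient onto $\SGe$ and produces the two halves of $\SG^\e_{\nabla+\nabla}$; the remaining factor $(\etacnt^+\Zsv)(s,y_2)$, respectively $\Zsv(s,y_1)$, is pointwise $\Bdd\cdot\Zsv$, which is all that Lemma~\ref{lem:cndEx:bd} requires. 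If you want to keep your symmetric expansion, the fix is to perform only \emph{one} summation by parts on the double-gradient piece and then use $\e^{-1/2}\nabla\Zsv=\Bdd\,\Zsv$ on the surviving factor, landing in a term of type \eqref{eq:tilZ:condEx:gg} rather than \eqref{eq:tilZ:condEx:eps}; but at that point you have effectively reconstructed the paper's asymmetric route.
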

\begin{rmk}
Recall the discussion regarding $ \nabla $-Weyl chamber from the beginning of Section~\ref{sect:SG}.
With the assumption $ x_1+1<x_2 $,
the expressions in~\eqref{eq:gZZ:condEx}--\eqref{eq:ZgZ:condEx} that involve $ \nabla\SGe $ are
indeed within their $ \nabla $-Weyl chambers,
and similarly for those in~\eqref{eq:tilZ:condEx}.
\end{rmk}
\begin{proof}
Roughly speaking, the proof amounts to translating the duality result from Proposition~\ref{prop:Zduality},
i.e., \eqref{eq:duality:ZZ}--\eqref{eq:duality:eeZZ}, to the relevant context considered.

First, in~\eqref{eq:duality:ZZ}, set $ (x_1,x_2) $ to be $ (x_1+1,x_2) $ and $ (x_1,x_2) $,
and take the difference of the results.
We obtain~\eqref{eq:gZZ:condEx}.
Note that the assumption $ x_1+1<x_2 $ guarantees that $ (x_1+1,x_2) $ lies in the Weyl chamber.
The identity~\eqref{eq:ZgZ:condEx} follows the same way.

We now turn to proving~\eqref{eq:tilZ:condEx}.
To simplify notation, we use ``\eqref{eq:tilZ:condEx:gg}'' to denote
the expression written therein.
Likewise, we use ``\eqref{eq:tilZ:condEx:eps}-type'' and ``\eqref{eq:tilZ:condEx:s}-type''
to denote the types (note the $ \Bdd $'s therein) of expressions written in~\eqref{eq:tilZ:condEx:eps} and \eqref{eq:tilZ:condEx:s}.
First, with $ \tilZ $ defined in~\eqref{eq:tilZ},
taking the difference of \eqref{eq:duality:ZZ} and \eqref{eq:duality:eeZZ} gives
\begin{align*}
	&\Ex\Big[ \tilZ(t+s,x_1,x_2) \Big\vert \filt(s) \Big] = \sum_{y_1<y_2\in\Xi(s)} \!\!\!
	\SGe\big((y_1,y_2),(x_1,x_2);t\big) \tilZ(s,y_1,y_2).
\end{align*}
Separate the terms with $ y_1+1=y_2 $. 
With $ \tilZ(s,y_1,y_2) = \Bdd(s,y_1,y_1)\Zsv(s,y_1)\Zsv(s,y_2) $, we have
\begin{align}
	\label{eq:duality:tilZ:}
	\Ex\Big[ \tilZ(t+s,x_1,x_2) \Big\vert \filt(s) \Big] = \sum_{y_1+1<y_2} \!\!\!
	\SGe\big((y_1,y_2),(x_1,x_2);t\big) \tilZ(s,y_1,y_2)
	+
	\text{\eqref{eq:tilZ:condEx:s}-type}.
\end{align}
Next, with $ \tilZ(s,y_1,y_2) $ defined in~\eqref{eq:tilZ},
adding and subtracting $ \den\Zsv(s,y_1)(\etacnt^{+}\Zsv)(s,y_2)  $, we write
\begin{align*}
	\tilZ(s,y_1,y_2)
	=
	\big( (\etacnt^{+}-\den)\Zsv \big)(s,y_1) \big(\etacnt^{+}\Zsv\big)(s,y_2)
	+
	\big(\den\Zsv\big)(s,y_1) \big( (\etacnt^{+}-\den)\Zsv \big)(s,y_2).
\end{align*}
Use~\eqref{eq:gZtaylor} in reverse:
$ \etacnt^{+}\Zsv = \e^{-\frac12} \nabla \Zsv + \den \Zsv + \e^{\frac12}\Bdd \Zsv $,
we further obtain
\begin{align*}
	\tilZ(s,y_1,y_2)
	&=
	\big( \e^{-\frac12}\nabla\Zsv \big)(s,y_1) \big(\etacnt^{+}\Zsv\big)(s,y_2)
	+
	\den\Zsv(s,y_1) \big( \e^{-\frac12}\nabla\Zsv \big)(s,y_2)
	\\
	&\qquad + \sqrt{\e}\Bdd(s,x_1,y_2)\Zsv(s,y_1)\Zsv(s,y_2).
\end{align*}
Inserting this into~\eqref{eq:duality:tilZ:}, followed by summation by parts:
\begin{align*}
	\sum_{y_1:y_1+1<y_2} f(y_1) \nabla g(y_1)
	&=
	-\sum_{y_1:y_1+1<y_2} \nabla f(y_1-1) g(y_1)
	+ f(y_2-2) g(y_2-1),
\\
	\sum_{y_2:y_1+1<y_2} f(y_2) \nabla g(y_2)
	&=
	-\sum_{y_2:y_1+1<y_2} \nabla f(y_2-1)  g(y_2)
	- f(y_1+1) g(y_1+2),
\end{align*}
we then arrive at the desired result:
\begin{align*}
	\Ex\big[ \tilZ(t+s,x_1,x_2) \big\vert \filt(s) \big] =
	\Big( \eqref{eq:tilZ:condEx:gg} +\text{\eqref{eq:tilZ:condEx:eps}-type}+\text{\eqref{eq:tilZ:condEx:s}-type} \Big)
	+
	\text{\eqref{eq:tilZ:condEx:s}-type}.
\end{align*}
\end{proof}

Given Lemma~\ref{lem:cndEx},
we now incorporate the estimates on $ \SGe $ from Section~\ref{sect:SG} to obtain
bounds on the conditional expectations.

\begin{lem}
\label{lem:cndEx:bd}
Given $ T<\infty $, there exists $ u=u(T)<\infty $ such that,
for all $ s,t\in[0,\e^{-2}T]\cap\Z $ and $ x_1,x_2\in\Xi(t) $,
\begin{align*}
	\ind_{\set{ |x_1-x_2|>1}}
	\Ex\Big[\Big| \Ex\big( \Zg(t+s,x_1,x_2) \big| \filt(s) \big) \Zg(s,x_1,x_2) \Big|\Big]
	&\leq
	C(T) \frac{\e^{-\frac12}}{\sqrt{t+1}} e^{ u\e(|x_1|+|x_2|)};
\\
	\Ex\Big[\Big| \Ex\big(\tilZ(t+s,x_1,x_2)|\filt(s)\big) \tilZ(s,x_1,x_2) \Big|\Big]
	&\leq
	C(T)
	\frac{\e^{-\frac12}}{\sqrt{t+1}} e^{u\e(|x_1|+|x_2|)}.
\end{align*}
\end{lem}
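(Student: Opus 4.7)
The plan is to invoke the conditional-expectation formulas of Lemma~\ref{lem:cndEx}, split each two-time product via Cauchy--Schwarz, and then estimate the $L^2$-norm of each factor by combining the kernel bounds of Proposition~\ref{prop:SG} with the moment bound~\eqref{eq:Zmomt} of Proposition~\ref{prop:momt}. Concretely, I would write
\[
\Ex\Big[\big|\Ex[\Zg(t+s,x_1,x_2)\mid \filt(s)]\, \Zg(s,x_1,x_2)\big|\Big]
\leq
\big\Vert\Ex[\Zg(t+s,x_1,x_2)\mid \filt(s)]\big\Vert_2 \cdot \big\Vert\Zg(s,x_1,x_2)\big\Vert_2,
\]
and similarly for the $\tilZ$ expression. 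The time-$s$ factor is controlled by $Ce^{u\e(|x_1|+|x_2|)}$ using~\eqref{eq:gZ:basic}, H\"older's inequality, and~\eqref{eq:Zmomt}; for $\tilZ$ the boundedness $\etacnt^{+}\in\{0,1\}$ makes the analogous estimate immediate.

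For the time-$(t+s)$ factor, substitute \eqref{eq:gZZ:condEx} or \eqref{eq:tilZ:condEx}, apply Minkowski's inequality to bring the $L^2$-norm inside the sum over $(y_1,y_2)\in\Xi(s)^2$, and bound $\Vert\Zsv(s,y_1)\Zsv(s,y_2)\Vert_2\leq e^{u\e(|y_1|+|y_2|)}$ via H\"older and~\eqref{eq:Zmomt}. Proposition~\ref{prop:SG} then reduces the problem to geometric sums of the form
\[
\sum_{y_1,y_2\in\Xi(s)} e^{-\alpha(|x_1-y_1|+|x_2-y_2|)/(\sqrt{t+1}+C(\alpha))} \, e^{u\e(|y_1|+|y_2|)}.
\]
The crucial observation is that, because $t\leq \e^{-2}T$, choosing $u=u(T,\alpha)$ small enough gives $u\e(\sqrt{t+1}+C(\alpha))\leq \alpha/2$, so after pulling out $e^{u\e(|x_1|+|x_2|)}$ the remaining double geometric sum has total mass $C(t+1)$. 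Multiplying this by the prefactor $\e^{-1/2}(t+1)^{-3/2}$ from the gradient bound on $\SGe$ yields $\Vert\Ex[\Zg(t+s)\mid \filt(s)]\Vert_2\leq C(T)\e^{-1/2}(t+1)^{-1/2}e^{u\e(|x_1|+|x_2|)}$, which matches the desired estimate after multiplying by the time-$s$ factor.

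For $\tilZ$, I would handle the three pieces~\eqref{eq:tilZ:condEx:gg}, \eqref{eq:tilZ:condEx:eps}, and~\eqref{eq:tilZ:condEx:s} separately. Part~\eqref{eq:tilZ:condEx:gg} is completely analogous to the $\Zg$ analysis because of the $\nabla_{y_j}$ on $\SGe$ and the $\e^{-1/2}$ prefactor. Part~\eqref{eq:tilZ:condEx:eps} contains $\e^{1/2}|\SGe|\leq C\e^{1/2}(t+1)^{-1}$; after the double geometric sum contributes $C(t+1)$, the total is $C\e^{1/2}$, and the trivial inequality $\e^{1/2}\leq C(T)\e^{-1/2}(t+1)^{-1/2}$ for $t\leq \e^{-2}T$ absorbs it into the claimed bound. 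Part~\eqref{eq:tilZ:condEx:s} is a single-variable sum of $|\SGe|\leq C(t+1)^{-1}$ over $y$; the Gaussian decay in $|x_1-y-i|+|x_2-y-j|$ gives a one-dimensional geometric sum of size $\sqrt{t+1}$, yielding $C(t+1)^{-1/2}\leq C\e^{-1/2}(t+1)^{-1/2}$.

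The main obstacle is bookkeeping the constants: one must choose $\alpha$ slightly larger than needed and then $u=u(T,\alpha)$ small enough so that the exponential growth $e^{u\e|y|}$ inherited from~\eqref{eq:Zmomt} is strictly dominated by the decay rate in Proposition~\ref{prop:SG} over the whole time window $t\leq \e^{-2}T$. The algebraic structure is otherwise routine and genuinely amounts to a Cauchy--Schwarz plus Minkowski estimate; the deeper work has already been done in Proposition~\ref{prop:SG}, whose sharp $(t+1)^{-1}$ and $(t+1)^{-3/2}$ heat-kernel-type bounds are exactly what is needed to turn the diffusive time-scale $t\sim \e^{-2}$ into the advertised $\e^{-1/2}(t+1)^{-1/2}$ decay in~$t$.
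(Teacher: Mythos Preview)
Your approach is correct and essentially the same as the paper's: both reduce the estimate to summing the kernel bounds of Proposition~\ref{prop:SG} against the exponential weight $e^{u\e|y|}$ coming from the moment bound~\eqref{eq:Zmomt}, and both handle the three pieces~\eqref{eq:tilZ:condEx:gg}--\eqref{eq:tilZ:condEx:s} in exactly the way you describe. The only cosmetic difference is that the paper multiplies the conditional-expectation formula directly by $\Zg(s,x_1,x_2)$ and then takes a single expectation of the resulting four-fold product of $Z$'s (via Cauchy--Schwarz on four factors), rather than first splitting into two $L^2$-norms as you do; the outcome is identical. One small correction: you cannot ``choose $u$ small enough,'' since $u$ is fixed by the moment bounds in Proposition~\ref{prop:momt}; the right direction---which the paper takes---is to accept $u$ as given and then choose $\alpha$ large enough in Proposition~\ref{prop:SG} (specifically $\alpha=3u\sqrt{T}$), using that Proposition~\ref{prop:SG} holds for arbitrary $\alpha$.
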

\begin{proof}
First, the moment bound~\eqref{eq:Zmomt} from Proposition~\ref{prop:momt} gives that
$ \Ex[\Zsv(s,y)^4] \leq C(T)e^{u\e|y|} $, for some fixed $ u=u(T)\in(0,\infty) $.
This together with the Cauchy--Schwarz inequality gives
\begin{align}
	\label{eq:cndExp:momt}
	\Ex\big[|\Zsv(s,x_1)\Zsv(s,x_2)\Zsv(s,y_1)\Zsv(s,y_2)|\big] \leq C(T) e^{u\e(|x_1|+|x_2|+|y_1|+|y_2|)}.
\end{align}
To alleviate notation, in the following we often write $ \SGe((y_1,y_2),(x_1,x_2);t)=\SGe $.
Multiply both sides of~\eqref{eq:gZZ:condEx}--\eqref{eq:ZgZ:condEx} by $ \Zg(s,x_1,x_2) $.
Incorporating both the cases $ x_1+1<x_2 $ and $ x_2+1<x_2 $, we write
\begin{align*}
	\Big| &\Ex\big[ \Zg(t+s,x_1,x_2) \big| \filt(s) \big] \Zg(s,x_1,x_2) \Big|\ind_{\set{ |x_1-x_2|>1}}
\\
	&\leq
	C(T) \sum_{y_1<y_2\in\Xi(s)} \e^{-\frac12} \big( |\nabla_{x_1}\SGe|+|\nabla_{x_2}\SGe|\big)
	\Zsv(s,y_1)\Zsv(s,y_2) \big| \Zg(s,x_1,x_2) \big|
\\
	&\leq
	C(T) \sum_{y_1<y_2\in\Xi(s)} \e^{-\frac12} \big( |\nabla_{x_1}\SGe|+|\nabla_{x_2}\SGe|\big)
	\Zsv(s,y_1)\Zsv(s,y_2)\Zsv(s,x_1)\Zsv(s,x_2),
\end{align*}
where, in the last inequality, we used~\eqref{eq:gZ:basic} to write
$ |\Zg(s,x_1,x_2)| \leq C Z(s,x_1) Z(s,x_2) $.
Take expectation on both sides using~\eqref{eq:cndExp:momt}.
For $ f:(y_1<\ldots<y_n)\in\Xi(s)^n \mapsto f(\vec{y})\in\R $, set
\begin{align*}
	[f]_u:= \sum_{y_1<\ldots<y_n} |f(y_1,\ldots,y_n)| e^{u(|y_1|+\ldots+|y_n|)}.
\end{align*}
We then obtain
\begin{align}
	\notag
	\Ex\Big[\Big| \Ex\big( \Zg(t+s,x_1,x_2) &\big| \filt(s) \big) \Zg(s,x_1,x_2) \Big|\Big]\ind_{\set{ |x_1-x_2|>1}}
\\
	\label{eq:lem:cndEx1}
	&\leq
	e^{u\e(|x_1|+|x_2|)} C(T) \e^{-\frac12} \big([\nabla_{x_1}\SGe]_{u\e}+[\nabla_{x_2}\SGe]_{u\e}\big).
\end{align}
Note that $[\nabla_{x_1}\SGe]_{u\e}$, $[\nabla_{x_2}\SGe]_{u\e}$ are only sums over
$y_1<y_2\in\Xi(s)$ and are thus still functions of $x_1,x_2$.

A similar procedure starting with~\eqref{eq:tilZ:condEx} gives
\begin{align}
	\notag
	\Ex\Big[\Big| \Ex\big( &\tilZ(t+s,x_1,x_2) \big| \filt(s) \big) \tilZ(s,x_1,x_2) \Big|\Big]
\\
	\label{eq:lem:cndEx2}
	&\leq
	e^{u\e(|x_1|+|x_2|)} C(T)
	\Big( \e^{-\frac12} \big([\nabla_{x_1}\SGe]_{u\e}+[\nabla_{x_2}\SGe]_{u\e}\big)
	+ \e^\frac12 [\SGe]
	+ \sum_{|i|,|j|\leq 3} [V_{\e,i,j}]_{u\e}
	\Big),
\end{align}
where $ V_{\e,i,j}(y) := \SGe((y+i,y+j),(x_1,x_2);t) $.

With $ t\leq \e^{-2}T $, we set $ \alpha:=3u\sqrt{T} $
so that
$
	\frac{\alpha}{\sqrt{t+1}+C(\alpha)} = \frac{\alpha\e}{\sqrt{T+\e^2}+C(\alpha)\e}
	>
	2u \e,
$
for all $ \e>0 $ small enough.
For such an exponent $ \alpha $, we indeed have
\begin{align}
	\notag
	\sum_{y\in\Xi(s)} e^{-\frac{\alpha|x-y|}{\sqrt{t+1}+C(\alpha)}} e^{u\e|y|}
	&\leq
	e^{u|x|}\sum_{y\in\Xi(s)} e^{-\frac{\alpha|x-y|}{\sqrt{t+1}+C(\alpha)}} e^{u\e|x-y|}
\\
	\label{e:sum-y-exp-exp}
	&\leq
	e^{u|x|}\sum_{y\in\Xi(s)} e^{-\frac{\alpha|x-y|}{2(\sqrt{t+1}+C(\alpha))}}
	\leq
	C(\alpha) e^{u\e|x|} (t+1)^\frac12,
\end{align}
for all $ \e>0 $ small enough.
Now, apply the estimates on $ |\SGe| $ and $ |\nabla\SGe| $ from Proposition~\ref{prop:SG} with this exponent $ \alpha $. We get
\begin{align*}
	&[\nabla_{x_i}\SGe]_{u\e},	\ [\nabla_{y_i}\SGe]_{u\e}
	\leq
	\frac{C(\alpha,T)}{(t+1)^{1/2}}e^{u\e(|x_1|+|x_2|)},
\\
	&[\SGe]_{u\e}
	\leq
	C(\alpha,T)e^{u\e(|x_1|+|x_2|)},
	\quad
	[V_{\e,i,j}]_{u\e}
	\leq
	\frac{C(\alpha,T,i,j)}{(t+1)^{1/2}}e^{u\e(|x_1|+|x_2|)}.
\end{align*}
Here, upon taking $[\;\Cdot\;]_{u\e}$, the sums over $y_1$ and over $y_2$
of $\SGe((y_1,y_2),(x_1,x_2),t)$ each produces a factor of $(t+1)^\frac12$,
as seen from in \eqref{e:sum-y-exp-exp}.
Insert these bounds into \eqref{eq:lem:cndEx1}--\eqref{eq:lem:cndEx2}.
With $ \frac{\e^{-\frac12}}{\sqrt{t+1}} + \e^{\frac12} + \frac{1}{\sqrt{t+1}} \leq \frac{C(T)\e^{-\frac12}}{\sqrt{t+1}} $,
and with $ \alpha=\alpha(u,T) $,
we conclude the desired result.
%
\end{proof}

Recall the definitions of $ \Xgmomt $ and $ \Xggmomt $ from \eqref{eq:Xgmomt}--\eqref{eq:Xggmomt}.
We are now ready to derive the relevant bounds on these quantities.

\begin{cor}
\label{cor:cndExp}
Fix $ T<\infty $,
let $ \bar t\in\Z\cap[0,\e^{-2}T] $ and $ x^\star_1\neq x_2^\star\in \Z $.
We have
\begin{align*}
	\Xgmomt(\bar t,x^\star_1,x^\star_2),
	\quad
	\Xggmomt(\bar t,x^\star_1,x^\star_2)
	\
	\leq
	C(T) \,\e^{\frac12} \, e^{u\e(|x^\star_1|+|x^\star_2|)}.
\end{align*}
\end{cor}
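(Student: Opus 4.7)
The plan is to reduce both quantities, via the conditional-expectation expansions \eqref{eq:cndEx1}--\eqref{eq:cndEx2}, to a double sum over two time arguments, and then invoke Lemma~\ref{lem:cndEx:bd} pointwise together with an elementary summation.

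First I would write
\begin{align*}
\Xgmomt(\bar t,x^\star_1,x^\star_2)
= \e^4 \Big( 2\!\!\sum_{0\leq s_1<s_2<\bar t}\! + \sum_{0\leq s_1=s_2<\bar t} \Big)
\Ex\Big[\Ex\big[\Zg(s_2,x_1(s_2),x_2(s_2))\big|\filt(s_1)\big]\,\Zg(s_1,x_1(s_1),x_2(s_1))\Big],
\end{align*}
where $x_i(s) := x_i^\star - \mue s + \lfloor\mue s\rfloor \in \Xi(s)$, and similarly for $\Xggmomt$ with $\tilZ$ in place of $\Zg$. Two observations are crucial: first, the spatial separation $|x_1(s)-x_2(s)| = |x_1^\star-x_2^\star|$ does not depend on $s$, so the indicator $\ind_{\set{|x_1-x_2|>1}}$ appearing in the first bound of Lemma~\ref{lem:cndEx:bd} is satisfied uniformly in $(s_1,s_2)$ in the relevant case $|x_1^\star-x_2^\star|>1$ (consistent with the definition of $\Xg$ as a sum restricted to $|x_1-x_2|>1$); second, since $|x_i(s)-x_i^\star|\leq 1$, the weight $e^{u\e(|x_1(s_1)|+|x_2(s_1)|)}$ is comparable up to a fixed constant to $e^{u\e(|x_1^\star|+|x_2^\star|)}$.

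Applying Lemma~\ref{lem:cndEx:bd} with $s=s_1$ and $t=s_2-s_1$ then bounds each summand by $C(T)\,\e^{-1/2}(s_2-s_1+1)^{-1/2}\,e^{u\e(|x_1^\star|+|x_2^\star|)}$. Summation is immediate: the off-diagonal part obeys
\begin{align*}
\sum_{0\leq s_1<s_2<\bar t} \frac{1}{\sqrt{s_2-s_1+1}} \leq \bar t \sum_{r=1}^{\bar t}\frac{1}{\sqrt{r+1}} \leq C\,\bar t^{3/2} \leq C(T)\,\e^{-3},
\end{align*}
using $\bar t \leq T\e^{-2}$, while the diagonal sum over $s_1=s_2$ contributes at most $\bar t \leq T\e^{-2}$ terms, which is strictly smaller. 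Combining with the prefactor $\e^4$ yields
\begin{align*}
\Xgmomt(\bar t,x^\star_1,x^\star_2) \leq \e^{4}\cdot C(T)\,\e^{-1/2}\cdot\e^{-3}\cdot e^{u\e(|x_1^\star|+|x_2^\star|)} = C(T)\,\e^{1/2}\,e^{u\e(|x_1^\star|+|x_2^\star|)},
\end{align*}
as claimed. The argument for $\Xggmomt$ is identical upon invoking the second bound in Lemma~\ref{lem:cndEx:bd} (which carries no separation requirement).

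The main obstacle is not in this corollary but in the upstream ingredient Lemma~\ref{lem:cndEx:bd}, which itself rests on the delicate semigroup estimates of Section~\ref{sect:SG}; the gradient bound in Proposition~\ref{prop:SG} supplies precisely the extra $(t+1)^{-1/2}$ decay needed here. Once that decay is available, the numerology is tight but routine: the $(t+1)^{-1/2}$ gain from one discrete gradient on the semigroup exactly compensates the $\e^{-3}$ produced by summing $(t+1)^{-1/2}$ over $\e^{-2}$ time steps, leaving the small surplus $\e^{1/2}$ that is the quantitative manifestation of the self-averaging behind Proposition~\ref{prop:qv}.
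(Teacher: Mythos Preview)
Your proposal is correct and follows essentially the same route as the paper: insert the bound from Lemma~\ref{lem:cndEx:bd} into the expansions \eqref{eq:cndEx1}--\eqref{eq:cndEx2}, sum $\sum_{s_1<s_2<\bar t}(s_2-s_1+1)^{-1/2}\le C\bar t^{3/2}\le C(T)\e^{-3}$, and collect powers of $\e$. Your treatment is in fact a touch more careful than the paper's, since you explicitly note that the indicator $\ind_{\{|x_1-x_2|>1\}}$ required by the first bound of Lemma~\ref{lem:cndEx:bd} is automatically satisfied because $|x_1(s)-x_2(s)|=|x_1^\star-x_2^\star|$ is constant in $s$; the paper simply applies the lemma without comment and absorbs the shift $|x_i(s)-x_i^\star|\le 1$ into the constant at the end.
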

\begin{proof}
This follows by inserting the bounds from Lemma~\ref{lem:cndEx:bd} into~\eqref{eq:cndEx1}--\eqref{eq:cndEx2}:
\begin{align*}
	\Xgmomt(\bar t,x^\star_1,x^\star_2)
	&\leq
	\e^{4} \Big( 2\sum_{s_1<s_2<\bar t} + \sum_{s_1=s_2<\bar t} \Big)
	C(T) \frac{\e^{-\frac12}}{\sqrt{s_2-s_1+1}} e^{u\e(|x_1|+|x_2|)}
	\leq
	C(T) \e^{\frac12}e^{u\e(|x^\star_1|+|x^\star_2|)},
\\
	\Xggmomt(\bar t,x^\star_1,x^\star_2)
	&\leq
	\e^{4} \Big( 2\sum_{s_1<s_2<\bar t} + \sum_{s_1=s_2<\bar t} \Big)
	C(T) \frac{\e^{-\frac12}}{\sqrt{s_2-s_1+1}} e^{u\e(|x_1|+|x_2|)}
	\leq
	C(T) \e^{\frac12}e^{u\e(|x^\star_1|+|x^\star_2|)}.
\end{align*}
Note that, with $ |x_i-x^\star_i|\leq 1 $,
we replaced $ x_i $ with $ x^\star_i $ at the cost of increasing the constant $ C(T) $
by factors of $ e^{u\e} \leq C(T) $ (with $ u=u(T) $).
\end{proof}

We are now ready to prove Proposition~\ref{prop:qv}.
\begin{proof}[Proof of Proposition~\ref{prop:qv}]
Fix $ T<\infty $, $ t \in [0,\e^{-2}T]\cap\Z $, and $ x_\star\in\Z $,
and write $ x_\star(s) := x_\star - \mue s +\mues $.
Given the decomposition in Corollary~\ref{cor:Theta:Exp}, it suffices to prove that
\begin{align}
	\label{eq:propqv:1}
	\Big\Vert \e^2 \sum_{s=0}^{t} A(s,x_\star(s)) \Big\Vert_2
	\leq
	\e^{\frac14} C(T)e^{C\e |x_\star|},
\end{align}
for $ A(s,x) = \e^\frac12\Xbd(t,x) $, $ \Xg(t,x) $, and $ \Xgg(t,x) $,
and for all $ \e>0 $ small enough.

Recall from~\eqref{eq:Xbd} that $ \Xbd(t,x) $ denotes a generic linear combination of $ \Zsv(t,x_1)\Zsv(t,x_2) $,
with random but uniformly exponentially decay coefficients $ \Decay(t,x_1,x_2;x) $.
Consequently,
\begin{align*}
	\Big\Vert \e^2 \sum_{s=0}^{t} \e^{\frac12} \Xbd(s,x_\star(s)) \Big\Vert_2
	\leq
	\e^{\frac12}
	\Bigg(
		\e^2 \sum_{s=0}^{t}
		\sum_{x_1,x_2\in\Xi(s)} e^{-\frac{1}{C}(|x_1-x_\star(s)|+|x_2-x_\star(s)|)} \Vert \Zsv(s,x_1)\Zsv(s,x_2) \Vert_2
	\Bigg).
\end{align*}
Given this,
together with $ |x_\star(s)-x_\star|\leq 1 $,
the statement~\eqref{eq:propqv:1} for $ A(s,x) = \e^\frac12\Xbd(t,x) $
readily follows from the moment bound~\eqref{eq:Zmomt} in Proposition~\ref{prop:momt}.

Next, recall that $ \Xg(t,x) $ and $ \Xgg(t,x) $ denote generic linear combinations of
$ \Zg(t,x_1,x_2) $ and $ \Xgg(t,x_1,x_2) $ with some deterministic coefficients~\eqref{eq:decay}
that decay exponentially off $ x $.
This gives
\begin{align*}
	\Big\Vert \e^2 \sum_{s=0}^{t} \Xg(s,x_\star(s)) \Big\Vert_2
	&\leq
	\sum_{x^\star_1<x^\star_2\in\Z}
	\ind_{\set{|x^\star_1-x^\star_2|>1}}
	e^{-\frac{1}{C}(|x^\star_1(s)-x_\star(s)|+|x^\star_2(s)-x_\star(s)|)}
	\Xgmomt(t,x^\star_1,x^\star_2)^{1/2},
\\
	\Big\Vert \e^2 \sum_{s=0}^{t} \Xgg(s,x_\star(s)) \Big\Vert_2
	&\leq
	\sum_{x^\star_1<x^\star_2\in\Z}
	e^{-\frac{1}{C}(|x^\star_1(s)-x_\star(s)|+|x^\star_2(s)-x_\star(s)|)}
	\Xggmomt(t,x^\star_1,x^\star_2)^{1/2},
\end{align*}
where $ x^\star_i(s):=x^\star_i - \mue s + \mues $.
Given this, the statement~\eqref{eq:propqv:1} for $ A(s,x) =  \Xg(t,x) $, and $ \Xgg(t,x) $,
readily follows from the bounds in Corollary~\ref{cor:cndExp}.
\end{proof}

\appendix
\section{Quadratic variation in ASEP}
\label{sect:ASEP}

In this appendix we expand upon the brief discussion from Sections \ref{sec:ASEPKPZ} and \ref{sect:introDual}
and explain how our Markov duality method can be applied to \ac{ASEP}, which is a simpler limit of the stochastic \ac{6V} model.  We will not carry out the necessary analysis, but rather just point to the main steps.

Recall that \ac{ASEP} is an interacting particle system on $ \Z $,
where particles inhabit sites index by $ \Z $ and jump left and right according to continuous time exponential clocks with rates $ \ell> 0 $ and $ r> 0 $
subject to exclusion (jumps to occupied sites are suppressed). We will assume that $\ell+r=1$ and set $\tau:= r/\ell$.
The \ac{ASEP} height function $ \Nasep(t,x) $ has $1$/$0$ slopes entering occupied/vacant sites as depicted in Figure \ref{fig.asep}.
For \ac{ASEP} with near-stationary initial data of density $ \den=\frac12 $ we define a variant\footnote{This follows immediately from \eqref{eq.mch} by a simple tilting and centering.} of the Hopf--Cole transform of $ \Nasep(t,x) $ by
\begin{align*}
	\Zasep(t,x) := \tau^{\Nasep(t,x)-\frac12 x} e^{t(1-2\sqrt{\ell r})},
	\quad
	t\in[0,\infty),
	\
	x\in\Z.
\end{align*}
This solves the following microscopic \ac{SHE}:
\begin{align}
	\label{eq:asepSHE}
	d \Zasep(t,x)
	=
	\sqrt{\ell r} \Delta \Zasep(t,x) dt
	+
	d\mg(t,x),
\end{align}
where $ \Delta f(x):= f(x+1)+f(x-1)-2f(x) $ denotes the discrete Laplacian, and, for each $ x\in\Z $,
the process $ \mg(t,x) $, $ t\in\R_+ $, is a martingale.

Under weak asymmetry scaling, i.e., $ \tau=\taue:=e^{-\sqrt\e} $ and $ (t,x)\mapsto (\e^{-2}t,\e^{-1}x) $,
an informal scaling argument applied to~\eqref{eq:asepSHE} indicates that the equation should converge to the continuum \ac{SHE}.
Key to establishing this convergence is the identification of the limiting quadratic variation of $ \mg(t,x) $.
Under weak asymmetry scaling, the optional quadratic variation of $ \mg(t,x) $ reads
\begin{align}
	\label{eq:asepQV}
	d\langle \mg(t,x), \mg(t,x') \rangle
	=
	\e \ind_{\set{x=x'}} \Big( \big( \tfrac14 + \e^\frac12 \Bdd(t,x) \big) \Zasep^2(t,x) + \tilde{F}_\e(t,x) \Big) dt,
\end{align}
where, following notations in Section~\ref{sect:qv},
$ \Bdd(t,x) $ is a generic, uniformly bounded process, and
\begin{align}
	\label{eq:asepQV:BG}
	\tilde{F}_\e(t,x)
	:=
	\e^{-\frac12}\nabla \Zasep(t,x) \e^{-\frac12}\nabla \Zasep(t,x-1).
\end{align}

Referring to the r.h.s.\ of~\eqref{eq:asepQV}, we see that $ \e^\frac12 \Bdd(t,x) $
is indeed negligible compared to the constant $ \frac14 $ factor.
Key to identifying the limiting behavior is to argue that $ \tilde{F}(t,x) $ is also negligible.
With $ \nabla\Zasep(t,x) = (e^{-\sqrt\e \eta(t,x+1)}-1)\Zasep(t,x) $,
we indeed have $ \tilde{F}_\e(t,x) = \Bdd(t,x) \Zasep^2(t,x) $,
i.e., pointwise bounded up to a multiplicative factor of $ \Zasep^2(t,x) $.
On the other hand, it is conceivable that this term $ \tilde{F}(t,x) $ does not tend to zero pointwise,
i.e., $ \tilde{F}(t,x) \not\to_\text{P} 0 $.
The crux of the convergence result is to prove that  this term converges to zero after time-averaging:
\begin{align}  \label{e:zero after time-averaging}
	\Ex\Big[ \Big( \e^{2}\int_0^{\e^{-2}T} \tilde{F}_\e(t,x) dt \Big)\Big]^2 \longrightarrow 0.
\end{align}
This is first achieved in \cite{Bertini1997} by showing the decay as $ t $ becomes large of the conditional expectation
\begin{align*}
	\Ex\big[ \tilde{F}_\e(t+s,x) \big| \filt(s) \big],
\end{align*}
where $ \filt $ denotes the canonical filtration of \ac{ASEP}.
Roughly speaking, the estimate starts by using~\eqref{eq:asepSHE} to develop a sequence of inequality that bounds the conditional expectation.
`Closing' the series of inequality relies crucially on an identity~\cite[(A.6)]{Bertini1997} for the (semi)-discrete heat kernel. We do not know of a way to generalize this approach from \cite{Bertini1997} to the stochastic \ac{6V} model setting.

Here we provide an alternative approach via duality.
The Markov duality method also begins with bounding conditional expectations.
However, instead of trying to close a sequence of inequalities,
this method provides \emph{direct} access to the conditional expectations.
First, the expression $ \tilde{F}_\e(t,x) $ is not convenient for our purpose.
Use  $ \nabla\Zasep(t,x) = (e^{-\sqrt\e (\eta^+(t,x)-\frac12) }-1)\Zasep(t,x) $ where $ \eta^+(t,x) := \eta(t,x+1) $, and Taylor expand
\begin{align} \label{e:grad-eta-Zasep}
 \nabla\Zasep(t,x) =\sqrt\e (\tfrac12 - \eta^{+}(t,x)) \Zasep + \e \Bdd(t,x)\Zasep(t,x) ,
 \end{align}
 where $ \Bdd(t,x)$ stands for a {\it generic} uniformly bounded process as in Section~\ref{sect:qv}.
We can then write $ \tilde{F}_\e(t,x) = F_\e(t,x) + \e^{1/2} \Bdd(t,x)\Zasep^2(t,x) $, where
\begin{align}
	\label{eq:asepQV:duality}
	F_\e(t,x)
	=
	\tfrac12 \Zg(t,x,x-1) + \tfrac12\Zg(t,x-1,x+1) + \tilZ(t,x-1,x),
\end{align}
where, following the notation in Section~\ref{sect:qv},
\begin{align*}
	\Zg(t,x_1,x_2) &:= (\e^{-\frac12}\nabla \Zasep(t,x_1)) \Zasep(t,x_2),
\\	
	\tilZ(t,x_1,t_2) &:= (\eta^+\Zasep)(t,x_1) (\eta^+\Zasep)(t,x_2) - \tfrac{1}{4} \Zasep(t,x_1) \Zasep(t,x_2).
\end{align*}
To see \eqref{eq:asepQV:duality}, we use \eqref{e:grad-eta-Zasep} just as in the proof of Lemma~\ref{lem:trade} (for the stochastic \ac{6V} model):
\begin{align*}
\tilde{F}_\e (t,x)  & = \big((\tfrac12 - \eta^+)  \Zasep\big)(t,x)  \big((\tfrac12 - \eta^+)  \Zasep\big)(t,x-1)
+ \e^{1/2} \Bdd(t,x)\Zasep^2(t,x)  \\
& =  \tfrac12 \big((\tfrac12 - \eta^+)  \Zasep\big)(t,x)   \Zasep(t,x-1)
+ \tfrac12 \big((\tfrac12 - \eta^+)  \Zasep\big)(t,x-1)   \Zasep(t,x)\\
&\qquad + \tilZ(t,x-1,x) + \e^{1/2} \Bdd(t,x)\Zasep^2(t,x)  \\
&= \mbox{r.h.s of }\eqref{eq:asepQV:duality} + \e^{1/2} \Bdd(t,x)\Zasep^2(t,x) .
\end{align*}
In the last step, we replace $\Zasep(t,x)$ with $\Zasep(t,x\pm1)$
costing error of order $\e^{1/2} \Bdd(t,x)\Zasep(t,x)$.

As mentioned in Section~\ref{sect:introDual}, \ac{ASEP} enjoys self-duality
via the functions $ Q $ and $ \tilde{Q} $ defined therein.
Specifically, the $ k=2 $ duality translates (after tilting and centering) into the following statement, in which we used the notation
\begin{align*}
	\SGasep\big((y_1,y_2),(x_1,x_2);t\big) := e^{2t(1-2\sqrt{\ell r})} \tau^{-\frac12(x_1+x_2-y_1-y_2)} \Pr_\text{ASEP}\big((y_1,y_2)\to (x_1,y_2) ;t\big).
\end{align*}
\begin{prop}
\label{prop:asepDual}
For all $ x_1<x_2\in\Z $ and $ t,s\in [0,\infty) $, we have
\begin{align}
	\notag
	&\Ex\Big[ \Zasep(t+s,x_1) \Zasep(t+s,x_2) \Big\vert \filt(s) \Big]
\\
	\label{eq:asepDual}
	&\hphantom{\Ex\Big( (\etacnt^{+}\Zasep)}= \sum_{y_1<y_2\in\Z}
	\SGasep\big((y_1,y_2),(x_1,x_2);t\big) \Zasep(s,y_1) \Zasep(s,y_2),
\\
	\notag
	&\Ex\Big[ (\eta^{+}\Zasep)(t+s,x_1) (\eta^{+}\Zasep)(t+s,x_2) \Big\vert \filt(s) \Big]
\\
	\label{eq:asepDual:}
	&\hphantom{\Ex\Big( (\etacnt^{+}\Zasep)}=
	\sum_{y_1<y_2\in\Z}
	\SGasep\big((y_1,y_2),(x_1,x_2);t\big) \big(\eta^{+}\Zasep\big)(s,x_1)\big(\eta^{+}\Zasep\big)(s,x_2).
\end{align}
\end{prop}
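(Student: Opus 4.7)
\emph{Proof proposal.} The plan is to deduce both identities directly from the classical $k=2$ ASEP self-dualities---with duality function $\prod_i Q(t,x_i)$, $Q(t,x):=\tau^{\Nasep(t,x)}$ (\cite{BCS14}), and with $\prod_i \tilde Q(t,x_i)$, $\tilde Q(t,x):=Q(t,x)-Q(t,x-1)$ (\cite{MSchu97})---and then to translate them into the centered/tilted variables $\Zasep$ and $\eta^{+}\Zasep$ via the deterministic factorizations
\[
Q(t,x)=\Zasep(t,x)\,\tau^{x/2}\,e^{-t(1-2\sqrt{\ell r})},\qquad \tilde Q(t,x+1)=(\tau-1)\,\eta^{+}(t,x)\,Q(t,x),
\]
the second being immediate by cases on $\eta(t,x+1)\in\{0,1\}$. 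In both cases, once these factorizations are substituted on both sides of the duality, the accumulated deterministic prefactors collapse to exactly $e^{2t(1-2\sqrt{\ell r})}\,\tau^{-(x_1+x_2-y_1-y_2)/2}$, which is the prefactor in the definition of $\SGasep$.

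For~\eqref{eq:asepDual}, I would first integrate the backward equation supplied by the $k=2$ $Q$-duality to obtain
\[
\Ex\!\Big[\prod_{i=1}^{2}Q(t+s,x_i)\,\Big\vert\,\filt(s)\Big]=\sum_{y_1<y_2}p^{\leftarrow}_t\big((x_1,x_2)\to(y_1,y_2)\big)\prod_{i=1}^{2}Q(s,y_i),
\]
where $p^{\leftarrow}_t$ is the two-particle space-reversed ASEP transition kernel. The key remaining step will be the adjoint identification $p^{\leftarrow}_t(\vec x\to\vec y)=\Pr_{\textup{ASEP}}(\vec y\to\vec x;t)$: the space-reversed generator coincides with the transpose of the forward generator $L$ with respect to counting measure on $\Yfin{2}$ (a one-line check on matrix entries, since reversing an allowed single jump swaps $r\leftrightarrow\ell$ in precisely the same way as space-reversal does), so that $(e^{tL^{\leftarrow}})(\vec x,\vec y)=(e^{tL})^{T}(\vec x,\vec y)=p_t(\vec y,\vec x)$. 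Substituting the Hopf--Cole factorization on both sides then yields~\eqref{eq:asepDual}.

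For~\eqref{eq:asepDual:} I would repeat the scheme with the $\tilde Q$-duality evaluated at the shifted pair $(x_1+1,x_2+1)$, reindex the summation via $y_i\mapsto y_i-1$, and invoke the translation invariance $p^{\leftarrow}_t\big((x_1+1,x_2+1)\to(y_1+1,y_2+1)\big)=p^{\leftarrow}_t\big((x_1,x_2)\to(y_1,y_2)\big)$ of the two-particle ASEP kernel; after converting every $\tilde Q$ via the second factorization above, the explicit $(\tau-1)^{2}$ prefactors cancel on both sides, and the remaining $\tau^{\cdot/2}$ and $e^{\cdot(1-2\sqrt{\ell r})}$ factors reassemble into $\SGasep$. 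The main difficulty---such as it is---lies purely in the bookkeeping of these deterministic prefactors and of the $y_i$-reindexing; the two analytic inputs, namely the classical $Q$- and $\tilde Q$-self-dualities together with the one-line transpose identification of $L^{\leftarrow}$ with $L^{T}$, are both elementary, so no real obstacle beyond the algebra should arise.
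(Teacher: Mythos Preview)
Your proposal is correct and takes essentially the same approach as the paper, which does not give a detailed proof but simply states that the proposition is the $k=2$ ASEP self-duality (via $Q$ and $\tilde Q$) ``translated after tilting and centering'' into the $\Zasep$ variables. Your write-up supplies precisely those details---the deterministic factorizations, the shift/reindex for the $\tilde Q$ identity, and the adjoint identification $p^{\leftarrow}_t(\vec x\to\vec y)=\Pr_{\textup{ASEP}}(\vec y\to\vec x;t)$ (which parallels the space--time reversal argument the paper uses for the stochastic six vertex model in Corollary~\ref{cor:CP16}).
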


Proposition~\ref{prop:asepDual} provides the necessary ingredients for expressing conditional expectations for the relevant quantities.
Specifically, with $ \tilZ(t,x-1,x) $ being an linear combination the two observables in \eqref{eq:asepDual} and in \eqref{eq:asepDual:}
at $ (x_1,x_2)=(x-1,x) $ we have
\begin{align}
	\label{eq:asep:tilZ}
	\Ex\Big[\tilZ(t+s,x-1,x)\Big|\filt(s)\Big] = \sum_{y_1<y_2\in\Z} \SGasep\big((y_1,y_2),(x-1,x);t\big) \tilZ(t,y_1,y_2).
\end{align}
Likewise, $ \Zg(t,x,x-1) $ is the difference of $ \Zasep(t,x+1)\Zasep(t,x-1) $ and $ \Zasep(t,x)\Zasep(t,x-1) $.
Taking the difference of~\eqref{eq:asepDual} for $ (x_1,x_2) = (x+1,x-1) $ and for $ (x,x-1) $ gives
\begin{align*}
	\Ex\Big[\Zg & (t+s,x,x-1)  \Big|\filt(s)\Big] \\
	& = \sum_{y_1<y_2\in\Z} \e^{-\frac12}\nabla_{x_1}\SGasep\big((y_1,y_2),(x_1,x_2);t\big)\big|_{(x_1,x_2)=(x,x-1)}
		\Zasep(s,y_1)\Zasep(s,y_2),
\end{align*}
where $ \nabla_{x_1} $ denotes the discrete (forward) gradient acting on the variable $ x_1 $.
Similarly,
\begin{align*}
	\Ex\Big[\Zg& (t+s,x-1,x+1)\Big|\filt(s)\Big] \\
	&= \sum_{y_1<y_2\in\Z} \e^{-\frac12}\nabla_{x_1}\SGasep\big((y_1,y_2),(x_1,x_2);t\big)\big|_{(x_1,x_2)=(x-1,x+1)}
		\Zasep(s,y_1)\Zasep(s,y_2).
\end{align*}

From the perspective of duality, roughly speaking,
the mechanism of decay in $ t\to\infty $ arises from the discrete gradient $ \nabla_{x_1} $.
The semigroup $ \SGasep $ behaves similar to (two copies of) the heat kernel,
so that $\sum_{y_1<y_2}\SGasep\big((y_1,y_2),(x_1,x_2);t\big) = \mathcal O(1)$,
and each gradient of $ \SGasep $ effectively produces a factor of $ t^{-1/2} $ for large $ t $.
Under the scaling $ \e^{-2} $ of time, namely $ t^{-1/2}\approx \e^1$, we expect to trade in $ \e^{-1/2}\nabla $ for $ \e^{-1/2}\e^{1}=\e^{1/2} \to 0 $.
In other words, the key heuristic is that the l.h.s of \eqref{e:zero after time-averaging} behaves as
\begin{align} \label{e:heuristicASEP}
\Ex\Big[ \Big( \e^{2}\int_0^{\e^{-2}T} \tilde{F}_\e(t,x) dt \Big)\Big]^2
\approx
\e^4 \int_0^{\e^{-2}T} \!\!\!\! \int_0^{\e^{-2}T} \!\!\! \frac{\e^{-1/2}}{\sqrt{t_1-t_2}}\,dt_1dt_2
 \approx \e^{\frac12} \to 0 .
\end{align}
Note that the identity~\eqref{eq:asep:tilZ} in its current form does not involve {\it gradients} of $ \SGasep $.
This identity can, however, be rewritten via Taylor expansion and summation by parts in a form that exposes the decay in $ t\to\infty $.
We do not perform this procedure here, and direct the readers to Lemma~\ref{lem:cndEx},
where the exact same procedure in carried out for the stochastic \ac{6V} model.
Specifically, the identity \eqref{eq:tilZ:condEx} therein holds with
$ (\SGasep,\Zasep,\Z) $ in place of $ (\SGe,\Zsv,\Xi(s)) $, and with $ s,t\in [0,\infty) $ instead of $ \Z_{\geq 0} $.

Given the preceding discussion,
the task for bounding conditional expectations boils down to estimating the semigroup $ \SGasep $ and its gradients.
Thanks to Bethe ansatz, $ \SGasep $ permits an explicit, analyzable formula in terms double contour integrals.
Under weak asymmetry scaling, we write $ \SGasep=\SGeasep $ and the formula reads
\begin{align*}
	\notag
	\SGeasep\big((y_1,y_2),(x_1,x_2);t\big)
	:=
	\oint_{\Circ_r}\oint_{\Circ_r}
	\Big(
		z_1^{x_1-y_1}z_2^{x_2-y_2} -\SGasepfr(z_1,z_2) z_1^{x_2-y_1}z_2^{x_1-y_2}
	\Big)
	\prod_{i=1}^2
	\frac{e^{t\SGasepE(z_i)}dz_i}{2\pi\img z_i},
\end{align*}
where $ \Circ_r $ is a counter-clockwise oriented, circular contour
 centered at origin, with a large enough radius $ r $
so as to include all poles of the integrand, and
\begin{align*}
	\SGasepfr(z_1,z_2)
	:=
	\frac{1+z_1z_2-(e^{-\frac12\sqrt\e}+e^{\frac12\sqrt\e}) z_2}{1+z_1z_2-(e^{-\frac12\sqrt\e}+e^{\frac12\sqrt\e}) z_1},
	\quad
	\SGasepE(z) := \sqrt{\ell r} \big(z+z^{-1}-2\big).
\end{align*}
This contour integral formula is amenable to steepest decent analysis.
Careful analysis jointly in $ (x_1,x_2,y_1,y_2,t) $ should produce the relevant estimates on $ \SGeasep $ and its gradient (the result and proof should be analogous to Proposition~\ref{prop:SG}). We do not pursue this analysis here.

\addtocontents{toc}{\protect\setcounter{tocdepth}{0}}
\bibliographystyle{alphaabbr}
\bibliography{Reference}

\end{document}